\documentclass{article}

\usepackage[a4paper, textwidth=14cm, top=3cm]{geometry}

\title{Effective interface laws for fluid flow and solute transport through thin reactive porous  layers}
\author{M. Gahn$^{\ast}$ and M. Neuss-Radu$^{\ast\ast}$}

\date{}

\usepackage{amsmath}
\usepackage{amsthm}
\usepackage[utf8]{inputenc}
\usepackage{amsfonts}
\usepackage{amssymb}
\usepackage{enumitem}
\usepackage{color}
\usepackage{hyperref}
\usepackage{MnSymbol}
\usepackage{graphicx}

\usepackage{stmaryrd}

\usepackage{comment}

\newcommand{\xieps}{\xi_{\varepsilon}}

\newcommand{\phieps}{\phi_{\varepsilon}}
\newcommand{\vareps}{\varepsilon}
\newcommand{\oeps}{\Omega_{\varepsilon}}

\newcommand{\tceps}{\widetilde{c}_{\varepsilon}}
\newcommand{\bceps}{\bar{c}_{\varepsilon}}
\newcommand{\bueps}{\bar{u}_{\varepsilon}}

\newcommand{\x}{\bar{x}}
\newcommand{\y}{\bar{y}}
\newcommand{\rats}{\overset{2}{\rightharpoonup}}
\newcommand{\rasts}{\overset{2}{\rightarrow}}

\newcommand{\tbxfxe}{\left(t,\bar{x},\frac{x}{\varepsilon}\right)}
\newcommand{\fxe}{\frac{x}{\varepsilon}}
\newcommand{\hxi}{\widehat{\xi}}

\newcommand{\Div}{\mathrm{Div}}

\newcommand{\tZ}{\widetilde{Z}}

\newcommand{\teps}{\mathcal{T}_{\varepsilon}}

\newcommand{\R}{\mathbb{R}}
\newcommand{\Z}{\mathbb{Z}}
\newcommand{\N}{\mathbb{N}}
\renewcommand{\oe}{\Omega_{\varepsilon}}
\newcommand{\geps}{\Gamma_{\varepsilon}}

\newcommand{\veps}{v_{\varepsilon}}

\newcommand{\ueps}{u_{\varepsilon}}
\newcommand{\peps}{p_{\varepsilon}}
\newcommand{\oem}{\Omega_{\varepsilon}^M}

\newcommand{\ceps}{c_{\varepsilon}}

\newcommand{\weps}{w_{\varepsilon}}

\newcommand{\oef}{\Omega_{\varepsilon}^f}

\newcommand{\sepm}{S_{\varepsilon}^{\pm}}
\renewcommand{\x}{\bar{x}}

\newcommand{\oems}{\Omega_{\varepsilon}^{M,s}}
\newcommand{\oemf}{\Omega_{\varepsilon}^{M,f}}

\newcommand{\foe}{\dfrac{1}{\varepsilon}}
\newcommand{\bxfxe}{\left(\bar{x},\dfrac{x}{\varepsilon}\right)}

\newtheorem{definition}{Definition}
\newtheorem{remark}{Remark}
\newtheorem{theorem}{Theorem}
\newtheorem{proposition}{Proposition}
\newtheorem{lemma}{Lemma}
\newtheorem{corollary}{Corollary}

\begin{document}

\maketitle

\vspace{2em}
\begin{abstract}
  We consider a coupled model for fluid flow and transport in a domain consisting of two bulk regions separated by a thin porous layer. The thickness of the layer is of order $\vareps$ and the microscopic structure of the layer is periodic in the tangential direction also with period $\vareps$. The fluid flow is described by an instationary Stokes system, properly scaled in the fluid part of the thin layer. The evolution of the solute concentrations is described by a reaction-diffusion-advection equation in the fluid part of the domain and a diffusion equation (allowing different scaling in the diffusion coefficients) in the solid part of the layer. At the microscopic fluid-solid interface inside the layer nonlinear reactions take place. This system is rigorously homogenized in the limit $\vareps \to 0$, based on weak and strong (two-scale) compactness results for the solutions. These are based on new embedding inequalities for thin perforated layers including coupling to bulk domains. In the limit, effective interface laws for flow and transport are derived at the interface separating the two bulk regions. These interface laws enable effective mass transport through the membrane, which is also an important  feature from an application point of view.
\end{abstract}

\let\thefootnote\relax\footnotetext{$^{*}$Institute for Mathematics, University Heidelberg, \\
Im Neuenheimer Feld 205, 69120 Heidelberg, Germany, markus.gahn@iwr.uni-heidelberg.de.
	
	\vspace{.5mm}
	$^{**}$Department Mathematik, Friedrich-Alexander-Universität Erlangen-Nürnberg, Cauerstr. 11, 91058 Erlangen, Germany, maria.neuss-radu@math.fau.de }

\section{Introduction}

Thin porous layers with a complex microscopic structure, also denoted as membranes, occur in many applications from engineering, material sciences, biology and medicine. E.g., in living organisms, the endothelial layer inside blood vessels, made up of endothelial cells linked together by transmembrane proteins, forms a selective barrier that controls the exchange of solutes and water between the intra- and extravascular space. Modern experimental assays
provide more and more detailed information about structures and processes at micro and nano-scale, which has to be incorporated into microscopic mathematical models. These are the basis for the derivation of effective models which can be used to give quantitative descriptions (via numerical computations) for processes in environments involving membranes. Let us mention that transport of lipids and immune cells through the endothelial layer plays a crucial role in the formation and growth of atherosclerotic plaques in blood vessels, see e.g., \cite{Telma} where, however, effective mathematical models are derived phenomenologically.

In this paper, we rigorously derive effective mathematical models for fluid flow and reactive transport of solutes in domains consisting of two bulk regions separated by a porous membrane made up of a solid part and a pore space filled with fluid. The thickness of the membrane is of order $\vareps$ and the pore structure is periodic in tangential direction also with period $\vareps$. The microscopic interface between the two phases of the membrane, denoted by  $\Gamma_\vareps$ also has an $\vareps$-periodic structure.
The fluid flow in the bulk regions and in the pores of the membrane is described by an instationary Stokes system. A solute is transported by diffusion and advection within the fluid part of the domain whereas in the solid part of the membrane a diffusion-equation is considered. In the solid phase, different scalings of the diffusion coefficient with $\vareps^\gamma$ are considered, ranging from small diffusion ($\gamma = 1$) to fast diffusion ($\gamma =-1$). At the microscopic fluid-solid interface $\Gamma_\vareps$, nonlinear transmission conditions are formulated in terms of normal fluxes which are given by nonlinear coupling functions of the solute concentrations on both sides of the interface. In case of the endothelial membrane, the solid phase represents the region occupied by endothelial cells, which are surrounded by extracellular fluid forming the fluid part of the membrane. The nonlinear transmission conditions describe bio-chemical reactions at the microscopic  interface between the two phases. Let us mention, that the subsystem describing the fluid flow is independent from that of transport equations, whereas the velocity of the fluid contributes to the advective transport in the fluid part of the domain.  

The main contribution of this paper is the homogenization of this so called  \textit{microscopic model} in the limit $\vareps \to 0$ by rigorous methods of two scale analysis and dimension reduction.  
In the limit we obtain an \textit{effective} or \textit{macroscopic model}, where the porous membrane is reduced to an effective interface $\Sigma$ (separating the two bulk regions) for which effective interface laws are derived. It has the advantage of reduced (computational) complexity, but still maintains key features about the microgeometry and processes in the membrane. The effective flow model consists of the instationary Stokes equations in the bulk regions coupled by effective transmission conditions for the normal stresses at the interface $\Sigma$. More precisely, both, the jump in the normal component of the normal stress, and the tangent forces on each side of the interface are given in terms of the velocities and involve effective permeability coefficients associated to a Darcy-type problem in the membrane. Additionally, the normal component of the effective velocity at the interface $\Sigma$ is continuous. The effective transport problem in the fluid part of the domain consists in a reaction-diffusion-advection equation within the two bulk regions coupled by effective transmission conditions at $\Sigma$. The latter consist of the continuity of the effective concentration and a jump in the normal flux given by the homogenized nonlinear kinetics from the microscopic interface $\Gamma_\vareps$. The effective approximation for the diffusion process inside the solid phase strongly depends on the scaling of the diffusion coefficient by $\varepsilon^\gamma$. It consists in an effective reaction-diffusion equation along the interface $\Sigma $ for $\gamma=-1$, an ordinary differential equation for $\gamma \in (-1,1)$, and a so called \textit{cell problem} formulated on the standard periodicity cell in the membrane for $\gamma=1$. Hereby, the nonlinear reaction kinetics at the microscopic interface $\Gamma_\vareps$ gives rise to source terms in the homogenized equations for $\gamma \in [-1,1)$, and to a nonlinear Neumann boundary condition for $\gamma=1$. 

To the best of our knowledge, the effective model derived in this paper is the first rigorous homogenization result for advective-diffusive transport through reactive porous membranes, starting from a microscopic model involving a system of transport equations in the solid and fluid part of the medium, coupled to an instationary Stokes system in the fluid domain. A recent contribution related to our model is given in \cite{freudenberg2024homogenization}, where heat transfer through a thin grain layer separating two bulk domains was homogenized. There, only transport of solutes was considered, and the fluid velocity causing the advective transport was \textit{a priori} given and assumed to be uniformly essential bounded. 
In our situation, where the fluid flow is described by a Stokes system, we have much less integrability and the bad scaling for the gradient of the fluid velocity leads to additional difficulties in the control of the advective term inside the fluid part of the thin layer.
Furthermore, in \cite{freudenberg2024homogenization} the coupling condition at the microscopic interface $\Gamma_\vareps$ was linear, the normal fluxes being proportional to the jump of the solution, and in case of a connected layer, only the scaling $\gamma=-1$ was considered.  In \cite{gahn2022derivation}, an effective model describing fluid flow through elastic porous membranes was derived. A drawback of this model was, that no mass transport through the effective interface $\Sigma$ occured. In \cite{Orlik2023}, where an application to thin filters consisting of slender yarns in contact 
was considered, the authors extended the effective model from \cite{gahn2022derivation} by incorporating an additional (phenomenlogical) interface condition obeying Darcy's law, in order to include mass transport through the filter. 
We emphasize, that the scaling of the flow equations in the present paper (which is different from \cite{gahn2022derivation}) allows the rigorous derviation of  interface conditions which allow mass transport through $\Sigma$. 

For the derivation of the effective model, we use two-scale convergence for thin domains \cite{NeussJaeger_EffectiveTransmission} and for rapidly oscillating surfaces \cite{bhattacharya2022homogenization}, which allow to deal simultaneously with the homogenization of the periodic structures in the membrane and the reduction of the thin membrane to a (lower dimensional) interface. In a first step, we provide \textit{a priori} estimates for the solutions to the microscopic problems with an explicit dependence on the scale parameter $\vareps$. The most critical and new parts are the \textit{a priori} estimates for the concentrations in the thin perforated layer, in particular the treatment of the nonlinear coupling term on the microscopic interface $\geps$ and the control of the convective term in the thin layer. To deal with the nonlinear boundary term, we prove a trace inequality giving control of the traces on $\geps$ via the bulk domains, see Lemma \ref{lem:Trace_inequality_Bulk}. This lemma generalizes to Lipschitz domains the results from \cite[Proposition 2]{donato2019asymptotic} and \cite[Lemma 2]{freudenberg2024homogenization} proven for a rough oscillating interface given as a graph. Furthermore, we improve these results by using the $H^\beta$-norm (instead of the $H^1$-norm) in the trace inequality. To estimate the convective term in the thin layer we prove an embedding result with explicit $\vareps$-dependence of the embedding constant, for $H^1$-functions into $L^p$-functions on the fluid domain in the layer, involving an additional boundary term, see Lemma \ref{lem:embedding_thin_layer_H1_Lp}. 
Based on the \textit{a priori} estimates, we prove compactness results with respect to weak and strong  (two-scale) convergence, in the bulk domains, in the porous layer and on the interface $\geps$. Here, the strong two-scale convergence of the concentrations on $\geps$ and the convergence of the time derivative for the concentrations represent the crucial part. To cope with the nonlinear term, we extend to thin porous layers  Kolmogorov-Simon-type arguments coupled (where necessary) with estimates for shifted functions. To pass to the limit in the terms involving the time derivative, especially that of the concentration in the solid part of the layer, we prove new two-scale compactness results just based on the \textit{a priori} estimates for the sequence, see Section \ref{sec:Ts_compactness_time_derivative}. 

In summary, the novel contributions of our paper are:
\begin{itemize}
    \item Existence of a microscopic solution with $\vareps$-uniform \textit{a priori} estimates for coupled Stokes-transport equations through thin porous layers in the presence of nonlinear kinetics on rapidly oscillating microscopic interfaces
    \item Strong two-scale compactness results for sequences in thin perforated layers and on rapidly oscillating surfaces, including different scalings for the gradient and/or coupling to bulk domains.
    \item Rigorous derivation of interface laws for fluid flow through thin porous membranes, in particular expressing the normal and tangential component of the normal stress at the interface using the velocity on both sides of the interface 
    \item Derivation of macroscopic model for a coupled fluid and transport model with effective transport conditions across the interface

    \item General embedding inequalities, like trace inequality and Sobolev inequality, in thin perforated layers including coupling to bulk domains with explicit dependence on the scaling parameter $\vareps$
    \item Identification of a proper scaling of the microscopic Stokes-transport system that enables an effective mass transport through the membrane
\end{itemize}

While the rigorous derivation of a macroscopic model for the coupled flow-transport problem considered in this paper is completely new, there are several results on  subproblems included in our model, like pure reactive-diffusive transport or only consider fluid flow. Additionally, there are results on Stokes flow in thin perforated layers. From a mathematical point of view the methods developed in this paper are related to treatment of such subproblems and in the following we give an overview about such previous results. In \cite{GahnEffectiveTransmissionContinuous,GahnNeussRaduKnabner2018a,NeussJaeger_EffectiveTransmission} the pure reactive-diffusive transport through a heterogeneous layer was considered for the different scalings $\gamma \in [-1,1]$ and different interface conditions between the bulk domains and the layer. In particular, the heterogeneity in the layer was obtained by oscillating coefficients in the equations.  
Here (for the transport problem) we consider perforated thin layers, we additionally take into account advection, and also our scaling for the time-derivative in the fluid phase of the thin layer does not match these previous results, which has a significant impact on the strong compactness results in the layer. 
In \cite{donato2018homogenization} a transport problem of parabolic type in bulk domains separated by a lower dimensional rough interface (with different scaled apmplitude) given as a graph of a function was considered. Even if the geometric conditions differ from ours, some methods are closely related, like the trace inequality mentioned above.  There is a large literature on the derivation of effective interface conditions for fluid flow through sieves and filters, and here we only mention some pioneering works. The case of stationary Stokes flow through an $\vareps$-periodic filter consisting of an array of (disconnected) obstacles of size $\vareps$ is treated in \cite{sanchez1982boundary} and \cite{ConcaI1987,ConcaII1987}, where the filter was already assumed to be lower-dimensional (perforated interface). Non-Newtonian flow in a similar geometry was considered in \cite{BourgeatGipoulouxMarusicPaloka2001}. The case of tiny holes of size asymptotically behaving like order $\vareps^2$ (for $n=3$) is treated in \cite{AllaireII1991} and $\vareps^{\alpha}$ with $\alpha \in (1,2)$ in \cite{Marusic1998}. A first result regarding the derivation of the Darcy-law for Stokes flow through thin perforated layers was given in \cite{bayada1989homogenization}, where a thin layer with a rough surface, given as a graph, was considered. The two-scale convergence for thin layers without heterogeneity was later used in \cite{MarusicMarusicPalokaTwoScaleConvergenceThinDomains} for the whole thin layer. The treatment of Stokes flow through thin perforated layers, also including connected perforations, can be found in \cite{fabricius2023homogenization} and \cite{fabricius2020pressure}. We emphasize that the limit behavior is completely different in our situation, due to the coupling to the bulk domains. In fact, in our case it figures out, that the fluid pressure in the layer vanishes in the limit $\vareps\to 0$, and therefore compared to the previously cited literature on Stokes flow through thin layers, the Darcy-pressure is zero.
Finally, we mention the recent work \cite{freudenberg2024analysis}, where fluid flow coupled to heat transport in a thin layer with rough surface is considered, which is coupled over the oscillating surface to heat transport in a bulk domain. Compared to our situation, they use a different scaling for the transport equation (same as $\gamma = -1$ in our solid phase). To deal with the advective term in the thin layer, they also need the strong two-scale convergence of the temperature. In their proof for this strong convergence they refer to \cite{GahnJaegerTwoScaleTools}, where such a result was shown under better regularity conditions for the time-derivative, which is not given in \cite{freudenberg2024analysis}. Our compactness result for the solid phase in the case $\gamma  = -1$ fills this gap.

The paper is organized as follows. The microscopic model including the assumptions on the data is formulated in Section \ref{sec:micro_model}. In Section \ref{sec:main_results} the main results of the paper including the macroscopic model are stated. This section also aims to give a rough overview of the different parts of the paper and provides the important results in our work. Further, it should  help the reader to better identify the different steps in the derivation of the effective models and the required techniques. In Section \ref{sec:existence_apriori_estimates} the existence and uniqueness result for the microscopic solutions is formulated and $\vareps$-uniform \textit{a priori} estimates for the microscopic solutions are proved. Compactness results for the microscopic solutions are proved in Section \ref{sec:comptness_results}. These are based on the \textit{a priori} estimates, and for $\gamma \in (-1,1]$, on bounds for differences of shifts of the concentration in the solid part of the layer. In Section \ref{sec:derivation_macro_model}, macroscopic models including effective interface laws are derived for the fluid flow and for the transport problems. Furthermore, in Section \ref{sec:Case_Sspm_not_empty} the case when the solid phase touches the bulk domains is discussed. Finally, some auxiliary results and estimates are given in the appendix A. Some of these are standard, however, there are also new results which are of independent interest. Furthermore, in appendix B, we briefly recall the definition of two-scale convergence for thin (perforated) layers together with known compactness results and also give some new two-scale compactness results for the time derivative.

\section{The microscopic model}
\label{sec:micro_model}
We consider fluid flow and reactive transport of solutes within two bulk domains $\oeps^{\pm}$ separated by a thin porous layer. The solute is transported by diffusion and advection in the fluid domain, and by diffusion in the solid phase. At the fluid-solid interface we assume continuity of the fluxes, which are given by nonlinear reaction-kinetics modelling the transport across the interface. The fluid flow is described by the incompressible Stokes equations and is not influenced by the concentration of the solute. Before we formulate the equations for these processes in detail we start with the precise definition of the underlying microscopic geometry.

\subsection{The microscopic geometry}
\label{sec:micro_geometry}

We consider the domain $\Omega := \Sigma \times (-H, H) \subset \R^3$ with $H > 0$, and 
 $\Sigma = (a,b)\subset \R^2$ with $a,b \in \Z^2$ and $a_i < b_i$ for $i=1,2$. Further, we assume that $\vareps^{-1} \in \N$. The domain $\Omega$  consists of two bulk domains 
\begin{align*}
\oe^+ := \Sigma \times (\vareps,H),  \qquad \oe^- := \Sigma \times (-H,-\vareps),
\end{align*}
which are separated by the thin layer
\begin{align*}
\oem := \Sigma \times (-\vareps,\vareps).
\end{align*}
Within the  thin layer we have a fluid part $\oemf$ and a solid part $\oems$, which have a periodical microscopic structure. More precisely, we define  the reference cell
\begin{align*}
Z := Y\times (-1,1) := (0,1)^2 \times (-1,1),
\end{align*}
with top and bottom
\begin{align*}
S^{\pm} := Y \times \{\pm 1\}.
\end{align*}
For $n\in \N$, let us denote the interior of a set $M \subset \mathbb{R}^n$ by $\mathrm{int}(M)$.
The cell $Z$ consists of a solid part $Z_s\subset Z $, see Figure \ref{fig:FigureMicroDomain}, and a fluid part $Z_f \subset Z$ with common interface $\Gamma = \mathrm{int}\left( \overline{Z_s} \cap \overline{Z_f} \right)$. Hence, we have 
\begin{align*}
Z = Z_f \cup Z_s \cup \Gamma.
\end{align*}
We denote the top/bottom of $Z_f$ respectively $Z_s$ by
\begin{align*}
S_f^\pm = \mathrm{int}\left( \overline{Z_f} \cap S^\pm\right), \quad S_s^\pm=\mathrm{int}\left( \overline{Z_s} \cap S^\pm \right).
\end{align*}
We have to distinguish between the cases $|S_s^{\pm}| = 0$ and $|S_s^{\pm}|>0$. In our analysis we will focus on the first case and therefore we assume $Z_s$ does not touch the boundary parts $S^\pm$. In the other case we will see in Section \ref{sec:Case_Sspm_not_empty}  that the macroscopic model for the fluid flow gets somehow trivial. Further, at the crucial points in the proofs where this assumption has an impact, we will provide additional comments.
We request that $Z_f$ and $Z_s$ are open, connected   with Lipschitz-boundary, and the lateral boundary is $Y$-periodic which means that for $i=1,2$ and $\ast \in \{s,f\}$
\begin{align*}
\left(\partial Z_{\ast} \cap \{y_i = 0\}\right) + e_i = \partial Z_{\ast} \cap \{y_i=1\}.
\end{align*}
We introduce the set $K_{\vareps}:= \{k \in \Z^2 \times \{0\} \, : \, \vareps(Z + k) \subset \oem\}$.  Clearly, we have $$\oem = \mathrm{int}\left(\bigcup_{k\in K_{\vareps}} \vareps(\overline{Z} + k)\right).
$$
Now, we define the fluid and solid part of the membrane, see Figure \ref{fig:FigureMicroDomain}, by
\begin{align*}
\oemf := \mathrm{int} \left( \bigcup_{k \in K_{\vareps}} \vareps \left(\overline{Z_f} + k \right) \right),
\quad 
\oems := \mathrm{int} \left( \bigcup_{k \in K_{\vareps}} \vareps \left(\overline{Z_s} + k \right) \right).
\end{align*}
The fluid-solid interface between the solid and the fluid part in the membrane is denoted by 
\begin{align*}
\geps := \mathrm{int}\left( \overline{\oems} \cap \overline{\oemf}\right).
\end{align*}
The interface between the fluid part in the membrane and the bulk domains is defined by
\begin{align*}
\sepm := \Sigma \times \{\pm \vareps\}.
\end{align*}
Again, if  we allow the solid part of the thin layer  to touch the bulk domains, the interface $\sepm$ is the union of the sets 
\begin{align*}
   S_{\vareps,s}^{\pm} := S_{\vareps}^{\pm} \setminus \partial \oemf, \qquad S_{\vareps,f}^{\pm}:= S_{\vareps}^{\pm} \setminus \partial \oems,  
\end{align*}
where $S_{\vareps,s}^{\pm}$ is the interface between the solid part in the membrane and the bulk domains, and similarly for the index $f$.
Altogether, we have the following decomposition of the domain $\Omega$
\begin{align*}
\Omega &= \oe^+ \cup \oe^- \cup \oem \cup  S_{\vareps}^+ \cup S_{\vareps}^-
\\
&= \oe^+ \cup \oe^- \cup \oems \cup \oemf \cup \geps \cup  S_{\vareps}^+ \cup S_{\vareps}^-.
\end{align*}
The whole fluid part is defined by
\begin{align*}
\oef := \Omega \setminus \overline{\oems}.
\end{align*}
\begin{figure}
\hspace{1cm}\begin{minipage}{4.6cm}
\centering
\includegraphics[scale=0.1025]{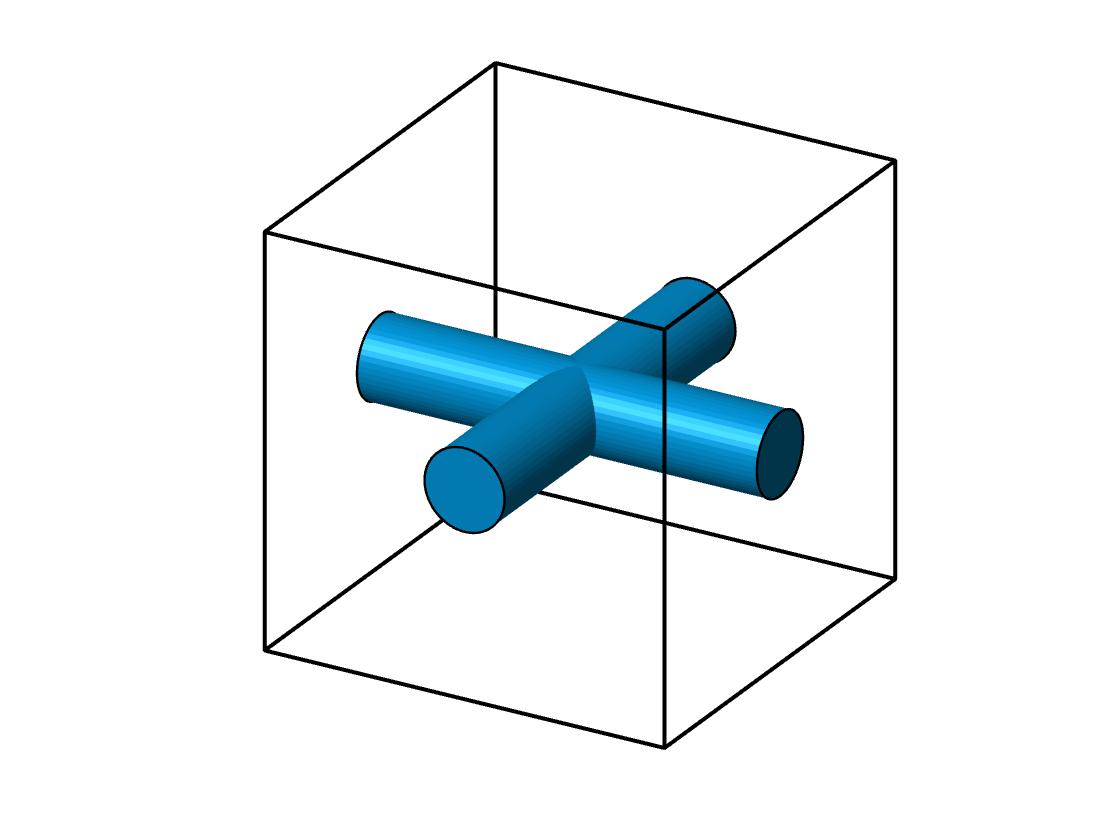}
\end{minipage} 
\hspace{0.3cm}
\begin{minipage}{4.7cm}
\centering
\includegraphics[scale=0.1725]{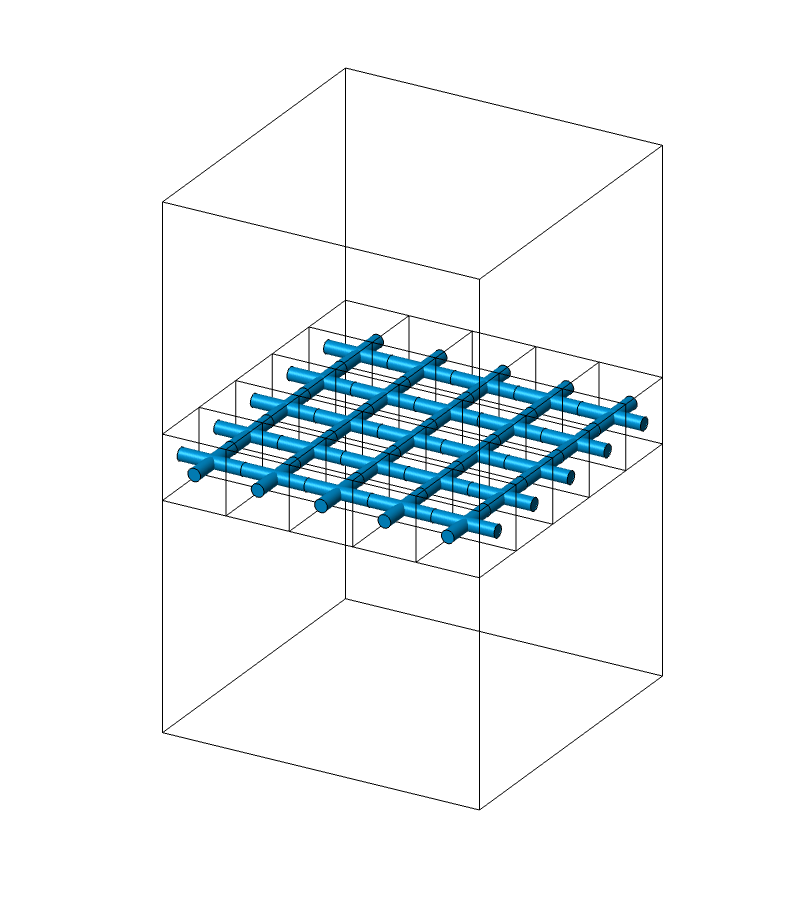}
\end{minipage}
\caption{Left: A reference cell $Z$ for the porous layer with the solid part $Z_s$ highlighted by the coloring. 
Right: The microscopic domain $\Omega$ with the porous layer $\oem$ consisting of the fluid part $\oemf$ and the solid part $\oems$.}
\label{fig:FigureMicroDomain}
\end{figure}
We further assume that the domains $\oef$, $\oemf$, and $\oems$ are connected and Lipschitz.
The upper and lower boundary of $\Omega$ is denoted by
\begin{align*}
    \partial_N \Omega &:= \bigcup_{\pm}\Sigma \times \{\pm H\}.
\end{align*}
Here a stress boundary condition is assumed for the fluid flow and a flux boundary condition for the transport equation.

In the limit $\vareps \to 0 $ the thin layer $\oem$ is reduced to the interface $\Sigma$ and the domains $\oeps^{\pm}$ converge to the  macroscopic bulk domains $\Omega^{\pm}$ defined by
\begin{align*}
\Omega^+ := \Sigma \times (0,H),
\quad 
\Omega^- := \Sigma \times (-H,0).
\end{align*}
The outer unit normal to $\Omega^\pm$ at $\Sigma$ is given by $\nu^\pm=\mp e_3$, where $e_3$ is the third standard unit vector in $\R^3$. Further, we denote the top respectively the bottom of $\Omega$ by
\begin{align*}
    \partial_N \Omega^\pm = \Sigma \times \{\pm H\}.
\end{align*}

\subsection{Notations } 

For $U\subset \R^m$ with $m \in \N$ we denote by $L^p(U)$ with $p \in [1,\infty]$ the usual Lebesgue spaces, and for $m \in \N_0$ the Sobolev space is defined by $W^{1,p}(U)$. For $p=2$ we shortly write $H^1(U):= W^{1,2}(U)$. For $\beta \in (0,1)$ we denoty by $H^{\beta}(U)$ the Sobolev-Slobodeckii space (for $p=2$). For norms of vector valued functions with values in $\R^n$ for $n\in \N$, we usually skip the exponent, for example we write $\|\cdot\|_{L^2(U)} := \|\cdot\|_{L^2(U)^n}$.

We use the notation $\bar{x} = (x_1, x_2) \in \Sigma$ for a vector $x= (x_1, x_2, x_3)\in \mathbb{R}^3$. For an arbitrary function $\phieps : \oef \rightarrow \R^m$ for $m\in \N$ we define the restrictions to the bulk domains and the fluid part of the membrane by
\begin{align*}
\phieps^{\pm} := \phieps\vert_{\oe^{\pm}}, \qquad \phieps^M := \phieps\vert_{\oemf}.
\end{align*}
For function spaces we use the index $\#$ to indicate functions which are periodic with respect to the first two variables. More precisely, we have 
\begin{align*}
    C_{\#}^{\infty}(\Omega):= \left\{ \phi \in C^{\infty}(\R^2 \times [-H,H] ) \, : \, \phi \mbox{ is } \Sigma\mbox{-periodic}\right\}.
\end{align*}
Here, a $\Sigma$-periodic function $\phi: \R^2 \times [-H,H]\rightarrow \R$ fulfills $\phi(k + x) = \phi(x)$ for all $x \in \R^2\times [-H,H]$ and $k:= (b-a,0) \in \Z^2 \times \{0\}$ (see Section \ref{sec:micro_geometry} for definition of $\Sigma$ and $a,b$). Then $H_{\#}^1(\Omega)$ is the closure of $C_{\#}^{\infty}(\Omega)$ with respect to the $H^1(\Omega)$-norm. In a similar way, we define the space $H_{\#}^1(\oeps^M)$, and we denote by $H^1_{\#}(\oeps^{M,\ast})$ for $\ast \in \{s,f\}$ the restriction of $H_{\#}^1(\oeps^M)$-functions on $\oeps^{M,\ast}$. 
Further, we define  
$$
C_{\#}^{\infty}(Z):= \left\{ \phi \in C^{\infty}(\R^2 \times [-1,1] ) \, : \, \phi \mbox{ is } Y\mbox{-periodic}\right\},
$$ 
and denote by $H_{\#}^1(Z)$ its closure with respect to the $H^1(Z)$-norm, and by $H^1_{\#}(Z_{\ast})$ for $\ast \in \{s,f\}$ the restriction of $H^1_{\#}(Z)$-functions on $Z_{\ast}$. We emphasize that for $L^p$-spaces we avoid to write $L^p_{\#}$, since these functions have no traces and if not stated otherwise we extend such functions periodically with respect to the first two components.

For a Lipschitz domain $U\subset \R^n$ with $n\in \N$ and $\omega \subset \partial U$, we define
\begin{align*}
H^1(U,\omega):= \left\{ u \in H^1(U)\, :\, u= 0 \mbox{ on } \omega\right\}.
\end{align*}
Further, $H^1_{\#}(\oeps^{M,\ast},\geps)$  for $\ast \in \{f,s\}$ is the space of functions from $H^1(\oeps^{M,\ast},\geps)$ which are $\Sigma$-periodic.

For a Banach space $B$ its dual is given by $B^{\ast}$ and we denote the duality pairing between the dual space $B^{\ast}$ and $B$ by $\langle \cdot ,\cdot \rangle_B$. Further, for an arbitrary open set $U\subset \R^m$ with $m\in \N$ we write $L^p(U,B)$ for the usual Bochner spaces (for $p\in [1,\infty]$).

\subsection{The microscopic problem}
Now, we formulate the microscopic model together with the assumptions on the data and give the definition of a weak solution of the problem.\\

\noindent \textbf{Reminder:} We only consider the case when the solid phase $\oems$ does not touch the bulk domains $\oeps^{\pm}$, more precisely we have $|S_s^{\pm}|= 0$. For the other case we refer to Section \ref{sec:Case_Sspm_not_empty}.
\\

In the fluid part $\oef$ we have the fluid velocity $\veps = (\veps^+,\veps^M,\veps^-): (0,T)\times \oef \rightarrow \R^3$ and the fluid pressure $\peps = (\peps^+,\peps^M,\peps^-) : (0,T)\times  \oef \rightarrow \R$.  Further, we consider the concentration $\ceps^f: (0,T)\times \oef \rightarrow \R$  in the fluid domain, and the concentration $\ceps^s: (0,T)\times \oems \rightarrow \R$ in the solid domain.
The evolution of the  velocity and pressure of the fluid is given by
\begin{subequations}\label{MicroscopicModel}
\begin{align}
\label{def:micro_equations_fluid_pde}
\partial_t \veps^{\pm} - \nabla \cdot D( \veps^{\pm}) + \nabla \peps^{\pm}  &= f_{\vareps}^{\pm} &\mbox{ in }& (0,T)\times \oeps^{\pm},
\\
 \partial_t \veps^M- \vareps\nabla \cdot D( \veps^M )+  \foe\nabla \peps^M  &= 0 &\mbox{ in }& (0,T)\times \oemf,
\\
\nabla \cdot \veps &= 0 &\mbox{ in }& (0,T)\times \oef,
\\
\left(-\peps I + D(\veps) \right)\cdot \nu &= 0 &\mbox{ on }& (0,T) \times \partial_N \Omega,
\\
\label{MicroModelBCDirichlet}
\veps &= 0 &\mbox{ on }& (0,T) \times \geps,
\\
\label{ContinuityVelocity}
\veps^\pm & = \veps^M & \mbox{ on }& (0,T) \times \sepm,
\\
\label{ContinuityNormalStress}
\left(-\peps^\pm I + D(\veps^\pm) \right)\cdot \nu^\pm = \left(-\frac{1}{\vareps}\peps^M I + \vareps D(\veps^M) \right)& \cdot \nu^\pm & \mbox{ on }& (0,T) \times \sepm,
\\
\veps(0) &= 0 &\mbox{ in }& \oef,
\\
\label{def:micro_equations_fluid_periodicBC}
\veps \mbox{ is } &\Sigma\mbox{-periodic}.
\end{align}
Here,  $D(\ueps):= \frac12 \left(\nabla \ueps + \nabla \ueps^T\right)$ denotes the symmetric gradient, $f_{\vareps}^\pm$ are the bulk forces, $\nu$ is the outer unit normal of $\oef$. At the interface $\sepm$ we assume the natural transmission conditions \eqref{ContinuityVelocity}-\eqref{ContinuityNormalStress} describing the continuity of the velocity and of the normal stresses. Here $\nu^\pm = \mp e_3$ is the outer unit normal to $\oeps^\pm$ on $\sepm$, where $e_3$ is the third standard unit vector in $\R^3$.
The transport equations for $\ceps^f$ and $\ceps^s$ are given by
\begin{align}
\partial_t \ceps^f - \nabla \cdot (D^f \nabla \ceps^f - \veps \ceps^f ) &= 0 &\mbox{ in }& (0,T)\times \oef,
\\
\foe \partial_t \ceps^s - \vareps^{\gamma} \nabla \cdot (D^s \nabla \ceps^s) &= 0 &\mbox{ in }& (0,T)\times \oems,
\\
-(D^f \nabla \ceps^f - \veps \ceps^f) \cdot \nu &= 0 &\mbox{ on }& (0,T)\times \partial_N \Omega,
\\
-(D^f \nabla \ceps^f - \veps \ceps^f) \cdot \nu = - \vareps^{\gamma} D^s \nabla \ceps^s \cdot \nu &= h(\ceps^f,\ceps^s) &\mbox{ on }& (0,T)\times \geps,
\\
\ceps^f(0) &= c_{\vareps,\mathrm{in}}^f &\mbox{ in }& \oef,
\\
\ceps^s(0) &= c_{\vareps,\mathrm{in}}^s &\mbox{ in }& \oems,
\\
\ceps^s,\, \ceps^f &\mbox{ are } \Sigma\mbox{-periodic},
\end{align}
\end{subequations}
with $\gamma \in [-1,1]$.

\begin{remark}\
\begin{enumerate}
[label = (\roman*)]

\item  For the sake of simplicity we put all data except the bulk forces equal to zero. This also corresponds to our aim to consider the influence of the bulk domain on the membrane in this paper, as this is of particular importance for applications. However, the results can be generalized to inhomogeneous forces in the membrane, see \cite{gahn2024derivation} for more details. It is also possible to consider non-linear reaction terms (with similar properties as $h$) in the transport equations.

\item In this paper we treat the physical relevant case $n=3$.  The case $n=2$ is excluded, since we assume that $\oemf$ and $\oems$ are both connected. From a mathematical point of view, we can not treat the case $n>3$, since we need the continuous embedding $H^1(\Omega) \hookrightarrow L^4(\partial_N \Omega)$.

\item On the lateral boundary $\partial \Sigma \times (-H,H)$ we consider periodic boundary conditions. This is for technical reasons. Similar boundary conditions as in \cite{GahnNeussRaduKnabner2018a} for the transport equations and \cite{gahn2022derivation} for the Stokes-system are possible, but lead to more technical difficulties in particular when estimating the shifts near the boundary, see Lemma \ref{lem:apriori_shifts} below.

\item For the transport problem in the fluid domain no interface conditions are needed at $\sepm$ (in comparison to \eqref{ContinuityVelocity}-\eqref{ContinuityNormalStress}), since we have the same scaling in the bulk domains and in the fluid part of the membrane.
\end{enumerate}
\end{remark}

\noindent\textbf{Assumptions on the data:}
\begin{enumerate}[label = (A\arabic*)]
\item \label{ass:rhs_feps_pm} For the bulk forces we assume $f_{\vareps}^\pm \in L^2(\oeps^\pm)$ and there exists $f_0^\pm \in L^2(\Omega^{\pm})$ such that $\chi_{\oeps^{\pm}} f_{\vareps}^\pm \rightharpoonup f_0^\pm $ in $L^2(\Omega^\pm)$. In particular we have that $\chi_{\oeps^{\pm}} f_{\vareps}^\pm$ is bounded in $L^2(\Omega^\pm)$.
\item\label{ass:diffusion_coefficients} The diffusion coefficients fulfill $D_f,\, D_s>0$.

\item \label{ass:h} The reaction-kinetics $h:\R\times \R \rightarrow \R$ is (globally) Lipschitz continous. In particular, it holds for all $(s_1,s_2) \in \R\times \R$ that
\begin{align*}
    |h(s_1,s_2)|\le C (1 + |s_1| + |s_2|).
\end{align*}

\item \label{ass:initial_values_ceps_f} We assume $c_{\vareps,\mathrm{in}}^f \in L^2(\oef)$ and there exists $c_{\mathrm{in}}^f \in L^2(\Omega)$ such that $\chi_{\oef} c_{\vareps,\mathrm{in}}^f \rightharpoonup c_{\mathrm{in}}^f $ in $L^2(\Omega)$. In particular we have that $\chi_{\oef}c_{\vareps,\mathrm{in}}^f$ is bounded in $L^2(\Omega)$.

\item \label{ass:initial_values_ceps_s} We assume $c_{\vareps,\mathrm{in}}^s\in L^2(\oems)$ and there exists $c_{\mathrm{in}}^s \in L^2(\Sigma \times Z_s)$ such that $\chi_{\oems}c_{\vareps,\mathrm{in}}^s \rats c_{\mathrm{in}}^s.$
In particular, we have
\begin{align*}
   \|c_{\vareps,\mathrm{in}}^s\|_{L^2(\oems)} \le C\sqrt{\vareps}.
\end{align*}
Further, we denote the mean value of the limit function $c_{\mathrm{in}}^s$ by 
\begin{align*}
   \bar{c}_{\mathrm{in}}^s := \frac{1}{|Z_s|} \int_{Z_s} c_{\mathrm{in}}^s dy.
\end{align*}
Additionally, for $\gamma \in (-1,1]$ we assume that for every sequence $l_{\vareps} \in \vareps \Z^2 \times \{0\}$ with $l_{\vareps} \to 0$ for $\vareps \to 0$ it holds that
\begin{align*}
    \frac{1}{\sqrt{\vareps}}\|c_{\vareps,\mathrm{in}}^s (\cdot + l_{\vareps  }) - c_{\vareps,\mathrm{in}}^s \|_{L^2(\oems)} \overset{\vareps \to 0}{\longrightarrow} 0.
\end{align*}
\end{enumerate}

The weak formulation of the microscopic model $\eqref{MicroscopicModel}$ reads as follows:

\begin{definition}\label{def:Weak_Micro_Model}
Let $\gamma \in [-1,1]$. We say that $(\veps,\peps,\ceps^f,\ceps^s)$ is a weak solution of the microscopic model $\eqref{MicroscopicModel}$, iff 
\begin{align*}
\veps &\in L^2((0,T),H^1_{\#}(\oef))^3 \cap H^1((0,T),L^2(\oef))^3
\\
\peps &\in L^2((0,T),L^2(\oef)),
\\
\ceps^f &\in L^2((0,T),H^1_{\#}(\oef)) \cap H^1((0,T),H^1_{\#}(\oef)^{\ast}),
\\
\ceps^s &\in L^2((0,T),H^1_{\#}(\oems)) \cap H^1((0,T),H^1_{\#}(\oems)^{\ast})
\end{align*} 
with $\veps = 0$ on $\geps$ and $\nabla \cdot \veps = 0$, and  for all $\phieps \in H^1_{\#}(\oef)^3$ with $\phieps = 0 $ on $\geps$ and all $\xieps^f \in H^1_{\#}(\oef)$ and $\xieps^s \in H^1_{\#}(\oems)$ it holds almost everywhere in $(0,T)$ that
\begin{align}
\begin{aligned}
\label{eq:Var_Micro_veps}
\sum_{\pm}& \left\{ \int_{\oeps^{\pm}} \partial_t \veps^{\pm} \cdot \phieps dx +  \int_{\oeps^{\pm}}  D( \veps^{\pm}) : D( \phieps) dx - \int_{\oeps^{\pm}} \peps^{\pm} \nabla \cdot \phieps dx\right\}
+  \int_{\oemf} \partial_t \veps^M \cdot \phieps dx 
\\
&+ \vareps \int_{\oemf} D( \veps^M) : D( \phieps) dx 
 -\foe  \int_{\oemf} \peps^M \nabla \cdot \phieps dx 
=\sum_{\pm} \int_{\oeps^{\pm}} f_{\vareps}^{\pm} \cdot \phieps dx,
\end{aligned}
\\
\begin{aligned}
\label{eq_Var_Micro_cepsf}
\langle \partial_t \ceps^f , \xieps^f\rangle_{H^1_{\#}(\oef)} + \int_{\oef} \left[ D^f \nabla \ceps^f - \veps \ceps^f\right] \cdot \nabla \xieps^f dx  &= - \int_{\geps} h(\ceps^f,\ceps^s) \xieps^f d\sigma,
\end{aligned}
\\
\begin{aligned}
\label{eq_Var_Micro_cepss}
\foe \langle \partial_t \ceps^s , \xieps^s\rangle_{H^1_{\#}(\oems)} +  \int_{\oems}  \vareps^{\gamma} D^s \nabla \ceps^s  \cdot \nabla \xieps^s dx  &= \int_{\geps} h(\ceps^f,\ceps^s) \xieps^s d\sigma.
\end{aligned}
\end{align}
Further, the initial conditions $\veps(0) = 0$, $\ceps^f(0) = c_{\vareps,\mathrm{in}}^f$, and $\ceps^s(0) = c_{\vareps,\mathrm{in}}^s$ are valid in the  $L^2$-sense.
\end{definition}

\section{Formulation of the main results and the macroscopic model}
\label{sec:main_results}
We will show that the microscopic solutions $(\veps,\peps,\ceps^f,\ceps^s)$ converge in a suitable sense to limit functions $(v_0,p_0,c_0^f,c_0^s)$. In the following we formulate the macroscopic problem solved by these limit functions and sketch the procedure for the derivation of the macro model with the crucial steps. Finally, we formulate the main results of the paper.
Since the fluid problem is not influenced by the concentration, we formulate the limit models for the fluid flow and the transport separately. 

\subsection{The macroscopic fluid model with effective interface conditions}

For the weak solution $(\veps,\peps)$ of the microscopic fluid problem we first consider separately the solutions $(\veps^{\pm},\peps^{\pm})$ in the bulk domains $\oeps^{\pm}$ and the solutions $(\veps^M,\peps^M)$ in the fluid part of the thin layer $\oemf$. First of all, we obtain the \textit{a priori} estimate 
\begin{align*}
   \|\partial_t \veps^{\pm}\|_{L^2((0,T)\times \oeps^{\pm})} +    \|\veps^{\pm}\|_{L^{\infty}((0,T),H^1(\oeps^{\pm}))} + \|\peps^{\pm}\|_{L^2((0,T)\times \oeps^{\pm})} &\le C.
\end{align*}
We emphasize that for the estimate of the pressure it is essential to have the pressure boundary condition at $\partial_N \Omega^{\pm}$. Assuming for a moment that $\veps^{\pm}$ and $\peps^{\pm}$ are defined on the whole fixed bulk domains $\Omega^{\pm}$ and the previous estimates are still valid,  we obtain by standard results for weak and strong convergence in $L^2$ spaces the existence of $v_0^{\pm} \in L^2((0,T),H^1_{\#}(\Omega^{\pm}))^3$ (and additional time-regularity) and $p_0^{\pm} \in L^2((0,T)\times \Omega^{\pm})$ such that up to a subsequence
\begin{align*}
    \veps^{\pm} \rightharpoonup v_0^{\pm} \quad \mbox{in } L^2((0,T),H^1 (\Omega^{\pm}))^3,\qquad \peps^{\pm} \rightharpoonup p_0^{\pm} \quad \mbox{in } L^2((0,T)\times \Omega^{\pm}).
\end{align*}
For the precise convergence results and regularity for the limit functions we refer to Proposition \ref{prop:compactness_fluid}. Hence, we can directly pass to the limit $\vareps \to 0$ in the bulk terms in the variational equation $\eqref{eq:Var_Micro_veps}$ (choosing test-functions vanishing on $\Sigma$) and obtain that $(v_0^{\pm},p_0^{\pm})$ solve in a weak sense
\begin{align*}
    \partial_t v_0^{\pm} - \nabla\cdot D(v_0^{\pm}) + \nabla p_0^{\pm} &= f_0^{\pm} &\mbox{ in }& (0,T)\times \Omega^{\pm},
\\
\nabla \cdot v_0^{\pm} &= 0 &\mbox{ in }& (0,T)\times \Omega^{\pm},
\end{align*}
together with homogeneous initial condition, periodic boundary conditions on the lateral boundary, and a pressure boundary condition on $\partial_N \Omega^{\pm}$. Now, the crucial question is the interface condition across $\Sigma$. For this, we have to consider the limit problem for $(\veps^M,\peps^M)$. We have 
\begin{align*}
     \|\partial_t \veps^M&\|_{L^2((0,T)\times \oemf)} +     \frac{1}{\sqrt{\vareps}} \|\veps^M\|_{L^{\infty}((0,T),L^2(\oemf))}
     \\
     &+ \sqrt{\vareps} \|\nabla \veps^M\|_{L^{\infty}((0,T), L^2( \oemf))} 
     + \foe  \|\peps^M\|_{L^2((0,T)\times \oemf)} \le C.
\end{align*}
Here a crucial point is to estimate the pressure, where we construct a suitable Bogovskii-operator. From the two-scale convergence theory in thin layers (see Section \ref{sec:two_scale_convergence_unfolding} for definitions, notations, and basic compactness results) we obtain 
\begin{align*}
    \peps^M \rats 0.
\end{align*}
In other words, the zeroth order approximation $p_0^M$ of $\peps^M$ is equal to zero. This is a significant difference to Stokes flow in perforated thin layers without coupling to bulk domains, where the term $p_0^M$ is the Darcy-pressure and given as the solution of the Darcy-equation, see for example \cite{fabricius2023homogenization} or \cite{fabricius2020pressure}. The bounds for $\veps^M$ imply the following convergences in the two-scale sense (up to a subsequence) 
\begin{align*}
    \veps^M \rats v_0^M,\qquad \vareps \nabla \veps^M \rats \nabla_y v_0^M
\end{align*}
for a limit function $v_0^M \in L^2((0,T)\times \Sigma,H_{\#}^1(Z_f,\Gamma))^3$. The time-derivative vanishes in the two-scale limit, and we can pass to the limit in the fluid part of the layer $\oemf$ and obtain that $v_0^M$ solves in a weak sense
\begin{align}
\begin{aligned}
\label{Stokes_Problem_Cell_Main}
    -\nabla_y \cdot D_y(v_0^M) + \nabla_y p_1^M &= 0 &\mbox{ in }& (0,T)\times \Sigma \times Z_f,
\\
\nabla_y \cdot v_0^M &= 0 &\mbox{ in }& (0,T)\times \Sigma \times Z_f,
\\
v_0^M &= 0 &\mbox{ on }& (0,T)\times \Sigma \times \Gamma,
\\
v_0^{\pm} &= v_0^M &\mbox{ on }& (0,T)\times \Sigma \times S^{\pm},
\end{aligned}
\end{align}
together with $Y$-periodic boundary conditions. The pressure term $p_1^M$ has to be constructed by using a Bogovskii-type argument, which is not straightforward, since we have to take into account the coupling to the bulk domains.  Further, we need additional boundary conditions on $S^{\pm}$, which can be derived from the continuity of the fluid velocity $\veps^{\pm} = \veps^M$ on $S_{\vareps}^{\pm}$, and we obtain
\begin{align}
\label{eq:continuity_velo_macro}
    v_0^M = v_0^{\pm} \quad\mbox{on } (0,T)\times \Sigma \times S^{\pm},
\end{align}
implying that $v_0^M$ is constant with respect to $y$ on the top and bottom of the cell $Z_f$. This interface condition gives the coupling condition between the equation for $(v_0^{\pm},p_0^{\pm})$ in the bulk domains $\Omega^{\pm}$ and the cell problems for $v_0^M$ on $\Sigma$. Further, this condition implies together with the incompressibility condition $\nabla_y \cdot v_0^M = 0$ the continuity of the normal velocity of $v_0$ across $\Sigma$, more precisely, we have 
\begin{align*}
    [v_0^+]_3 = [v_0^-]_3 \quad\mbox{ on } (0,T)\times \Sigma.
\end{align*}
To complete the system for $v_0$, we need a boundary condition for the tangential component of the normal stress and the jump of the normal component of the normal stress across $\Sigma$.  
The linearity of the Stokes problem $\eqref{Stokes_Problem_Cell_Main}$ and $\eqref{eq:continuity_velo_macro}$ allows to express $v_0^M$ and $p_1^M$ via $v_0^{\pm}|_{\Sigma}$ in the following way
\begin{align*}
    v_0^M(t,\x,y) &= \sum_{\pm} \sum_{i=1}^2 [v_0^{\pm}]_i(t,\x,0)  q_i^{\pm}(y) + [v_0]_3 q_3(y),
    \\
    p_1^M(t,\x,y) &= \sum_{\pm}\sum_{i=1}^2 [v_0^{\pm}]_i(t,\x,0) \pi_i^{\pm}(y) + [v_0]_3 \pi_3(y),
\end{align*}
where $(q_i^{\pm},\pi_i^{\pm})$ and $(q_3,\pi_3)$ are the solutions of suitable cell problems, see $\eqref{Cell_problems_fluid_12}$ and $\eqref{Cell_problems_fluid_3}$, and $[v_0]_3:= [v_0^{\pm}]_3$.
Now, formally we obtain from the continuity of the normal stresses in $\eqref{ContinuityNormalStress}$ that
\begin{align*}
    -\left[D(v_0^{\pm} - p_0^{\pm} I \right] \nu^{\pm} = -\left[D(v_0^M) - p_1^M I \right] \nu^{\pm} \quad\mbox{ on } (0,T)\times \Sigma \times S^{\pm}.
\end{align*}
We can split this equation in its normal and tangetial part. After integration with respect to $y$ over $S^{\pm}$ and using the representation of $v_0^M$ in $\eqref{id:representation_v0M_p1M}$ and the properties of the cell solution, we obtain after a long but straightforward computation
\begin{align*}
-\llbracket (D (v_0) - p_0 I ) \nu\cdot \nu\rrbracket &= K^+v_0^+ \cdot \nu^+ -K^-v_0^- \cdot \nu^- &\mbox{ on }& (0,T)\times \Sigma,
\\
[(D(v_0^{\pm}) - p_0^{\pm}I)\nu^{\pm}]_t &= -[K^{\pm}v_0^{\pm}]_t - Mv_0^{\mp} &\mbox{ on }& (0,T)\times \Sigma,
\end{align*}
where $\llbracket \phi \rrbracket:= \phi^+ - \phi^-$ on $\Sigma$ denotes the jump across $\Sigma$ and $[\cdot]_t $ the tangential part of a vector field. The effective coefficients $K^{\pm} \in \R^{3\times 3}$ and $M\in \R^{3\times 3}$ are defined in $\eqref{def:Kpm}$ and $\eqref{def:M}$ in Section \ref{sec:derivation_fluid_model}, and are given via suitable Stokes-cell problems.  To make this argument rigorous we have to work with the weak formulation for the limit functions $(v_0^+,v_0^M,v_0^-)$ and $(p_0^+,p_0^M,p_0^-)$ and we will not do explicitly the formal computation mentioned above.
\\

Altogether, we showed that the limit functions $(v_0^\pm,p_0^\pm)$ defined 
on $\Omega^{\pm}$ are the unique weak solution of the following macroscopic equation for the fluid flow:
\begin{subequations}\label{def:Macro_Stokes_model_strong}
\begin{align}
\label{def:Macro_Stokes_model_PDE}
    \partial_t v_0^{\pm} - \nabla\cdot D(v_0^{\pm}) + \nabla p_0^{\pm} &= f_0^{\pm} &\mbox{ in }& (0,T)\times \Omega^{\pm},
\\
\nabla \cdot v_0^{\pm} &= 0 &\mbox{ in }& (0,T)\times \Omega^{\pm},
\\
[v_0^+]_3 &= [v_0^-]_3 &\mbox{ on }& (0,T)\times \Sigma,
\\
-[D(v_0^{\pm}) - p_0^{\pm}I]\nu &= 0 &\mbox{ on }& (0,T)\times \Sigma \times \{\pm H\},
\\
\label{Macro_Model_Fluid_Stress_normal}
-\llbracket (D (v_0) - p_0 I ) \nu\cdot \nu\rrbracket &= K^+v_0^+ \cdot \nu^+ -K^-v_0^- \cdot \nu^- &\mbox{ on }& (0,T)\times \Sigma,
\\
\label{Macro_Model_Fluid_Stress_tangential}
[(D(v_0^{\pm}) - p_0^{\pm}I)\nu^{\pm}]_t &= -[K^{\pm}v_0^{\pm}]_t - Mv_0^{\mp} &\mbox{ on }& (0,T)\times \Sigma,
\\
v_0^{\pm}(0) &= 0 &\mbox{ in }& \Omega^{\pm},
\\
v_0^{\pm} \,\, \Sigma\mbox{-periodic}.
\end{align}
\end{subequations}
We call the tuple $(v_0^\pm,p_0^\pm)$ a weak solution of $\eqref{def:Macro_Stokes_model_strong}$, if 
\begin{align*}
    v_0^{\pm} &\in L^2((0,T),H^1_{\#}(\Omega^{\pm}))^3 \cap H^1((0,T),L^2(\Omega^{\pm}))^3, \mbox{ and }
    p_0^{\pm} \in L^2((0,T)\times \Omega^{\pm})
\end{align*}
with $[v_0^+]_3 = [v_0^-]_3$ on $(0,T)\times \Sigma$ and $\nabla \cdot v_0^{\pm} = 0$, and for all $(\phi^+,\phi^-) \in H^1_{\#}(\Omega^+)^3 \times H^1_{\#}(\Omega^-)^3$ with $\phi^+_3 = \phi^-_3$ on $\Sigma$ it holds almost everywhere in $(0,T)$
\begin{align}
\begin{aligned}\label{eq:var_macro_fluid}
\sum_{\pm} & \left\{ \int_{\Omega^{\pm}} \partial_t v_0^{\pm} \cdot \phi^{\pm} dx +\int_{\Omega^{\pm}} D(v_0^{\pm}) : D(\phi^{\pm}) dx -  \int_{\Omega^{\pm}} p_0^{\pm} \nabla \cdot \phi^{\pm} dx   \right\} 
\\
&+ \sum_{\pm} \int_{\Sigma} K^{\pm} v_0^{\pm} \cdot \phi^{\pm} d\x + \int_{\Sigma} Mv_0^- \cdot \phi^+ + Mv_0^+ \cdot \phi^- d\x = \sum_{\pm} \int_{\Omega^{\pm}} f^{\pm}_0 \cdot \phi^{\pm} dx  .
\end{aligned}
\end{align}

The weak equation $\eqref{eq:var_macro_fluid}$ is obtained with an elemental calculation by formally multiplying $\eqref{def:Macro_Stokes_model_PDE}$ with suitable test functions, integrating by parts, and decomposing the normal stress into its normal- and tangential part and using the interface conditions in $\eqref{def:Macro_Stokes_model_strong}$.
Finally, we summarize the results above in the following theorem:
\begin{theorem}\label{MainThm:Fluid}
Let $(\veps,\peps)$ be the unique weak solution of the microscopic Stokes problem $\eqref{def:micro_equations_fluid_pde}$-$\eqref{def:micro_equations_fluid_periodicBC}$. Then, there exist $v_0^{\pm} \in L^2((0,T)\times \Omega^{\pm})^3$ and $p_0^{\pm} \in L^2((0,T)\times \Omega^{\pm})$ such that 
\begin{align*}
    \chi_{\oeps^{\pm}} \veps^{\pm} \rightharpoonup v_0^{\pm} \quad\mbox{in } L^2((0,T)\times \Omega^{\pm})^3, \qquad \chi_{\oeps^{\pm}} \peps^{\pm} \rightharpoonup p_0^{\pm} \quad\mbox{in } L^2((0,T)\times \Omega^{\pm}).
\end{align*}
The tuple $(v_0^\pm,p_0^\pm)$ is the unique weak solution of the macroscopic problem $\eqref{def:Macro_Stokes_model_strong}$.

Additionally, there exists $v_0^M \in L^2((0,T)\times \Sigma \times Z_f)^3$, such that $\veps^M$ converges in the two-scale sense to $v_0^M$. The Darcy-velocity $\bar{v}_0^M \in L^2((0,T)\times \Sigma)^3$  defined as the average of $v_0^M$  fulfills
\begin{align}\label{Darcy_velocity_main_result}
\bar{v}_0^M := \frac{1}{|Z_f|}\int_{Z_f} v_0^M dy =  \sum_{\pm} Q^{\pm}v_0^{\pm}|_{\Sigma} + \frac{|Z|}{|Z_f|}[v_0]_3 e_3,
\end{align}
where $[v_0]_3:= [v_0^{\pm}]_3$ and the entries of the matrix $Q^{\pm}$ are given by \eqref{MatrixQbar}.
\end{theorem}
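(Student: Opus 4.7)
The plan is to combine the $\vareps$-uniform a priori estimates sketched in Section 3.1 with weak and two-scale compactness, then pass to the limit in the microscopic weak formulation \eqref{eq:Var_Micro_veps} using carefully chosen test functions, and finally identify the interface conditions by exploiting linearity of the resulting cell Stokes problem. First I would derive the a priori bounds by testing \eqref{eq:Var_Micro_veps} with $\veps$ itself, using Korn's inequality together with the homogeneous Dirichlet condition on $\geps$ and the pressure boundary condition on $\partial_N\Omega$. The crucial step at this stage is to construct a scale-adapted Bogovskii operator on $\oef$ that respects the coupling between the bulk and the layer, in order to bound $\|\peps^{\pm}\|_{L^2}$ and $\foe\|\peps^M\|_{L^2}$. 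From these estimates I extract along a subsequence the weak limits $v_0^{\pm}$, $p_0^{\pm}$ of the extensions, together with a two-scale limit $v_0^M \in L^2((0,T) \times \Sigma, H^1_{\#}(Z_f,\Gamma))^3$ for $\veps^M$; the scaling $\foe\|\peps^M\|_{L^2} \le C$ forces $\peps^M \rats 0$, which is the key departure from the classical Darcy regime.

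Testing \eqref{eq:Var_Micro_veps} with macroscopic $\phi$ supported away from $\Sigma$ immediately yields the bulk Stokes system for $(v_0^{\pm},p_0^{\pm})$ together with the pressure boundary condition on $\partial_N\Omega^{\pm}$. For the microstructure in the layer I would insert oscillating test fields of the form $\phieps(x) = \vareps \phi\bxfxe$ with $\phi \in L^2((0,T)\times\Sigma, H^1_{\#}(Z_f,\Gamma))^3$ that are divergence-free in $y$; bulk and time-derivative contributions vanish with $\vareps$, while the symmetric gradient and pressure terms pass to the weak form of the cell problem \eqref{Stokes_Problem_Cell_Main}. The pressure $p_1^M$ is then recovered by a de Rham argument on divergence-free two-scale test fields, which requires constructing a Bogovskii operator on $Z_f$ that produces divergence-free extensions matching prescribed traces on $S^{\pm}$ while accounting for the bulk coupling; I expect this coupled Bogovskii construction to be the main technical obstacle. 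The continuity \eqref{eq:continuity_velo_macro} is obtained by passing to the two-scale limit in $\veps^{\pm} = \veps^M$ on $\sepm$, and combined with $\nabla_y \cdot v_0^M = 0$ it forces $[v_0^+]_3 = [v_0^-]_3$ on $\Sigma$.

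By linearity of \eqref{Stokes_Problem_Cell_Main} and \eqref{eq:continuity_velo_macro}, I decompose $(v_0^M,p_1^M)$ as a superposition of the cell solutions $(q_i^{\pm},\pi_i^{\pm})$ and $(q_3,\pi_3)$ driven by unit tangential or normal velocity data on $S^{\pm}$. To make the interface conditions \eqref{Macro_Model_Fluid_Stress_normal}--\eqref{Macro_Model_Fluid_Stress_tangential} rigorous, I would test \eqref{eq:Var_Micro_veps} with a globally admissible divergence-free field combining a macroscopic profile $\phi^{\pm}$ in the bulk with an $\vareps$-periodic correction in the layer that matches $\phi^{\pm}|_{\Sigma}$ on $\sepm$; the existence of such admissible test fields is again secured by the coupled Bogovskii operator. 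Passing to the two-scale limit and integrating by parts in $y$ converts the cell-wise symmetric gradient and pressure into surface integrals against $v_0^{\pm}|_{\Sigma}$ weighted by $\int_{S^{\pm}} D_y(q_i^{\pm})\nu^{\pm}\, d\sigma$ and $\int_{S^{\pm}} \pi_i^{\pm}\nu^{\pm}\, d\sigma$, which I identify with the matrices $K^{\pm}$ and $M$ defined in \eqref{def:Kpm}--\eqref{def:M}. This gives exactly the weak formulation \eqref{eq:var_macro_fluid}, from which the strong interface conditions follow by standard arguments.

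Finally, for the Darcy formula \eqref{Darcy_velocity_main_result}, I average the representation formula \eqref{id:representation_v0M_p1M} over $Z_f$; the coefficients $\int_{Z_f} [q_i^{\pm}]_j \, dy$ yield the entries of $Q^{\pm}$ as given by \eqref{MatrixQbar}, and the $e_3$-term arises from $\int_{Z_f} q_3\, dy = \tfrac{|Z|}{|Z_f|} e_3$, which follows from $\nabla_y\cdot q_3 = 0$ together with the boundary conditions on $S^{\pm}$ by a divergence-theorem computation over $Z$. Uniqueness for the macroscopic problem \eqref{def:Macro_Stokes_model_strong} is then obtained from a standard energy estimate applied to the difference of two solutions, using the positive semidefiniteness of the coupled quadratic form built from $K^{\pm}$ and $M$ (inherited from the dissipation $\int_{Z_f}|D_y v_0^M|^2\, dy$ of the cell Stokes system), which in turn promotes the subsequential convergences extracted in the first step to convergence of the entire sequence.
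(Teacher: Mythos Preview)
Your overall strategy matches the paper's, but there is a genuine gap in the second paragraph: the oscillating test functions $\phieps(x)=\vareps\,\phi\bxfxe$ are scaled incorrectly. With this $\vareps$-prefactor, every term in the membrane part of \eqref{eq:Var_Micro_veps} vanishes in the limit. Indeed, $D(\phieps)=\vareps D_{\x}(\phi)+D_y(\phi)$, so
\[
\vareps\int_{\oemf} D(\veps^M):D(\phieps)\,dx=\int_{\oemf}\vareps D(\veps^M):\big[\vareps D_{\x}(\phi)+D_y(\phi)\big]\bxfxe\,dx,
\]
and since $\vareps D(\veps^M)\rats D_y(v_0^M)$ in the thin-layer two-scale sense (which already carries a factor $\foe$), this integral is of order $\vareps$ and tends to zero, not to $\int_\Sigma\int_{Z_f}D_y(v_0^M):D_y(\phi)$. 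The pressure term behaves the same way. The correct choice, which the paper uses, is an $O(1)$ test function in the layer, $\phieps^M(x)=\phi^M\bxfxe$, glued to shifted bulk profiles $\phi^{\pm}(x\mp\vareps e_n)$ with $\phi^M=\phi^{\pm}$ on $S^{\pm}$ and $\nabla_y\cdot\phi^M=0$; this single combined test function yields the coupled weak form simultaneously for bulk and cell, and the cell pressure $p_1^M$ is then produced by a closed-range argument for the operator $\phi\mapsto\nabla_y\cdot\phi^M$ on the coupled test space (Lemma \ref{lem:Bogovskii}), not by a separate de Rham step.

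Two smaller points. First, your identification of $K^{\pm}$ and $M$ via surface integrals $\int_{S^{\pm}}D_y(q_i^{\pm})\nu^{\pm}\,d\sigma$ and $\int_{S^{\pm}}\pi_i^{\pm}\nu^{\pm}\,d\sigma$ is exactly the formal computation the paper announces and then explicitly avoids: these traces are not defined without extra regularity of the cell solutions. The paper instead chooses $\phi^M$ in the weak form \eqref{eq:two_scale_model_fluid} to be the specific combination \eqref{def:aux_phiM} of cell solutions and computes the bilinear form $\int_{Z_f}D_y(v_0^M):D_y(\phi^M)\,dy$ directly as a sum of the bulk integrals $G_{ij}^{\alpha\beta}=\int_{Z_f}D_y(q_i^{\alpha}):D_y(q_j^{\beta})\,dy$; this is how $K^{\pm}$ and $M$ are actually defined in \eqref{def:Kpm}--\eqref{def:M}. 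Second, the claim $\int_{Z_f}q_3\,dy=\tfrac{|Z|}{|Z_f|}e_3$ is only correct for the third component; the tangential components $\int_{Z_f}[q_3]_k\,dy$, $k=1,2$, need not vanish and are absorbed into the matrices $Q^{\pm}$ via \eqref{MatrixQbar}.
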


\subsection{The macroscopic transport equations}

While the limit equation for the solute concentration in the fluid part has the same structure for every $\gamma$, for the solid phase we have to distinguish between the cases $\gamma = -1$, $\gamma \in (-1,1)$, and $\gamma = 1$. Before we formulate the macro-model in detail, let us briefly summarize the main points. For $\gamma = -1$ we obtain a reaction-diffusion equation on $\Sigma$ including homogenized diffusion coefficients given by cell problems on the reference element $Z_s$. For the case $\gamma \in (-1,1)$ the diffusion term vanishes, and the evolution of the macroscopic concentration of the solute on $\Sigma$ is described by an ordinary differential equation. Finally, in the critical case $\gamma = 1$ the micro- and macro-variable do not decouple, leading in every macroscopic point $\x \in \Sigma$ to a partial differential equation with respect to the microscopic variable $y \in Z_s$.
\\

 We are looking for $c_0^f :(0,T)\times \Omega \rightarrow \R$  and $c_0^s:(0,T)\times \Sigma \times Z_s \rightarrow \R$, such that $c_0^f$ solves
\begin{subequations}\label{Macro_Model_Transport}
\begin{align}
\begin{aligned}\label{eq:Macro_c0f_strong}
\partial_t c_0^f - \nabla \cdot \left[ D^f \nabla c_0^f - v_0 c_0^f\right] &= H_{\Gamma}^{\Sigma}(c_0^f,c_0^s) &\mbox{ in }& (0,T)\times \Omega,
\\
-\left[D^f \nabla c_0^f - v_0 c_0^f\right]\cdot \nu &= 0 &\mbox{ on }& (0,T)\times \partial_N \Omega,
\\
c_0^f (0) &= c_{\mathrm{in}}^f &\mbox{ in }& L^2(\Omega),
\\
c_0^f \mbox{ is } \Sigma\mbox{-periodic,}
\end{aligned}
\end{align}
with $H_{\Gamma}^{\Sigma}(c_0^f,c_0^s) \in L^2((0,T),H^1_{\#}(\Omega)^{\ast})$ defined almost everywhere in $(0,T)$ by
\begin{align*}
    \langle H_{\Gamma}^{\Sigma} (c_0^f ,c_0^s),\xi^f \rangle_{H^1_{\#}(\Omega)}:= - \int_{\Sigma} \int_{\Gamma} h(c_0^f,c_0^s) d\sigma_y  \xi^f d\x. 
\end{align*}
We call $c_0^f$ a weak solution of the problem $\eqref{eq:Macro_c0f_strong}$ if 
$c_0^f \in L^2((0,T),H^1_{\#}(\Omega)) \cap H^1((0,T),H^1_{\#}(\Omega)^{\ast})$ with $c_0^f(0) = c_{\mathrm{in}}^f$ and for all $\xi^f \in H^1_{\#}(\Omega)$ it holds almost everywhere in $(0,T)$ that (for the precise regularity of $c_0^s$ see below for the different choices of $\gamma$)
\begin{align*}
 \langle \partial_t c_0^f , \xi^f \rangle_{H^1_{\#}(\Omega)} +  \int_{\Omega} [D^f \nabla c_0^f - v_0 c_0^f] \cdot \nabla \xi^f dx = -\int_{\Sigma} \int_{\Gamma} h(c_0^f,c_0^s) d\sigma_y \xi^f d\x,
\end{align*}
where $v_0$ is the limit function of $\veps$.
The macroscopic equation \eqref{eq:Macro_c0f_strong}
can equivalently be written as a homogeneous equation on $\Omega$ and a jump condition for the normal flux across $\Sigma$. In particular we see, that the normal flux across $\Sigma$ is not continuous and the jump in the normal flux is caused by the processes in the thin layer.
The function $c_0^s$ solves the following problems for the different choices of $\gamma$:\\

\noindent \underline{$\gamma = -1$}: For $\gamma = - 1$ it holds that $c_0^s: (0,T)\times \Sigma \rightarrow \R$ solves 
    \begin{align}
    \begin{aligned}
    \label{eq:Macro_c0s_gamma-1_strong}
     |Z_s|\partial_t c_0^s - \nabla_{\x} \cdot ( D^s_{\ast} \nabla_{\x} c_0^s ) &= |\Gamma|h(c_0^f,c_0^s) &\mbox{ in }& (0,T)\times \Sigma,
\\
c_0^s(0) &= \bar{c}_{\mathrm{in}}^s &\mbox{ in }& \Sigma,
\\
c_0^s \,\, \Sigma\mbox{-periodic},
    \end{aligned}
    \end{align}
and we call $c_0^s$ a weak solution of problem $\eqref{eq:Macro_c0s_gamma-1_strong}$ if 
$c_0^s \in L^2((0,T),H_{\#}^1(\Sigma)) \cap H^1((0,T),H_{\#}^1(\Sigma)^{\ast})$ with $c_0^s(0) = \bar{c}_{\mathrm{in}}^s$ and for all $\xi_0^s\in H^1_{\#}(\Sigma)$ it holds almost everywhere in $(0,T)$ that
\begin{align*}
     |Z_s|\langle \partial_t c_0^s, \xi_0^s\rangle_{H^1_{\#}(\Sigma)} + \int_{\Sigma} D_{\ast}^s \nabla_{\x} c_0^s \cdot \nabla_{\x} \xi_0^s d\x = |\Gamma|\int_{\Sigma} h(c_0^f,c_0^s) \xi_0^s d\x.
\end{align*}

\noindent\underline{$\gamma \in (-1,1)$}: For $\gamma \in (-1,1)$ it holds that $c_0^s: (0,T)\times \Sigma \rightarrow \R$ solves 
\begin{align}
\begin{aligned}
\label{eq:Macro_c0s_gamma-11_strong}
 |Z_s|\partial_t c_0^s &= |\Gamma|h(c_0^f,c_0^s) &\mbox{ in }& (0,T)\times \Sigma,
\\
c_0^s(0) &= \bar{c}_{\mathrm{in}}^s &\mbox{ in }& \Sigma,
\end{aligned}
\end{align}
and we call $c_0^s$ a weak solution of problem $\eqref{eq:Macro_c0s_gamma-11_strong}$ if $c_0^s \in H^1((0,T),L^2(\Sigma))$ with $c_0^s (0) = \bar{c}_{\mathrm{in}}^s$ and for all $\xi_0^s \in L^2(\Sigma)$ it holds almost everywhere in $(0,T)$ that
\begin{align*}
    |Z_s|\int_{\Sigma} \partial_t c_0^s \xi_0^s d\x = |\Gamma| \int_{\Sigma} h(c_0^f,c_0^s) \xi_0^s d\x. 
\end{align*}

\noindent \underline{$\gamma = 1$}:  For $\gamma =1 $ the function $c_0^s:(0,T)\times \Sigma \times Z_s \rightarrow \R$ solves 
\begin{align}
\begin{aligned}
\label{eq:Macro_c0s_gamma1_strong}
 \partial_t c_0^s - \nabla_y \cdot ( D^s \nabla_y c_0^s) &=0 &\mbox{ in }& (0,T)\times \Sigma \times Z_s,
\\
-D^s \nabla_y c_0^s \cdot \nu &= h(c_0^f,c_0^s) &\mbox{ on }& (0,T)\times \Sigma \times \Gamma,
\\
c_0^s(0) &= c_{\mathrm{\mathrm{in}}}^s &\mbox{ in }& \Sigma \times Z_s,
\\
c_0^s \,\, Y\mbox{-periodic},
\end{aligned}
\end{align}
and we call $c_0^s$ a weak solution of the problem $\eqref{eq:Macro_c0s_gamma1_strong}$ if $c_0^s \in L^2((0,T)\times \Sigma ,H_{\#}^1(Z_s)) \cap H^1((0,T),L^2(\Sigma,H_{\#}^1(Z_s))^{\ast})$ with $c_0^s(0) = c_{\mathrm{in}}^s$ and for all $\xi_1^s \in L^2(\Sigma,H_{\#}^1(Z_s))$ it holds almost everywhere in $(0,T)$ that 
\begin{align*}
    \langle \partial_t c_0^s,\xi_1^s\rangle_{L^2(\Sigma,H_{\#}^1(Z_s))} + \int_{\Sigma} \int_{Z_s} D^s \nabla_y c_0^s \cdot \nabla_y \xi_1^s dyd\x = \int_{\Sigma} \int_{\Gamma} h(c_0^f,c_0^s) \xi_1^s d\sigma d\x.
\end{align*}
\end{subequations}
We will show that the sequence $(\ceps^f,\ceps^s)$ converges in a suitable sense to the function $(c_0^f,c_0^s)$ solving the macroscopic equation $\eqref{eq:Macro_c0f_strong}$ and (\ref{Macro_Model_Transport}$\ast$) with $\ast \in\{\mbox{b,c,d}\}$ for the different choices of $\gamma$. Our compactness results are based on $\vareps$-uniform a priori estimates for the microscopic solutions. For the concentration of the solute in the solid phase $\ceps^s$ we have
\begin{align}\label{ineq:aux_main_results}
\foe \|\partial_t \ceps^s\|_{L^2((0,T),H_{\vareps,\gamma}(\oems)^{\ast})}  + \frac{1}{\sqrt{\vareps}} \|\ceps^s\|_{L^{\infty}((0,T),L^2(\oems))} + \vareps^{\frac{\gamma}{2}} \|\nabla \ceps^s\|_{L^2((0,T)\times \oems)} \le C,
\end{align}
where we refer to the beginning of Section \ref{sec:existence_apriori_estimates} for the definition of the space $H_{\vareps,\gamma}(\oems)$. The concentration of the solute in the fluid phase $\ceps^f$ fulfills
\begin{align*}
\|\partial_t \ceps^f \|_{L^2((0,T),H^1(\oef)^{\ast})} + \|\ceps^f\|_{L^{\infty}((0,T),L^2(\oef))} + \|\nabla \ceps^f\|_{L^2((0,T)\times \oef)} \le C.
\end{align*}
The proof of these estimates is given in Section \ref{sec:existence_apriori_estimates}, where we formulated all \textit{a priori} estimates for the microscopic solution in Proposition \ref{prop:existence_apriori}. The most critical parts in the proof are the control of the nonlinear boundary term on $\geps$ and the convective term in the thin layer. The crucial ingredient is the embedding result in Lemma \ref{lem:embedding_thin_layer_H1_Lp} and the trace inequality in Lemma \ref{lem:Trace_inequality_Bulk} giving control of the traces of $\ceps^f$ on $\geps$ via the bulk domains. The latter is based on an estimate for the $L^2(\oemf)$-norm via the $H^1(\oeps^{\pm})$-norm (respectively $H^{\beta}$-norm for $\beta \in (\frac12,1)$), see Lemma \ref{lem:estimate_L2_membrane_Bulk_Gradient} and \ref{lem:estimate_L2_norm_membrane_Hbeta}. In particular, we obtain 
\begin{align*}
    \frac{1}{\sqrt{\vareps}} \|\ceps^f\|_{L^2((0,T)\times \oemf)} \le C\|\ceps^f\|_{L^2((0,T),H^1(\oef))} \le C.
\end{align*}
These uniform estimates imply weak (two-scale) convergence results for $\ceps^f$ and $\ceps^s$ what is enough to pass to the limit in the linear terms. However, a crucial point remains to pass to the limit in the nonlinear boundary term on $\geps$. For $\ceps^f$ we first use a Kolmogorov-Simon-compactness argument to obtain strong $L^2$-convergence in the bulk domains. Using again Lemma \ref{lem:Trace_inequality_Bulk}, which allows to control the traces on $\geps $ via the bulk domains, we are able to obtain the strong two-scale compactness result (see Section \ref{sec:two_scale_convergence_unfolding} in the appendix for the definition of the two-scale convergence)
\begin{align*}
    \ceps^f|_{\geps} \rasts c_0^f|_{\Sigma} \qquad\mbox{on }\geps.
\end{align*}
To establish the strong two-scale convergence on $\geps$ for the concentration of the solute in the solid phase $\ceps^s$, we extend these functions for $\gamma \in [-1,1)$ to the whole layer $\oem$ using the extension operators preserving the \textit{a priori} bounds, see Lemma \ref{lem:Extension_operators}, and then average this function over the $x_3$-component. For this sequence we use Kolmogorov-Simon-type arguments, where for $\gamma \in (-1,1) $ an additional estimate for the shifts is necessary. Finally, in the critical case $\gamma = 1$ we use a general Kolmogorov-Simon-compactness result, combining two-scale convergence and dimension reduction, for sequences fulfilling $\eqref{ineq:aux_main_results}$ and an additional bound for the shifts. 
We summarize the results in the following main result:
\begin{theorem}\label{Main_theorem_transport}
Let  $(\ceps^f,\ceps^s)$ be the concentration of the solutes from the microscopic solution of problem $\eqref{MicroscopicModel}$.  There exists an extension $\tceps^f $ of $\ceps^f$ to the whole domain $\Omega$ and a limit function $c_0^f \in L^2((0,T),H_{\#}^1(\Omega)) \cap H^1((0,T),H^1_{\#}(\Omega)^{\ast})$, such that  for all $\beta \in \left(\frac12,1\right)$
\begin{align}
    \tceps^f \rightharpoonup c_0^f \quad\mbox{in } L^2((0,T),H^1(\Omega)),\qquad \tceps^f \rightarrow c_0^f \quad\mbox{in } L^2((0,T),H^{\beta}(\Omega)).
\end{align}
Further, there exists $c_0^s$ with 
\begin{align*}
    c_0^s \in \begin{cases}
L^2((0,T),H_{\#}^1(\Sigma)) \cap H^1((0,T),H_{\#}^1(\Sigma)^{\ast}) &\mbox{ for } \gamma = -1,
\\
H^1((0,T),L^2(\Sigma)) &\mbox{ for } \gamma \in (-1,1),
\\
L^2((0,T)\times \Sigma ,H_{\#}^1(Z_s)) \cap H^1((0,T),L^2(\Sigma,H_{\#}^1(Z_s))^{\ast}) &\mbox{ for } \gamma = 1,
    \end{cases}
\end{align*}
such that $\chi_{\oems} \ceps^s \rats c_0^s$. Further, it holds that
\begin{align*}
    \ceps^f|_{\geps} \rasts c_0^f|_{\Sigma} \quad\mbox{on }\geps, \qquad \ceps^s|_{\geps} \rightarrow c_0^s \quad\mbox{ on }\geps.
\end{align*}
The tuple $(c_0^f,c_0^s)$ is the unique weak solution of the macroscopic equation $\eqref{eq:Macro_c0f_strong}$ and (\ref{Macro_Model_Transport}$\ast$) with $\ast \in\{\mbox{b,c,d}\}$ for the different choices of $\gamma$. 
\end{theorem}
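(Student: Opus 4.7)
The plan is to combine the $\vareps$-uniform \textit{a priori} bounds from Section \ref{sec:existence_apriori_estimates} with the specialized embedding and trace inequalities of the paper to extract weak and strong (two-scale) limits, and then pass to the limit in the variational equations \eqref{eq_Var_Micro_cepsf}--\eqref{eq_Var_Micro_cepss} with test functions tailored to each value of $\gamma$. First I would extend $\ceps^f$ to the whole of $\Omega$ via the extension operator of Lemma \ref{lem:Extension_operators}, obtaining $\tceps^f$ with $\vareps$-uniform bounds in $L^2((0,T),H^1_{\#}(\Omega))\cap H^1((0,T),H^1_{\#}(\Omega)^\ast)$. Standard Aubin-Lions compactness then yields the weak $H^1$ limit $c_0^f$ together with strong convergence in $L^2((0,T),H^{\beta}(\Omega))$ for $\beta\in(\tfrac12,1)$. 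For $\ceps^s$, the estimate \eqref{ineq:aux_main_results} places us in the two-scale framework for thin perforated layers recalled in Section \ref{sec:two_scale_convergence_unfolding}, so that $\chi_{\oems}\ceps^s \rats c_0^s$ up to a subsequence; the regularity is dictated by the scaling $\vareps^{\gamma/2}$ of $\nabla\ceps^s$: for $\gamma\in[-1,1)$ one has $\partial_y c_0^s=0$ and only $\gamma=-1$ retains $\x$-regularity, while for $\gamma=1$ the full $Y$-dependence survives. The corresponding Bochner regularity in time follows from the bounds on $\foe\partial_t \ceps^s$ via the two-scale compactness results of Section \ref{sec:Ts_compactness_time_derivative}.

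The central difficulty is to upgrade these weak limits to strong two-scale convergence on the oscillating interface $\geps$, which is required to pass to the limit in the nonlinear surface term $\int_{\geps} h(\ceps^f,\ceps^s)\xieps\,d\sigma$. For $\ceps^f$ I would couple the trace inequality of Lemma \ref{lem:Trace_inequality_Bulk} (controlling the $L^2(\geps)$-norm by the $H^{\beta}$-norm on the bulk for $\beta\in(\tfrac12,1)$) with the strong $L^2((0,T),H^{\beta}(\Omega))$ convergence of $\tceps^f$, obtaining $\ceps^f|_{\geps}\rasts c_0^f|_{\Sigma}$. For $\ceps^s$ I would first extend to the whole layer $\oem$ via Lemma \ref{lem:Extension_operators}, average in $x_3$, and apply a Kolmogorov-Simon-type argument in the tangential variable $\x$: time-shifts are controlled by the bound on $\foe\partial_t \ceps^s$, and spatial shifts by the diffusion estimate for $\gamma=-1$, by the shift bound of Lemma \ref{lem:apriori_shifts} together with assumption \ref{ass:initial_values_ceps_s} for $\gamma\in(-1,1)$, and by a combined two-scale/dimension-reduction Kolmogorov-Simon argument in the critical case $\gamma=1$ where the cell-gradient remains $O(1)$.

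With strong two-scale convergence on $\geps$ at hand, the passage to the limit in \eqref{eq_Var_Micro_cepsf}--\eqref{eq_Var_Micro_cepss} proceeds as follows. In \eqref{eq_Var_Micro_cepsf}, linear terms pass by weak convergence; the advection term splits into a bulk contribution, handled by the weak $L^2H^1$ convergence of $\veps^\pm$ together with the strong $L^2$ convergence of $\tceps^f$, and a thin-layer contribution, which vanishes due to the scaling $\|\veps^M\|_{L^2}\le C\sqrt{\vareps}$ from Theorem \ref{MainThm:Fluid} combined with $\|\ceps^f\|_{L^2(\oemf)}\le C\sqrt{\vareps}$ from the embedding of Lemma \ref{lem:embedding_thin_layer_H1_Lp}; the nonlinear integral converges by Lipschitz continuity of $h$ and strong two-scale convergence, producing $H^{\Sigma}_{\Gamma}(c_0^f,c_0^s)$. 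In \eqref{eq_Var_Micro_cepss}, the test function is chosen according to $\gamma$: $\xieps^s(t,x)=\xi_0(t,\x)+\vareps \xi_1(t,\x,x/\vareps)$ for $\gamma=-1$, yielding the effective diffusion tensor $D^s_\ast$ through the standard cell problem on $Z_s$; a purely macroscopic $\xi_0(t,\x)$ for $\gamma\in(-1,1)$, making the diffusion term disappear; and $\xi_1(t,\x,x/\vareps)$ for $\gamma=1$, which preserves the microscopic PDE on $Z_s$. Uniqueness of the macroscopic limit system finally follows from a Gronwall argument on the difference of two solutions using the Lipschitz continuity of $h$, which also promotes subsequential convergence to convergence of the whole sequence.
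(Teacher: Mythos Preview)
Your overall strategy matches the paper's, but there is one genuine gap in the treatment of $\tceps^f$. You claim that the extension $\tceps^f = E_\vareps^f \ceps^f$ inherits an $\vareps$-uniform bound in $H^1((0,T),H^1_{\#}(\Omega)^\ast)$, and then invoke Aubin--Lions. This step is not justified: the a priori estimate on $\partial_t \ceps^f$ lives in $L^2((0,T),H^1_{\#}(\oef)^\ast)$, and the spatial extension operator $E_\vareps^f$ acts on $H^1(\oef)$, not on its dual. There is no obvious way to push the time derivative through the extension when $\partial_t \ceps^f$ is merely a functional, so a bound on $\partial_t \tceps^f$ in $H^1(\Omega)^\ast$ is not available. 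The paper explicitly flags this point (proof of Proposition \ref{prop:convergence_cepsf}): ``a direct application of the Aubin--Lions Lemma is not possible, because we have no control of $\partial_t(E_\vareps^f \ceps^f)$.''

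The fix, which the paper carries out, is to control \emph{time-shifts} of the extension rather than its time derivative. By linearity of $E_\vareps^f$ one has $\tceps^f(\cdot+h)-\tceps^f = E_\vareps^f\big(\ceps^f(\cdot+h)-\ceps^f\big)$, and the $L^2(\Omega)$-norm of the right-hand side is controlled by $\|\ceps^f(\cdot+h)-\ceps^f\|_{L^2(\oef)}$. The Nikolskii-type estimate of Lemma \ref{lem:Nikolskii_estimate}, applied in the Gelfand triple $H^1_{\#}(\oef)\hookrightarrow L^2(\oef)\hookrightarrow H^1_{\#}(\oef)^\ast$, then gives $\|\tceps^f(\cdot+h)-\tceps^f\|_{L^2((0,T-h)\times\Omega)}\le Ch^{1/4}$, uniformly in $\vareps$. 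With this shift estimate and the uniform $L^2((0,T),H^1(\Omega))$ bound, Simon's compactness theorem (the shift version, \cite[Theorem 5]{Simon}) yields the strong convergence in $L^2((0,T),H^\beta(\Omega))$. The same mechanism is needed again for the averaged function $\bceps^s$ in the cases $\gamma\in[-1,1)$, where you likewise cannot bound $\partial_t \bceps^s$ directly. The rest of your outline---the trace argument on $\geps$, the shift lemma for $\ceps^s$, the choice of test functions for each $\gamma$, and the vanishing of the thin-layer advective/diffusive contributions---is in line with the paper.
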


\begin{remark}
The precise convergence results for $\ceps^f$ and $\ceps^s$ are formulated in Section \ref{sec:compactness_concentrations_fluid} and \ref{sec:conv_ceps_s}.
\end{remark}

\section{Existence and \textit{a priori} estimates}
\label{sec:existence_apriori_estimates}

In this section we formulate the existence result for the microscopic problem $\eqref{MicroscopicModel}$ and show $\vareps$-uniform \textit{a priori} estimates for the microscopic solution. These estimates form the starting point for the homogenization and provide several (two-scale) compactness results. We start by introducing a function space on the perforated solid part $\oems$ adapted to the thin structure and the scaling in the diffusive term in the solid phase depending on $\gamma$. More precisely, we define  the space $H_{\vareps,\gamma}(\oems)$ as the space of functions from $H^1_{\#}(\oems)$ together with the norm
\begin{align*}
\|\xieps^s\|_{H_{\vareps,\gamma}(\oems)}^2:= \foe \|\xieps^s\|_{L^2(\oems)}^2 + \vareps^{\gamma} \|\nabla \xieps^s\|_{L^2(\oems)}^2.
\end{align*}
Further, we consider the Gelfand-triple
\begin{align*}
    H_{\vareps,\gamma}(\oems) \hookrightarrow L^2(\oems) \hookrightarrow H_{\vareps,\gamma}(\oems)^{\ast}
\end{align*}
with the natural embeddings, leading to the equality (for $F_{\vareps} \in H_{\vareps,\gamma}(\oems)^{\ast} \simeq H^1_{\#}(\oems)^{\ast}$)
\begin{align*}
    \langle F_{\vareps} , \xieps^s \rangle_{H_{\vareps,\gamma}(\oems)} = \langle F_{\vareps} , \xieps^s \rangle_{H^1_{\#}(\oems)}
\end{align*}
for all $\xieps^s \in H_{\vareps,\gamma}(\oems) \simeq H^1_{\#}(\oems)$. 
\begin{proposition}\label{prop:existence_apriori}
There exists a unique weak solution $(\veps,\peps,\ceps^f,\ceps^s)$ of the microscopic problem $\eqref{MicroscopicModel}$, such that the following \textit{a priori} estimates hold
\begin{align*}
 \|\partial_t \veps^{\pm}\|_{L^2((0,T)\times \oeps^{\pm})} +    \|\veps^{\pm}\|_{L^{\infty}((0,T),H^1(\oeps^{\pm}))} + \|\peps^{\pm}\|_{L^2((0,T)\times \oeps^{\pm})} &\le C,
    \\
  \|\partial_t \veps^M\|_{L^2((0,T)\times \oemf)} +     \frac{1}{\sqrt{\vareps}} \|\veps^M\|_{L^{\infty}((0,T),L^2(\oemf))} + \sqrt{\vareps} \|\nabla \veps^M\|_{L^{\infty}((0,T), L^2( \oemf))} &\le C,
    \\
\frac{1}{\sqrt{\vareps}} \|\peps^M\|_{L^2((0,T)\times \oemf)} &\le C\sqrt{\vareps},
\\
\|\partial_t \ceps^f \|_{L^2((0,T),H^1_{\#}(\oef)^{\ast})} + \|\ceps^f\|_{L^{\infty}((0,T),L^2(\oef))} + \|\nabla \ceps^f\|_{L^2((0,T)\times \oef)} &\le C,
\\
\foe \|\partial_t \ceps^s\|_{L^2((0,T),H_{\vareps,\gamma}(\oems)^{\ast})}  + \frac{1}{\sqrt{\vareps}} \|\ceps^s\|_{L^{\infty}((0,T),L^2(\oems))} + \vareps^{\frac{\gamma}{2}} \|\nabla \ceps^s\|_{L^2((0,T)\times \oems)} &\le C.
\end{align*}
Further, in the fluid part of the thin layer we obtain the estimates 
\begin{align*}
   \frac{1}{\sqrt{\vareps}} \|\ceps^f\|_{L^{\infty}((0,T),L^2(\oemf))} + \|\ceps^f\|_{L^2((0,T)\times \geps)} \le C. 
\end{align*}
\end{proposition}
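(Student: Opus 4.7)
Since the Stokes system $\eqref{def:micro_equations_fluid_pde}$--$\eqref{def:micro_equations_fluid_periodicBC}$ decouples from the transport subsystem, I would first establish existence, uniqueness and the a priori bounds for $(\veps,\peps)$, and only then use the velocity as a known coefficient in the analysis of $(\ceps^f,\ceps^s)$. Existence for the Stokes problem follows from a Galerkin approximation on a countable basis of $\Sigma$-periodic, divergence-free vector fields vanishing on $\geps$ and matching through $\sepm$. Testing $\eqref{eq:Var_Micro_veps}$ with $\veps$ eliminates the pressure by incompressibility and yields
\[
\frac{1}{2}\frac{d}{dt}\|\veps\|_{L^2(\oef)}^2 + \|D(\veps^\pm)\|_{L^2(\oeps^\pm)}^2 + \vareps\|D(\veps^M)\|_{L^2(\oemf)}^2 = \sum_\pm \int_{\oeps^\pm} f_\vareps^\pm\cdot\veps^\pm\,dx.
\]
Korn's inequality in the bulk, together with a scaled Poincaré inequality in the thin perforated layer (exploiting the Dirichlet condition on $\geps$, which produces a Poincaré constant of order $\vareps$ on $\oemf$), yields the stated bounds on $\|\veps^\pm\|_{L^2(H^1)}$, $\vareps^{-1/2}\|\veps^M\|_{L^\infty(L^2)}$ and $\vareps^{1/2}\|\nabla\veps^M\|_{L^\infty(L^2)}$. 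Testing with $\partial_t\veps$ (after a standard time-regularisation of the Galerkin scheme) and a Gronwall argument then give the $L^2$-bound on the time derivative and the $L^\infty(H^1)$-bound on $\veps^\pm$.

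The pressure estimate is the most delicate step. For every $q\in L^2(\oef)$ I would construct a Bogovskii-type lifting $\phi\in H^1_\#(\oef)^3$ with $\phi=0$ on $\geps$, $\nabla\cdot\phi=q$, $\phi^\pm=\phi^M$ on $\sepm$, and the $\vareps$-uniform bound
\[
\|\phi^\pm\|_{H^1(\oeps^\pm)}+\vareps^{-1/2}\|\phi^M\|_{L^2(\oemf)}+\vareps^{1/2}\|\nabla\phi^M\|_{L^2(\oemf)}\le C\|q\|_{L^2(\oef)}.
\]
Such a lifting can be built by splitting $q$ into a bulk and a membrane part with compatible flux through $\sepm$ and combining the classical Bogovskii operator in the bulk subdomains with its periodic counterpart on the reference cell $Z_f$ inside the layer; the thin-layer scaling of the latter is absorbed through the change of variables $y=x/\vareps$. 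Inserting such $\phi$ into $\eqref{eq:Var_Micro_veps}$ together with the already proven velocity bounds yields $\|\peps^\pm\|_{L^2}\le C$ and $\|\peps^M\|_{L^2}\le C\vareps$; the stress boundary condition on $\partial_N\Omega$ removes any zero-mean constraint. The design of a Bogovskii operator compatible both with the perforation in $\oemf$ and with the matching condition on $\sepm$ is the main technical obstacle here.

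For the transport subsystem, existence follows from a fixed-point iteration based on the Lipschitz property \ref{ass:h} of $h$, applied to the pair of decoupled linear parabolic subproblems (one for $\ceps^f$ with $\ceps^s$ frozen on $\geps$, and vice versa). To derive the uniform estimates I would test $\eqref{eq_Var_Micro_cepsf}$ with $\ceps^f$ and $\eqref{eq_Var_Micro_cepss}$ with $\ceps^s$ and add the two identities. Integration by parts in the convective term, combined with $\nabla\cdot\veps=0$ and $\veps|_{\geps}=0$, leaves only a boundary contribution on $\partial_N\Omega$ controlled by the $L^\infty(H^1)$-bound on $\veps^\pm$ and an interpolated trace estimate, which is then absorbed into the dissipation. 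The crucial obstacle is the nonlinear term on $\geps$: via the growth in \ref{ass:h} it reduces to $\vareps$-uniform control of $\|\ceps^f\|_{L^2(\geps)}$ and $\|\ceps^s\|_{L^2(\geps)}$. The first is provided by Lemma \ref{lem:Trace_inequality_Bulk}, which bounds the trace on $\geps$ by the bulk $H^1$-norm with a factor absorbable into $\|\nabla\ceps^f\|_{L^2(\oef)}^2$; the second by a scaled trace inequality on the perforated solid of the form
\[
\|\ceps^s\|_{L^2(\geps)}^2 \le C\bigl(\vareps^{-1}\|\ceps^s\|_{L^2(\oems)}^2+\vareps\|\nabla\ceps^s\|_{L^2(\oems)}^2\bigr),
\]
which is compatible with the energy balance for every $\gamma\in[-1,1]$. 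Gronwall combined with \ref{ass:initial_values_ceps_s} then delivers all concentration estimates. The additional $L^\infty(L^2)$-bound on $\ceps^f$ in $\oemf$ and the $L^2((0,T)\times\geps)$-bound follow immediately from Lemma \ref{lem:embedding_thin_layer_H1_Lp} and Lemma \ref{lem:Trace_inequality_Bulk} applied to the already established $L^2(H^1)$-estimate. The time-derivative bounds are obtained by duality against the respective test norms, using once more Lemma \ref{lem:Trace_inequality_Bulk} to keep the nonlinear boundary contribution $\vareps$-uniform. Uniqueness for both subsystems is standard by energy estimates on the difference of two solutions.
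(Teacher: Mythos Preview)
Your overall strategy matches the paper: energy testing for $\veps$, then $\partial_t\veps$; Bogovskii-type liftings for the pressures; add the two transport energy identities and control the $\geps$-term via Lemmas \ref{lem:Stand_scaled_trace_inequality} and \ref{lem:Trace_inequality_Bulk}; finish with duality for the time derivatives. Two points deserve comment.

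\textbf{Pressure.} The paper does \emph{not} build a single global Bogovskii operator on $\oef$. Instead it treats the three pressures separately: for $\peps^+$ it takes $\phi$ supported only in $\oeps^+$ with $\nabla\cdot\phi=\peps^+$ and zero trace on $\sepm$ (so no membrane compatibility is needed at all); similarly for $\peps^-$. For $\peps^M$ it uses the cell-wise Bogovskii $L^2(Z_f)\to H^1(Z_f,\partial Z_f\setminus S_f^\pm)^3$, producing $\phi^M\in H^1_\#(\oemf,\geps)^3$ with $\|\nabla\phi^M\|_{L^2(\oemf)}\le C\|\peps^M\|_{L^2(\oemf)}$ (no $\vareps$-loss), and then extends to $\Omega$ by a mirror plus cut-off in an $\vareps$-neighbourhood of $\oem$. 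This separated route is shorter and avoids having to make the bulk and membrane liftings flux-compatible through $\sepm$. Your unified lifting with the stated scaled bound would also yield $\|\peps^M\|\le C\vareps$, but only if you apply it to $q=(0,\peps^M,0)$ rather than to the full $q=\peps$; with the latter the cross-pollution from the bulk downgrades the membrane bound to $C\sqrt{\vareps}$.

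\textbf{The bound on $\partial_t\ceps^f$.} Here there is a genuine gap in your sketch. You mention only the nonlinear $\geps$-term in the duality estimate, but the delicate term is the convective one inside the thin layer,
\[
\int_{\oemf}\veps^M\,\ceps^f\cdot\nabla\xieps^f\,dx,
\]
for a test function with $\|\xieps^f\|_{H^1(\oef)}\le 1$. Integration by parts is unavailable here (the test function is arbitrary), and a naive H\"older splitting loses a negative power of $\vareps$. The paper uses the $L^3$--$L^6$--$L^2$ split together with Lemma \ref{lem:embedding_thin_layer_H1_Lp} at $p=3$ (exploiting $\veps^M=0$ on $\geps$, which kills the boundary term) to get $\|\veps^M\|_{L^3(\oemf)}\le C\sqrt{\vareps}\,\|\nabla\veps^M\|_{L^2(\oemf)}\le C$, and Corollary \ref{cor:embedding_H1_L6_oef} (the extension-based embedding $H^1(\oef)\hookrightarrow L^6(\oef)$ with $\vareps$-independent constant) for $\|\ceps^f\|_{L^6(\oemf)}$. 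This specific combination is the reason Lemma \ref{lem:embedding_thin_layer_H1_Lp} is in the paper, and you should invoke it explicitly at this step.
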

\begin{proof}
The existence and uniqueness of a solution are standard and can be obtained via a Galerkin method based on similar estimates as below. So we focus on proving the $\vareps$-uniform \textit{a priori} estimates. The estimates for the fluid velocity are quite standard and can be obtained by using similar methods as for Stokes flow in perforated thin layers. However, for the sake of completeness we give some details. For the pressure we use similar arguments as in \cite{gahn2022derivation}, by constructing a suitable solution of the divergence equation in $\oef$. The most critical and new part are the \textit{a priori} estimates for the concentrations in the thin perforated layer, in particular the treatment of the nonlinear coupling term on $\geps$, and the control of the convective term to find a uniform bound for $\partial_t \ceps^f$, which are necessary to obtain strong convergence results. Compared to the existing literature, see for example \cite{GahnEffectiveTransmissionContinuous} and  \cite{GahnNeussRaduKnabner2018a}, we here have a different scaling in the diffusive term in the transport equation for $\ceps^f$ in $\oemf$ and the term including the time-derivative, which causes additional difficulties. To get control of the nonlinear boundary term, we use the trace inequality in Lemma \ref{lem:Trace_inequality_Bulk} in the appendix. To estimate the convective term we use Lemma \ref{lem:embedding_thin_layer_H1_Lp} in the appendix, which gives an explicit dependence on $\vareps$ for the operator norm of the embedding $H^1(\oemf)\hookrightarrow L^p(\oemf)$ for $2\le p < 6$.

We start with estimating the fluid velocity $\veps$ by choosing $\phieps = \veps$ as a test-function in $\eqref{eq:Var_Micro_veps}$ to obtain
\begin{align*}
    \sum_{\pm} &\left\{ \frac12 \frac{d}{dt} \|\veps^{\pm}\|_{L^2(\oeps^{\pm})}^2 + \|D(\veps^{\pm})\|^2_{L^2(\oeps^{\pm})} \right\}  + \frac{1}{2} \frac{d}{dt} \| \veps^M\|^2_{L^2(\oemf)} + \vareps \|D(\veps^M)\|_{L^2(\oemf)}^2
    \\
    &\le \sum_{\pm} \|f_{\vareps}^{\pm}\|_{L^2(\oeps^{\pm})}\|\veps^{\pm}\|_{L^2(\oeps^{\pm})}  \le C\left(1 + \sum_{\pm} \|\veps^{\pm}\|_{L^2(\oeps^{\pm})}^2\right).
\end{align*}
Integration with respect to time, Gronwall inequality, and the Korn inequality (see Lemma \ref{lem:Korn_inequality} in the appendix for the perforated thin layer) imply for almost every $t \in (0,T)$
\begin{align*}
    \sum_{\pm} \left\{ \|\veps^{\pm}(t)\|_{L^2(\oeps^{\pm})}^2 + \|\nabla \veps^{\pm}\|_{L^2((0,t) \times \oeps^{\pm})} \right\} +  \|\veps^M(t) \|_{L^2(\oemf)}^2 + \vareps \|\nabla \veps^M\|_{L^2((0,t)\times \oemf)}^2 \le C.
\end{align*}
Using the Poincar\'e inequality in the perforated domain, see Lemma \ref{lem:Korn_inequality}, we obtain
\begin{align*}
    \frac{1}{\sqrt{\vareps}}\|\veps^M\|_{L^2((0,T)\times \oemf)} \le C.
\end{align*}
To obtain an estimate for the time derivative $\partial_t \veps$ and $L^{\infty}$-estimates with respect to time for $\veps$ and its gradient we choose $\phieps = \partial_t \veps$ and use similar arguments as above. We emphasize that this is a formal argument because $\partial_t \veps$ is not regular enough. This argument can be made rigorous for example via a Galerkin approximation. We also obtain the $L^{\infty}$-regularity with respect to time for the gradients of $\veps$.
To estimate the pressure we use similar arguments as in the proof of Lemma 4.2 in \cite{gahn2022derivation}. For the sake of completeness we give some details. First of all, there exists $\phieps \in H^1_{\#}(\Omega)^3$ with $\phieps=0 $ in $\oeps^-\cup \oeps^M$ and  $\nabla \cdot \phieps = \peps^+$, such that 
\begin{align*}
    \|\phieps\|_{H^1(\oeps^+)} \le C \|\peps^+\|_{L^2(\oeps^+)}.
\end{align*}
We emphasize that the constant on the right-hand side can be chosen independently of $\vareps$. This can be shown by a transformation of $\oeps^+$ to the fixed domain $\Omega^+$ (see for example \cite[Section 5.1]{NeussJaeger_EffectiveTransmission} for a transformation).
Choosing the function $\phieps$ as a test function in $\eqref{eq:Var_Micro_veps}$ and using the estimates for $\veps^+$ obtained above we get the desired estimate for $\peps^+$. In the same way we can treat the pressure $\peps^-$. Using the Bogovskii-operator from $L^2(Z_f) \rightarrow H^1(Z_f,\partial Z_f \setminus S^{\pm}_f)^3$, we obtain a $\phieps^M \in H^1_{\#}(\oemf,\geps)^3$ such that $\nabla \cdot \phieps^M = \peps^M$ and 
\begin{align*}
    \|\nabla \phieps^M\|_{L^2(\oemf)} \le C \|\peps^M\|_{L^2(\oemf)}.
\end{align*}
Using a mirror and a cut-off (in an $\vareps$-neighborhood of $\oeps^M$) argument this function can be extended to a function $\phieps \in H^1_{\#}(\Omega)^3$ with 
\begin{align*}
    \sum_{\pm} \left\{ \vareps^{-1}\|\phieps^{\pm}\|_{L^2(\oeps^{\pm})} + \|\nabla \phieps^{\pm}\|_{L^2(\oeps^{\pm})} \right\} + \|\nabla \phieps^M\|_{L^2(\oemf)} \le C \|\peps^M\|_{L^2(\oemf)}.
\end{align*}
Testing $\eqref{eq:Var_Micro_veps}$ with the function $\phieps$ we get
\begin{align*}
\foe \|\peps^M\|^2_{L^2(\oemf)} \le& 
\sum_{\pm} \left\{ \|\partial_t \veps^{\pm}\|_{L^2(\oeps^{\pm})}+ \|D(\veps^{\pm})\|_{L^2(\oeps^{\pm})}  + \|\peps^{\pm}\|_{L^2(\oeps^{\pm})}  +  \|f_{\vareps}^{\pm}\|_{L^2(\oeps^{\pm})} \right\}\|\phieps\|_{H^1(\oeps^{\pm})} 
\\
&+  \|\partial_t \veps^M \|_{L^2(\oemf)} \|\phieps\|_{L^2(\oemf)} + \vareps \|D(\veps^M)\|_{L^2(\oemf)} \|D(\phieps)\|_{L^2(\oemf)} 
\\
\le& \bigg[ \sum_{\pm} \left\{ \|\partial_t \veps^{\pm}\|_{L^2(\oeps^{\pm})}+ \|D(\veps^{\pm})\|_{L^2(\oeps^{\pm})}  + \|\peps^{\pm}\|_{L^2(\oeps^{\pm})}  +  \|f_{\vareps}^{\pm}\|_{L^2(\oeps^{\pm})} \right\}
\\
&+ \vareps \|\partial_t \veps^M\|_{L^2(\oemf)} + \vareps\|D(\veps^M)\|_{L^2(\oemf)} \bigg] \|\peps^M\|_{L^2(\oemf)}.
\end{align*}
After integration with respect to time and using the estimates obtained above we obtain the result for $\peps^M$.

For the bounds of $\ceps^f$ and $\ceps^s$ we test equation $\eqref{eq_Var_Micro_cepsf}$ and $\eqref{eq_Var_Micro_cepss}$ with  $\ceps^f$ respectively $\ceps^s$ and add up both equations to obtain 
\begin{align}
\begin{aligned}\label{ineq:aux1_Lemma_apriori}
\frac12 \frac{d}{dt}& \|\ceps^f\|_{L^2(\oef)}^2 + \frac{1}{2\vareps} \frac{d}{dt} \|\ceps^s\|_{L^2(\oems)}^2 + D^f\|\nabla \ceps^f\|_{L^2(\oef)}^2 + D^s\vareps^{\gamma}\|\nabla \ceps^s\|_{L^2(\oems)}^2 
\\
&\le \int_{\oef} \ceps^f \veps \cdot \nabla \ceps^f dx + \int_{\geps} h(\ceps^f,\ceps^s) (\ceps^s - \ceps^f) d\sigma.
\end{aligned}
\end{align}
For the convective term  we use the trace inequality (here we use $\Omega \subset \R^3$)
\begin{align}
    \|\weps\|_{L^4(\partial_N \Omega)} \le C \sum_{\pm}\|\weps \|_{H^1(\oeps^{\pm})}
\end{align}
for all $\weps \in H^1(\oef)$ and a constant $C>0$ independent of $\vareps$ to obtain by integration by parts and the H\"older inequality
\begin{align*}
    \left| \int_{\oef} \veps \ceps^f \cdot \nabla \ceps^f dx \right| &= \frac{1}{2}\left|\int_{\oef} \veps \cdot \nabla |\ceps^f|^2 dx \right| = \frac{1}{2}\left| \int_{\partial_N \Omega} \veps \cdot \nu |\ceps^f|^2 d\sigma  \right|
    \\
    &\le \frac{1}{2} \|\veps\|_{L^4(\partial_N \Omega)} \| \ceps^f\|_{L^{\frac83}(\partial_N \Omega)}^2
    \\
    &\le C \sum_{\pm} \|\veps\|_{H^1(\oeps^{\pm})} \| \ceps^f\|_{L^{\frac83}(\partial_N \Omega)}^2.
\end{align*}
Now, for arbitrary $p \in [1,4)$ we use the compactness of the embedding $H^1(\oef) \hookrightarrow L^p(\partial_N \Omega)$ (again, this holds since $\Omega \subset \R^3$) to obtain for arbitrary $\theta >0 $ the existence of $C_{\theta}>0$, such that 
\begin{align*}
 \|w\|_{L^p(\partial_N \Omega)} \le C_{\theta}\|w\|_{L^2(\oef)} + \theta \|\nabla w \|_{L^2(\oef)}   
\end{align*}
for all $w \in H^1(\oef)$. Hence, we get
\begin{align*}
    \left| \int_{\oef} \veps \ceps^f \cdot \nabla \ceps^f dx \right| &\le C \sum_{\pm} \|\veps^{\pm}\|_{H^1(\oeps^{\pm})}\left(C_{\theta} \|\ceps^f\|_{L^2(\oef)}^2 + \theta \|\nabla \ceps^f\|_{L^2(\oef)}^2\right)
    \\
    &\le C_{\theta}\|\ceps^f\|_{L^2(\oef)}^2 + \theta \|\nabla \ceps^f\|_{L^2(\oef)}^2, 
    \end{align*}
where in the last inequality we used the \textit{a priori} estimate already obtained for $\veps$ (and possibly changed the values of $C_{\theta}$ and $\theta$).

For the nonlinear term on $\geps$ we use the growth condition of $h$ from assumption \ref{ass:h} and the trace inequalities in Lemma \ref{lem:Stand_scaled_trace_inequality} and \ref{lem:Trace_inequality_Bulk} (remember $|\geps|\le C$) to obtain for all $\theta>0$
\begin{align*}
\int_{\geps} h(\ceps^f,\ceps^s) (\ceps^s - \ceps^f) d\sigma &\le C \left(1 + \|\ceps^f\|_{L^2(\geps)}^2 + \|\ceps^s\|^2_{L^2(\geps)} \right)
\\
&\le C_{\theta}\left(1 + \foe \|\ceps^s\|_{L^2(\oems)}^2 +  \|\ceps^f\|_{L^2(\oeps^{\pm})}^2    \right) 
\\
&+ C_0 \vareps \|\nabla \ceps^f\|_{L^2(\oemf)}^2 + \theta \left( \vareps \|\nabla \ceps^s\|_{L^2(\oems)}^2 + \|\nabla \ceps^f\|_{L^2(\oeps^{\pm})}^2 \right).
\end{align*}
Hence, we obtain from $\eqref{ineq:aux1_Lemma_apriori}$ by choosing $\theta>0$ small enough for all $\vareps< \vareps_0$ for $\vareps_0>0 $ small enough
\begin{align*}
     \frac{d}{dt} \|\ceps^f\|_{L^2(\oef)}^2 + \frac{1}{\vareps} \frac{d}{dt} \|\ceps^s\|_{L^2(\oems)}^2 + \|&\nabla \ceps^f\|_{L^2(\oef)}^2 + \vareps^{\gamma}\|\nabla \ceps^s\|^2_{L^2(\oems)} 
\\
&\le C \left(1 + \foe \|\ceps^s\|_{L^2(\oems)}^2 +  \|\ceps^f\|_{L^2(\oeps^{\pm})}^2    \right).
\end{align*}
Now, the Gronwall inequality implies the $L^2$-bounds for $\ceps^f$ and $\ceps^s$ and their gradients.
\\

It remains to show the bounds for the time derivatives $\partial_t \ceps^f$ and $\partial_t \ceps^s$. First, for $\xieps^s \in H^1_{\#}(\oems)$ with $\|\xieps^s\|_{H_{\vareps,\gamma}(\oems)}\le 1$ we obtain from $\eqref{eq_Var_Micro_cepss}$ with the trace inequalities from Lemma \ref{lem:Stand_scaled_trace_inequality} and \ref{lem:Trace_inequality_Bulk}
\begin{align*}
\foe \langle \partial_t \ceps^s,\xieps^s\rangle_{H_{\vareps,\gamma}(\oems)} \le& C \vareps^{\gamma} \|\nabla \ceps^s\|_{L^2(\oems)} \|\nabla \xieps^s\|_{L^2(\oems)} 
\\
&+ C \left( 1 + \|\ceps^f\|_{L^2(\geps)} + \|\ceps^s\|_{L^2(\geps)} \right) \|\xieps^s\|_{L^2(\geps)}
\\
\le& C \bigg\{ 1 + \vareps^{\frac{\gamma}{2}} \|\nabla \ceps^s\|_{L^2(\oems)} +   \sqrt{\vareps} \|\nabla \ceps^f\|_{L^2(\oemf)} 
\\
&+ \|\ceps^f\|_{H^1(\oeps^{\pm})} + \frac{1}{\sqrt{\vareps} } \|\ceps^s\|_{L^2(\oems)} + \sqrt{\vareps} \|\nabla \ceps^s\|_{L^2(\oems)}  \bigg\},
\end{align*}
where at the end we also used the inequality
\begin{align*}
    \|\xieps^s\|_{L^2(\geps)} \le C \left( \frac{1}{\sqrt{\vareps}} \|\xieps\|_{L^2(\oems)} + \sqrt{\vareps} \|\nabla \xieps^s\|_{L^2(\oems)} \right) \le C \|\xieps^s\|_{H_{\vareps,\gamma}(\oems)},
\end{align*}
since $\gamma \in [-1,1]$. Now, taking the supremem, integrating with respect to time and using the estimates already obtained above we get 
\begin{align*}
\|\partial_t \ceps^s\|_{L^2((0,T),H_{\vareps,\gamma}(\oems)^{\ast})} \le C\vareps .
\end{align*}
Now, we estimate the time derivative of $\ceps^f$. For this we choose test-functions $\xieps^f \in H^1_{\#}(\oef)$ with $\|\xieps^f\|_{H^1(\oef)} \le 1$ in $\eqref{eq_Var_Micro_cepsf}$. The arguments are similar to the arguments for $\partial_t \ceps^s$, excepting the advective term, which is treated in the following way:
\begin{align*}
\int_{\oef} \veps \ceps^f \cdot \nabla \xieps^f dx = \int_{\oemf} \veps^M \ceps^f \cdot \nabla \xieps^f dx + \sum_{\pm } \int_{\oeps^{\pm}}\veps^{\pm} \ceps^f \cdot \nabla \xieps^f dx.
\end{align*}
For the bulk terms we use the embedding $H^1(\oeps^{\pm}) \hookrightarrow L^4(\oeps^{\pm})$ (with embedding constant independent of $\vareps$) to obtain
\begin{align*}
\left| \int_{\oeps^{\pm}} \veps^{\pm} \ceps^f \cdot \nabla \xieps^f dx \right| \le  \|\veps^{\pm}\|_{L^4(\oeps^{\pm})} \|\ceps^f\|_{L^4(\oeps^{\pm})} \|\nabla \xieps^f\|_{L^2(\oeps^{\pm})} \le C \|\veps^{\pm}\|_{H^1(\oeps^{\pm})} \|\ceps^f\|_{H^1(\oeps^{\pm})}.
\end{align*}
For the advective term in the thin layer we use Corollary \ref{cor:embedding_H1_L6_oef} and Lemma \ref{lem:embedding_thin_layer_H1_Lp} with $p=3$ and $G=\Gamma$ (remember $\veps = 0$ on $\geps$)
\begin{align*}
\left|\int_{\oemf} \veps^M \ceps^f \cdot \nabla \xieps^f dx \right| &\le \|\veps^M\|_{L^3(\oemf)} \|\ceps^f\|_{L^6(\oemf)} \|\nabla \xieps^f\|_{L^2(\oemf)}
\\
&\le C \sqrt{\vareps} \|\nabla \veps^M\|_{L^2(\oemf)} \|\ceps^f\|_{L^6(\oef)} 
\\
&\le C \|\ceps^f\|_{H^1(\oef)}.
\end{align*}
This implies the desired result for $\vareps < \vareps_0$. However, since there are only finitely many elements $\vareps \geq \vareps_0$ the result is valid for all $\vareps$ by possibly changing the generic constant $C$.

Finally, the estimates for $\ceps^f$ in $\oemf$ and on $\geps$ follow directly from Lemma \ref{lem:estimate_L2_membrane_Bulk_Gradient} and \ref{lem:Trace_inequality_Bulk}.

\end{proof}
For $\gamma = -1$ we can give a simple relation between the norm on $H_{\vareps,-1}(\oems)^{\ast}$ and the more common space $H^1(\oems)^{\ast}$. In fact, we have:
\begin{align*}
   \|\partial_t \ceps^s\|_{L^2((0,T),H_{\vareps,-1}(\oems)^{\ast})}= \sqrt{\vareps} \|\partial_t \ceps^s\|_{L^2((0,T),H^1(\oems)^{\ast})}.
\end{align*}

\begin{remark}
We emphasize that the results in Proposition \ref{prop:existence_apriori} remain valid in the case $|S_s^{\pm}| \neq 0$ when the solid phase touches the fluid bulk regions. In fact, in particular, we used estimates and results from the appendix, which are also valid in this case.
\end{remark}

\section{Compactness results for the micro-solution}
\label{sec:comptness_results}

In this section we derive the compactness results for sequence $(\veps,\peps,\ceps^f,\ceps^s)$ of microscopic solutions of $\eqref{MicroscopicModel}$. These results are based on the \textit{a priori} estimates from Section \ref{sec:existence_apriori_estimates} and do not use the explicit structure of the microscopic model, and therefore can be considered as general compactness results applicable also to other problems fulfilling the same $\vareps$-uniform bounds. The crucial point is to obtain strong (two-scale) compactness in the thin layer and on the surface $\geps$. The latter is necessary to pass to the limit in the nonlinear terms on $\geps$. For the case $\gamma \in (-1,1]$ we first prove an additional \textit{a priori} bound for differences of the shifts of $\ceps^s$.

\subsection{Convergence of fluid velocity and pressure}

For the fluid flow we can consider the bulk domains $\oeps^{\pm}$ and the thin fluid layer $\oemf$ separately, since we are only interested in weak convergence results (see also the convergence results for $\ceps^f$ below, were this is not possible anymore). Under the \textit{a priori} estimates in Proposition \ref{prop:existence_apriori}, the compactness results for $\veps^{\pm}$ and $\veps^M$ are almost classical, and we summarize them in the next Proposition. The crucial question is the coupling condition across $\Sigma$ for the limit functions $v_0^{\pm}$ of $\veps^{\pm}$. It turns out that from the continuity of the velocity across $S_{\vareps}^{\pm}$ we get that  $v_0^M = v_0^{\pm}$ on $\Sigma \times S^{\pm}$, where $v_0^M$ is the two-scale limit of $\veps^M$.  In particular, we obtain from this relation the continuity of the normal velocity for $v_0^+$ and $v_0^-$ across $\Sigma$.

\begin{proposition}\label{prop:compactness_fluid}
\begin{enumerate}[label = (\roman*)]
\item\label{prop:compactness_fluid_item1} There exist $v_0^{\pm} \in L^2((0,T), H^1_{\#}(\Omega^{\pm})^3 \cap H^1((0,T),L^2(\Omega^{\pm}))^3$ and $p_0^{\pm} \in L^2((0,T)\times \Omega^{\pm})$, such that up to a subsequence
\begin{align*}
\chi_{\oeps^{\pm}}\veps^{\pm} &\rightarrow v_0^{\pm} &\mbox{ in }& L^2((0,T)\times \Omega^{\pm}),
\\
\chi_{\oeps^{\pm}} \nabla \veps^{\pm} &\rightharpoonup \nabla v_0^{\pm} &\mbox{ in }& L^2((0,T)\times \Omega^{\pm}),
\\
\chi_{\oeps^{\pm}} \partial_t \veps^{\pm} &\rightharpoonup \partial_t v_0^{\pm} &\mbox{ in }& L^2((0,T)\times \Omega^{\pm}),
\\
\chi_{\oeps^{\pm}} \peps^{\pm} &\rightharpoonup p_0^{\pm} &\mbox{ in }& L^2((0,T)\times \Omega^{\pm}).
\end{align*}

\item\label{prop:compactness_fluid_item2} There exists $v_0^M \in L^2((0,T)\times \Sigma, H_{\#}^1(Z_f))^3$ with $v_0^M = 0$ on $\Gamma$ and 
\begin{align*}
    \nabla_y \cdot v_0^M = 0, \qquad \nabla_{\x} \cdot \int_{Z_f} v_0^M dy = 0,
\end{align*}
such that up to a subsequence it holds that 
\begin{align*}
\veps^M &\rats v_0^M,
\\
\vareps \nabla \veps^M &\rats \nabla_y v_0^M,
\\
\partial_t \veps^M &\rats 0,
\\
\peps^M &\rats 0.
\end{align*}

\item\label{prop:compactness_fluid_item3} We have
\begin{align*}
    v_0^{\pm} = v_0^M \qquad \mbox{on } (0,T)\times \Sigma \times S_f^{\pm}.
\end{align*}
In particular $v_0^M$ is constant with respect to $y$ on $S_f^{\pm}$.
\end{enumerate}
\end{proposition}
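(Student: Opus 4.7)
For part \ref{prop:compactness_fluid_item1}, the plan is to transfer the problem from the $\vareps$-dependent bulk $\oeps^{\pm}$ to the fixed domain $\Omega^{\pm}$. Since $\oeps^{\pm}$ and $\Omega^{\pm}$ differ only by a slab of width $\vareps$, a uniform extension operator (for instance by reflection across $\Sigma\times\{\pm\vareps\}$) produces extensions whose $L^{\infty}((0,T),H^1(\Omega^{\pm}))$- and $H^1((0,T),L^2(\Omega^{\pm}))$-norms remain bounded by the estimates of Proposition~\ref{prop:existence_apriori}. Weak-$\ast$ compactness then yields the weak limits of $\veps^{\pm}$, $\nabla\veps^{\pm}$ and $\partial_t\veps^{\pm}$; an application of the Aubin--Lions lemma (with $H^1 \hookrightarrow\hookrightarrow L^2$) provides the strong $L^2$-convergence. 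Since $|\Omega^{\pm}\setminus\oeps^{\pm}| = O(\vareps)$, these convergences transfer to the truncated sequences $\chi_{\oeps^{\pm}}\veps^{\pm}$. For the pressure, no Aubin--Lions is needed: the $L^2(\oeps^{\pm})$-bound after extension by zero gives weak compactness in $L^2((0,T)\times\Omega^{\pm})$ directly.

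For part \ref{prop:compactness_fluid_item2}, the scaled bounds $\|\veps^M\|_{L^{\infty}L^2} \lesssim \sqrt{\vareps}$ and $\vareps\|\nabla\veps^M\|_{L^{\infty}L^2} \lesssim \sqrt{\vareps}$ are precisely the hypotheses of the two-scale compactness theorem for thin perforated layers recalled in Appendix~B, yielding $v_0^M \in L^2((0,T)\times\Sigma, H^1_{\#}(Z_f))^3$ with $\veps^M \rats v_0^M$ and $\vareps\nabla\veps^M \rats \nabla_y v_0^M$. Passing to the two-scale limit in the Dirichlet condition $\veps^M = 0$ on $\geps$ gives $v_0^M = 0$ on $\Gamma$, and in the rescaled incompressibility $\vareps\,\nabla\cdot\veps^M = 0$ gives $\nabla_y\cdot v_0^M = 0$. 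The bounds $\|\partial_t\veps^M\|_{L^2L^2}\le C$ and $\|\peps^M\|_{L^2L^2}\le C\vareps$ are of strictly lower order than the natural two-scale scaling $\sqrt{\vareps}$, so $\partial_t\veps^M \rats 0$ and $\peps^M \rats 0$. The macroscopic constraint $\nabla_{\x}\cdot\int_{Z_f} v_0^M\,dy = 0$ follows by testing $\nabla\cdot\veps=0$ on $\oef$ against functions $\psi(t,\x)$ independent of $x_3$ and of the cell variable, passing to the limit and combining the bulk contributions from part~\ref{prop:compactness_fluid_item1} with the matching provided by part~\ref{prop:compactness_fluid_item3}.

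For part \ref{prop:compactness_fluid_item3}, the plan is to pass to the limit in the continuity condition \eqref{ContinuityVelocity}, $\veps^{\pm} = \veps^M$ on $S_\vareps^{\pm}$, after testing both sides against a smooth function $\varphi(t,\x)\in C_c^{\infty}((0,T)\times\Sigma)^3$. On the bulk side, the strong $L^2(\Omega^{\pm})$-convergence of the extended $\veps^{\pm}$ together with the uniform $H^1$-bound yields strong convergence of the trace on the shifted interface $\Sigma\times\{\pm\vareps\}$ to $v_0^{\pm}|_{\Sigma}$. On the layer side, the two-scale convergence of $\veps^M$ combined with a trace identification on the top/bottom $S_f^{\pm}$ of the cell $Z_f$ shows that the surface integral converges to $\int_{\Sigma}\int_{S_f^{\pm}} v_0^M(t,\x,y)\,dy\cdot\varphi(t,\x)\,d\x$. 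Equating the two limits for all admissible $\varphi$ and using that $v_0^{\pm}|_{\Sigma}$ does not depend on the fast variable $y$ forces $v_0^M(t,\x,\cdot)$ to be $y$-independent on $S_f^{\pm}$ and equal to $v_0^{\pm}|_{\Sigma}$, establishing the claim.

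The main obstacle I expect is in part \ref{prop:compactness_fluid_item3}: reconciling the weak/strong convergence of the bulk trace (a function of $\x$ alone) with the two-scale convergence of $\veps^M$ on the rapidly oscillating surface $S_\vareps^{\pm}$ requires a trace identification for two-scale convergence on the top and bottom of the thin perforated cell. This relies on the unfolding framework of Appendix~B together with the strong compactness from part~\ref{prop:compactness_fluid_item1}, and the identification would fail without the $L^2$-strong convergence of the bulk sequence. A secondary technical point in part~\ref{prop:compactness_fluid_item1} is checking that the chosen extension operator is compatible with the time-derivative bound and does not spoil the divergence structure needed when deriving the macroscopic incompressibility constraint in part~\ref{prop:compactness_fluid_item2}.
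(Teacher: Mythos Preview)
Your plan for parts \ref{prop:compactness_fluid_item1} and \ref{prop:compactness_fluid_item2} is essentially correct and close to the paper, though for \ref{prop:compactness_fluid_item1} the paper takes a slightly cleaner route: instead of building an extension operator and worrying about its compatibility with $\partial_t$, it applies Aubin--Lions directly on each fixed truncated domain $\Omega^{\pm}_{\delta} = \Sigma\times(\pm\delta,\pm H)$ (where $\oeps^{\pm}\supset\Omega^{\pm}_{\delta}$ for $\vareps<\delta$), obtains strong $L^2$ convergence there, and then lets $\delta\to 0$. This bypasses the ``secondary technical point'' you raised entirely.

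There is, however, a genuine gap in your argument for part \ref{prop:compactness_fluid_item3}. Testing the continuity condition $\veps^{\pm}=\veps^M$ on $S_{\vareps}^{\pm}$ only against non-oscillating functions $\varphi(t,\x)$ yields, after passing to the limit, merely
\[
\int_{\Sigma} v_0^{\pm}|_{\Sigma}\cdot\varphi\,d\x \;=\; \int_{\Sigma}\Bigl(\int_{S_f^{\pm}} v_0^M\,d\sigma_y\Bigr)\cdot\varphi\,d\x,
\]
i.e.\ only that the $S_f^{\pm}$-average of $v_0^M$ equals $v_0^{\pm}|_{\Sigma}$. Your subsequent claim that ``$v_0^{\pm}|_{\Sigma}$ does not depend on $y$ forces $v_0^M$ to be $y$-independent on $S_f^{\pm}$'' is a non sequitur: an average being constant tells you nothing about the integrand. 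To obtain the pointwise identity you must test against oscillating functions $\phi\bigl(t,\x,\tfrac{\x}{\vareps}\bigr)$ with $\phi\in C_0^{\infty}((0,T)\times\Sigma,C_{\#}^{\infty}(S^{\pm}))^3$. On the bulk side, the strong convergence from \ref{prop:compactness_fluid_item1} (plus the $H^1$-bound) gives that $\veps^{\pm}|_{S_{\vareps}^{\pm}}$ two-scale converges on $\Sigma$ to the $y$-constant function $v_0^{\pm}|_{\Sigma}$; on the layer side, the surface two-scale convergence gives $\veps^M|_{S_{\vareps}^{\pm}}\rats v_0^M|_{S^{\pm}}$. Equating the two limits for all such $\phi$ then yields $v_0^{\pm}=v_0^M$ pointwise on $\Sigma\times S_f^{\pm}$, which is precisely the paper's argument.
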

\begin{proof}
Throughout the proof we will use the \textit{a priori} estimates from Proposition \ref{prop:existence_apriori}. First of all,  we  easily obtain $v_0^{\pm} $  with the stated regularity and the weak convergences of $\chi_{\oeps^{\pm}} \veps^{\pm}$, $\chi_{\oeps^{\pm}} \nabla \veps^{\pm}$, and $\chi_{\oeps^{\pm}} \partial_t \veps^{\pm}$ in $L^2((0,T) \times \Omega^{\pm})$. Since for every $\delta >0$ we obtain with the Aubin-Lions Lemma the strong convergence of $\chi_{\oeps^{\pm}} \veps^{\pm}$ in $L^2((0,T)\times \Omega^{\pm}_{\delta})^3$, we get the strong convergence of this sequence in the whole domain $\Omega^{\pm}$. The convergence of $\peps^{\pm}$ is clear. In \ref{prop:compactness_fluid_item2}, the existence of $v_0^M$ with the desired properties follows by standard two-scale compactness results for divergence free vector fields, see for example \cite{Allaire_TwoScaleKonvergenz} for domains and \cite{MarusicMarusicPalokaTwoScaleConvergenceThinDomains,fabricius2020pressure}. It remains to check the interface condition $v_0^{\pm} = v_0^M$ in \ref{prop:compactness_fluid_item3}. First of all, we notice, that $\veps^{\pm}$ (as a function of $(t,\x)$) converges in the two-scale sense to $v_0^{\pm}|_{\Sigma}$ on $\Sigma$, and $\veps^M|_{S_{\vareps}^{\pm}}$ in the two-scale sense to $ v_0^M|_{S^{\pm}}$ on $S_{\vareps}^{\pm}$. Hence, we obtain for every $\phi \in C_0^{\infty}((0,T)\times \Sigma, C_{\#}^{\infty}(S^{\pm}))^3$ 
\begin{align*}
\int_0^T\int_{\Sigma} \int_{S^{\pm}} v_0^{\pm} \cdot \phi (\x,\y) d\y d\x dt &= \lim_{\vareps \to 0} \int_0^T \int_{S_{\vareps}^{\pm}} \veps^{\pm} \cdot \phi \left(\x, \frac{\x}{\vareps}\right) d\x dt 
\\
&= \lim_{\vareps \to 0} \int_0^T \int_{S_{\vareps}^{\pm}} \veps^M \cdot \phi \left(\x, \frac{\x}{\vareps}\right) d\x dt 
\\
&= \int_0^T \int_{\Sigma} \int_{S^{\pm}} v_0^M \cdot \phi (\x,\y) d\y d\x dt,
\end{align*}
which gives the desired result.

\end{proof}

\begin{remark}\label{rem:fluid_vel_zero_Sigma}
If $|S_s^{\pm}|>0$ then it holds that $v_0^{\pm} = 0$ on $\Sigma$. This follows by the same arguments as for $v_0^{\pm} = v_0^M$ in the proof above (just replace $S^{\pm}$ with $S_s^{\pm}$ and follow the same ideas) and using $\veps^{\pm} = 0$ on $S_{\vareps,s}^{\pm}$.
In particular we immediately obtain $v_0^M = 0$ on $S^{\pm}_f$. This is the crucial difference to the case $|S_s^{\pm}|=0$ and in this case the membrane acts as an impermeable barrier for the fluid flow.
\end{remark}

\begin{corollary}\label{cor:continuity_normal_component_v_0}
It holds that 
\begin{align*}
    [v_0^+]_3 = [v_0^-]_3 \qquad \mbox{ on } (0,T)\times \Sigma.
\end{align*}
\end{corollary}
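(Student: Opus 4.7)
The plan is to derive the continuity of the normal velocity component across $\Sigma$ from the microscopic incompressibility condition $\nabla_y\cdot v_0^M=0$ in $Z_f$, combined with the interface identity $v_0^\pm = v_0^M$ on $\Sigma\times S_f^\pm$ established in Proposition~\ref{prop:compactness_fluid}~\ref{prop:compactness_fluid_item3}. The underlying idea is essentially a divergence-theorem computation on the cell $Z_f$ performed for almost every $(t,\bar{x})\in(0,T)\times\Sigma$.

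First I would fix $(t,\bar{x})\in (0,T)\times\Sigma$ for which $v_0^M(t,\bar{x},\cdot)\in H^1_{\#}(Z_f)^3$, which holds a.e.\ by the regularity given in Proposition~\ref{prop:compactness_fluid}~\ref{prop:compactness_fluid_item2}. Integrating the identity $\nabla_y\cdot v_0^M(t,\bar{x},\cdot)=0$ over $Z_f$ and applying the divergence theorem (which is justified since $Z_f$ has Lipschitz boundary) yields
\begin{equation*}
0=\int_{\partial Z_f} v_0^M(t,\bar{x},y)\cdot \nu_y\, d\sigma_y.
\end{equation*}
The boundary $\partial Z_f$ decomposes into $S_f^+$, $S_f^-$, the interface $\Gamma$, and the lateral part $\partial Z_f\cap(\partial Y\times(-1,1))$. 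On $\Gamma$ the contribution vanishes because $v_0^M=0$ there; on the lateral part, the $Y$-periodicity of $v_0^M$ together with the reversal of the outer normal on opposite faces cancels the contribution.

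It remains to treat the contributions on $S_f^\pm$. The outer unit normal on $S_f^\pm$ is $\pm e_3$, so the surface integrals reduce to $\int_{S_f^+}[v_0^M]_3\,dy - \int_{S_f^-}[v_0^M]_3\,dy$. By the assumption that the solid phase does not touch $S^\pm$, we have $|S_s^\pm|=0$, hence $S_f^\pm = Y$ and $|S_f^\pm|=1$. Using the interface identity $v_0^M = v_0^\pm|_\Sigma$ on $\Sigma\times S_f^\pm$ (so that $[v_0^M]_3$ is independent of $y$ on $S_f^\pm$), the two surface integrals collapse to $[v_0^+]_3(t,\bar{x})$ and $[v_0^-]_3(t,\bar{x})$ respectively, yielding $[v_0^+]_3=[v_0^-]_3$ a.e.\ on $(0,T)\times\Sigma$.

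There is no real obstacle here, the argument is a bookkeeping of boundary contributions; the only point worth checking carefully is that the decomposition of $\partial Z_f$ is legitimate and that all traces are taken in the correct $H^{1/2}$-sense so the divergence theorem applies. Notice also that if instead $|S_s^\pm|>0$, the same computation would produce $\int_{S_f^+}[v_0^+]_3\,dy = \int_{S_f^-}[v_0^-]_3\,dy$, which combined with Remark~\ref{rem:fluid_vel_zero_Sigma} would force both sides to vanish, consistently with the impermeable membrane scenario discussed there.
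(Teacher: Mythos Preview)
Your proof is correct and follows essentially the same approach as the paper: integrate $\nabla_y\cdot v_0^M=0$ over $Z_f$, apply the divergence theorem, and use the vanishing of $v_0^M$ on $\Gamma$, $Y$-periodicity on the lateral boundary, and the interface identity $v_0^M=v_0^\pm$ on $S^\pm$ to reduce to $[v_0^+]_3-[v_0^-]_3=0$. The paper's proof is simply a more compact one-line version of the same computation.
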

\begin{proof}
Using $v_0^{\pm} = v_0^M $ on $(0,T)\times \Sigma \times S^{\pm}$ and $v_0^M = 0$ on $(0,T)\times \Sigma \times \Gamma$, we obtain almost everywhere in $(0,T)\times \Sigma$ (use that $v_0^M$ is divergence free with respect to $y$)
\begin{align*}
[v_0^+]_3 - [v_0^-]_3 = \int_{S^+} v_0^M \cdot \nu d\sigma_y + \int_{S^-} v_0^M \cdot \nu d\sigma_y 
= \int_{\partial Z_f} v_0^M \cdot \nu d\sigma_y = \int_{Z_f} \nabla_y \cdot v_0^M dy = 0.
\end{align*}
\end{proof}

\subsection{Convergence of the solute concentration in the fluid domain}
\label{sec:compactness_concentrations_fluid}
From the \textit{a priori} estimates in Proposition \ref{prop:existence_apriori} we immediately obtain weak convergences in the bulk domains. Also the strong convergence in $L^2$ will follow quite straightforward from these \textit{a priori} estimates. However, to pass to the limit in the nonlinear terms on the surface $\geps$ we need the two-scale convergence of $h(\ceps^f,\ceps^s)$ on $\geps$, for which we need the strong two-scale convergence of $\ceps^f$ on $\geps$. This will be the crucial part in the following arguments. To the best of our knowledge, such strong compactness results for the setting of our model and the \textit{a priori} estimates for $\ceps^f$ are new. Compared to \cite{GahnEffectiveTransmissionContinuous} we have another scaling for the time-derivative and less regularity for the time-derivative. The latter also makes it impossible to consider the bulk domains $\oeps^{\pm}$ and the thin layer $\oemf$ separately as in \cite{GahnNeussRaduKnabner2018a} (see also the weak convergence results for $\veps$). To overcome this problem  we extend $\ceps^f$ to the whole domain $\Omega$ with the extension operator from Lemma \ref{lem:Extension_operators}. The linearity of this operator and the \textit{a priori} bounds for $\ceps^f$ allow to control differences of shifts of the extension of $\ceps^f$ with respect to time. Then, we can apply the Simon-compactness theorem to obtain strong convergence of the extension of $\ceps^f$ in $\Omega$. Now, with the estimates from the appendix (see Lemma \ref{lem:estimate_L2_membrane_Bulk_Gradient}, \ref{lem:estimate_L2_norm_membrane_Hbeta} and \ref{lem:Trace_inequality_Bulk}) we can control $\ceps^f$ within  the thin fluid layer $\oemf$ by the norms in the bulk domains.

In the following we define the extension
\begin{align*}
    \tceps^f := E_{\vareps}^f \ceps^f \in L^{\infty}((0,T),L^2(\Omega)) , L^2((0,T), H^1_{\#}(\Omega))
\end{align*}
with the extension operator $E_{\vareps}^f$ from Lemma \ref{lem:Extension_operators}.
\begin{proposition}\label{prop:convergence_cepsf}
There exists 
\begin{align*}
c_0^f \in L^{\infty}((0,T),L^2(\Omega)) \cap L^2((0,T),H^1_{\#}(\Omega)) \cap H^1((0,T),H^1_{\#}(\Omega)^{\ast}),
\end{align*}
such that up to a subsequence for every $\beta \in (\frac12,1)$
\begin{align*}
    \tceps^f &\rightharpoonup c_0^f &\mbox{ in }& L^2((0,T),H^1(\Omega)),
    \\
    \tceps^f &\rightarrow c_0^f &\mbox{ in }& L^2((0,T), H^{\beta}(\Omega)),
    \\
    \partial_t (\chi_{\oef} \ceps^f) &\rightharpoonup \partial_t c_0^f &\mbox{ in }& L^2((0,T),H^1_{\#}(\Omega)^{\ast}).
\end{align*}
Further, it holds that $\tceps^f \rats c_0^f|_{\Sigma}$  on $\geps$.
\end{proposition}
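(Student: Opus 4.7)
The plan is to start by extracting the weak limits. The $\vareps$-uniform boundedness of the extension operator $E_\vareps^f$ from Lemma \ref{lem:Extension_operators}, applied to the a priori bounds of Proposition \ref{prop:existence_apriori}, gives that $\tceps^f$ is bounded in $L^\infty((0,T), L^2(\Omega)) \cap L^2((0,T), H_\#^1(\Omega))$. Extracting a subsequence produces $c_0^f$ in this space with the claimed weak convergences, and the periodicity of the $\tceps^f$ transfers to $c_0^f$.

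The central step is the strong convergence in $L^2((0,T), H^\beta(\Omega))$. My strategy is to first obtain strong convergence in $L^2((0,T)\times \Omega)$ via a Simon--Kolmogorov compactness argument using the triple $H_\#^1(\Omega) \hookrightarrow L^2(\Omega) \hookrightarrow H_\#^1(\Omega)^*$, and then interpolate with the uniform $H^1$-bound. Spatial compactness is provided by Rellich's theorem; the technical core is a uniform time-shift estimate
\begin{align*}
    \sup_{\vareps>0} \|\tau_h \tceps^f - \tceps^f\|_{L^2((0,T-h), H_\#^1(\Omega)^*)} \longrightarrow 0 \text{ as } h \to 0.
\end{align*}
By linearity of the extension, $\tceps^f(t+h) - \tceps^f(t) = E_\vareps^f[\ceps^f(t+h) - \ceps^f(t)]$, and the fundamental theorem of calculus combined with the a priori bound on $\partial_t \ceps^f$ gives control of $\|\ceps^f(t+h) - \ceps^f(t)\|_{H_\#^1(\oef)^*} \le C\sqrt h$ after integration. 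Testing against $\phi \in H_\#^1(\Omega)$ and splitting $\Omega = \oef \cup \oems$, the bulk contribution is immediately bounded using $\|\phi\|_{H^1(\oef)} \le \|\phi\|_{H^1(\Omega)}$, while the layer contribution is handled via the thin-layer inequality $\|\phi\|_{L^2(\oem)} \le C\sqrt\vareps\|\phi\|_{H^1(\Omega)}$ from Lemma \ref{lem:estimate_L2_membrane_Bulk_Gradient} together with the $L^2$-continuity of $E_\vareps^f$ and the $L^\infty L^2$-bound of $\tceps^f$. With the $L^2((0,T)\times \Omega)$-strong convergence established, the interpolation inequality $\|u\|_{H^\beta} \le C\|u\|_{L^2}^{1-\beta}\|u\|_{H^1}^\beta$ combined with Hölder in time and the uniform $L^2((0,T), H^1(\Omega))$-bound yields convergence in $L^2((0,T), H^\beta(\Omega))$ for all $\beta \in (1/2,1)$.

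For the weak convergence of the time derivative, the identity $\langle \partial_t(\chi_{\oef}\ceps^f), \phi\rangle = \langle \partial_t \ceps^f, \phi|_{\oef}\rangle_{H_\#^1(\oef)}$ for $\phi \in H_\#^1(\Omega)$ together with the a priori bound on $\partial_t \ceps^f$ gives uniform boundedness in $L^2((0,T), H_\#^1(\Omega)^*)$; identification of the limit as $\partial_t c_0^f$ proceeds by integration by parts in time against smooth compactly supported test functions, passing to the limit via the strong $L^2$-convergence of $\tceps^f$ together with the smallness
\begin{align*}
\|\tceps^f - \chi_{\oef}\ceps^f\|_{L^2((0,T)\times \Omega)}^2 = \|\tceps^f\|_{L^2((0,T)\times \oems)}^2 \le C\vareps,
\end{align*}
which follows from the $L^\infty L^2$-bound for $\tceps^f$ and $|\oems| \le C\vareps$.

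Finally, for the strong two-scale convergence on $\geps$, the trace inequality in Lemma \ref{lem:Trace_inequality_Bulk} controls the $L^2(\geps)$-norm of a bulk function by its $H^\beta$-norm (for $\beta > 1/2$) with the scaling compatible with two-scale convergence on oscillating surfaces. Applied to $\tceps^f - c_0^f$, and combined with the strong $H^\beta$-convergence from the second step, it produces the norm-convergence criterion required for strong two-scale convergence. The weak two-scale limit is $c_0^f|_{\Sigma}$ (constant in the fast variable $y$) by routine testing against admissible $\Sigma$-periodic smooth functions on the surface. The principal obstacle in the whole argument is the uniform-in-$\vareps$ time-shift estimate: reconciling the temporal smallness (of order $\sqrt h$, coming from $\partial_t \ceps^f$) with the geometric smallness on $\oems$ (of order $\sqrt\vareps$) without losing uniformity in either parameter requires careful interplay between the duality behavior of $E_\vareps^f$ and the thin-layer estimates.
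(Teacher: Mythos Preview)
Your outline is largely sound, but the time-shift argument for Simon's theorem has a genuine gap that you yourself flag as the ``principal obstacle'' without actually closing it. Your estimate splits $\langle \tceps^f(t+h)-\tceps^f(t),\phi\rangle_\Omega$ into an $\oef$ piece (controlled by $C\sqrt h$) and an $\oems$ piece (controlled by $C\sqrt\vareps$ via the thin-layer inequality and the $L^\infty L^2$ bound). The resulting bound $\|\tau_h\tceps^f-\tceps^f\|_{L^2((0,T-h),H^1_\#(\Omega)^\ast)}\le C(\sqrt h+\sqrt\vareps)$ does \emph{not} satisfy $\sup_\vareps\to 0$ as $h\to 0$: the $\sqrt\vareps$ term is independent of $h$, so after taking the supremum over the sequence you are left with a positive constant. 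Simon's Theorem~5 therefore does not apply as you have set it up.

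The paper avoids this entirely by a simpler choice of the third space: it estimates the time shift in $L^2(\Omega)$ rather than $H^1_\#(\Omega)^\ast$. The key observation is that the linearity and uniform $L^2$-boundedness of $E_\vareps^f$ give
\[
\|\tceps^f(\cdot+h)-\tceps^f\|_{L^2((0,T-h)\times\Omega)}\le C\|\ceps^f(\cdot+h)-\ceps^f\|_{L^2((0,T-h)\times\oef)},
\]
with no splitting required. The right-hand side lives on $\oef$, where one controls $\partial_t\ceps^f$ in $H^1_\#(\oef)^\ast$; applying the Nikolskii-type inequality (Lemma~\ref{lem:Nikolskii_estimate}) with the Gelfand triple $H^1(\oef)\hookrightarrow L^2(\oef)\hookrightarrow H^1_\#(\oef)^\ast$ yields the clean bound $Ch^{1/4}$, uniform in $\vareps$. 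Simon's theorem with the triple $H^1(\Omega)\hookrightarrow H^\beta(\Omega)\hookrightarrow L^2(\Omega)$ then gives strong convergence in $L^2((0,T),H^\beta(\Omega))$ directly, so your interpolation step is also unnecessary. The remaining parts of your argument (weak limit of the time derivative, two-scale convergence on $\geps$) are essentially in line with the paper; note only that the proposition asserts \emph{weak} two-scale convergence on $\geps$, for which the paper simply cites Corollary~\ref{cor:weak_ts_conv_geps}, whereas your $H^\beta$-trace argument in fact establishes the stronger strong two-scale convergence proved in the subsequent proposition.
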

\begin{proof}
From the properties of the extension operator in Lemma \ref{lem:Extension_operators} and the \textit{a priori} estimates for $\ceps^f$ in Proposition \ref{prop:existence_apriori} we obtain
\begin{align*}
\|\tceps^f\|_{L^{\infty}((0,T),L^2(\Omega))} + \|\tceps^f\|_{L^2((0,T),H^1(\Omega))} \le C.
\end{align*}
Hence, there exists $c_0^f \in L^{\infty}((0,T),L^2(\Omega)) \cap L^2((0,T),H^1_{\#}(\Omega)) $, such that up to a subsequence
\begin{align*}
    \tceps &\rightharpoonup c_0^f \qquad \mbox{in } L^2((0,T),H^1(\Omega)).
\end{align*}
For the strong convergence in $L^2((0,T), H^{\beta}(\Omega))$ we need some control with respect to the time-variable to apply a Kolmogorov-Simon-type compactness result (see \cite[Theorem 5]{Simon}. We emphasize that a direct application of the Aubin-Lions-Lemma is not possible, because we have no control of $\partial_t ( E_{\vareps}^f \ceps^f)$. Hence, we argue in the same way as in the proof of \cite[Lemma 10]{Gahn}. Using the linearity of $E_{\vareps}^f$ we obtain together with Lemma \ref{lem:Nikolskii_estimate} for every $0<h \ll 1$
\begin{align*}
\|\tceps^f(\cdot_t + h, \cdot_x) - \tceps^f&\|_{L^2((0,T-h) \times \Omega)} \le
C \|\ceps^f(\cdot_t + h, \cdot_x) - \ceps^f\|_{L^2((0,T-h) \times \Omega)}
\\
&\le h^{\frac14} \|\ceps^f(\cdot_t + h ,\cdot_x) - \ceps^f\|^{\frac12}_{L^2((0,T),H^1(\oef))} \|\partial_t \ceps^f\|^{\frac12}_{L^2((0,T),H^1_{\#}(\oef)^{\ast})}
\\
&\le Ch^{\frac14}.
\end{align*}
Since $\tceps$ is bounded in $L^2((0,T),H^1(\Omega))$ and the embedding $H^1(\Omega)\hookrightarrow H^{\beta}(\Omega)$ is compact, we obtain the strong convergence of $\tceps^f$ from \cite[Theorem 5]{Simon}.

It remains to check the weak convergence of the time derivative of the zero extension. Since $\partial_t (\chi_{\oef} \ceps^f)$ is bounded in $L^2((0,T),H^1_{\#}(\Omega)^{\ast})$, there exists a subsequence and a function $W\in L^2((0,T),H^1_{\#}(\Omega)^{\ast})$ such that $\partial_t (\chi_{\oef} \ceps^f )\rightharpoonup W$ in $L^2((0,T),H^1_{\#}(\Omega)^{\ast}).$ Hence, we have for all $\psi \in C_0^{\infty}((0,T))$ and $\phi \in H^1_{\#}(\Omega)$
\begin{align*}
\int_0^T   \langle W, \phi\rangle_{H^1_{\#}(\Omega)} \psi(t) dt &= \lim_{\vareps \to 0} \int_0^T \int_{\Omega} \chi_{\oef} \ceps^f \phi \psi'dx dt.  
\end{align*}
We split the integral on the right-hand side in the bulk terms and the membrane part $\oemf$ and calculate the limits:
\begin{align*}
\int_0^T\int_{\oeps^{\pm}} \ceps^f \phi \psi' dx dt = \int_0^T \int_{\Omega} \tceps^f \chi_{\oeps^{\pm}} \phi \psi' dx dt \rightarrow \int_0^T \int_{\Omega^{\pm}} c_0^f \phi \psi' dx dt.
\end{align*}
Further, we have with the \textit{a priori} estimates for $\ceps^f$ from Proposition \ref{prop:existence_apriori}
\begin{align*}
\left| \int_0^T \int_{\oemf} \ceps^f \phi \psi' dx dt \right| \le C \|\ceps^f\|_{L^2((0,T)\times \oemf)} \|\phi\|_{L^2(\oemf)} \le C \sqrt{\vareps} \|\phi\|_{L^2(\Omega)} \rightarrow 0.
\end{align*}
This implies  $W = \partial_t c_0^f$. Finally, the convergence $\tceps^f \rats c_0^f|_{\Sigma}$  on $\geps$ follows directly from Lemma \ref{lem:weak_two_scale_Geps} respectively Corollary \ref{cor:weak_ts_conv_geps}.
\end{proof}

\begin{lemma}\label{lem:aux_lemma}
For every $u\in H^1(\Omega)$ it holds that 
\begin{align*}
\teps^M u &\rightarrow u|_{\Sigma} &\mbox{ in }& L^2(\Sigma \times Z),
\\
\teps^M u|_{\Gamma}  &\rightarrow u|_{\Sigma} &\mbox{ in }& L^2(\Sigma \times \Gamma).
\end{align*}
\end{lemma}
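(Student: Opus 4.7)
The approach is a density argument combining $\vareps$-uniform bounds on $\teps^M$ with explicit pointwise convergence for smooth test functions. The first step is an $\vareps$-uniform estimate $\|\teps^M u\|_{L^2(\Sigma \times Z)} \le C\|u\|_{H^1(\Omega)}$. By the change of variables built into the unfolding operator for the thin layer, one has the identity
\begin{align*}
\|\teps^M u\|_{L^2(\Sigma \times Z)}^2 = \foe \|u\|_{L^2(\oem)}^2.
\end{align*}
Using the trace $u(\cdot,0) \in L^2(\Sigma)$ of $u \in H^1(\Omega)$ together with the identity $u(\x,x_3) = u(\x,0) + \int_0^{x_3}\partial_3 u(\x,s)\,ds$ and Fubini, I obtain $\|u\|_{L^2(\oem)}^2 \le C\vareps \|u\|_{H^1(\Omega)}^2$, which provides the desired uniform bound.

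Next, for $u \in C^1_{\#}(\overline{\Omega})$ the defining formula $\teps^M u(\x,y) = u(\vareps[\x/\vareps]_Y + \vareps\y, \vareps y_3)$ yields, by the mean value theorem,
\begin{align*}
|\teps^M u(\x,y) - u(\x,0)| \le C \vareps \, \|\nabla u\|_{L^\infty(\Omega)}
\end{align*}
uniformly on $\Sigma \times Z$, so $\teps^M u \to u|_\Sigma$ in $L^\infty(\Sigma \times Z)$ and hence in $L^2(\Sigma \times Z)$. For general $u \in H^1(\Omega)$ and $\delta >0$, I choose $u_\delta \in C^1_{\#}(\overline{\Omega})$ with $\|u - u_\delta\|_{H^1(\Omega)} < \delta$, decompose
\begin{align*}
\teps^M u - u|_\Sigma = \teps^M(u - u_\delta) + (\teps^M u_\delta - u_\delta|_\Sigma) + (u_\delta - u)|_\Sigma,
\end{align*}
and bound the first term by the uniform estimate above, the third term by continuity of the trace $H^1(\Omega) \to L^2(\Sigma)$, and send $\vareps \to 0$ in the middle term using the smooth-function convergence. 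This yields $\limsup_{\vareps \to 0} \|\teps^M u - u|_\Sigma\|_{L^2(\Sigma \times Z)} \le C\delta$, and since $\delta$ was arbitrary, the first convergence follows.

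For the second convergence on $\Gamma$, the same scheme applies once an $\vareps$-uniform bound for $\teps^M u|_\Gamma$ is available. I use the standard trace inequality on the Lipschitz reference cell $Z$, applied for a.e.~$\x \in \Sigma$,
\begin{align*}
\|\teps^M u\|_{L^2(\Sigma \times \Gamma)}^2 \le C \left( \|\teps^M u\|_{L^2(\Sigma \times Z)}^2 + \|\nabla_y \teps^M u\|_{L^2(\Sigma \times Z)}^2 \right).
\end{align*}
Since $\nabla_y \teps^M u = \vareps\, \teps^M \nabla u$, the gradient term equals $\vareps^2 \cdot \vareps^{-1} \|\nabla u\|_{L^2(\oem)}^2 \le \vareps \|u\|_{H^1(\Omega)}^2$, which is uniformly bounded (and in fact vanishes as $\vareps\to 0$), so combined with the first step one gets $\|\teps^M u\|_{L^2(\Sigma \times \Gamma)} \le C \|u\|_{H^1(\Omega)}$. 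For smooth $u$ the pointwise bound above transfers directly to $\Gamma$, yielding uniform (hence $L^2$) convergence, and the same density argument as before closes the proof. The main delicate point is obtaining the correct $\vareps$-scaling in both the $L^2$ and trace estimates of $\teps^M$; once these are secured the rest is routine.
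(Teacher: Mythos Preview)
Your proof is correct and follows essentially the same scheme as the paper: a density argument combining an $\vareps$-uniform bound on $\teps^M$ (as an operator $H^1(\Omega)\to L^2(\Sigma\times Z)$, resp.\ $L^2(\Sigma\times\Gamma)$) with explicit convergence for smooth functions. The paper obtains the uniform bounds by invoking Lemmas~\ref{lem:estimate_L2_membrane_Bulk_Gradient} and~\ref{lem:Trace_inequality_Bulk} (i.e.\ it uses $\|\teps^M(u-u_k)\|_{L^2(\Sigma\times\Gamma)}=\|u-u_k\|_{L^2(\geps)}\le C\|u-u_k\|_{H^1(\Omega)}$ directly), whereas you derive them from scratch via the fundamental theorem of calculus in the $x_3$-direction and, for the $\Gamma$-case, the trace inequality on the reference cell together with $\nabla_y\teps^M u=\vareps\,\teps^M\nabla u$. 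One small imprecision: $\Gamma$ is an interior surface of $Z$, not part of $\partial Z$, so the trace inequality you quote should be understood as the trace $H^1(Z_f)\to L^2(\Gamma)$ composed with the restriction $H^1(Z)\to H^1(Z_f)$; this is harmless but worth stating precisely.
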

\begin{proof}
We only prove the convergence in $L^2(\Sigma \times \Gamma)$. The other case follows by similar arguments (using Lemma \ref{lem:estimate_L2_membrane_Bulk_Gradient} instead of Lemma \ref{lem:Trace_inequality_Bulk}). Let $u_k \in C^{\infty}(\overline{\Omega})$ with $u_k \rightarrow u$ in $H^1(\Omega)$. With the trace inequality in Lemma \ref{lem:Trace_inequality_Bulk} and the elemental properties of the unfolding operator, we have
\begin{align*}
\|\teps^M u - u|_{\Sigma} \|_{L^2(\Sigma \times \Gamma)} &\le \|u_k - u\|_{L^2(\geps)} + \|\teps^M u_k  - u_k|_{\Sigma}\|_{L^2(\Sigma \times \Gamma)} + \|u_k|_{\Sigma} - u|_{\Sigma} \|_{L^2(\Sigma \times \Gamma)}
\\
&\le C \|u_k - u\|_{H^1(\Omega)} + \|\teps^M u_k - u_k \|_{L^2(\Sigma \times \Gamma)} + |\Gamma|^{\frac12} \|u_k - u\|_{L^2(\Sigma)}.
\end{align*}
The first and third term converge to zero for $k\to \infty$, due to the strong $H^1$-convergence of $u_k$ to $u$ and the trace inequality. The second term vanishes for $\vareps \to 0$ (for fixed $k$) by the dominated convergence theorem.
\end{proof}

Now, we are able to show the strong two-scale convergence of $\tceps^f$ in the membrane $\oeps^M$ and $\geps \cup S_{\vareps}^{\pm}$. We emphasize that here we have no good control of gradient compared to the case $\gamma=-1$ for $\ceps^s$ (see Section \ref{sec:conv_ceps_s} below).

\begin{proposition}
Up to a subsequence it holds that
\begin{align*}
   \tceps^f|_{\oeps^M} &\rasts c_0^f|_{\Sigma}  &\mbox{ strongly in the two-scale sense in }& \oeps^M,
   \\
   \ceps^f|_{\geps } &\rasts c_0^f|_{\Sigma} &\mbox{ strongly in the two-scale sense on }& \geps.
\end{align*}
In other words, we have $\teps^M \tceps^f \rightarrow c_0^f|_{\Sigma} $ in $L^2((0,T)\times \Sigma \times Z)$ and $\teps^M \ceps^f|_{\Gamma} \rightarrow c_0^f|_{\Sigma}$ in $L^2((0,T)\times \Sigma \times \Gamma )$.
\end{proposition}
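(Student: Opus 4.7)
The plan is to prove both convergences by the same splitting strategy, reducing the problem to the deterministic approximation in Lemma~\ref{lem:aux_lemma} plus an $\vareps$-dependent error that I can absorb by the refined $H^\beta$-estimates of Proposition~\ref{prop:convergence_cepsf} combined with Lemma~\ref{lem:estimate_L2_norm_membrane_Hbeta} and Lemma~\ref{lem:Trace_inequality_Bulk}.

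For the convergence in $\oeps^M$, I would decompose
$$
\teps^M \tceps^f - c_0^f|_{\Sigma} = \teps^M(\tceps^f - c_0^f) + (\teps^M c_0^f - c_0^f|_{\Sigma}).
$$
For the second, \emph{frozen}, summand, Lemma~\ref{lem:aux_lemma} applied with $u = c_0^f(t,\cdot) \in H^1_{\#}(\Omega)$ for a.e.\ $t$ yields pointwise-in-time convergence in $L^2(\Sigma \times Z)$, and the elementary bound $\|\teps^M c_0^f(t,\cdot)\|_{L^2(\Sigma \times Z)}^2 = |Y|\vareps^{-1}\|c_0^f(t,\cdot)\|_{L^2(\oeps^M)}^2 \le C\|c_0^f(t,\cdot)\|_{H^1(\Omega)}^2$ provides a uniform $L^2(0,T)$-envelope, so dominated convergence upgrades this to convergence in $L^2((0,T) \times \Sigma \times Z)$. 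For the first summand I would combine the unfolding identity $\|\teps^M w\|_{L^2(\Sigma \times Z)}^2 = |Y|\vareps^{-1}\|w\|_{L^2(\oeps^M)}^2$ with the membrane embedding $\|w\|_{L^2(\oeps^M)} \le C\sqrt{\vareps}\|w\|_{H^\beta(\Omega)}$ from Lemma~\ref{lem:estimate_L2_norm_membrane_Hbeta}, obtaining
$$
\|\teps^M(\tceps^f - c_0^f)\|_{L^2((0,T) \times \Sigma \times Z)} \le C \|\tceps^f - c_0^f\|_{L^2((0,T), H^\beta(\Omega))} \to 0
$$
by Proposition~\ref{prop:convergence_cepsf}.

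For the convergence on $\geps$, I would first note that, since $\tceps^f = E_{\vareps}^f \ceps^f \in H^1(\Omega)$ for a.e.\ $t$ (by Lemma~\ref{lem:Extension_operators}), its two-sided trace on $\geps$ coincides with $\ceps^f|_{\geps}$, and perform the analogous splitting in $L^2(\Sigma \times \Gamma)$. The frozen piece is again handled by the second assertion of Lemma~\ref{lem:aux_lemma} together with dominated convergence. The error piece is controlled using the surface unfolding identity $\|\teps^M u\|_{L^2(\Sigma \times \Gamma)}^2 = |Y| \|u\|_{L^2(\geps)}^2$ (which, crucially, carries no $\vareps^{-1}$-factor) together with the new $H^\beta$-trace inequality of Lemma~\ref{lem:Trace_inequality_Bulk}, reducing the error once more to $C\|\tceps^f - c_0^f\|_{L^2((0,T), H^\beta(\Omega))}$.

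The main obstacle I anticipate is the $\vareps^{-1}$-factor produced by the unfolding on the thin layer: the weak convergence $\tceps^f \rightharpoonup c_0^f$ in $L^2((0,T), H^1(\Omega))$ is too coarse to absorb it, and even plain strong $L^2$-convergence would leave a divergent prefactor. What rescues the argument is the precise $\sqrt{\vareps}$-gain in the $H^\beta$-to-$L^2(\oeps^M)$ embedding, paired with the $H^\beta$-strong compactness of $\tceps^f$; this is exactly why the Sobolev--Slobodeckij scale $\beta \in (\tfrac12, 1)$ has to be brought in here, rather than the more standard $H^1$-scale used in earlier works on perforated thin layers.
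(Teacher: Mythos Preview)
Your proposal is correct and follows essentially the same approach as the paper: the same splitting $\teps^M \tceps^f - c_0^f|_{\Sigma} = \teps^M(\tceps^f - c_0^f) + (\teps^M c_0^f - c_0^f|_{\Sigma})$, with the first term controlled via Lemma~\ref{lem:estimate_L2_norm_membrane_Hbeta} and the strong $H^\beta$-convergence from Proposition~\ref{prop:convergence_cepsf}, the second via Lemma~\ref{lem:aux_lemma}, and the surface case handled analogously using Lemma~\ref{lem:Trace_inequality_Bulk}. Your write-up is slightly more explicit about the dominated-convergence step for the time variable and about why the extension's trace on $\geps$ agrees with $\ceps^f|_{\geps}$, but these are just details the paper leaves implicit.
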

\begin{proof}
Using the properties of the unfolding operator and Lemma \ref{lem:estimate_L2_norm_membrane_Hbeta} we obtain for $\beta \in (\frac12 ,1)$
\begin{align*}
\|\teps^M \tceps^f - c_0^f|_{\Sigma}\|_{L^2((0,T)\times \Sigma \times Z)} &\le \frac{1}{\sqrt{\vareps}} \| \tceps^f - c_0^f\|_{L^2((0,T)\times \oeps^M)} + \|\teps^M c_0^f - c_0^f|_{\Sigma}\|_{L^2((0,T)\times \Sigma \times Z)}
\\
&\le C \|\tceps^f - c_0^f\|_{L^2((0,T),H^{\beta}(\Omega))} +\|\teps^M c_0^f - c_0^f|_{\Sigma}\|_{L^2((0,T)\times \Sigma \times Z)}.
\end{align*}
The first term converges to zero for $\vareps \to 0$, because of the strong convergence of $\tceps^f$ to $c_0^f$ in $L^2((0,T),H^{\beta}(\Omega))$ from Proposition \ref{prop:convergence_cepsf}. The second term vanishes in the limit, due to Lemma \ref{lem:aux_lemma}. This gives the strong convergence of $\tceps^f|_{\oem}$.  The convergence of the traces follows by similar arguments using the trace inequality $\eqref{ineq:lem_trace_inequality_bulk_Hbeta}$ in Lemma \ref{lem:Trace_inequality_Bulk}.
\end{proof}

\begin{remark}
The Proposition remains valid if we replace $\geps$ by $\geps \cup S_{\vareps,f}^{\pm}$ (respectively $\Gamma$ with $\Gamma \cup S_f^{\pm}$), and the proof follows the same lines.
\end{remark}

\subsection{Convergence of the solute concentration in the solid domain}
\label{sec:conv_ceps_s}

Now, we give the compactness results for the concentration $\ceps^s$ in the solid phase of the thin layer $\oems$. Again, the only critical question is the strong two-scale convergence on $\geps$, necessary for the nonlinear boundary term. Hence, from a general perspective, we need strong two-scale compactness results for thin perforated layers under weak regularity conditions for the time-derivative (not in $L^2$). Such results were obtained for the full thin layer in \cite{GahnNeussRaduKnabner2018a}. For the case $\gamma = 1$ the proof is quite similar, and we formulate here a general two-scale compactness result of Kolmogorov-Simon-type based suitable \textit{a priori} bounds and additional estimates for the shifts with respect to the spatial variable. Such a result was formulated in \cite{GahnPop} for periodically perforated domains. For $\gamma \in [-1,1)$ we extend the function with the extension operator from Lemma \ref{lem:Extension_operators} to the whole layer $\oeps^M$ and now average this quantity with respect to $x_3$-variable. Compared to \cite{GahnNeussRaduKnabner2018a}, where a similar argument was used, we have no direct control of the time-variable for this sequence of averaged functions, because the time-derivative is not commutable with the extension operator (for low regularity of the time-derivative). However, we control shifts with respect to time in the same way as for $\ceps^f$.

\subsubsection{The case $\gamma = -1$}

For $\gamma = -1$ we obtain from Proposition \ref{prop:existence_apriori} the following \textit{a priori} estimates:
\begin{align*}
\frac{1}{\sqrt{\vareps}} \|\partial_t \ceps^s \|_{L^2((0,T),H^1(\oems)^{\ast})} + \frac{1}{\sqrt{\vareps}} \|\ceps^s\|_{L^2((0,T),H^1(\oems)} \le C.
\end{align*}
With the extension operator $E_{\vareps}^s$ from Lemma \ref{lem:Extension_operators} we define $\tceps^s:= E_{\vareps}^s \ceps^s$ and obtain 
\begin{align*}
    \frac{1}{\sqrt{\vareps}}\|\tceps^s\|_{L^2((0,T),H^1(\oem))} \le C.
\end{align*}
Again (as for $\ceps^f$), we have no control for $\partial_t \tceps^s$.  We argue in a similar way as in \cite{GahnNeussRaduKnabner2018a}, whereby a perforated layer $\oems$ must be taken into account here. However, we will overcome this problem by using the extension operator. We define the averaged function $\bceps^s$  by
\begin{align}\label{def:averaged_function}
    \bceps^s(\x):= \frac{1}{2\vareps} \int_{-\vareps}^{\vareps} \tceps^s (x) dx_3.
\end{align}

The following lemma was shown in the proof of  \cite[Proposition 4]{GahnNeussRaduKnabner2018a}. For the sake of completeness we formulate it here as an own result and give the proof for arbitrary $p \in (1,\infty)$.
\begin{lemma}\label{lem:general_strong_ts_convergence_averaged_function}
Let $p \in (1,\infty)$ and denote its dual exponent by $p'$. For an arbitrary function $\ueps \in L^p(\oeps^M)$ we define its average $\bueps \in L^p(\Sigma)$ by
\begin{align*}
\bueps(\x) : = \frac{1}{2\vareps} \int_{-\vareps}^{\vareps} \ueps (x) dx_3.
\end{align*}
Let $\ueps \in W^{1,p}(\oeps^M)$ be a sequence, such that $\bueps \rightarrow u_0$ in $L^p(\Sigma)$ for a function $u_0\in L^p(\Sigma)$ and assume that 
\begin{align*}
    \lim_{\vareps \to 0} \vareps^{\frac{1}{p'}} \|\partial_n \ueps\|_{L^p(\oeps^M)} = 0.
\end{align*}
Then $\ueps$ converges strongly in the two-scale sense in $L^p$ to $u_0$, i.e., the unfolded sequence $\teps^M \ueps $ converges strongly to $u_0$ in $L^p(\Sigma \times Z)$.
The result is also valid in the time-dependent case.
\end{lemma}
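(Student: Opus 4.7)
The plan is to split $\teps^M u_\vareps - u_0$ as $(\teps^M u_\vareps - \teps^M \bueps) + (\teps^M \bueps - u_0)$, where $\bueps$ is regarded as an element of $L^p(\oem)$ that is constant in the $x_3$-direction, and to show that each piece tends to $0$ in $L^p(\Sigma \times Z)$. The first piece measures how far $u_\vareps$ is from being $x_3$-independent and will be controlled by the smallness hypothesis $\vareps^{1/p'}\|\partial_n u_\vareps\|_{L^p(\oem)} \to 0$. The second piece is, after unfolding, essentially a copy of $\bueps$ in the $(\bar{x},\bar{y})$-variables and will converge to $u_0$ thanks to the assumed strong convergence $\bueps \to u_0$ in $L^p(\Sigma)$.

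For the first piece, the key ingredient is a Poincaré-type estimate in the thin direction. For fixed $\bar{x}\in\Sigma$ and any $x_3\in(-\vareps,\vareps)$, writing $u_\vareps(\bar{x},x_3) - \bueps(\bar{x}) = \frac{1}{2\vareps}\int_{-\vareps}^{\vareps}\bigl(u_\vareps(\bar{x},x_3) - u_\vareps(\bar{x},s)\bigr)\,ds$, applying the fundamental theorem of calculus to the integrand, and using Hölder's inequality with exponents $(p',p)$ twice yields
\begin{equation*}
|u_\vareps(\bar{x},x_3) - \bueps(\bar{x})|^p \leq (2\vareps)^{p-1}\int_{-\vareps}^{\vareps}|\partial_n u_\vareps(\bar{x},t)|^p\,dt.
\end{equation*}
Integrating over $\oem = \Sigma\times(-\vareps,\vareps)$ gives $\|u_\vareps - \bueps\|_{L^p(\oem)} \leq 2\vareps\|\partial_n u_\vareps\|_{L^p(\oem)}$. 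Combined with the standard isometry $\|\teps^M v\|_{L^p(\Sigma\times Z)}^p = \vareps^{-1}\|v\|_{L^p(\oem)}^p$ (up to a negligible boundary-layer correction near $\partial\Sigma$), this produces
\begin{equation*}
\|\teps^M u_\vareps - \teps^M \bueps\|_{L^p(\Sigma \times Z)} \leq C\vareps^{1/p'}\|\partial_n u_\vareps\|_{L^p(\oem)},
\end{equation*}
which tends to $0$ by assumption.

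For the second piece, since $\bueps$ depends only on $\bar{x}$, the unfolded function $\teps^M \bueps$ is independent of $y_3$ and equals $\bueps$ evaluated at a shifted base point. Writing $\teps^M \bueps - u_0 = \teps^M(\bueps - u_0) + (\teps^M u_0 - u_0)$, the first summand is controlled by $|Z|^{1/p}\|\bueps - u_0\|_{L^p(\Sigma)}\to 0$ by hypothesis, and the second tends to $0$ by a standard density argument: approximate $u_0$ in $L^p(\Sigma)$ by continuous functions and use the classical property $\teps^M \varphi \to \varphi$ in $L^p(\Sigma \times Z)$ for continuous $\varphi$. Combining both pieces finishes the proof. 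The only bookkeeping concern is the boundary layer near $\partial\Sigma$ where $\oem$ does not decompose into entire $\vareps$-cells, but this is a standard matter for two-scale convergence in thin layers recalled in Section \ref{sec:two_scale_convergence_unfolding}. The time-dependent version is immediate, since all estimates are pointwise in $t$ and integrate over $(0,T)$ directly.
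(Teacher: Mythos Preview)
Your proof is correct and follows essentially the same approach as the paper: both rely on the Poincar\'e-type estimate $\|u_\vareps - \bueps\|_{L^p(\oem)} \le C\vareps\|\partial_n u_\vareps\|_{L^p(\oem)}$ in the thin direction, the isometry $\|\teps^M v\|_{L^p(\Sigma\times Z)} = \vareps^{-1/p}\|v\|_{L^p(\oem)}$, and the convergence $\teps^M u_0 \to u_0$ for $u_0$ independent of $y$. The only difference is cosmetic ordering (you unfold first and then split, the paper reduces to the physical-space quantity $\vareps^{-1/p}\|u_\vareps - u_0\|_{L^p(\oem)}$ and splits there). One minor remark: your caveat about a boundary layer near $\partial\Sigma$ is unnecessary here, since by the assumptions $a,b\in\Z^2$ and $\vareps^{-1}\in\N$ the layer $\oem$ is covered exactly by $\vareps$-cells and the unfolding isometry is exact.
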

\begin{proof}
It is enough to show that 
\begin{align}\label{convergence:aux_lemma}
    \vareps^{-\frac{1}{p}} \| \ueps - u_0 \|_{L^p(\oeps^M)} \rightarrow 0
\end{align}
for $\vareps \to 0$. In fact, using the equivalent characterization of the strong two-scale convergence via the unfolding operator, we obtain
\begin{align*}
\|\teps^M \ueps - u_0 \|_{L^p(\Sigma \times Z)} &\le \|\teps^M \ueps - \teps^M u_0 \|_{L^p(\Sigma \times Z)} + \|\teps^M u_0 - u_0 \|_{L^p(\Sigma \times Z)}
\\
&= \vareps^{-\frac{1}{p}} \|\ueps - u_0 \|_{L^p(\oeps^M)} + \|\teps^M u_0 - u_0 \|_{L^p(\Sigma \times Z)}.
\end{align*}
Since the second term vanishes for $\vareps \to 0$, it follows that $\eqref{convergence:aux_lemma}$ implies the strong two-scale convergence of $\ueps$. To check $\eqref{convergence:aux_lemma}$ we use the simple estimate (see the proof of  \cite[Lemma 5.1]{GahnNeussRaduKnabner2018a} for more details in the case $p=2$)
\begin{align*}
\vareps^{-\frac{1}{p}} \|\ueps - \bueps \|_{L^p(\oeps^M)} \le C \vareps^{\frac{1}{p'}} \|\partial_n \ueps\|_{L^p(\oeps^M)},
\end{align*}
to obtain 
\begin{align*}
    \vareps^{-\frac{1}{p}} \|\ueps - u_0\|_{L^p(\oeps^M)} &\le \vareps^{-\frac{1}{p}} \|\ueps - \bueps\|_{L^p(\oeps^M)} + \vareps^{-\frac{1}{p}}  \| \bueps - u_0 \|_{L^p(\oeps^M)} 
    \\
    &\le C \vareps^{\frac{1}{p'}} \|\partial_n \ueps\|_{L^p(\oeps^M)} +  C \|\bueps - u_0\|_{L^p(\Sigma)} \overset{\vareps \to 0}{\longrightarrow} 0.
\end{align*}

\end{proof}
Now, we show the convergence results for $\tceps^s $, especially the strong convergence of $\tceps^s $ in the two-scale sense in $\oeps^M$ and on the surface $\geps$:
\begin{proposition}[The case $\gamma = -1$]\label{prop:strong_TS_convergence_cepss_gamma_-1}
Let $\gamma = -1$. There exist limit functions $c_0^s \in L^2((0,T),H^1_{\#}(\Sigma)) \cap H^1((0,T),H^1_{\#}(\Sigma)^{\ast})$ and $c_1^s \in L^2((0,T)\times \Sigma,H_{\#}^1(Z_s))$, such that up to a subsequence
\begin{align*}
\nabla \tceps^s \rats \nabla_{\x} c_0^s + \nabla_y c_1^s,\qquad 
    \tceps^s \rasts c_0^s, \qquad \ceps^s|_{\geps} \rasts c_0^s \quad \mbox{ on }\geps.
\end{align*}
Further, for all $\phieps(t,x) := \phi \left(t,\x,\fxe\right)$ with $\phi \in C_0^{\infty}((0,T), C_{\#}^{\infty}(\Sigma, C_{\#}^0(Z_s)))$ and all $\eta \in L^2((0,T),H^1_{\#}(\Sigma))$ it holds that (again up to a subsequence)
\begin{align*}
    \lim_{\vareps \to 0} \foe \int_0^T \langle \partial_t \ceps^s , \eta + \phieps \rangle_{H^1_{\#}(\oems)} dt = \int_0^T \int_{Z_s} \langle \partial_t c_0^s, \eta + \phi(\cdot_t,\cdot_{\x},y ) \rangle_{H^1_{\#}(\Sigma)} dy dt.
\end{align*}
\end{proposition}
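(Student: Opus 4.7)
Step 1: \emph{Weak two-scale limits.} Extending $\ceps^s$ by the operator of Lemma \ref{lem:Extension_operators} to $\tceps^s$ converts the \textit{a priori} bound of Proposition \ref{prop:existence_apriori} (for $\gamma=-1$) into $\|\tceps^s\|_{L^2((0,T),H^1(\oem))} \le C\sqrt{\vareps}$. The standard thin-layer two-scale compactness (cf.\ the appendix) then yields, up to a subsequence, $c_0^s \in L^2((0,T),H^1_\#(\Sigma))$, independent of $y$ because the full gradient is scaled as $\sqrt{\vareps}$, and $c_1^s \in L^2((0,T)\times\Sigma,H^1_\#(Z))$ such that $\tceps^s \rats c_0^s$ and $\nabla\tceps^s \rats \nabla_{\x} c_0^s + \nabla_y c_1^s$; restriction to $Z_s$ gives the corresponding statement for $\ceps^s$.

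Step 2: \emph{Strong two-scale convergence in the membrane via averaging.} The plan is to apply Lemma \ref{lem:general_strong_ts_convergence_averaged_function} with $p=2$ to $\tceps^s$. The condition $\sqrt{\vareps}\,\|\partial_{x_3}\tceps^s\|_{L^2((0,T)\times\oem)} = O(\vareps) \to 0$ is immediate from the gradient bound. For the required strong $L^2((0,T)\times\Sigma)$-convergence of the average $\bceps^s$ defined in $\eqref{def:averaged_function}$, I verify the hypotheses of Simon's compactness theorem \cite[Theorem 5]{Simon}: Jensen's inequality transfers the scaled gradient bound to $\|\bceps^s\|_{L^2((0,T),H^1(\Sigma))} \le C$. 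For time compactness, I combine the Nikolski-type estimate (Lemma \ref{lem:Nikolskii_estimate}) with the identity stated after Proposition \ref{prop:existence_apriori}, which converts the $H_{\vareps,-1}(\oems)^\ast$-bound into $\|\partial_t\ceps^s\|_{L^2((0,T),H^1(\oems)^\ast)} \le C\sqrt{\vareps}$; this yields $\|\ceps^s(\cdot+h,\cdot)-\ceps^s\|_{L^2((0,T-h)\times\oems)} \le Ch^{1/4}\sqrt{\vareps}$. Using linearity of the extension and the $(2\vareps)^{-1/2}$-boundedness of the averaging operator from $L^2(\oem)$ into $L^2(\Sigma)$, the shift bound for $\bceps^s$ becomes $O(h^{1/4})$ uniformly in $\vareps$. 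Uniqueness of the weak two-scale limit identifies the strong $L^2$-limit of $\bceps^s$ as $c_0^s$, so Lemma \ref{lem:general_strong_ts_convergence_averaged_function} delivers $\tceps^s \rasts c_0^s$.

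Step 3: \emph{Strong two-scale convergence on $\geps$.} For smooth approximations $u_k \to c_0^s$ in $L^2((0,T),H^1_\#(\Sigma))$, I write
\begin{align*}
\|\teps^M\ceps^s|_{\Gamma} - c_0^s\|_{L^2((0,T)\times\Sigma\times\Gamma)}
&\le \|\teps^M(\ceps^s-u_k)|_{\Gamma}\|_{L^2} + \|\teps^M u_k|_{\Gamma} - u_k\|_{L^2} + \|u_k - c_0^s\|_{L^2}.
\end{align*}
A fibrewise trace inequality on $Z_s$, combined with the unfolding scalings $\|\teps^M\tceps^s\|_{L^2(\Sigma\times Z)} = \vareps^{-1/2}\|\tceps^s\|_{L^2(\oem)}$ and $\|\nabla_y\teps^M\tceps^s\|_{L^2(\Sigma\times Z)} = \vareps^{1/2}\|\nabla\tceps^s\|_{L^2(\oem)} = O(\vareps)$, bounds the first term by $C\bigl(\|\teps^M\tceps^s - u_k\|_{L^2(\Sigma\times Z_s)} + O(\vareps)\bigr)$, which vanishes by Step 2 and the approximation. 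The second term vanishes as in Lemma \ref{lem:aux_lemma}, and the third by the choice of $u_k$.

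Step 4: \emph{Identification of the time-derivative limit.} For $\eta \in C^1([0,T],H^1_\#(\Sigma))$ with $\eta(0)=\eta(T)=0$, integration by parts gives $\tfrac{1}{\vareps}\int_0^T\langle\partial_t\ceps^s,\eta\rangle\,dt = -\tfrac{1}{\vareps}\int_0^T\!\int_{\oems}\ceps^s\,\partial_t\eta\,dx\,dt$; testing the weak two-scale convergence of $\chi_{\oems}\ceps^s/\vareps$ against the $y$-independent function $\partial_t\eta$ yields the limit $-|Z_s|\int_0^T\!\int_{\Sigma} c_0^s\,\partial_t\eta\,d\x\,dt = |Z_s|\int_0^T\langle\partial_t c_0^s,\eta\rangle_{H^1_\#(\Sigma)}\,dt$. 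A density argument extends this to $\eta\in L^2((0,T),H^1_\#(\Sigma))$ and simultaneously yields $c_0^s \in H^1((0,T),H^1_\#(\Sigma)^\ast)$. The parallel computation against $\phieps$ with $\phi$ compactly supported in time produces $-\int_0^T\!\int_\Sigma\!\int_{Z_s} c_0^s\,\partial_t\phi\,dy\,d\x\,dt = \int_0^T\!\int_{Z_s}\langle\partial_t c_0^s,\phi(\cdot,\cdot,y)\rangle\,dy\,dt$, using that $\partial_t c_0^s$ is $y$-independent. Summing the two identities yields the claim. The main obstacle is Step 3: unlike in the $\ceps^f$-case, no bulk $H^\beta$-embedding is available, so the trace estimate must be performed fibrewise on $Z_s$, and the $O(\vareps)$-smallness of $\nabla_y\teps^M\tceps^s$ is crucial for closing the argument.
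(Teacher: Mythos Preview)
Your proof is correct and follows the same strategy as the paper: extension, averaging, Simon compactness via Lemma~\ref{lem:Nikolskii_estimate}, and then Lemma~\ref{lem:general_strong_ts_convergence_averaged_function}. For Steps~3 and~4 the paper simply invokes Lemmas~\ref{lem:strong_ts_convergence_boundary} and~\ref{lem:two_scale_conv_time_derivative}; in particular, your auxiliary approximation $u_k$ in Step~3 is unnecessary, since $\|\nabla_y\teps^M\tceps^s\|_{L^2(\Sigma\times Z)}=\sqrt{\vareps}\,\|\nabla\tceps^s\|_{L^2(\oem)}=O(\vareps)$ already upgrades the strong $L^2(\Sigma\times Z)$-convergence from Step~2 to convergence in $L^2(\Sigma,H^{\beta}(Z_s))$ and hence on $\Gamma$ directly by the trace embedding.
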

\begin{proof}
The existence of functions $c_0^s$ and $c_1^s$ together with the weak two-scale convergence for $\tceps^s$ and $\nabla \tceps^s$ is  standard, see Lemma \ref{LemmaBasicTSCompactness}.
To establish the strong two-scale convergence in the thin layer $\oeps^M$, we use Lemma \ref{lem:general_strong_ts_convergence_averaged_function}. Hence, we have to check the properties of the averaged function $\bceps^s$. We argue in a similar way as in the proof of Proposition \ref{prop:convergence_cepsf}. We have for $0< h \ll 1$ (see also \cite[Lemma 5.1]{GahnNeussRaduKnabner2018a}) with Lemma \ref{lem:Nikolskii_estimate}
\begin{align*}
\int_0^{T-h } \|\bceps^s (t + h) - \bceps^s\|_{L^2(\Sigma)}^2 dt &\le \frac{C}{\vareps} \int_0^{T-h} \|\tceps^s (t + h ) - \tceps^s\|_{L^2(\oeps^M)}^2 dt 
\\
&\le \frac{C}{\vareps} \int_0^{T-h} \|\ceps^s(t+h) - \ceps\|_{L^2(\oeps^M)}^2 dt 
\\
&\le \frac{C\sqrt{h}}{\vareps} \|\partial_t \ceps^s\|_{L^2((0,T),H^1(\oems)^{\ast})}  \|\ceps^s\|_{L^2((0,T),H^1(\oems))} 
\\
&\le C\sqrt{h}.
\end{align*}
Further, we have (use again \cite[Lemma 5.1]{GahnNeussRaduKnabner2018a})
\begin{align*}
    \|\bceps^s\|_{L^2((0,T), H^1(\Sigma))} \le \frac{1}{\sqrt{2\vareps}}\|\tceps^s\|_{L^2((0,T),H^1(\oeps^M)} \le C.
\end{align*}
Hence, we can apply \cite[Theorem 1]{Simon} and obtain existence of $c_0^s \in L^2((0,T),H^1(\Sigma))$ such that up to a subsequence
\begin{align*}
    \bceps^s &\rightharpoonup c_0^s &\mbox{ in }& L^2((0,T),H^1(\Sigma)),
    \\
    \bceps^s &\rightarrow c_0^s &\mbox{ in }& L^2((0,T)\times \Sigma).
\end{align*}
Now, Lemma \ref{lem:general_strong_ts_convergence_averaged_function} implies the strong two-scale convergence of $\tceps$ in $\oeps^M$, and Lemma \ref{lem:strong_ts_convergence_boundary} gives the strong two-scale convergence on $\geps$. Finally, the existence of $\partial_t c_0^s$ and the associated convergence follows directly from Lemma \ref{lem:two_scale_conv_time_derivative} in the appendix.

\end{proof}

\subsubsection{The cases $\gamma \in (-1,1)$ and $\gamma = 1$}

Here, we treat the cases $\gamma \in (-1,1)$ and $\gamma = 1$. Although these two cases behave completely different in the limit, they have one significant similarity compared to the case $\gamma = -1$: The \textit{a priori} estimates obtained in Proposition \ref{prop:existence_apriori} are not enough to obtain strong two-scale convergence results in $\oeps^M$ and on $\geps$. From a mathematical point of view, compared to the proof of the strong two-scale convergence in the case $\gamma  =-1$, for an application of the Kolmogorov-Simon-type compactness result the uniform bounds for the gradient are not good enough.  What we need is more control of the spatial variable, more precisely we have to control the differences for spatial shifts of $\ceps^s$.

The situation gets even more complicated for $\gamma = 1$. In this case, the limit behavior changes completely compared to the cases $\gamma \in [-1,1)$. In fact, the two-scale limit depends on the macroscopic and microscopic variable $(\x,y) \in \Sigma \times Z_s$.  While in the cases $\gamma \in [-1,1)$ the (weak/strong) two-scale limit of $\ceps^s$ and the (weak/strong) limit of the averaged function $\bceps^s$ are equal (and both convergences are equivalent), this is no longer valid for the case $\gamma = 1$. Hence, in the latter we will argue with the unfolding operator. This leads to two additional difficulties for an application of a Kolmogorov-Simon-compactness result. First of all, the unfolded sequence is depending on an additional macroscopic variable $\x \in \Sigma$. We overcome this by controlling differences of the shifts for the unfolded sequence by differences of the shifts for the sequnce itself, see for example \cite{Gahn,GahnNeussRaduKnabner2018a,GahnNRP}. The second crucial point is the control of the time variable for the unfolded sequence, which is not so straightforward as in the cases $\gamma \in [-1,1)$ (obtained via Lemma \ref{lem:Nikolskii_estimate} and the properties of the extension operator). For this, we use a general compactness result (see Proposition \ref{prop:general_ts_compactness_gamma_1}) which depends in particular on estimates for the differences of the shifts and the bound for $\partial_t \ceps$ in $L^2((0,T),H_{\#}^1(\oems)^{\ast})$
\\

Let us start with the proof of some additional \textit{a priori} estimates for the differences of the shifts. 
For given $l \in \Z^{n-1} \times \{0\}$ we define (for a given function $\phi$) the difference of shifts by
\begin{align*}
    \delta \phi(x) := \phi(x + l\vareps) - \phi(x).
\end{align*}
We emphasize that in this notation we neglect the dependence of $\delta \phi$ on the specific shift $l\vareps$. This should be clear from the context. Otherwise, we also write $\delta_{l\vareps}$.  Due to the periodic boundary conditions on $\partial \Sigma$ for $\ceps^f$ and $\ceps^s$, the functions $\delta \ceps^f$ and $\delta \ceps^s$ are well-defined for all $l \in \Z^{n-1}\times \{0\}$.

\begin{lemma}\label{lem:apriori_shifts}
Let $l \in \Z^{n-1} \times \{0\}$. The microscopic solution $(\ceps^f,\ceps^s)$ from \eqref{eq_Var_Micro_cepsf}-\eqref{eq_Var_Micro_cepss} fulfills 
\begin{align*}
\frac{1}{\sqrt{\vareps}} \|\delta\ceps^s\|_{L^{\infty}((0,T),L^2(\oems))} +  &\vareps^{\frac{\gamma}{2}} \|\nabla \delta \ceps^s\|_{L^2((0,T)\times \oems)} 
\\
&\le C \left(\frac{1}{\sqrt{\vareps}} \|\delta c_{\vareps,\mathrm{in}}^s\|_{L^2(\oems)} + \|\delta \ceps^f\|_{L^2((0,T), H^{\beta}(\oef))}\right).
\end{align*}
\end{lemma}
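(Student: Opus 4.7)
The plan is to derive an equation for $\delta \ceps^s$ by subtracting the weak formulation $\eqref{eq_Var_Micro_cepss}$ at shifted and unshifted arguments, then test with $\delta \ceps^s$ itself and apply the Lipschitz property of $h$ together with the trace inequalities from the appendix. The $\Sigma$-periodic lateral boundary conditions together with the $\vareps(\Z^2\times\{0\})$-periodicity of the microscopic geometry ensure that the shift by $l\vareps$ leaves $\oems$ and $\geps$ invariant in the periodic sense; therefore, applying the change of variables $x\mapsto x+l\vareps$ to $\eqref{eq_Var_Micro_cepss}$ (which keeps test functions in $H^1_\#(\oems)$) and subtracting gives
\begin{align*}
\foe\langle\partial_t\delta\ceps^s,\xieps^s\rangle_{H^1_{\#}(\oems)} + \int_{\oems}\vareps^{\gamma} D^s \nabla\delta\ceps^s\cdot\nabla\xieps^s\,dx = \int_{\geps}\delta h\,\xieps^s\,d\sigma,
\end{align*}
where $\delta h := h(\ceps^f(\cdot+l\vareps),\ceps^s(\cdot+l\vareps)) - h(\ceps^f,\ceps^s)$.

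Next, I would choose $\xieps^s = \delta\ceps^s$ and exploit the global Lipschitz continuity of $h$ from \ref{ass:h} to obtain
\begin{align*}
\foe\frac{1}{2}\frac{d}{dt}\|\delta\ceps^s\|_{L^2(\oems)}^2 + \vareps^{\gamma} D^s \|\nabla\delta\ceps^s\|_{L^2(\oems)}^2 \le L\int_{\geps}\bigl(|\delta\ceps^f|+|\delta\ceps^s|\bigr)|\delta\ceps^s|\,d\sigma.
\end{align*}
The boundary term on $\geps$ is then controlled by invoking Lemma \ref{lem:Stand_scaled_trace_inequality} applied to $\delta\ceps^s$, which yields $\|\delta\ceps^s\|_{L^2(\geps)}^2 \le C\bigl(\foe\|\delta\ceps^s\|_{L^2(\oems)}^2 + \vareps\|\nabla\delta\ceps^s\|_{L^2(\oems)}^2\bigr)$, and Lemma \ref{lem:Trace_inequality_Bulk} applied to $\delta\ceps^f$, which yields $\|\delta\ceps^f\|_{L^2(\geps)} \le C\|\delta\ceps^f\|_{H^\beta(\oef)}$, followed by Young's inequality.

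The crucial observation is that $\gamma\le 1$ implies $\vareps^{\gamma}\ge \vareps$, so the $\vareps\|\nabla\delta\ceps^s\|_{L^2(\oems)}^2$ contribution arising from the trace inequality can be absorbed into the dissipative term $\vareps^{\gamma}D^s\|\nabla\delta\ceps^s\|_{L^2(\oems)}^2$ on the left for a sufficiently small choice of the Young parameter. This produces the differential inequality
\begin{align*}
\foe\frac{d}{dt}\|\delta\ceps^s\|_{L^2(\oems)}^2 + \vareps^{\gamma}\|\nabla\delta\ceps^s\|_{L^2(\oems)}^2 \le C\|\delta\ceps^f\|_{H^\beta(\oef)}^2 + C\foe\|\delta\ceps^s\|_{L^2(\oems)}^2.
\end{align*}
Integration in time and the Gronwall lemma applied to the quantity $\foe\|\delta\ceps^s(t)\|_{L^2(\oems)}^2$ then deliver simultaneously the $L^\infty_t L^2_x$-bound and the $L^2_t$-bound for $\vareps^{\gamma/2}\nabla\delta\ceps^s$ in the form claimed, upon taking square roots.

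The main obstacle is verifying that the absorption in the gradient term works uniformly across the whole range $\gamma\in[-1,1]$: the critical case is $\gamma=1$, where the dissipative coefficient $\vareps^{\gamma}D^s$ degenerates to the same order $\vareps$ as the unfavourable boundary contribution, so one must be careful that the absolute constants from Lemma \ref{lem:Stand_scaled_trace_inequality} and Young's inequality are dominated by $D^s$. A second, more technical subtlety is the rigorous justification of the shifted weak equation, which relies on the $\Sigma$-periodicity to guarantee that $\xieps^s(\cdot-l\vareps)\in H^1_{\#}(\oems)$ and that the change of variables $x\mapsto x+l\vareps$ leaves the domain $\oems$, the surface $\geps$, and all coefficients invariant.
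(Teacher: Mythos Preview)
Your proposal is correct and follows essentially the same route as the paper: subtract the shifted equation, test with $\delta\ceps^s$, use the Lipschitz bound on $h$, apply Lemma~\ref{lem:Trace_inequality_Bulk} to $\delta\ceps^f$ and Lemma~\ref{lem:Stand_scaled_trace_inequality} to $\delta\ceps^s$, absorb the gradient term (using $\vareps\le\vareps^{\gamma}$), and conclude with Gronwall. Your worry about the critical case $\gamma=1$ is resolved precisely by the free parameter $\theta$ in Lemma~\ref{lem:Stand_scaled_trace_inequality}, which lets you make the coefficient of $\vareps\|\nabla\delta\ceps^s\|_{L^2(\oems)}^2$ smaller than $D^s$ regardless of the other constants.
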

\begin{proof}
Testing the weak formulation $\eqref{eq_Var_Micro_cepss}$ for $\ceps^s$ with $\delta \ceps^s$, we obtain with Lemma \ref{lem:Trace_inequality_Bulk} for $\delta \ceps^f$ and the trace inequality from Lemma \ref{lem:Stand_scaled_trace_inequality} for $\delta \ceps^s$ on $\geps$
\begin{align*}
\foe \langle \partial_t \delta \ceps^s ,\delta \ceps^s\rangle_{H^1_{\#}(\oems)} + \vareps^{\gamma} \|\nabla \delta \ceps^s\|_{L^2(\oems)}^2 &\le C \left( \|\delta \ceps^f\|_{L^2(\geps)}^2 + \|\delta \ceps^s\|_{L^2(\geps)}^2 \right)
\\
&\le C \|\delta \ceps^f\|_{H^{\beta}(\oef)}^2 + \frac{C_{\theta}}{\vareps} \|\delta \ceps^s\|_{L^2(\oems)}^2 + \theta \vareps \|\nabla \delta \ceps^s\|_{L^2(\oems)}^2
\end{align*}
with $\theta >0$ arbitrary. For $\theta $ small enough the last term on the right-hand side can be absorbed from the left-hand side. After an integration with respect to time and an application of Gronwall inequality, we obtain the desired result.
\end{proof}

\begin{remark}
Similar estimates can be found for example in \cite{GahnNeussRaduKnabner2018a} and \cite{GahnNRP}. Here the proof simplifies a lot, because we consider periodic boundary conditions on $\partial \Sigma$. The general case can be treated with the same arguments as in the aforementioned papers. However, we emphasize that here we argue in a slightly different way. We do not use both equations for $\ceps^f$ and $\ceps^s$, but estimate the shifts for $\ceps^s$ by shifts of $\ceps^f$ in $H^{\beta}(\oef)$, for which we already have strong compactness results.
\end{remark}

\begin{proposition}[The case $\gamma \in (-1,1)$]\label{prop:strong_ts_conv_gamma_-1_1}
Let $\gamma \in (-1,1)$. There exists a limit function $c_0^s \in L^2((0,T) \times \Sigma) \cap H^1((0,T),L^2(\Sigma))$, such that up to a subsequence
\begin{align*}
    \tceps^s \rasts c_0^s, \qquad \ceps^s|_{\geps} \rasts c_0^s \quad \mbox{ on }\geps.
\end{align*}
Further, for every $\phi \in L^2((0,T),H^1(\Sigma))$ it holds that 
\begin{align*}
    \foe \int_0^T \langle \partial_t \ceps^s,\phi \rangle_{H^1_{\#}(\oems)} dt \rightarrow |Z_s|\int_0^T \langle \partial_t c_0^s , \phi \rangle_{H^1_{\#}(\Sigma)} dt.
\end{align*}
\end{proposition}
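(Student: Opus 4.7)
My strategy parallels the $\gamma=-1$ case in Proposition \ref{prop:strong_TS_convergence_cepss_gamma_-1} but replaces the missing direct $H^1$-bound on $\bceps^s:=\frac{1}{2\vareps}\int_{-\vareps}^{\vareps} E_\vareps^s\ceps^s\,dx_3$ by the shift estimates of Lemma \ref{lem:apriori_shifts}. Since $\gamma<1$, the a priori bound $\vareps^{\gamma/2}\|\nabla\ceps^s\|_{L^2((0,T)\times\oems)}\le C$ yields $\vareps^{1/2}\|\partial_n E_\vareps^s\ceps^s\|_{L^2((0,T)\times\oem)}\le C\vareps^{(1-\gamma)/2}\to 0$, so Lemma \ref{lem:general_strong_ts_convergence_averaged_function} reduces the claim $\tceps^s\rasts c_0^s$ to proving strong $L^2((0,T)\times\Sigma)$-convergence $\bceps^s\to c_0^s$. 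The boundary convergence $\ceps^s|_{\geps}\rasts c_0^s$ then follows from Lemma \ref{lem:strong_ts_convergence_boundary}, and the $y$-independence of the weak two-scale limit is standard since $\sqrt{\vareps}\|\nabla\ceps^s\|_{L^2}\le C\vareps^{(1-\gamma)/2}\to 0$.

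For the Kolmogorov-Simon-type argument I need uniform moduli of continuity in time and in the macroscopic variable. The time shifts are handled exactly as in the case $\gamma=-1$: applying the Nikolskii estimate (Lemma \ref{lem:Nikolskii_estimate}) on the Gelfand triple $H_{\vareps,\gamma}(\oems)\hookrightarrow L^2(\oems)\hookrightarrow H_{\vareps,\gamma}(\oems)^{\ast}$ and combining $\|\ceps^s\|_{L^2((0,T),H_{\vareps,\gamma}(\oems))}\le C$ with $\|\partial_t\ceps^s\|_{L^2((0,T),H_{\vareps,\gamma}(\oems)^{\ast})}\le C\vareps$ gives $\|\bceps^s(\cdot+h)-\bceps^s\|_{L^2((0,T-h)\times\Sigma)}\le C\sqrt{h}$ after extending and averaging in $x_3$.

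The main obstacle is the spatial modulus, because the only direct bound $\|\nabla_{\x}\bceps^s\|_{L^2((0,T)\times\Sigma)}\le C\vareps^{-(1+\gamma)/2}$ diverges. My plan is to treat a shift $h\in\R^2\times\{0\}$ by splitting according to the ratio $|h|/\vareps$, fixing any $\alpha\in(1,\,2/(1+\gamma))$, a nonempty interval since $\gamma<1$. In the regime $\vareps\ge|h|^{\alpha}$, the elementary estimate $|h|\,\|\nabla_{\x}\bceps^s\|_{L^2}\le C|h|^{1-\alpha(1+\gamma)/2}$ tends to zero as $|h|\to 0$. In the regime $\vareps<|h|^{\alpha}$, I decompose $h=l_\vareps\vareps+r_\vareps$ with $l_\vareps\in\Z^2\times\{0\}$ and $|r_\vareps|\le\vareps$; the remainder contributes $C\vareps^{(1-\gamma)/2}\to 0$ via the same gradient bound, while $|l_\vareps\vareps-h|\le\vareps<|h|^{\alpha}$ combined with $\alpha>1$ forces $l_\vareps\vareps\to 0$ as $|h|\to 0$, so that Lemma \ref{lem:apriori_shifts} applies to the lattice shift. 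The right-hand side of that lemma vanishes: the initial-data term by assumption \ref{ass:initial_values_ceps_s}, and the $H^\beta(\oef)$-shift of $\ceps^f$ by the strong convergence $\tceps^f\to c_0^f$ in $L^2((0,T),H^\beta(\Omega))$ from Proposition \ref{prop:convergence_cepsf} combined with $L^2$-translation continuity of the limit.

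With both moduli of continuity in place, Kolmogorov-Simon delivers strong convergence $\bceps^s\to c_0^s$ in $L^2((0,T)\times\Sigma)$. Finally, the convergence $\foe\int_0^T\langle\partial_t\ceps^s,\phi\rangle\,dt\to|Z_s|\int_0^T\langle\partial_t c_0^s,\phi\rangle\,dt$ for $\phi\in L^2((0,T),H^1(\Sigma))$ and the $H^1((0,T),L^2(\Sigma))$-regularity of $c_0^s$ follow from Lemma \ref{lem:two_scale_conv_time_derivative}, using that for $\gamma>-1$ the $H_{\vareps,\gamma}(\oems)$-norm of a test function that depends only on $(t,\x)$ is bounded by $\|\phi\|_{L^2(\Sigma)}$ up to a vanishing $\vareps^{(1+\gamma)/2}\|\nabla_{\x}\phi\|_{L^2(\Sigma)}$-term.
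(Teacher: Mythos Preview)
Your proof is correct and follows essentially the same route as the paper: extend via $E_\vareps^s$, pass to the averaged function $\bceps^s$, control time shifts by the Nikolskii estimate on the Gelfand triple $H_{\vareps,\gamma}\hookrightarrow L^2\hookrightarrow H_{\vareps,\gamma}^\ast$, control spatial shifts via Lemma~\ref{lem:apriori_shifts}, then apply Lemma~\ref{lem:general_strong_ts_convergence_averaged_function}, Lemma~\ref{lem:strong_ts_convergence_boundary}, and Lemma~\ref{lem:two_scale_conv_time_derivative}.

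The only difference is cosmetic: for the spatial modulus, the paper directly decomposes any shift $\xi$ as $\xi_\vareps+(\xi-\xi_\vareps)$ with $\xi_\vareps=\vareps([\hxi/\vareps],0)$, bounding the remainder by $C\vareps^{(1-\gamma)/2}$ (small as $\vareps\to 0$, uniformly in $\xi$) and the lattice part via Lemma~\ref{lem:apriori_shifts}; the joint limit as $(\vareps,|\xi|)\to(0,0)$ then gives Kolmogorov--Simon. Your additional regime split according to $\vareps\gtrless|h|^\alpha$ with $\alpha\in(1,2/(1+\gamma))$ is correct but unnecessary---the paper's $A_\vareps^1$-bound already handles every $(\vareps,\xi)$ without distinguishing cases, since it depends only on $\vareps$. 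Your split does make the estimates in each regime depend on $|h|$ alone, which some readers may find more transparent, but it adds no mathematical content.
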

\begin{proof}
The proof uses the same arguments as the proof of \cite[Theorem 7.3]{GahnNeussRaduKnabner2018a}. The only difference is that in a first step we have to extend the function $\ceps^s$ to the whole thin layer $\oeps^M$ to guarantee that the averaged function $\bceps^s$ is well defined. For the sake of completeness we sketch the main ideas. We first argue in a similar way as for the case $\gamma =-1$ and consider the averaged function (of the extension) $\bceps^s$ defined in $\eqref{def:averaged_function}$. By the same arguments as in the proof of Proposition \ref{prop:strong_TS_convergence_cepss_gamma_-1}, we obtain 
\begin{align*}
 \|\bceps^s (t + h) - \bceps^s\|_{L^2((0,T-h)\times\Sigma)}^2 &\le  \frac{C\sqrt{h}}{\vareps} \|\partial_t \ceps^s\|_{L^2((0,T),H_{\vareps,\gamma}(\oems)^{\ast})}  \|\ceps^s\|_{L^2((0,T),H_{\vareps,\gamma}(\oems))} 
\\
&\le C\sqrt{h}.
\end{align*}
The same argument holds if we consider the intervall $(h,T)$ instead of $(0,T-h)$. Next, we estimate shifts with respect to the spatial variable. For this we make use of the estimates for the differences of the shifts in Lemma \ref{lem:apriori_shifts}, since the estimates for the gradient of $\ceps^s$ are not enough.
Let $\hxi \in \R^{n-1}$ and $\xi:= (\hxi ,0)$ and $\xi_{\vareps} := \vareps \left( \left[\frac{\hxi}{\vareps}\right] , 0\right)$. We emphasize that for $x \in \oems$ there exists $k \in \Z^{n-1} \times \{0\}$ with $x + \xi_{\vareps} \in \vareps (k + Z_s)$, but in general we can have $x + \xi \notin \vareps(k+Z_s)$. 
We obtain 
\begin{align*}
\|&\bceps^s (\cdot, \cdot + \hxi) - \bceps^s \|_{L^2((0,T) \times \Sigma)} \le \frac{1}{2\sqrt{\vareps}} \| \tceps^s (\cdot , \cdot + \xi ) - \tceps^s \|_{L^2((0,T)\times \oeps^M)} 
\\
&\le \frac{1}{\sqrt{\vareps}} \| \tceps^s (\cdot , \cdot + \xi) - \tceps^s (\cdot , \cdot + \xi_{\vareps}) \|_{L^2((0,T)\times \oeps^M)} + \frac{1}{\sqrt{\vareps}} \| \tceps^s(\cdot , \cdot + \xi_{\vareps}) - \tceps^s\|_{L^2((0,T)\times \oeps^M)} 
\\
&=: A_{\vareps}^1 + A_{\vareps}^2.
\end{align*}
For the first term $A_{\vareps}^1$ we use the mean value theorem, the $\partial \Sigma$-periodicity of $\ceps^s$ (resp. $\tceps^s$), the properties of the extension operator from Lemma \ref{lem:Extension_operators}, the \textit{a priori} bound for $\nabla \ceps^s$ from Proposition \ref{prop:existence_apriori}, and the fact $|\xi - \xi_{\vareps}| \le C\vareps$ to get
\begin{align*}
A_{\vareps}^1 \le C\sqrt{\vareps} \|\nabla \tceps^s\|_{L^2((0,T)\times \oeps^M)} \le C\varepsilon^{\frac{1-\gamma}{2}}.
\end{align*}
For the second term $A_{\vareps}^2$ we use again the properties of the extension operator from Lemma \ref{lem:Extension_operators}  and also Remark \ref{rem:extension_operator} to obtain
\begin{align*}
A_{\vareps}^2 = \frac{1}{\sqrt{\vareps}} \| E_{\vareps}^s( \ceps^s(\cdot , \cdot + \xi_\vareps)) - E_{\vareps}^s \ceps^s \|_{L^2((0,T)\times \oeps^M)} \le \frac{C}{\sqrt{\vareps}} \|\ceps^s (\cdot , \cdot + \xi_{\vareps}) - \ceps^s \|_{L^2((0,T)\times \oems)}.
\end{align*}
Using the \textit{a priori} bounds for the differences of the shifts from Lemma \ref{lem:apriori_shifts}, and the strong convergence of $\tceps^f$ in $L^2((0,T),H^{\beta}(\Omega))$ for $\beta \in \left(\frac12, 1\right)$, we obtain that $A^2_{\vareps} \rightarrow 0$ for $\vareps,|\xi| \to 0$. Together with the uniform bound for $\bceps^s$ in $L^2((0,T)\times \Sigma)$, we can apply the Kolmogorov-Simon-compactness result and obtain the strong convergence of $\bceps^s$ in $L^2((0,T)\times \Sigma).$  We emphasize that for the shifts with respect to time it is enough to consider shifts on the interval $(0,T-h)$ and $(h,T)$, see for example the argument in the third step in the proof of \cite[Theorem 1]{Simon}.

This strong convergence implies the strong two-scale convergence of $\tceps^s$ in $\oeps^M$, see Lemma \ref{lem:general_strong_ts_convergence_averaged_function}. By Lemma \ref{lem:strong_ts_convergence_boundary} we obtain again the strong two-scale convergence on $\geps$. For the convergence of the time-derivative and the regularity for $\partial_t c_0^s$ we use Lemma \ref{lem:two_scale_conv_time_derivative}.
\end{proof}
The convergence result for the time-derivative is also valid for suitable oscillating test-functions, see Lemma \ref{lem:two_scale_conv_time_derivative}. However, for the derivation of the macroscopic model such test-functions are not necessary.
\begin{remark}
In contrast to \cite[Theorem 7.3]{GahnNeussRaduKnabner2018a} we obtain here the strong convergence of $\bceps^s$ in $L^2$, instead of $L^p$ for $p \in [1,2)$. Here we can improve the inegrability exponent, since we consider periodic boundary conditions.
\end{remark}

Finally, we have to consider the most critical case $\gamma = 1$. In this case it makes no sense to consider the averaged sequence $\bceps^s$ as above, since the two-scale limit of $\tceps^s$ is depending on $\x$ and $y$. Instead, we consider the unfolded sequence $\teps^M \ceps^s$ and can argue in the same way as in \cite[Theorem 7.5]{GahnNeussRaduKnabner2018a} (no perforations inside the thin layer). An extension to the whole layer is therefore not necessary. However, here we formulate a general strong two-scale (or equivalent formulation via unfolding operator) convergence results based on \textit{a priori} estimates for the sequence and the differences for the shifts. This result is analogous to \cite[Theorem 1]{GahnNRP}, where a perforated domain (not thin) was considered.

\begin{proposition}\label{prop:general_ts_compactness_gamma_1}
Let $\ueps \in L^2((0,T),H^1_{\#}(\oems))\cap H^1((0,T),H_{\vareps,1}(\oems))$ with:
\begin{enumerate}[label = (\roman*)]
\item It holds that
\begin{align*}
\|\ueps\|_{L^2((0,T),H_{\vareps,1}(\oems))} + \foe\|\partial_t \ueps\|_{L^2((0,T),H_{\vareps,1}(\oems)} \le  C.
\end{align*}

\item For $l_{\vareps} \in  \vareps \Z^{n-1} \times \{0\}$ with $l_{\vareps } \to 0$ for $\vareps \to 0$ it holds that (with $\delta = \delta_{l_{\vareps}}$)
\begin{align*}
\|\delta \ueps \|_{L^2((0,T)\times \oems)} + \vareps \|\nabla \delta \ueps\|_{L^2((0,T)\times \oems)} \overset{\vareps \to 0}{\longrightarrow} 0.
\end{align*}
\end{enumerate}
Then, there exists $u_0 \in L^2((0,T)\times \Sigma , H_{\#}^1(Z_s))$, such that for every $\beta \in \left(\frac12,1\right)$ it holds that
\begin{align*}
\teps^M \ueps \rightarrow u_0 \qquad \mbox{in } L^2((0,T)\times \Sigma,H^{\beta}(Z_s)).
\end{align*}
In particular $\ueps$ converges strongly in the two-scale sense to $u_0$ in $\oeps^M$ and on $\geps$.
\end{proposition}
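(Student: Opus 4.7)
I would prove this by applying a Kolmogorov--Simon--Riesz compactness result to the unfolded sequence $\teps^M\ueps$ in $L^2((0,T)\times\Sigma, H^\beta(Z_s))$, using the compact embedding $H^1(Z_s)\hookrightarrow H^\beta(Z_s)$ for $\beta\in(\tfrac12,1)$ to upgrade the weak two-scale limit to a strong one. The scaling identities $\|\teps^M\xi\|^2_{L^2(\Sigma\times Z_s)} = \foe\|\xi\|^2_{L^2(\oems)}$ and $\|\nabla_y\teps^M\xi\|_{L^2(\Sigma\times Z_s)} = \sqrt{\vareps}\|\nabla\xi\|_{L^2(\oems)}$, together with hypothesis~(i), give that $\teps^M\ueps$ is uniformly bounded in $L^2((0,T)\times\Sigma, H^1(Z_s))$; by the weak two-scale compactness in Lemma~\ref{LemmaBasicTSCompactness}, a weak limit $u_0 \in L^2((0,T)\times\Sigma, H^1_{\#}(Z_s))$ exists along a subsequence.

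The core task is to verify the two equicontinuity conditions required by Kolmogorov--Simon, in time and in the macroscopic variable $\x$. For time translations, a Nikolskii-type estimate (Lemma~\ref{lem:Nikolskii_estimate}) applied to the Gelfand triple $H_{\vareps,1}(\oems)\hookrightarrow L^2(\oems)\hookrightarrow H_{\vareps,1}(\oems)^\ast$ turns the bounds $\|\ueps\|_{L^2(H_{\vareps,1})}\le C$ and $\|\partial_t\ueps\|_{L^2(H_{\vareps,1}^\ast)} \le C\vareps$ into $\|\ueps(\cdot+h) - \ueps\|_{L^2((0,T-h)\times \oems)}^2 \le C\vareps h$, which after unfolding yields the $\vareps$-uniform bound $\|\teps^M\ueps(\cdot+h) - \teps^M\ueps\|^2_{L^2((0,T-h)\times\Sigma\times Z_s)} \le Ch$. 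For shifts in $\x$, given $l\in\R^{n-1}\times\{0\}$ small, I would decompose $l = l_\vareps + r_\vareps$ with $l_\vareps\in\vareps\Z^{n-1}\times\{0\}$ nearest to $l$ and $|r_\vareps|\le C\vareps$. The lattice part is controlled directly by hypothesis~(ii) through the identity $\|\teps^M\delta_{l_\vareps}\ueps\|^2_{L^2} = \foe\|\delta_{l_\vareps}\ueps\|^2_{L^2(\oems)}$, while the piecewise-constant dependence of $\teps^M\ueps$ on $\x$ through $[\x/\vareps]_Y$ reduces the residual shift by $r_\vareps$ to a single one-cell translation of $\ueps$, again covered by~(ii). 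The interpolation $\|\cdot\|_{H^\beta(Z_s)} \le \|\cdot\|^{1-\beta}_{L^2(Z_s)}\|\cdot\|^\beta_{H^1(Z_s)}$, combined with the uniform $H^1$-bound in $y$ and Hölder in $(t,\x)$, transfers both estimates to the target space $L^2((0,T)\times\Sigma, H^\beta(Z_s))$.

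Kolmogorov--Simon--Riesz then upgrades the weak convergence to strong convergence $\teps^M\ueps \to u_0$ in $L^2((0,T)\times\Sigma, H^\beta(Z_s))$, which is precisely the strong two-scale convergence of $\ueps$ in $\oeps^M$; the analogous statement on $\geps$ follows from Lemma~\ref{lem:strong_ts_convergence_boundary} via a trace argument using the uniform bound on $\sqrt{\vareps}\nabla\ueps$. The principal obstacle is the time-translation estimate: since $\partial_t\ueps$ lives only in the dual $H_{\vareps,1}(\oems)^\ast$ and not in $L^2$, the standard Aubin--Lions route is blocked, and one must carefully track the $\vareps$-scaling in both $H_{\vareps,1}(\oems)$ and its dual so that the resulting bound survives the factor $\foe$ introduced by unfolding. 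The spatial shifts are subtler than they appear because macroscopic shifts of $\teps^M\ueps$ do not translate cleanly into shifts of $\ueps$; it is the lattice-plus-residual decomposition, combined with the cell-wise constancy of $\teps^M$, that makes hypothesis~(ii) usable.
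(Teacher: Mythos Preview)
Your proposal is correct and follows the same overall strategy as the paper: apply a Kolmogorov--Simon argument to the unfolded sequence $\teps^M\ueps$, using hypothesis~(ii) for the macroscopic shifts and the compact embedding $H^1(Z_s)\hookrightarrow H^\beta(Z_s)$. The organization differs slightly. The paper views $\teps^M\ueps$ as an element of $L^2(\Sigma,B)$ with $B=L^2((0,T),H^\beta(Z_s))$, controls shifts only in $\Sigma$ via~(ii), and for the range condition in $B$ invokes the commutability of $\teps^M$ with the weak time derivative (a nontrivial fact, cited to \cite{GahnNeussRaduKnabner2018a}), which then permits an Aubin--Lions--type argument in $B$. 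You instead work in $L^2((0,T)\times\Sigma,H^\beta(Z_s))$, controlling shifts in both $t$ and $\x$ directly: the time shifts via Lemma~\ref{lem:Nikolskii_estimate} and interpolation, the spatial shifts via the lattice-plus-residual decomposition. So your claim that ``the standard Aubin--Lions route is blocked'' is not quite accurate---the paper does take that route, after establishing commutability---but your direct Nikolskii argument is a valid and somewhat more self-contained alternative. Both proofs are minor variations on the same idea.

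One small point: for the strong two-scale convergence on $\geps$ you cite Lemma~\ref{lem:strong_ts_convergence_boundary}, but that lemma is stated for $\gamma\in[-1,1)$ with limit $u_0$ independent of $y$. In the present case $\gamma=1$ the trace convergence follows immediately from the strong convergence in $L^2((0,T)\times\Sigma,H^\beta(Z_s))$ itself, since $H^\beta(Z_s)\hookrightarrow L^2(\Gamma)$ continuously for $\beta>\tfrac12$.
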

\begin{proof}
The proof follows the same ideas as the proof of \cite[Theorem 1 and Remark 5]{GahnNRP}. The fact that we are dealing with thin domains has no influence on the proof. For the sake of completeness let us shortly describe the main ideas: We consider the unfolded sequence $\teps^M \ueps \in L^2(\Sigma,L^2((0,T),H^{\beta}(Z_s)))$ and apply the Komlogorov-Simon-compactness results from \cite{GahnNeussRaduKolmogorovCompactness} for Bochner spaces $L^2(\Sigma,B)$ with rectangular domains $\Sigma$ in $\R^n$ and the Banach space $B= L^2((0,T),H^{\beta}(Z_s))$.  The control of the shifts in $\Sigma$ is an immediate consequence of (ii). To control the range of $\teps^M \ueps$ we use the compact embedding $H^1(Z_s) \hookrightarrow H^{\beta}(Z_s)$, and the commutability between the unfolding operator $\teps^M$ and the time-derivative, see \cite[Proposition 7 and 8]{GahnNeussRaduKnabner2018a}.  We emphasize that this property is not obvious, because of the low regularity of the time-derivative $\partial_t \ceps^s$ which is only an element of  $\partial_t \ceps$ in $L^2((0,T),H_{\#}^1(\oems)^{\ast})$.
\end{proof}
Again, we obtain in contrast to the results in \cite{GahnNeussRaduKnabner2018a} and \cite{GahnNRP} the strong convergence in $L^2$ instead of $L^p$ for $p \in [1,2)$, which follows again by the periodic boundary conditions on $\partial \Sigma$. As an immediate consequence we obtain the strong two-scale convergence of $\ceps^s$ in $\oems$ and on $\geps$:

\begin{proposition}[The case $\gamma = 1$]\label{prop:strong_ts_conv_gamma_1}
Let $\gamma = 1$. There exists a limit function 
\begin{align*}
c_0^s \in L^2((0,T)\times \Sigma, H^1_{\#}(Z_s))\cap H^1((0,T),L^2(\Sigma, H_{\#}^1(Z_s))^{\ast}),
\end{align*}
such that up to a subsequence
\begin{align*}
\vareps \nabla \tceps^s \rats \nabla_y c_0^s,\qquad    \tceps^s \rasts c_0^s, \qquad \ceps^s|_{\geps} \rasts c_0^s \quad \mbox{ on }\geps.
\end{align*}
Further, for all $\phieps(t,x) := \phi \left(t,\x,\fxe\right)$ with $\phi \in C_0^{\infty}([0,T)\times \overline{\Sigma}, C_{\#}^0(\overline{Z_s}))$ it holds that (up to a subsequence)
\begin{align*}
    \lim_{\vareps \to 0} \foe \int_0^T \langle \partial_t \ceps^s , \phieps \rangle_{H^1(\oems)} dt = \int_0^T \langle \partial_t c_0^s , \phi \rangle_{L^2(\Sigma,H_{\#}^1(Z_s))} dt.
\end{align*}
\end{proposition}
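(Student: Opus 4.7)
The plan is to deduce the weak two-scale convergences from the standard two-scale compactness in thin layers, to upgrade them to strong two-scale convergence via the abstract criterion Proposition \ref{prop:general_ts_compactness_gamma_1}, and to handle the time-derivative statement through Lemma \ref{lem:two_scale_conv_time_derivative}.

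For the weak two-scale part, Proposition \ref{prop:existence_apriori} with $\gamma=1$ yields
\begin{align*}
\frac{1}{\sqrt{\vareps}}\|\ceps^s\|_{L^\infty((0,T),L^2(\oems))} + \sqrt{\vareps}\|\nabla \ceps^s\|_{L^2((0,T)\times \oems)} + \foe \|\partial_t \ceps^s\|_{L^2((0,T),H_{\vareps,1}(\oems)^\ast)} \le C,
\end{align*}
and the boundedness of $E_\vareps^s$ from Lemma \ref{lem:Extension_operators} transfers the first two bounds to $\tceps^s$ on the full thin layer $\oem$. The standard thin-layer compactness (Lemma \ref{LemmaBasicTSCompactness}) then produces, along a subsequence, a limit $c_0^s \in L^2((0,T)\times \Sigma, H^1_{\#}(Z_s))$ with $\tceps^s \rats c_0^s$ and $\vareps \nabla \tceps^s \rats \nabla_y c_0^s$.

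To upgrade to strong two-scale convergence, I would apply Proposition \ref{prop:general_ts_compactness_gamma_1} with $u_\vareps = \ceps^s$. Hypothesis (i) is directly supplied by the \textit{a priori} estimates above. For hypothesis (ii), take $l_\vareps \in \vareps\Z^{n-1}\times\{0\}$ with $l_\vareps \to 0$ and write $\delta = \delta_{l_\vareps}$. The shift estimate in Lemma \ref{lem:apriori_shifts} gives
\begin{align*}
\frac{1}{\sqrt{\vareps}}\|\delta \ceps^s\|_{L^\infty((0,T),L^2(\oems))} + \sqrt{\vareps}\|\nabla \delta \ceps^s\|_{L^2((0,T)\times \oems)}
\le C\left(\frac{1}{\sqrt{\vareps}}\|\delta c_{\vareps,\mathrm{in}}^s\|_{L^2(\oems)} + \|\delta \ceps^f\|_{L^2((0,T),H^\beta(\oef))}\right)
\end{align*}
for a fixed $\beta\in(\tfrac12,1)$. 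The initial-data shift tends to zero by assumption \ref{ass:initial_values_ceps_s}, and the fluid-concentration shift tends to zero since $\tceps^f \to c_0^f$ strongly in $L^2((0,T),H^\beta(\Omega))$ by Proposition \ref{prop:convergence_cepsf}, combined with spatial translation continuity of the limit in $L^2((0,T),H^\beta(\Omega))$. Multiplying through by $\sqrt{\vareps}$ (and majorizing the $L^2$-in-time norm by the $L^\infty$-in-time norm) yields $\|\delta \ceps^s\|_{L^2((0,T)\times \oems)} + \vareps \|\nabla \delta \ceps^s\|_{L^2((0,T)\times \oems)} \to 0$, confirming (ii).

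Proposition \ref{prop:general_ts_compactness_gamma_1} then supplies $\teps^M \ceps^s \to c_0^s$ strongly in $L^2((0,T)\times\Sigma,H^\beta(Z_s))$, which translates to both $\tceps^s \rasts c_0^s$ in $\oem$ and $\ceps^s|_{\geps}\rasts c_0^s$ on $\geps$. The time-regularity $c_0^s \in H^1((0,T),L^2(\Sigma, H_{\#}^1(Z_s))^\ast)$ together with the identification of $\lim_{\vareps\to 0}\foe \int_0^T \langle \partial_t \ceps^s, \phieps\rangle_{H^1(\oems)}dt$ against admissible oscillating test functions $\phieps(t,x)=\phi(t,\x,x/\vareps)$ follows from Lemma \ref{lem:two_scale_conv_time_derivative}, given the uniform bound on $\foe \partial_t \ceps^s$ in $L^2((0,T),H_{\vareps,1}(\oems)^\ast)$ and the already established weak two-scale convergence of $\ceps^s$. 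The principal obstacle is the verification of the shift-continuity hypothesis (ii): the $\sqrt{\vareps}$-weighted gradient bound is by itself too coarse to yield strong two-scale convergence, so one must exploit the coupling of $\ceps^s$ to the strongly $H^\beta$-convergent fluid concentration through the trace inequality underlying Lemma \ref{lem:apriori_shifts}.
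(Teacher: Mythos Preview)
Your proposal is correct and follows essentially the same approach as the paper: verify the hypotheses of Proposition \ref{prop:general_ts_compactness_gamma_1} using the \textit{a priori} estimates from Proposition \ref{prop:existence_apriori} and the shift bounds from Lemma \ref{lem:apriori_shifts} (together with the strong $H^\beta$-convergence of $\tceps^f$ and assumption \ref{ass:initial_values_ceps_s}), then invoke Lemma \ref{lem:two_scale_conv_time_derivative} for the time-derivative part and Lemma \ref{LemmaBasicTSCompactness} for the weak two-scale convergence of the gradient. Your write-up is in fact more detailed than the paper's own proof, which simply cites these ingredients without spelling out the verification of hypothesis (ii).
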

\begin{proof}
Follows from Proposition \ref{prop:general_ts_compactness_gamma_1} and the \textit{a priori} estimates in Proposition \ref{prop:existence_apriori} and Lemma \ref{lem:apriori_shifts}. The time regularity and the convergence of the time-derivative follows from Lemma \ref{lem:two_scale_conv_time_derivative}. The two-scale convergence of the gradient is classical, see Lemma \ref{LemmaBasicTSCompactness}.
\end{proof}
The strong two-scale convergence results of $\ceps^f$ and $\ceps^s$ on $\geps$ immediately imply the strong two-scale convergence of the sequence of nonlinear functions:
\begin{corollary}\label{cor:ts_conv_nonlinear_term}
Let $c_0^f$ be the limit function from Propostion \ref{prop:convergence_cepsf} and $c_0^s$ the limit function from Proposition \ref{prop:strong_TS_convergence_cepss_gamma_-1} (for $\gamma = -1$) resp. Proposition \ref{prop:strong_ts_conv_gamma_-1_1} (for $\gamma \in (-1,1)$) or Proposition \ref{prop:strong_ts_conv_gamma_1} (for $\gamma  =1$). Then, up to a subsequence, it holds that
\begin{align*}
    h(\ceps^f,\ceps^s) \rasts h(c_0^f|_{\Sigma},c_0^s) \quad\mbox{ on }\geps.
\end{align*}
\end{corollary}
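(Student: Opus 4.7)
The plan is to translate the statement into an equivalent $L^2$-convergence on the unfolded interface $\Sigma\times\Gamma$ and then exploit the Lipschitz continuity of $h$ from assumption \ref{ass:h}. Recall that, by the definition of strong two-scale convergence on $\geps$ via the boundary unfolding operator $\teps^M$, the assertion $h(\ceps^f,\ceps^s)\rasts h(c_0^f|_\Sigma,c_0^s)$ on $\geps$ is equivalent to
\[
  \teps^M h(\ceps^f,\ceps^s) \longrightarrow h(c_0^f|_\Sigma,c_0^s)
  \quad\text{in } L^2((0,T)\times\Sigma\times\Gamma).
\]

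First I would use the pointwise identity
\[
  \teps^M h(\ceps^f,\ceps^s) \;=\; h\bigl(\teps^M\ceps^f,\,\teps^M\ceps^s\bigr)
  \qquad\text{a.e. on }(0,T)\times\Sigma\times\Gamma,
\]
which holds since $\teps^M$ merely re-parametrises the argument of the pointwise-defined map $h$. Combining this with the Lipschitz bound $|h(s_1,s_2)-h(\tilde s_1,\tilde s_2)|\le L(|s_1-\tilde s_1|+|s_2-\tilde s_2|)$ from \ref{ass:h} yields
\[
  \bigl\|\teps^M h(\ceps^f,\ceps^s) - h(c_0^f|_\Sigma,c_0^s)\bigr\|_{L^2((0,T)\times\Sigma\times\Gamma)}
  \;\le\; L\,\bigl\|\teps^M\ceps^f - c_0^f|_\Sigma\bigr\|_{L^2((0,T)\times\Sigma\times\Gamma)}
  + L\,\bigl\|\teps^M\ceps^s - c_0^s\bigr\|_{L^2((0,T)\times\Sigma\times\Gamma)}.
\]

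The first term on the right tends to zero by Proposition \ref{prop:convergence_cepsf}, namely the strong two-scale convergence $\tceps^f\rasts c_0^f|_\Sigma$ on $\geps$ (note $\tceps^f|_{\geps}=\ceps^f|_{\geps}$ by the extension property). The second term tends to zero by Proposition \ref{prop:strong_TS_convergence_cepss_gamma_-1} for $\gamma=-1$, Proposition \ref{prop:strong_ts_conv_gamma_-1_1} for $\gamma\in(-1,1)$, and Proposition \ref{prop:strong_ts_conv_gamma_1} for $\gamma=1$, in each case giving $\ceps^s|_{\geps}\rasts c_0^s$ on $\geps$; for $\gamma\in[-1,1)$ the limit $c_0^s$ is independent of $y$ and is interpreted as a constant function on $\Gamma$. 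Passing to a common subsequence on which all three strong two-scale convergences hold concludes the proof.

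There is no real obstacle here: the result is a direct corollary of the Lipschitz continuity of $h$ and the (already nontrivial) strong two-scale compactness established in the preceding propositions. The only point worth checking is the commutation $\teps^M\circ h = h\circ(\teps^M,\teps^M)$, which is immediate from the definition of the boundary unfolding operator.
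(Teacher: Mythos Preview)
Your argument is correct and is precisely the route the paper has in mind: the paper states the corollary as an immediate consequence of the strong two-scale convergences of $\ceps^f$ and $\ceps^s$ on $\geps$ together with the Lipschitz continuity of $h$, and your unfolding-plus-Lipschitz estimate is exactly how one makes this ``immediate'' step explicit. One minor citation point: the \emph{strong} two-scale convergence $\ceps^f|_{\geps}\rasts c_0^f|_\Sigma$ is actually established in the (unnumbered) proposition following Lemma~\ref{lem:aux_lemma}, not in Proposition~\ref{prop:convergence_cepsf} itself, which only records the weak two-scale convergence on $\geps$.
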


\section{Derivation of the macroscopic model}
\label{sec:derivation_macro_model}

With the compactness results obtained for the microscopic solution $(\veps,\peps,\ceps^f,\ceps^s)$ from Section \ref{sec:comptness_results} we are able to pass to the limit in the respective microscopic equations. For the fluid flow we have to identify the interface conditions across $\Sigma$. For the transport problems we can follow classical approaches to pass to the limit $\vareps\to 0$, see for example \cite{GahnEffectiveTransmissionContinuous,GahnNeussRaduKnabner2018a}. However, since the scaling for the time-derivative in the thin layer is different, the only contribution from the thin layer (in the limit the interface) on the concentration of the solute in the fluid phase is the nonlinear coupling term. In the following we treat the fluid equations and transport equations separately.

\subsection{Effective interface law for the fluid flow}
\label{sec:derivation_fluid_model}

We derive the macroscopic equation for the fluid problem. While we can expect to obtain Stokes-equations in the bulk domains $\Omega^{\pm}$, the interface conditions across $\Sigma$ for the macroscopic fluid velocity and pressure are not obvious. For the derivation we choose suitable test-functions in the microscopic equation $\eqref{eq:Var_Micro_veps}$ adapted to the structure of the limit function $v_0$. In particular, in the thin layer we have to choose oscillating test-functions. Hence, to pass to the limit for the terms in the thin layer we use similar arguments as for Stokes flow in thin perforated layers. However, the coupling to the bulk domains has significant contributions to the limit equations, especially for the cell problems and the Darcy velocity at $\Sigma$. Further, we have to construct a special divergence operator including coupling conditions across $\Sigma$ to generate a pressure $p_1^M$ associated to the fluid velocity $v_0^M$.
\\

\noindent\textit{Proof of Theorem \ref{MainThm:Fluid}:} The convergence results for $(\veps,\peps)$ were already established in Proposition \ref{prop:compactness_fluid} and the continiuty of the normal velocity across $\Sigma$ in Corollary \ref{cor:continuity_normal_component_v_0}. It remains to show that $(v_0,p_0)$ is the unique weak solution of the macro model $\eqref{def:Macro_Stokes_model_strong}$. Let us choose test-functions $\phieps$ in the variational equation $\eqref{eq:Var_Micro_veps}$ of the form
\begin{align*}
\phieps(x):= \begin{cases}
\phi^{\pm} \left( t,x \mp \vareps e_n\right) &\mbox{ for } x \in \oeps^{\pm},
\\
\phi^M\left(t,\x,\fxe\right) &\mbox{ for } x \in \oemf,
\\
0 &\mbox{ for } x \in \oems,
\end{cases}
\end{align*}
for functions $\phi^{\pm} \in  C_0^{\infty}\left((0,T),C_{\#}^{\infty}\left( \overline{\Omega^{\pm}}\right)\right)^3$ fulfilling $\phi^+_3 = \phi^-_3$ on $(0,T)\times \Sigma$, and $\phi^M \in C_0^{\infty}\left((0,T),C_{\#}^{\infty}\left(\Sigma, H_{\#}^1(Z_f,\Gamma)\right)\right)^3$ with $\nabla_y \cdot \phi^M = 0$ and $\phi^M = \phi^{\pm}(\x)$ on $(0, T) \times \Sigma \times S^{\pm}$. The last condition implies that $\phi^M$ is constant on $S^{\pm}$ with respect to $y$.
The motivation for this choice of test functions is that it fits to the structure of the limit function $(v_0^{\pm}, v_0^M)$ from Proposition \ref{prop:compactness_fluid}, see also Corollary \ref{cor:continuity_normal_component_v_0}. Hence, we obtain almost everywhere in $(0,T)$
\begin{align*}
\sum_{\pm}& \bigg\{ \int_{\oeps^{\pm}} \partial_t \veps^{\pm}\cdot  \phi^{\pm}(x \mp \vareps e_n)  dx +  \int_{\oeps^{\pm}} D(\veps^{\pm}): D(\phi^{\pm})(x \mp \vareps e_n) dx   -\int_{\oeps^{\pm}} \peps^{\pm} \nabla \cdot \phi^{\pm}(x \mp \vareps e_n) dx \bigg\}
\\
+& \int_{\oemf} \partial_t \veps^M \cdot \phi^M  \bxfxe dx  + \foe\int_{\oemf} \vareps D(\veps^M) : \left[ \vareps D_{\x}(\phi^M) + D_y(\phi^M)\right]\bxfxe  dx
\\
-&  \foe \int_{\oemf} \peps^M \nabla_{\x} \cdot \phi^M \bxfxe dx 
= \sum_{\pm} \int_{\oeps^{\pm}} f_{\vareps}^{\pm} \cdot \phi^{\pm}(x \mp \vareps e_n) dx .
\end{align*}
Due to Proposition \ref{prop:existence_apriori}, the term including the time-derivative $\partial_t \veps^M$ is of order $\sqrt{\vareps}$ (after integration with respect to time). Integration with respect to time and using the compactness results from Proposition \ref{prop:compactness_fluid}, we obtain for $\vareps \to 0$
\begin{align}
\begin{aligned}\label{eq:aux_Darcy_law}
\sum_{\pm} & \left\{ \int_0^T \int_{\Omega^{\pm}} \partial_t v_0^{\pm} \cdot \phi^{\pm} dx dt + \int_0^T \int_{\Omega^{\pm}} D(v_0^{\pm}) : D(\phi^{\pm}) dxdt - \int_0^T  \int_{\Omega^{\pm}} p_0^{\pm} \nabla \cdot \phi^{\pm} dx  dt \right\} 
\\
&+ \int_0^T \int_{\Sigma} \int_{Z_f} D_y(v_0^M) : D_y(\phi^M) dy d\x dt  = \sum_{\pm} \int_0^T \int_{\Omega^{\pm}} f^{\pm}_0 \cdot \phi^{\pm} dx dt .
\end{aligned}
\end{align}
By density this result is valid for test functions $\phi = (\phi^+,\phi^M , \phi^-) \in L^2((0,T),H)$ with $\nabla_y \cdot \phi^M =0$, where the space $H$ is defined via
\begin{align*}
H:= \big\{ \phi \in H^1_{\#}(\Omega^+)^3 \times L^2(\Sigma,H_{\#}^1(Z_f,\Gamma))^3 \times &H^1_{\#}(\Omega^-)^3 \, : 
\\
&\phi^{\pm} = \phi^M \mbox{ on } \Sigma \times S^{\pm} , \phi^+_3 = \phi^-_3 \mbox{ on } \Sigma \big\}.
\end{align*}
We emphasize that $v_0 = (v_0^+,v_0^M,v_0^-) \in L^2((0,T),H)$. To construct a pressure in the cell $Z_f$ we consider the following divergence operator:
\begin{align*}
    Div_H : H \rightarrow L^2(\Sigma \times Z_f), \qquad \Div_H \phi = \nabla_y \cdot \phi^M.
\end{align*}
\begin{lemma}\label{lem:Bogovskii}
The range of the operator $\Div_H$ is equal to $L^2(\Sigma,L_0^2(Z_f))$ and is in particular closed. Further, for every $F \in H^{\ast}$ with $F(\phi) = 0$ for all $\phi \in H$ with $\Div_H \phi = 0$ there exists  a unique $p^M \in L^2(\Sigma, L_0^2(Z_f))$ with 
\begin{align*}
    F(\phi) = (p^M , \nabla_y \cdot \phi^M )_{L^2(\Sigma \times Z_f)} \qquad \mbox{for all } \phi \in H.
\end{align*}
\end{lemma}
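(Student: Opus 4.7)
My plan is to first establish the range equality $\mathrm{range}(\Div_H) = L^2(\Sigma, L^2_0(Z_f))$, which in particular makes the range closed, and then deduce the pressure representation from the closed-range theorem. The inclusion $\mathrm{range}(\Div_H) \subseteq L^2(\Sigma, L^2_0(Z_f))$ is the easy direction: for $\phi = (\phi^+,\phi^M,\phi^-) \in H$ and a.e.\ $\x \in \Sigma$, Gauss' theorem yields
\begin{align*}
\int_{Z_f} \nabla_y \cdot \phi^M(\x,y)\,dy \;=\; \int_{\partial Z_f} \phi^M(\x,y) \cdot \nu \, d\sigma_y.
\end{align*}
The $\Gamma$-contribution vanishes since $\phi^M \in H^1_{\#}(Z_f,\Gamma)$, and the lateral contributions cancel by $Y$-periodicity. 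On $S^\pm$ the trace $\phi^M(\x,\cdot) = \phi^\pm(\x)$ is independent of $y$, and under the standing assumption $|S_s^{\pm}|=0$ we have $|S_f^{\pm}|=|Y|$, so the remaining boundary integral collapses to $|Y|\big(\phi^+_3(\x) - \phi^-_3(\x)\big) = 0$ thanks to the coupling $\phi^+_3 = \phi^-_3$ on $\Sigma$.

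For the reverse inclusion, given $g \in L^2(\Sigma, L^2_0(Z_f))$ I intend to realize it as $\Div_H \phi$ by picking $\phi = (0,\phi^M,0)$. Taking $\phi^{\pm} \equiv 0$ forces the side condition $\phi^M = 0$ on $\Sigma \times S^\pm$, so what is needed is a bounded linear Bogovskii-type operator $\B\colon L^2_0(Z_f) \to H^1_{\#}(Z_f)^3$ producing vector fields that vanish on $\Gamma \cup S^+ \cup S^-$ and are $Y$-periodic in the lateral directions, with $\nabla_y \cdot \B(u) = u$. Such an operator exists by the classical Bogovskii construction on the connected Lipschitz cell $Z_f$ equipped with the lateral periodic identification (reducing to the Bogovskii problem on a bounded Lipschitz domain with homogeneous Dirichlet data on $\Gamma \cup S^+ \cup S^-$). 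Setting $\phi^M(\x,\cdot) := \B(g(\x,\cdot))$ pointwise in $\x$, Fubini together with the $\x$-independent operator norm of $\B$ gives $\phi^M \in L^2(\Sigma, H^1_{\#}(Z_f,\Gamma))^3$ with $\nabla_y \cdot \phi^M = g$, and the triple $(0,\phi^M,0)$ lies in $H$.

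With surjectivity established, $\Div_H$ is a bounded linear surjection from $H$ onto the Hilbert space $L^2(\Sigma, L^2_0(Z_f))$, so its range is closed. Given $F \in H^\ast$ vanishing on $\ker(\Div_H)$, $F$ descends to a bounded functional on the quotient $H/\ker(\Div_H)$, which $\Div_H$ identifies with $L^2(\Sigma, L^2_0(Z_f))$; by the Riesz representation theorem on this self-dual space one obtains a $p^M \in L^2(\Sigma, L^2_0(Z_f))$ such that
\begin{align*}
F(\phi) \;=\; (p^M, \nabla_y \cdot \phi^M)_{L^2(\Sigma \times Z_f)} \qquad \text{for all } \phi \in H.
\end{align*}
Uniqueness of $p^M$ in $L^2(\Sigma, L^2_0(Z_f))$ is immediate from surjectivity of $\Div_H$: any two representatives would pair identically against every element of the target space.

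The only non-routine point is the Bogovskii construction of the surjectivity step: while divergence right inverses on connected Lipschitz domains are classical, one must verify compatibility with the specific mixed boundary conditions (lateral $Y$-periodicity and homogeneous Dirichlet on $\Gamma \cup S^+ \cup S^-$) built into the ambient space $H$. This is precisely where the assumption $|S_s^\pm|=0$, ensuring $S_f^\pm = Y$ and thus both the mean-value constraint of Step~1 and the clean Dirichlet setup of Step~2, enters the argument.
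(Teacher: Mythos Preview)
Your proof is correct and follows essentially the same approach as the paper: both directions of the range equality are argued identically (Gauss' theorem for the inclusion, the classical Bogovskii operator on $Z_f$ with zero extension to $\Omega^{\pm}$ for surjectivity), and your quotient-plus-Riesz argument for the pressure is just a rephrasing of the paper's direct appeal to the closed range theorem via $N(\Div_H)^{\perp} = R(\Div_H^{\ast})$.
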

\begin{proof}
For every $\phi \in H$ we have almost everywhere in $\Sigma$ by integration by parts (using $\phi^M = 0 $ on $\Gamma$)
\begin{align*}
\int_{Z_f} \Div_H \phi dy = \int_{Z_f} \nabla_y \cdot \phi^M dy = \int_{S^+} \phi^M_3 d\sigma_y - \int_{S^-} \phi^M_3 d\sigma_y = \phi^+_3 - \phi^-_3 = 0.
\end{align*}
Now, let $f \in L^2(\Sigma ,L_0^2(Z_f))$. By the classical Bogovskii theory there exists $\phi^M \in L^2(\Sigma,H_{\#}^1(Z_f,\Gamma \cup S^+ \cup S^-))^3$ such that $\nabla_y \cdot \phi^M = f$. Extending $\phi^M$ by zero to the bulk domains $\Omega^{\pm}$ we obtain a function $\phi \in H$ fulfilling $\Div_H \phi = f$, which implies $R(\Div_H) = L^2(Z,L_0^2(Z_f))$.

The second statement is just the closed range theorem. In fact, by assumption we have $F \in N(\Div_H)^{\perp} = R(\Div_H^{\ast})$ (the last equality follows from the closed range theorem). Hence, there exists $p^M \in L^2(\Sigma,L_0^2(Z_f))$ with $\Div_H^{\ast} p^M = F$. In other words, for all $\phi \in H$ it holds that
\begin{align*}
F(\phi ) = \langle \Div_H^{\ast} p ,\phi \rangle_H = (p,\Div_H \phi)_{L^2(\Sigma \times Z_f)} = (p, \nabla_y \cdot \phi^M)_{L^2(\Sigma \times Z_f)}.
\end{align*}
Uniqueness is clear.
\end{proof}
From equation $\eqref{eq:aux_Darcy_law}$ and Lemma \ref{lem:Bogovskii} we obtain the existence of a unique $p_1^M \in L^2((0,T)\times \Sigma,L_0^2(Z_f))$ such that for all $\phi = (\phi^+,\phi^M , \phi^-) \in L^2((0,T),H)$ and almost everywhere in $(0,T)$ it holds that
\begin{align}
\begin{aligned}\label{eq:two_scale_model_fluid}
\sum_{\pm} & \left\{ \int_{\Omega^{\pm}} \partial_t v_0^{\pm} \cdot \phi^{\pm} dx +\int_{\Omega^{\pm}} D(v_0^{\pm}) : D(\phi^{\pm}) dx -  \int_{\Omega^{\pm}} p_0^{\pm} \nabla \cdot \phi^{\pm} dx   \right\} 
\\
&+ \int_{\Sigma} \int_{Z_f} D_y(v_0^M) : D_y(\phi^M) dy d\x - \int_{\Sigma} \int_{Z_f} p_1^M \nabla_y \cdot \phi^M dy d\x = \sum_{\pm} \int_{\Omega^{\pm}} f^{\pm}_0 \cdot \phi^{\pm} dx  . 
\end{aligned}
\end{align}
We summarize the previous results in the following Proposition:
\begin{proposition}\label{prop:two_scale_model_fluid}
The limit function $(v_0,p_0)$ with $v_0 = (v_0^+,v_0^M ,v_0^-)$ and $p_0 = (p_0^+,p_0^-)$ from Proposition \ref{prop:compactness_fluid} fulfills $v_0 \in L^2((0,T),H)$ and is the unique weak solution of 
\begin{align*}
\partial_t v_0^{\pm} - \nabla\cdot D(v_0^{\pm}) + \nabla p_0^{\pm} &= f_0^{\pm} &\mbox{ in }& (0,T)\times \Omega^{\pm},
\\
\nabla \cdot v_0^{\pm} &= 0 &\mbox{ in }& (0,T)\times \Omega^{\pm},
\\
[v_0^+]_3 &= [v_0^-]_3 &\mbox{ on }& (0,T)\times \Sigma,
\\
-[D(v_0^{\pm}) - p_0^{\pm}I]\nu &= 0 &\mbox{ on }& (0,T)\times \Sigma \times \{\pm H\},
\\
-\nabla_y \cdot D_y(v_0^M) + \nabla_y p_1^M &= 0 &\mbox{ in }& (0,T)\times \Sigma \times Z_f,
\\
\nabla_y \cdot v_0^M &= 0 &\mbox{ in }& (0,T)\times \Sigma \times Z_f,
\\
v_0^M &= 0 &\mbox{ on }& (0,T)\times \Sigma \times \Gamma,
\\
v_0^{\pm} &= v_0^M &\mbox{ on }& (0,T)\times \Sigma \times S^{\pm},
\\
v_0^{\pm}(0) &= 0 &\mbox{ in }& \Omega^{\pm},
\\
v_0^{\pm} \,\, \Sigma\mbox{-periodic},\,\, v_0^M \,\, Y\mbox{-periodic}.
\end{align*}
A weak solution of this problem are functions $v_0\in L^2((0,T),H)$ with $\partial_t v_0^{\pm} \in L^2((0,T)\times \Omega^{\pm})$ and $\nabla \cdot v_0^{\pm} = 0$ resp. $\nabla_y \cdot v_0^M = 0$, together with a pressure $(p_0^+,p_1^M,p_0^-) \in L^2(\Omega^+)\times L^2(\Sigma,L_0^2(Z_f)) \times L^2(\Omega^-)$, such that $\eqref{eq:two_scale_model_fluid}$ is valid for all $\phi \in H$ almost everywhere in $(0,T)$.
\end{proposition}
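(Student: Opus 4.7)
The plan is to verify in turn the four content parts of the proposition: (i) membership of $v_0$ in $L^2((0,T),H)$ with the stated divergence constraints, (ii) the identity \eqref{eq:two_scale_model_fluid} as the weak formulation, (iii) equivalence between the variational form and the strong two-scale system, and (iv) uniqueness. Parts (i) and (ii) are essentially already available. From Proposition \ref{prop:compactness_fluid} we have $v_0^{\pm} \in L^2((0,T),H_{\#}^1(\Omega^{\pm}))^3 \cap H^1((0,T),L^2(\Omega^{\pm}))^3$ with $\nabla\cdot v_0^{\pm}=0$, $v_0^M \in L^2((0,T)\times\Sigma,H_{\#}^1(Z_f))^3$ with $\nabla_y\cdot v_0^M=0$ and $v_0^M=0$ on $\Gamma$, together with the matching $v_0^M = v_0^{\pm}$ on $\Sigma \times S^{\pm}$; Corollary \ref{cor:continuity_normal_component_v_0} supplies $[v_0^+]_3 = [v_0^-]_3$ on $\Sigma$. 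These properties together say exactly that $v_0 \in L^2((0,T),H)$. Equation \eqref{eq:two_scale_model_fluid} itself has already been derived: it is the outcome of taking the limit in \eqref{eq:Var_Micro_veps} using the two-scale test functions constructed in the proof of Theorem \ref{MainThm:Fluid}, combined with the existence of the cell pressure $p_1^M$ obtained from Lemma \ref{lem:Bogovskii}.

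For (iii) I would recover the strong form from \eqref{eq:two_scale_model_fluid} by a standard choice-of-test-function argument. Picking $\phi^{\pm}\in C_0^{\infty}((0,T)\times \Omega^{\pm})^3$ (extended by zero, with $\phi^M\equiv 0$) yields the bulk Stokes equations in the sense of distributions, and integration by parts then identifies the boundary traces $-(D(v_0^{\pm})-p_0^{\pm}I)\nu$ on $\partial_N \Omega^{\pm}$, which must vanish since no boundary integral appears on these faces. Similarly, choosing $\phi = (0,\phi^M,0)$ with $\phi^M\in C_0^{\infty}((0,T)\times \Sigma,C_{\#}^{\infty}(Z_f))^3$ having compact support away from $S^{\pm}\cup \Gamma$ gives $-\nabla_y \cdot D_y(v_0^M) + \nabla_y p_1^M=0$ on $\Sigma \times Z_f$. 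All remaining boundary and interface conditions on $\Sigma$, $\Gamma$ and $S^{\pm}$ are encoded in the definition of $H$ and the regularity of $v_0$.

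For (iv), uniqueness, I would take two weak solutions $(v_0^{(k)},p_0^{(k)},p_1^{M,(k)})$, $k=1,2$, and subtract: their difference $(w,q_0,q_1^M)$ satisfies the homogeneous variational identity for every $\phi \in H$, together with zero initial data. Testing with $\phi = w \in L^2((0,T),H)$, which is admissible because it satisfies the divergence constraints and the interface matchings defining $H$, the two pressure terms vanish (the bulk ones because $\nabla\cdot \phi^{\pm}=0$, the cell one because $\nabla_y\cdot \phi^M=0$), leaving
\begin{equation*}
\tfrac{1}{2}\tfrac{d}{dt}\sum_{\pm}\|w^{\pm}\|_{L^2(\Omega^{\pm})}^2 + \sum_{\pm}\|D(w^{\pm})\|_{L^2(\Omega^{\pm})}^2 + \int_{\Sigma}\int_{Z_f}|D_y(w^M)|^2\,dy\,d\bar{x}=0.
\end{equation*}
Together with the zero initial condition this gives $w^{\pm}\equiv 0$ in $L^{\infty}((0,T),L^2(\Omega^{\pm}))^3$ and $D(w^{\pm})\equiv 0$, whence Korn's inequality in $H_{\#}^1(\Omega^{\pm})$ yields $w^{\pm}\equiv 0$; since $w^M = w^{\pm}=0$ on $\Sigma\times S^{\pm}$, Korn's inequality on $H_{\#}^1(Z_f,\Gamma)$ also forces $w^M\equiv 0$. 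Uniqueness of the pressures then follows from Lemma \ref{lem:Bogovskii}: once the velocity is known, the pressure is the unique representative in the quotient $L^2(\Omega^+)\times L^2(\Sigma,L_0^2(Z_f))\times L^2(\Omega^-)$ of the resulting functional on the divergence-free subspace of $H$.

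The only step requiring care is the admissibility of $\phi = w$ as a test function: one must verify that $w$ inherits enough regularity from the two solutions to belong to $L^2((0,T),H)$, that both divergence constraints are preserved under subtraction (they are, by linearity), and that the interface traces $\phi^{\pm}=\phi^M$ on $\Sigma\times S^{\pm}$ and $\phi_3^+=\phi_3^-$ on $\Sigma$ still hold. These all follow directly from the definition of a weak solution. Once this is observed, the uniqueness argument is routine energy analysis and no Gronwall loop is even needed since the problem is strictly dissipative without lower-order couplings.
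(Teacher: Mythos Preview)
Your proposal is correct and follows the same route as the paper, which disposes of the proposition in two lines (``Uniqueness is obvious and the initial conditions can be obtained by similar arguments as above with test-functions not vanishing in $t=0$''); you simply supply the details the paper omits. The one point you skip and the paper flags is the verification of $v_0^{\pm}(0)=0$: this does not come for free from Proposition~\ref{prop:compactness_fluid} and requires repeating the limit passage with test functions $\phi^{\pm}\in C_0^{\infty}([0,T),C_{\#}^{\infty}(\overline{\Omega^{\pm}}))^3$ not vanishing at $t=0$, then integrating by parts in time and using $\veps(0)=0$.
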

\begin{proof}
Uniqueness is obvious and the initial conditions can be obtained by similar arugments as above with test-functions not vanishing in $t= 0$.
\end{proof}
In a next step, we will formulate the problem on the bulk domains $\Omega^+$ and $\Omega^-$ with interface conditions across $\Sigma$. From Proposition \ref{prop:two_scale_model_fluid} we obtain that $(v_0^M,p_1^M)$ is the unique weak solution of the problem 
\begin{align*}
-\nabla_y \cdot D_y(v_0^M) + \nabla_y p_1^M &= 0 &\mbox{ in }& (0,T)\times \Sigma \times Z_f,
\\
\nabla_y \cdot v_0^M &= 0 &\mbox{ in }& (0,T)\times \Sigma \times Z_f,
\\
v_0^M &= 0 &\mbox{ on }& (0,T)\times \Sigma \times \Gamma,
\\
v_0^M &= v_0^{\pm} &\mbox{ on }& (0,T)\times \Sigma \times S^{\pm},
\\
v_0^M,\, p_1^M \,\, Y\mbox{-periodic}.
\end{align*}
We emphasize (again), that this problem has a weak solution, since $[v_0^+]_3 = [v_0^-]_3$ on $\Sigma$. We shortly write 
\begin{align*}
    [v_0]_3:= [v_0^{\pm}]_3 \qquad \mbox{ on } \Sigma.
\end{align*}
By linearity, we immediately obtain the following representations for $v_0^M$ and $p_1^M$ for almost every $(t,\x,y) \in (0,T)\times \Sigma \times Z_f$:
\begin{align}
\begin{aligned}\label{id:representation_v0M_p1M}
 v_0^M(t,\x,y) &= \sum_{\pm} \sum_{i=1}^2 [v_0^{\pm}]_i(t,\x,0)  q_i^{\pm}(y) + [v_0]_3 q_3(y),
    \\
    p_1^M(t,\x,y) &= \sum_{\pm}\sum_{i=1}^2 [v_0^{\pm}]_i(t,\x,0) \pi_i^{\pm}(y) + [v_0]_3 \pi_3(y),
\end{aligned}
\end{align}
where here $v_0^{\pm}(t,\x,0)$ denotes the trace of $v_0^{\pm}$ on $\Sigma$, and for $i=1,2$ the tuple $(q_i^{\pm},\pi_i^{\pm}) \in H^1_{\#}(Z_f,\Gamma)^3 \times L_0^2(Z_f)$ is the unique weak solution of the cell problem
\begin{align}
\begin{aligned}\label{Cell_problems_fluid_12}
    -\nabla_y \cdot D_y(q_i^{\pm}) +\nabla_y \pi_i^{\pm} &= 0 &\mbox{ in }& Z_f,
    \\
    \nabla_y \cdot q_i^{\pm} &= 0 &\mbox{ in }& Z_f,
    \\
    q_i^{\pm} &= 0 &\mbox{ on }& \Gamma \cup S^{\mp},
    \\
    q_i^{\pm} &= e_i &\mbox{ on }& S^{\pm},
    \\
    q_i^{\pm},\, \pi_i^{\pm} &\mbox{ is } Y\mbox{-periodic},
\end{aligned}
\end{align}
and $(q_3,\pi_3) \in H^1_{\#}(Z_f,\Gamma)^3 \times L_0^2(Z_f)$  is the unique solution of  
\begin{align}
\begin{aligned}\label{Cell_problems_fluid_3}
    -\nabla_y \cdot D_y(q_3) +\nabla_y \pi_3 &= 0 &\mbox{ in }& Z_f,
    \\
    \nabla_y \cdot q_3 &= 0 &\mbox{ in }& Z_f,
    \\
    q_3 &= 0 &\mbox{ on }& \Gamma ,
    \\
    q_3 &= e_3 &\mbox{ on }& S^+ \cup S^-,
    \\
    q_3,\, \pi_3 &\mbox{ is } Y\mbox{-periodic}.
\end{aligned}
\end{align}
Here we have to point out the difference between the cell problems for $(q_i^{\pm},\pi_i^{\pm})$ with $i=1,2$ and $(q_3,\pi_3)$. While for the first tuple we have the zero boundary condition on $S^-$ resp. $S^+$, this is not possible for $(q_3,\pi_3)$, since we need $\int_{\partial Z_f} q_3 \cdot \nu dy = 0$.

Next, we derive some kind of permeability coefficients across the interface $\Sigma$, which allows to replace the terms in $\eqref{eq:two_scale_model_fluid}$ including integrals over $Z_f$ by an integral over $\Sigma$.  In fact we will see that these integrals are not depending on the choice of $\phi^M$, but only on its values $\phi^+$ and $\phi^-$ on $S^+$ and $S^-$, respectively. Let $z^{\pm} \in \R^3$ with $z^+_3 = z^-_3 =: z_3$ and choose 
\begin{align}\label{def:aux_phiM}
    \phi^M := \sum_{\alpha \in \{\pm\}} \sum_{j=1}^2 z_j^{\alpha} q_j^{\alpha} + z_3 q_3.
\end{align}
Hence, $\phi^M$ fulfills $\nabla_y \cdot \phi^M = 0$ and $\phi^M = z^{\pm}$ on $S^{\pm}$. Using the representation above for $v_0^M$, we get almost everywhere in $(0,T)\times \Sigma$
\begin{align*}
\int_{Z_f}& D_y(v_0^M) : D_y(\phi^M) - p_1^M \nabla_y \cdot \phi^M dy 
\\
=&\sum_{\alpha,\beta \in \{\pm\}} \sum_{i,j=1}^2 [v_0^{\alpha}]_i z_j^{\beta} \int_{Z_f} D_y(q_i^{\alpha} ) : D_y (q_j^{\beta}) dy + \sum_{\alpha\in\{\pm\}} \sum_{i=1}^2 [v_0^{\alpha}]_i z_3 \int_{Z_f} D_y(q_i^{\alpha}) : D_y(q_3) dy 
\\
&+ \sum_{\beta \in \{\pm\}} \sum_{j=1}^2 [v_0]_3 z_j^{\beta} \int_{Z_f} D_y(q_3) : D_y(q_j^{\beta}) dy + [v_0]_3 z_3 \int_{Z_f} D_y(q_3) : D_y(q_3) dy.
\end{align*}
This gives rise to the definition of the following effective coefficients for $\alpha ,\beta \in \{\pm\}$ and $i,j=1,2$:
\begin{align*}
    G_{ij}^{\alpha \beta} &:= \int_{Z_f} D_y(q_i^{\alpha}) : D_y(q_j^{\beta}) dy ,
    \\
   G_{3i}^{\alpha \alpha }:=  G_{i3}^{\alpha \alpha } &:= \int_{Z_f} D_y(q_i^{\alpha}) : D_y(q_3) dy ,
    \\
    G_{33}^{\alpha \alpha} &:= \frac12 \int_{Z_f} D_y(q_3) : D_y(q_3) dy.
\end{align*}
Obviously, the last integral is not depending on $\alpha$, and also the second is just depending on $\alpha $ and not $\beta$. However, in the following this will simplify the notation. The tensor $G^{\alpha \beta}_{ij}$ is symmetric with respect to $\alpha,\beta$ and $i,j$.
Hence, we obtain 
\begin{align*}
\int_{Z_f}& D_y(v_0^M) : D_y(\phi^M) - p_1^M \nabla_y \cdot \phi^M dy 
\\
=&\sum_{\alpha,\beta \in \{\pm\}} \sum_{i,j=1}^2 G_{ij}^{\alpha \beta} [v_0^{\alpha}]_i z_j^\beta + \sum_{\alpha \in \{\pm\}}\sum_{i=1}^2 G_{i3}^{\alpha \alpha} \left\{ [v_0^{\alpha}]_i z_3 + [v_0]_3 z_i^{\alpha}\right\} + G_{33}^{\alpha \alpha} z_3 [v_0]_3.
\end{align*}
We define the tensors $K^{\alpha} \in \R^{3\times 3}$ for $\alpha \in \{\pm\}$ and $M\in \R^{3\times 3}$ by ($i,j=1,2,3$)
\begin{align}\label{def:Kpm}
    K^{\alpha}_{ij}:= G^{\alpha \alpha}_{ij}
\end{align}
and 
\begin{align}\label{def:M}
    M_{ij}:= G^{+-}_{ij} \quad \mbox{ for } i,j\in \{1,2\},\quad M_{i3} = M_{3i} = 0 \quad \mbox{ for } i=1,2,3.
\end{align}
Since $G^{+-}_{ij} = G^{-+}_{ij}$ the matrix $M$ is not depending on $\pm$.  Altogether, we obtain
\begin{align}\label{eq:aux_membrane}
\int_{Z_f}& D_y(v_0^M) : D_y(\phi^M) - p_1^M \nabla_y \cdot \phi^M dy = \sum_{\pm} K^{\pm} v_0^{\pm} \cdot z^{\pm} + Mv_0^+  \cdot z^- + M v_0^- \cdot z^+.
\end{align}
Plugging in this identity into $\eqref{eq:two_scale_model_fluid}$ we obtain that $(v_0,p_0)$ is a weak solution of the macroscopic fluid problem $\eqref{def:Macro_Stokes_model_strong}$. We emphasize that in $\eqref{eq:two_scale_model_fluid}$ we have to choose test-functions $(\phi^+,\phi^M,\phi^-) \in H$, even if the equation is not depending anymore on $\phi^M$. However, using for example the construction in $\eqref{def:aux_phiM}$, we obtain for every $(\phi^+,\phi^-) \in H^1_{\#}(\Omega^+)^3 \times H^1(\Omega^-)^3_{\#}$ with $\phi^+_3 = \phi^-_3$ on $\Sigma$ an element $\phi^M$, such that $(\phi^+,\phi^M,\phi^-)\in H$.

Next, we  prove that $(v_0,p_0)$ is the unique solution of the macroscopic fluid model. We show the the effective coefficients $K^{\pm}$ and $M$ fulfill the following coercivity property:
\begin{lemma}
There exists a constant $c_0>0$ such that for every $\xi^{\pm} \in \R^3$ with $\xi_3^+ = \xi_3^-$ it holds that 
\begin{align}\label{ineq:coercivity_macro_fluid}
M\xi^+ \cdot \xi^- + M \xi^- \cdot \xi^+ + \sum_{\pm} K^{\pm} \xi^{\pm} \cdot \xi^{\pm} \geq c_0 \sum_{\pm} |\xi^{\pm}|^2.
\end{align}
\end{lemma}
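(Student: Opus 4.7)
My strategy is to recognize the left-hand side of $\eqref{ineq:coercivity_macro_fluid}$ as the Dirichlet energy of a single Stokes-type velocity field built from $\xi^{\pm}$, and then close the argument with Korn's inequality on the cell $Z_f$ (using that $\Gamma$ has positive surface measure) together with the trace theorem on $S^{\pm}$.

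\textbf{Step 1 (Construct the test field).} For $\xi^{\pm}\in\R^3$ with $\xi_3^+=\xi_3^-=:\xi_3$, set
\begin{align*}
\phi^M := \sum_{\alpha\in\{\pm\}} \sum_{j=1}^{2} \xi_j^{\alpha}\, q_j^{\alpha} + \xi_3\, q_3,
\end{align*}
where $(q_i^{\pm},\pi_i^{\pm})$ and $(q_3,\pi_3)$ are the cell solutions of $\eqref{Cell_problems_fluid_12}$ and $\eqref{Cell_problems_fluid_3}$. By linearity, $\phi^M\in H^1_{\#}(Z_f,\Gamma)^3$ satisfies $\phi^M=\xi^{\pm}$ on $S^{\pm}$, $\phi^M=0$ on $\Gamma$, and $\nabla_y\cdot\phi^M=0$ in $Z_f$.

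\textbf{Step 2 (Rewrite the quadratic form).} Apply the identity $\eqref{eq:aux_membrane}$ with $v_0^{\pm}=z^{\pm}=\xi^{\pm}$, so that the corresponding Stokes field coincides with $\phi^M$. Since $\nabla_y\cdot\phi^M=0$, the pressure term drops out and
\begin{align*}
\int_{Z_f} |D_y(\phi^M)|^2\,dy = \sum_{\pm} K^{\pm}\xi^{\pm}\cdot\xi^{\pm} + M\xi^+\cdot\xi^- + M\xi^-\cdot\xi^+,
\end{align*}
which is exactly the left-hand side of $\eqref{ineq:coercivity_macro_fluid}$. In particular the form is non-negative.

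\textbf{Step 3 (Korn plus trace).} Since $\phi^M$ vanishes on $\Gamma$ and $|\Gamma|>0$ (with $Z_f$ a connected Lipschitz domain), Korn's inequality yields a constant $c_K>0$ (independent of $\xi^{\pm}$) such that
\begin{align*}
\int_{Z_f} |D_y(\phi^M)|^2\,dy \;\geq\; c_K\, \|\phi^M\|_{H^1(Z_f)}^2.
\end{align*}
By the trace theorem on $Z_f$ and the fact that $\phi^M$ is constant on $S^{\pm}$ with value $\xi^{\pm}$,
\begin{align*}
\|\phi^M\|_{H^1(Z_f)}^2 \;\geq\; c_T\bigl(\|\phi^M\|_{L^2(S^+)}^2 + \|\phi^M\|_{L^2(S^-)}^2\bigr) = c_T\bigl(|S^+|\,|\xi^+|^2 + |S^-|\,|\xi^-|^2\bigr).
\end{align*}
Under the standing assumption $|S_s^{\pm}|=0$ we have $|S^{\pm}|=|S_f^{\pm}|>0$, so combining the last two bounds with Step~2 gives $\eqref{ineq:coercivity_macro_fluid}$ with $c_0 := c_K\,c_T\,\min(|S^+|,|S^-|)>0$.

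\textbf{Main obstacle.} Conceptually the argument is short; the only nontrivial point is the applicability of Korn's inequality on the connected Lipschitz cell $Z_f$ with Dirichlet portion $\Gamma$ of positive surface measure, which is classical. It is also worth noting that the compatibility $\xi_3^+=\xi_3^-$ is precisely what makes $\phi^M$ well-defined as a divergence-free field in $Z_f$, so the coercivity cannot be expected on the full $\R^3\times\R^3$ but only on the admissible subspace appearing in the weak formulation $\eqref{eq:var_macro_fluid}$.
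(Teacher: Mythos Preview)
Your proof is correct and follows essentially the same approach as the paper: construct the field $\phi^M$ from the cell solutions, identify the quadratic form as $\|D_y(\phi^M)\|_{L^2(Z_f)}^2$ via $\eqref{eq:aux_membrane}$, and then combine Korn's inequality (using $\phi^M=0$ on $\Gamma$) with the trace estimate on $S^{\pm}$. The only cosmetic difference is that the paper wraps the last step in a contradiction argument (normalizing $|\xi_n^+|^2+|\xi_n^-|^2=1$ and sending the energy to zero), whereas you chain the inequalities directly and obtain an explicit constant $c_0=c_K\,c_T\,\min(|S^+|,|S^-|)$; the mathematical content is identical.
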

\begin{proof}
Let $\xi^{\pm} \in \R^3$ with $\xi_3^+ = \xi_3^- =: \xi_3$ and define
\begin{align}\label{def:aux_coercivity_macro_fluid}
    \xi^M:= \sum_{\pm} \sum_{i=1}^2 \xi^{\pm}_i q_i^{\pm} + \xi_3 q_3.
\end{align}
By an elemental calculation similar to the derivation of $\eqref{eq:aux_membrane}$ we obtain 
\begin{align}\label{eq:aux_coercivity_macro_fluid}
    M\xi^+ \cdot \xi^- + M \xi^- \cdot \xi^+ + \sum_{\pm} K^{\pm} \xi^{\pm} \cdot \xi^{\pm}  = \|D_y(\xi^M)\|_{L^2(Z_f)}^2 \geq 0.
\end{align}
Now, we assume that $\eqref{ineq:coercivity_macro_fluid}$ is not valid. Hence, there exists a sequence $(\xi_n^{\pm})_{n\in \N} \subset \R^3$ (here the index $n$ is not the component) with $[\xi_n^+]_3 = [\xi_n^-]^3=: [\xi_n]_3$ with $|\xi_n^+|^2 + |\xi_n^-|^2 = 1$ and 
\begin{align*}
    M\xi_n^+ \cdot \xi_n^- + M \xi_n^- \cdot \xi_n^+ + \sum_{\pm} K^{\pm} \xi_n^{\pm} \cdot \xi_n^{\pm} < \frac{1}{n}.
\end{align*}
We define $\xi_n^M \in H^1(Z_f,\Gamma)^3$ as in $\eqref{def:aux_coercivity_macro_fluid}$, and obtain from $\eqref{eq:aux_coercivity_macro_fluid}$  and the Korn inequality that 
\begin{align*}
  \|\xi^M_n\|_{H^1(Z_f)} \le C   \|D_y(\xi^M_n)\|_{L^2(Z_f)}^2 < \frac{C}{n}.
\end{align*}
Hence, up to a subsequence we get $\xi^M_n \rightarrow 0$ in $H^1(Z_f)$. From the trace inequaliy we obtain
\begin{align*}
    1 = |\xi_n^+|^2 + |\xi_n^-|^2 = \sum_{\pm} \|\xi_n^M\|_{L^2(S^{\pm})}^2 \le C \|\xi_n^M\|^2_{H^1(Z_f)} \overset{n \to \infty }{\longrightarrow} 0,
\end{align*}
leading to a contradiction and we obtain the desired result.
\end{proof}
As a direct consequence we obtain the following result:
\begin{corollary}
    The solution of $\eqref{eq:var_macro_fluid}$ is unique. In particular, all the convergence results for $\veps$ are valid for the whole sequence.
\end{corollary}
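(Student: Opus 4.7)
The plan is to exploit linearity: given two weak solutions $(v_0^{\pm}, p_0^{\pm})$ and $(\tilde v_0^{\pm}, \tilde p_0^{\pm})$ of $\eqref{eq:var_macro_fluid}$, their difference $(w^{\pm}, q^{\pm})$ satisfies the same variational equation with vanishing right-hand side and zero initial condition, it is divergence-free, and fulfills $[w^+]_3 = [w^-]_3$ on $\Sigma$. Hence $(w^+, w^-)$ itself lies in the admissible test-function space, and testing $\eqref{eq:var_macro_fluid}$ with $\phi^{\pm} = w^{\pm}$ makes the pressure terms drop out by incompressibility, yielding the energy identity
\begin{align*}
\frac{1}{2}\frac{d}{dt} \sum_{\pm} \|w^{\pm}\|_{L^2(\Omega^{\pm})}^2 + \sum_{\pm} \|D(w^{\pm})\|_{L^2(\Omega^{\pm})}^2 + \int_{\Sigma} \Big( \sum_{\pm} K^{\pm} w^{\pm} \cdot w^{\pm} + Mw^- \cdot w^+ + Mw^+ \cdot w^-\Big) d\x = 0.
\end{align*}
At each $\x \in \Sigma$ the triple $(w^+(\x), w^-(\x))$ satisfies $w_3^+ = w_3^-$, so the coercivity estimate $\eqref{ineq:coercivity_macro_fluid}$ applies pointwise and the interface integrand is nonnegative. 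Integrating in time and using $w^{\pm}(0) = 0$ forces $w^{\pm} \equiv 0$. The only nonobvious point in this step is precisely the sign of the boundary term, since the cross-contributions $Mw^{\mp}\cdot w^{\pm}$ are a priori indefinite; this is what the coercivity lemma was designed to control.

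Once the velocities coincide, the remaining identity $\sum_{\pm}\int_{\Omega^{\pm}} q^{\pm} \nabla \cdot \phi^{\pm} dx = 0$ must hold for every admissible $\phi$. Choosing $\phi^- \equiv 0$ and $\phi^+ \in H^1_{\#}(\Omega^+)^3$ with trace zero on $\Sigma$ (so the constraint $\phi_3^+ = \phi_3^-$ on $\Sigma$ is trivially satisfied), the classical Bogovskii operator on the Lipschitz domain $\Omega^+$ realizes any target in $L^2_0(\Omega^+)$ as a divergence, which forces $q^+$ to be constant; an analogous argument gives $q^-$ constant. To fix the additive constants, I would then employ test functions with nontrivial $x_3$-trace on $\partial_N \Omega^{\pm}$; the corresponding surface terms in $\eqref{eq:var_macro_fluid}$ encode the pressure boundary condition $-[D(v_0^{\pm})-p_0^{\pm}I]\nu = 0$ on $\Sigma \times \{\pm H\}$, from which the two constants are pinned down, yielding $q^{\pm} \equiv 0$.

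The final claim that the full sequence converges then follows by the standard Urysohn-type subsequence principle: by Proposition \ref{prop:compactness_fluid}, every subsequence of $(\chi_{\oeps^{\pm}}\veps^{\pm}, \chi_{\oeps^{\pm}}\peps^{\pm})$ admits a further subsequence converging, in all the weak and strong topologies listed there, to a weak solution of $\eqref{def:Macro_Stokes_model_strong}$. Since uniqueness (just established) implies that every such subsequential limit equals $(v_0^{\pm}, p_0^{\pm})$, the whole sequence must converge to the same limit in each of those topologies. I expect the energy estimate to be routine; the real content is the coercivity supplied by the previous lemma, and a minor subtlety lies in setting up the Bogovskii test functions so that they respect the constraint $\phi_3^+ = \phi_3^-$ on $\Sigma$ used in the definition of weak solution.
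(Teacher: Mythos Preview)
Your proposal is correct and follows exactly the standard energy argument the paper has in mind; the paper itself omits the proof entirely, treating the corollary as an immediate consequence of the coercivity lemma. One small wording point: there are no explicit surface terms on $\partial_N\Omega$ in $\eqref{eq:var_macro_fluid}$ --- it is precisely their \emph{absence} (i.e., test functions being free on $\partial_N\Omega^{\pm}$) that lets you choose $\phi^{\pm}$ with nonzero total divergence and thereby force the pressure constants to vanish.
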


It remains to show the characterization for the Darcy-velocity $\bar{v}_0^M$ in $\eqref{Darcy_velocity_main_result}$ which was defined via
\begin{align*}
    \bar{v}_0^M(t,\x) := \frac{1}{|Z_f|} \int_{Z_f} v_0^M dy.
\end{align*}
Now, we use the representation of $v_0^M$ from $\eqref{id:representation_v0M_p1M}$. We define $q_3^{\pm}:= \frac12 q_3$ to obtain  for $k =1,2,3$
\begin{align*}
    \left[\bar{v}_0^M\right]_k = \sum_{\pm} \sum_{j=1}^3 [v_0^{\pm}]_j  \frac{1}{|Z_f|} \int_{Z_f} [q_j^{\pm}]_k dy.
\end{align*}
This gives rise for the definition for $A^{\pm}\in \R^{3\times 3}$ for $j,k=1,2,3$
\begin{align*}
A_{kj}^{\pm}:= \frac{1}{|Z_f|} \int_{Z_f} [q_j^{\pm}]_k dy. 
\end{align*}
For the last row of $A^{\pm}$ we obtain using $\nabla_y \cdot q_j^{\pm} = 0$ and $[q_j^{\pm}]_3 = \frac12 \delta_{j3}$ on $S^{\pm}$ ($\nu^M = \pm e_3$ on $S^{\pm}$)
\begin{align*}
  |Z_f|  A_{3j}^{\pm}  = \int_{Z_f} [q_j^{\pm}] \cdot e_3 dy = \int_{Z_f} [q_j^{\pm}] \cdot \nabla y_3 dy = \sum_{\pm} \pm  \int_{S^{\pm}} [q_j^{\pm}]_3 y_3 d\sigma = \frac12 \delta_{j3}  \left(|S^+| + |S^-|\right) = \delta_{j3}.
\end{align*}
Now, we define $Q^{\pm} \in \R^{3\times 3}$ by 
\begin{align}\label{MatrixQbar}
    Q^{\pm}:= A^{\pm}  -  \frac{1}{|Z_f|} e_3 \otimes e_3.
\end{align}
In particular, the last row of $Q^{\pm}$ is zero and we obtain
\begin{align}\label{DarcyVel_aux}
    \bar{v}_0^M = \sum_{\pm} Q^{\pm}v_0^{\pm}|_{\Sigma} + \frac{|Z|}{|Z_f|}[v_0]_3 e_3.
\end{align}
We emphasize that $|Z|= 2$, but here we wrote $|Z|$ to illustrate that the factor in the second term includes the ratio between the whole reference cell $Z$ and its fluid part.

\begin{remark}
\begin{enumerate}[label = (\roman*)]
\item We point out the following properties of the representation for the Darcy-velocity in $\eqref{DarcyVel_aux}$: The third component in the first term is equal to zero, since the last row of $Q^{\pm}$ vanishes. Hence, the first term describes the tangential velocity at $\Sigma$, and the second term gives the vertical velocity (the flux) through the interface $\Sigma$.

\item  For the right-hand side in the tangential stress boundary condition $\eqref{Macro_Model_Fluid_Stress_tangential}$ we have due to the properties of  $M$ (all components in third row and column are zero)
\begin{align*}
    Mv_0^{\pm} \cdot \nu^{\mp} = 0.
\end{align*}
This means that $Mv_0^{\pm}$ is in fact tangential on $\Sigma$.
\end{enumerate}
\end{remark}

\subsection{The macroscopic transport equations}

To finish the proof of Theorem \ref{Main_theorem_transport}, we have to derive the macroscopic equations, since the convergence results were already established in Section \ref{sec:compactness_concentrations_fluid} and \ref{sec:conv_ceps_s}. So, we pass to the limit $\vareps \to 0$ in the transport equations for $\ceps^f$ and $\ceps^s$. \\

\noindent \textit{Proof of Theorem \ref{Main_theorem_transport}}:
We start with the equation for $\ceps^f$. The only dependence on the parameter $\gamma$ comes from the boundary term and the limit function $c_0^s$ obtained in Section \ref{sec:conv_ceps_s} for $\gamma = -1$, $\gamma \in (-1,1)$, and $\gamma =1$. However, for the derivation of the macroscopic equation for $c_0^f$ the specific form of $c_0^s$ has no influence, so we can treat all the cases simultaneously. In the variational equation $\eqref{eq_Var_Micro_cepsf}$ we choose test functions $\xieps^f:= \xi^f \in C_0^{\infty}([0,T),C_{\#}^{\infty}(\overline{\Omega}))$ and obtain after integration with respect to time
\begin{align}
\begin{aligned}\label{eq:var_micro_deriv_c0f}
   \int_0^T &\langle \partial_t \ceps^f , \xi^f \rangle_{H^1(\oef)} dt  + \sum_{\pm} \int_0^T \int_{\oeps^{\pm}} [D^f \nabla \ceps^f - \veps \ceps^f] \cdot \nabla \xi^f dx dt 
   \\
   &+ \int_0^T \int_{\oemf} [D^f \nabla \ceps^f - \veps^M \ceps^f]\cdot \nabla \xi^f dx dt = -\int_0^T \int_{\geps} h(\ceps^f,\ceps^s)\xi^f d\sigma dt.
\end{aligned}
\end{align}
Using the compactness results for the time derivative in Proposition \ref{prop:compactness_fluid} we obtain
\begin{align}\label{conv:aux_deriv_macro_c0f}
     \int_0^T \langle \partial_t \ceps^f , \xi^f \rangle_{H^1(\oef)} dt = \int_0^T \langle \partial_t (\chi_{\oef} \ceps^f) , \xi^f \rangle_{H^1(\Omega)} dt \overset{\vareps \to 0}{\longrightarrow} \int_0^T \langle \partial_t c_0^f , \xi^f \rangle_{H^1(\Omega)} dt.
\end{align}
Further, using the \textit{a priori} estimates from Proposition \ref{prop:existence_apriori}, we obtain for the integral in the layer
\begin{align*}
\bigg| \int_0^T &\int_{\oemf} [D^f \nabla \ceps^f - \veps^M \ceps^f]\cdot \nabla \xi^f dx dt \bigg|
\\
&\le C \sqrt{\vareps} \|\nabla \ceps^f \|_{L^2((0,T)\times \oemf)} + C \| \veps^M\|_{L^2((0,T)\times \oemf)} \|\ceps^f\|_{L^2((0,T)\times \oemf)} 
\\
&\le C (\sqrt{\vareps} + \vareps ), 
\end{align*}
and therefore this term vanishes for $\vareps \to 0$. Now, from the convergence results in Proposition \ref{prop:compactness_fluid} and \ref{prop:convergence_cepsf} and Corollary \ref{cor:ts_conv_nonlinear_term} we obtain for $\vareps \to 0$ almost everywhere in $(0,T)$ and (by density) for all $\xi^f \in H_{\#}^1(\Omega)$ (here we use the notation $v_0 =v_0^{\pm} $ in $\Omega^{\pm}$)
\begin{align*}
    \langle \partial_t c_0^f , \xi^f \rangle_{H^1(\Omega)} +  \int_{\Omega} [D^f \nabla c_0^f - v_0 c_0^f] \cdot \nabla \xi^f dx = -\int_{\Sigma} \int_{\Gamma} h(c_0^f,c_0^s) d\sigma_y \xi^f d\x.
\end{align*}
We emphasize that for  $\gamma \in [-1,1)$ the limit function $c_0^s$ is independent of $y$ and we can write for the last integral
\begin{align*}
    -\int_{\Sigma} \int_{\Gamma} h(c_0^f,c_0^s) d\sigma_y \xi^f d\x = -|\Gamma|\int_{\Sigma} h(c_0^f,c_0^s) \xi^f d\x.
\end{align*}
Finally, let us derive the initial condition for $c_0^f$. Even if it is standard, for the sake of completeness, we sketch the procedure. 
Using $\eqref{conv:aux_deriv_macro_c0f}$, integration by parts in time, and the assumption \ref{ass:initial_values_ceps_f}, we get
\begin{align*}
\int_0^T \langle \partial_t c_0^f , \xi^f \rangle_{H^1(\Omega)} dt &= \lim_{\vareps \to 0}\int_0^T \langle\partial_t \ceps^f , \xi^f\rangle_{H^1(\oef)} dt 
\\
&= \lim_{\vareps \to 0}\left\{ -\int_0^T \int_{\oef} \ceps^f \partial_t \xi^f dx dt - \int_{\oef} c_{\vareps,\mathrm{in}}^f \xi^f(0,x) dx \right\}
\\
&= -\int_0^T \int_{\Omega} c_0^f \partial_t \xi^f dx dt - \int_{\Omega} c_{\mathrm{in}}^f \xi^f(0,x) dx.
\end{align*}
Integration by parts gives $c_0^f(0) = c_{\mathrm{in}}^f$ in $L^2(\Omega)$. Altogether, we showed that $c_0^f$ is a weak solution of the macroscopic equation $\eqref{eq:Macro_c0f_strong}$.

For the transport equation in $\oems$ we have to distinguish the three cases $\gamma = -1$, $\gamma \in (-1,1)$, and $\gamma = 1$. 

\noindent\textit{\underline{The case $\gamma = - 1$}}: As a test-function in the variational equation $\eqref{eq_Var_Micro_cepss}$ we choose 
\begin{align*}
    \xieps^s(t,x):= \xi_0^s(t,\x) + \vareps \xi_1^s\left(t,\x,\fxe\right)
\end{align*}
with  $\xi_0^s \in C_0^{\infty}((0,T),C_{\#}^{\infty}(\Sigma))$ and $\xi_1^s \in C_0^{\infty}((0,T)\times \Sigma, C_{\#}^{\infty}(\overline{Z_s}))$ to obtain almost everywhere in $(0,T)$
\begin{align}
\begin{aligned}\label{eq:micro_deriv_macro_c0s}
\foe &\left\langle \partial_t \ceps^s,  \xi_0^s + \vareps \xi_1^s\left(t,\x,\fxe\right)\right\rangle_{H^1(\oems)} 
\\
&+ \foe \int_{\oems} D^s \nabla \ceps^s \cdot \left[\nabla_{\x} \xi_0^s + \nabla_y \xi_1^s\left(t,\x,\fxe\right) + \vareps \nabla_{\x} \xi_1^s\left(t,\x,\fxe\right) \right] dx
\\
=& \int_{\geps } h(\ceps^f,\ceps^s) \left[ \xi_0^s + \vareps \xi_1^s \left(t,\x,\fxe\right)\right] d\sigma.
\end{aligned}
\end{align}
Using the convergence results from Proposition \ref{prop:strong_TS_convergence_cepss_gamma_-1} and Corollary \ref{cor:ts_conv_nonlinear_term}, we obtain for $\vareps \to 0$ (after an integration with respect to time) almost everywhere in $(0,T)$:
\begin{align*}
|Z_s|\langle \partial_t c_0^s, \xi_0^s\rangle_{H^1(\Sigma)} + \int_{\Sigma} \int_{Z_s} D^s \left[\nabla_{\x} c_0^s + \nabla_y c_1^s\right] \cdot \left[\nabla_{\x} \xi_0^s + \nabla_y \xi_1^s \right] dy d\x 
= |\Gamma| \int_{\Sigma}  h(c_0^f,c_0^s) \xi_0^s d\sigma.
\end{align*}
By density this equation is valid for all $\xi_0^s \in H^1_{\#}(\Sigma)$ and $\xi_1^s \in L^2(\Sigma, H_{\#}^1(Z_s))$. Choosing first $\xi_0^s=0$ we obtain that $c_1^s$ solves almost everywhere in $\Sigma$
\begin{align*}
    \int_{Z_s} D^s[\nabla_{\x} c_0^s + \nabla_y c_1^s] \cdot \nabla_y \xi_1^s dy = 0.
\end{align*}
Hence, by standard arguments we obtain for almost every $(t,\x,y) \in (0,T)\times \Sigma \times Z_f$
\begin{align*}
    c_1^s(t,\x,y)  = \sum_{i=1}^2 \partial_{x_i} c_0^s(t,\x) \eta_i(y) 
\end{align*}
with $\eta_i \in H^1(Z_s)/\R$ the unique weak solution of the cell problem 
\begin{align}
\begin{aligned}\label{cell_problem_c_1_s}
-\nabla_y \cdot \left( D^s[e_i + \nabla_y \eta_i ]\right) &= 0 &\mbox{ in }& Z_s,
\\
-D^s [e_i + \nabla_y \eta_i]\cdot \nu &= 0 &\mbox{ on }& \Gamma,
\\
\eta_i \,\, Y\mbox{-periodic}, \,\,\, \int_{Z_s} \eta_i dy = 0.
\end{aligned}
\end{align}
Next, we choose in the equation above $\xi_1^s= 0$ and obtain after an elemental calculation
\begin{align}\label{eq:var_equation_c0s}
 |Z_s|\langle \partial_t c_0^s, \xi_0^s\rangle_{H^1(\Sigma)} + \int_{\Sigma} D_{\ast}^s \nabla_{\x} c_0^s \cdot \nabla_{\x} \xi_0^s d\x = |\Gamma|\int_{\Sigma} h(c_0^f,c_0^s) \xi_0^s d\sigma
\end{align}
with the homogenized diffusion coefficient $D^s \in \R^{2\times 2}$ defined for $i,j=1,2$ by 
\begin{align}\label{hom_diff_coefficient_Ds}
(D^s_{\ast})_{ij}:= \int_{Z_s} D^s [e_i + \nabla_y \eta_i ] \cdot [e_j + \nabla_y \eta_j] dy.
\end{align}
Finally, using Corollary \ref{cor:initial_conditions_ts_limit} and  assumption \ref{ass:initial_values_ceps_s} we obtain for $c_0^s$ the initial condition $c_0^s(0) = \bar{c}_{\mathrm{in}}^s$.  Altogether we showed that $c_0^s$ is a weak solution of the macroscopic problem \ref{eq:Macro_c0s_gamma-1_strong}.

\noindent\textit{\underline{The case $\gamma \in (-1,1)$}}: Again the limit function $c_0^s$ obtained in Proposition \ref{prop:strong_ts_conv_gamma_-1_1} is independent of the microscopic variable $y \in Z_s$. However, compared to the case $\gamma = -1$ we loose spatial regularity and only have $L^2$-regularity for $c_0^s$ with respect to $\x$. As a test function in the variation equation $\eqref{eq_Var_Micro_cepss}$ we now choose simply $\xieps^s(t,x):= \xi_0^s \in C_0^{\infty}((0,T),C_{\#}^{\infty}(\Sigma))$ and argue in the same way as in the case $\gamma = -1$. We only emphasize that the flux term including $\nabla \ceps^s$ is now of order $\vareps^{\frac{1 + \gamma}{2}}$ and therefore vanishes for $\vareps \to 0$. We obtain in the limit almost everywhere in $(0,T)$ and for all $\xi_0^s \in H_{\#}^1(\Sigma)$
\begin{align*}
   |Z_s| \langle \partial_t c_0^s, \xi_0^s\rangle_{H^1(\Sigma)} = |\Gamma| \int_{\Sigma} h(c_0^f,c_0^s) \xi_0^s d\sigma
\end{align*}
Using again Corollary \ref{cor:initial_conditions_ts_limit}, we get $c_0^s(0) = \bar{c}_{\mathrm{in}}^s$.
Hence, $c_0^s$ is a weak solution of the macroscopic equation $\eqref{eq:Macro_c0s_gamma-11_strong}$.

\noindent\textit{\underline{The case $\gamma = 1$}}: Finally, for $\gamma = 1$ we choose in the variational equation $\eqref{eq_Var_Micro_cepss}$ test-functions $\xieps^s(t,x) = \xi_1^s\left(t,\x,\fxe\right)$ with $\xi_1^s \in C_0^{\infty}((0,T)\times \Sigma,C_{\#}^{\infty}(\overline{Z_s}))$ and obtain for $\vareps\to 0$ using Proposition \ref{prop:strong_ts_conv_gamma_1} and Corollary \ref{cor:ts_conv_nonlinear_term} almost everywhere in $(0,T)$
\begin{align*}
    \langle \partial_t c_0^s,\xi_1^s\rangle_{L^2(\Sigma,H_{\#}^1(Z_s))} + \int_{\Sigma} \int_{Z_s} D^s \nabla_y c_0^s \cdot \nabla_y \xi_1^s dyd\x = \int_{\Sigma} \int_{\Gamma} h(c_0^f,c_0^s) \xi_1^s d\sigma d\x.
\end{align*}
By density, this equation is also valid for all $\xi_1^s \in L^2(\Sigma,H_{\#}^1(Z_s))$. Further, by Corollary \ref{cor:initial_conditions_ts_limit} we have the initial condition $c_0^s(0) = c_{\mathrm{in}}^s$. In other words, $c_0^s$ is a weak solution of the macroscopic problem $\eqref{eq:Macro_c0s_gamma1_strong}$.

\begin{remark}
We emphasize that for the derivation of the initial condition for the limit function $c_0^s$ we only used the compactness results in Lemma \ref{lem:two_scale_conv_time_derivative} and Corollary \ref{cor:initial_conditions_ts_limit}, but not the microscopic equation and the macroscopic equation. The latter is often used in the literature, when the time-derivative of the microscopic solution is only a functional but not an element in $L^2$. Of course, this idea is also valid in our case.
\end{remark}

It remains to show the uniqueness of the whole macroscopic model.  For the fluid model (which is not influenced by the concentration of the solutes) we already have uniqueness. Hence, the problem of uniqueness reduces to the question of uniqueness for a reaction-diffusion-advection equation with nonlinear right-hand sides. For the pure diffusive case we refer to \cite{GahnNeussRaduKnabner2018a}, where similar problems where considered without advection and a slightly differnt form of the nonlinearity. The treatment of the advective term is classical, so we obtain the desired result and the whole Theorem \ref{Main_theorem_transport} is proved.

\subsection{The case $|S_s^{\pm}| \neq 0$}
\label{sec:Case_Sspm_not_empty}

Finally, we investigate the case $|S_s^{\pm}| \neq 0$ (valid for both $S^+_s$ and $S_s^-$) when the solid phase touches the bulk domains. First of all, we have to slightly modify the microscopic problem. Now, the interface conditions between the bulk domains and the thin layer for the fluid problem in $\eqref{ContinuityVelocity}$ and $\eqref{ContinuityNormalStress}$ are valid on $S_{\vareps,f}^{\pm}$ instead of $S_{\vareps}^{\pm}$ (see Section \ref{sec:micro_geometry} for the definition). Additionally, we need an interface condition for the transport equation of $\ceps^s$ between the solid phase and the bulk regions, where we assume again the nonlinear flux condition
\begin{align*}
-(D^f \nabla \ceps^f - \veps \ceps^f) \cdot \nu = - \vareps^{\gamma} D^s \nabla \ceps^s \cdot \nu &= h(\ceps^f,\ceps^s) &\mbox{ on }& (0,T)\times S_{\vareps,s}^{\pm}.
\end{align*}
The weak formulation remains the same as in the previous case and is given in Definition \ref{def:Weak_Micro_Model}. Hence, the \textit{a priori} estimates in Proposition \ref{prop:existence_apriori} and the estimates for the differences of the shifts in Lemma \ref{lem:apriori_shifts} are still valid. We emphasize that all the results used for the derivation of the \textit{a priori} estimates in the appendix are still valid for $|S_s^{\pm}|\neq 0$.

The crucial difference now occurs in Proposition \ref{prop:compactness_fluid}, where we immediately obtain $v_0^{\pm} = 0$ on $(0,T)\times \Sigma$ and therefore the interface $\Sigma$ acts as an impermeable barrier to the fluid flow, see Remark \ref{rem:fluid_vel_zero_Sigma}.
Now, we can follow the arguments in Section \ref{sec:derivation_fluid_model} and obtain in Proposition \ref{prop:two_scale_model_fluid} the interface conditions
\begin{align*}
    v_0^{\pm} \quad \mbox{on } (0,T)\times \Sigma, \qquad v_0^M = 0\quad \mbox{on } (0,T)\times \Sigma \times S^{\pm}_f.
\end{align*}
In particular, the tuple $(v_0^M,p_1^M)$ solves the cell problem
\begin{align*}
-\nabla_y \cdot D_y(v_0^M) + \nabla_y p_1^M &= 0 &\mbox{ in }& (0,T)\times \Sigma \times Z_f,
\\
\nabla_y \cdot v_0^M &= 0 &\mbox{ in }& (0,T)\times \Sigma \times Z_f,
\\
v_0^M &= 0 &\mbox{ on }& (0,T)\times \Sigma \times (\Gamma \cup S_f^{\pm}),
\\
v_0^M,\, p_1^M \,\, Y\mbox{-periodic}.
\end{align*}
This problem only has the trivial solution $v_0^M = 0$ and $p_1^M = 0$. Summarizing, we obtain that the macroscopic fluid velocity $v_0 =(v_0^+,v_0^-)$ (and $v_0^M =0$) solves the problem 
\begin{align*}
    \partial_t v_0^{\pm} - \nabla\cdot D(v_0^{\pm}) + \nabla p_0^{\pm} &= f_0^{\pm} &\mbox{ in }& (0,T)\times \Omega^{\pm},
\\
\nabla \cdot v_0^{\pm} &= 0 &\mbox{ in }& (0,T)\times \Omega^{\pm},
\\
v_0^{\pm} &= 0 &\mbox{ on }& (0,T)\times \Sigma,
\\
v_0^{\pm}(0) &= 0 &\mbox{ in }& \Omega^{\pm},
\\
v_0^{\pm} \,\, \Sigma\mbox{-periodic},
\end{align*}
Hence, in the case $|S_s^{\pm}|\neq 0$, in the limit $\vareps \to 0$, the fluid flow in the two bulk domains $\Omega^{\pm}$ is modelled by two decoupled Stokes systems.

Now, we have to consider the transport equations, where we will see that the macroscopic transport problem is still given by  $\eqref{eq:Macro_c0f_strong}$ and (\ref{Macro_Model_Transport}$\ast$) with $\ast \in\{\mbox{b,c,d}\}$ for the different choices of $\gamma$. This can be shown by following the  arguments from  Section \ref{sec:comptness_results} and \ref{sec:derivation_macro_model}. However, we can also use the following simple trick. Replace the reference element $Z$ by the bigger cell $\tZ := Y \times (-2,2)$ and define $\tZ_f:= \mathrm{int} \left(\tZ \setminus Z_s\right)$. The associated thin perforated layer is denoted by $\widetilde{\Omega}_{\vareps}^{M,f}$. Since the transport equation for $\ceps^f$ has the same scaling in the bulk domains and in the thin layer, we can just replace $\oemf$ by $\widetilde{\Omega}_{\vareps}^{M,f}$ (respectively $\oeps^{\pm}$ by $\Omega_{2\vareps}^{\pm}$ for the convergence results of the fluid velocity and pressure in the bulk domains, see Proposition \ref{prop:compactness_fluid}) and obtain the same limit problem $\eqref{Macro_Model_Transport}$ and (\ref{Macro_Model_Transport}$\ast$) with $\ast \in\{\mbox{b,c,d}\}$ for the different choices of $\gamma$. Of course, the volume of $\tZ_f$ is different from the volume of $Z_f$, however, this has no influence since in the macroscopic equation $\eqref{Macro_Model_Transport}$ this quantity does not occur.

\begin{remark}
The situation changes if we consider the case $|S_s^+| = 0$ and $|S_s^-|\neq 0$ (only the bottom of the solid phase touches the bulk domain).  In this case, the membrane acts as a rough boundary and we have $[v_0^+]_3 = [v_0^-]_3  = 0$. In this case, the vertical component of the Darcy-velocity $\bar{v}_0^M$ vanishes. The proof follows similar ideas as above. The same holds if we interchange $+$ and $-$.
\end{remark}

\begin{appendix}
\normalsize
\section{Auxiliary results and inequalities}

In this section we give some auxiliary results and estimates which are necessary for our homogenization and dimension reduction. Some of them are standard, and we refer to the literature. However, several results seem to be new and are of particular importance on their own, since they allow to treat more complex applications.

In the following, we consider arbitrary dimensions $n\geq 3$. This means that we can use all the definitions from Section \ref{sec:micro_geometry} by just replacing the dimension $3$ by $n$. We emphasize that the results which are only formulated for $\oef$, $\oemf$, and $\oems$ are also valid for $n=2$, as long as all these domains are connected. 
We emphasize that in the whole appendix we consider the general case that $|S_s^{\pm}|>0$, meaning that the solid part of the thin perforated layer touches the bulk regions.

\begin{lemma}[Korn and Poincar\'e inequality]\label{lem:Korn_inequality}
There exists a constant $C>0$, such that for all $\ueps \in H^1(\oemf)^n$ with $\ueps = 0$ on $\geps$ it holds that
\begin{align*}
    \|\ueps\|_{L^2(\oemf)} \le C\vareps \|\nabla \ueps \|_{L^2(\oemf)} \le C\vareps \|D(\ueps)\|_{L^2(\oemf)}.
\end{align*}
\end{lemma}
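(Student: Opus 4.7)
The approach is a standard decomposition-and-rescaling argument. The key observation is that
$$\oemf = \mathrm{int}\Bigl(\bigcup_{k \in K_\vareps} \vareps(\overline{Z_f} + k)\Bigr),$$
so the global inequality on $\oemf$ can be assembled from the corresponding local inequality on each translated, scaled copy of the reference cell $Z_f$. On each cell we will transfer to the fixed reference $Z_f$ via the change of variables $x = \vareps(y + k)$, apply a Poincar\'e--Korn inequality on $Z_f$ for functions vanishing on $\Gamma$, and then sum over $k \in K_\vareps$. The factor $\vareps$ in the final estimate will come from the rescaling, since $\|\nabla u\|_{L^2}$ picks up a factor $\vareps^{(n-2)/2}$ while $\|u\|_{L^2}$ picks up $\vareps^{n/2}$.

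First I would establish the two ingredients on the reference cell: for every $u \in H^1(Z_f)^n$ with $u = 0$ on $\Gamma$, there exists $C > 0$ (depending only on $Z_f$) such that
\begin{align*}
\|u\|_{L^2(Z_f)} &\le C \|\nabla u\|_{L^2(Z_f)}, \\
\|\nabla u\|_{L^2(Z_f)} &\le C \|D(u)\|_{L^2(Z_f)}.
\end{align*}
The first is a Poincar\'e inequality for functions vanishing on a portion $\Gamma$ of $\partial Z_f$ with positive surface measure (which holds since $Z_f$ is a bounded Lipschitz domain and $|\Gamma| > 0$). The second is Korn's inequality of the second kind with vanishing trace on $\Gamma$; this follows by a standard compactness/contradiction argument using that $D(u) = 0$ together with $u|_\Gamma = 0$ forces $u \equiv 0$ (rigid motions with zero trace on a set of positive surface measure must vanish).

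Next I would transfer these estimates to each $\vareps$-cell. Given $u_\vareps \in H^1(\oemf)^n$ with $u_\vareps = 0$ on $\geps$, for each $k \in K_\vareps$ define $u^k(y) := u_\vareps(\vareps(y+k))$ on $Z_f$. Then $u^k \in H^1(Z_f)^n$ with $u^k = 0$ on $\Gamma$, and the change-of-variables rules give
\begin{align*}
\|u_\vareps\|^2_{L^2(\vareps(Z_f + k))} &= \vareps^n \|u^k\|^2_{L^2(Z_f)}, \\
\|\nabla u_\vareps\|^2_{L^2(\vareps(Z_f + k))} &= \vareps^{n-2} \|\nabla u^k\|^2_{L^2(Z_f)},
\end{align*}
and analogously for $D(\cdot)$. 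Applying the reference-cell inequalities to $u^k$ and combining the two identities yields
$$\|u_\vareps\|_{L^2(\vareps(Z_f+k))} \le C\vareps \|\nabla u_\vareps\|_{L^2(\vareps(Z_f+k))} \le C\vareps \|D(u_\vareps)\|_{L^2(\vareps(Z_f+k))}$$
with $C$ independent of $\vareps$ and $k$. Squaring and summing over $k \in K_\vareps$ gives the claim.

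The only mildly delicate point is the Korn inequality on $Z_f$ with vanishing trace only on $\Gamma$ (not the whole boundary). This is the main obstacle, though it is well known for Lipschitz domains: it requires that the rigid-motion subspace intersect trivially with the subspace of functions vanishing on $\Gamma$, which is immediate since $\Gamma$ has positive $(n-1)$-Hausdorff measure. All other steps are routine scaling and additivity.
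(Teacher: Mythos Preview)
Your proof is correct and follows the standard decomposition-and-rescaling route; the paper itself does not supply a proof for this lemma, treating it as a standard result stated for reference in the appendix. Your argument (Poincar\'e and Korn on the reference cell $Z_f$ for functions vanishing on $\Gamma$, then rescale and sum over the $\vareps$-cells) is exactly the expected one, and your remark on the Korn step---that a rigid motion vanishing on a set of positive $(n-1)$-measure must be identically zero, since the zero set of a nontrivial affine map $x\mapsto Ax+b$ with $A$ skew-symmetric has dimension at most $n-2$---correctly handles the only nontrivial point.
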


\begin{lemma}[Standard scaled trace inequality]\label{lem:Stand_scaled_trace_inequality}
For all $\theta >0$ there exists $C_\theta>0$  independent of $\vareps$, such that for all $\phieps \in H^1(\Omega_\vareps^{M,\ast}), \ast \in \{s,f\}$, it holds that
\begin{align*}
    \|\phieps\|_{L^2(\geps)} \le \frac{C_\theta}{\sqrt{\vareps}} \|\phieps\|_{L^2(\oe^{M,\ast})}
    + \theta \sqrt{\vareps}\|\nabla \phieps \|_{L^2(\oe^{M,\ast})}.
\end{align*}

\end{lemma}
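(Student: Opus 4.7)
The plan is the standard two-step argument: establish a scale-free Ehrling-type trace inequality on the reference cell, then rescale and sum over all $\varepsilon$-cells making up $\Omega_\varepsilon^{M,\ast}$.

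\textbf{Step 1 (reference inequality).} Since $Z_\ast$ is open, connected and has Lipschitz boundary (by the standing assumption on the geometry), the trace operator $H^1(Z_\ast)\to L^2(\Gamma)$ is continuous and the embedding $H^1(Z_\ast)\hookrightarrow L^2(Z_\ast)$ is compact. A standard Ehrling (Lions) lemma argument then yields: for every $\eta>0$ there is $C_\eta>0$ such that
\begin{equation*}
\|\phi\|_{L^2(\Gamma)}\;\le\; C_\eta\|\phi\|_{L^2(Z_\ast)} + \eta\,\|\nabla\phi\|_{L^2(Z_\ast)}\qquad\text{for all }\phi\in H^1(Z_\ast).
\end{equation*}

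\textbf{Step 2 (rescaling a single cell).} Fix $k\in K_\varepsilon$ and set $\phi(y):=\phieps(\varepsilon y+\varepsilon k)$ for $y\in Z_\ast$. The chain rule gives $\nabla_y\phi = \varepsilon\,(\nabla_x\phieps)(\varepsilon y+\varepsilon k)$, and the changes of variables $x=\varepsilon y+\varepsilon k$ produce
\begin{align*}
\|\phi\|_{L^2(\Gamma)}^2 &= \varepsilon^{-(n-1)}\|\phieps\|_{L^2(\varepsilon(\Gamma+k))}^2,\\
\|\phi\|_{L^2(Z_\ast)}^2 &= \varepsilon^{-n}\|\phieps\|_{L^2(\varepsilon(Z_\ast+k))}^2,\\
\|\nabla\phi\|_{L^2(Z_\ast)}^2 &= \varepsilon^{2-n}\|\nabla\phieps\|_{L^2(\varepsilon(Z_\ast+k))}^2.
\end{align*}
Inserting into the reference inequality, multiplying by $\varepsilon^{(n-1)/2}$ and squaring (using $(a+b)^2\le 2a^2+2b^2$) gives, with constants $C_\eta'=\sqrt{2}C_\eta$ and $\eta'=\sqrt{2}\eta$,
\begin{equation*}
\|\phieps\|_{L^2(\varepsilon(\Gamma+k))}^2 \;\le\; \frac{(C_\eta')^2}{\varepsilon}\|\phieps\|_{L^2(\varepsilon(Z_\ast+k))}^2 + (\eta')^2\,\varepsilon\,\|\nabla\phieps\|_{L^2(\varepsilon(Z_\ast+k))}^2.
\end{equation*}

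\textbf{Step 3 (summation).} By the geometric setup, $\Omega_\varepsilon^{M,\ast}$ and $\Gamma_\varepsilon$ decompose (up to sets of measure resp.\ surface measure zero) into the pairwise disjoint unions of the cells $\varepsilon(Z_\ast+k)$ and $\varepsilon(\Gamma+k)$, $k\in K_\varepsilon$. Summing the previous inequality over $k\in K_\varepsilon$ yields
\begin{equation*}
\|\phieps\|_{L^2(\Gamma_\varepsilon)}^2 \;\le\; \frac{(C_\eta')^2}{\varepsilon}\|\phieps\|_{L^2(\Omega_\varepsilon^{M,\ast})}^2 + (\eta')^2\,\varepsilon\,\|\nabla\phieps\|_{L^2(\Omega_\varepsilon^{M,\ast})}^2,
\end{equation*}
and taking the square root together with $\sqrt{a+b}\le\sqrt{a}+\sqrt{b}$, then relabelling $\eta'=:\theta$, $C_\eta':=C_\theta$, gives the claim.

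The only thing that requires any attention is the book-keeping of the $\varepsilon$-powers in the scaling, and the observation that summing the squared inequalities over all cells matches exactly the global $L^2$-norms on $\Gamma_\varepsilon$ and $\Omega_\varepsilon^{M,\ast}$; there is no nontrivial obstacle since both $Z_s$ and $Z_f$ are Lipschitz so that Step~1 applies uniformly in $\ast\in\{s,f\}$, and the same argument works whether or not $Z_s$ touches $S^{\pm}$.
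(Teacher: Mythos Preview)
Your proof is correct and follows exactly the standard route: an Ehrling-type trace inequality on the reference cell $Z_\ast$, followed by the $\varepsilon$-rescaling and summation over the cells $k\in K_\varepsilon$. The paper does not actually give a proof of this lemma (it is labeled ``standard'' and implicitly referred to the literature in the introduction to the appendix), so your argument is precisely the one the authors have in mind.
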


In the following lemma we estimate the scaled $L^2$-norm in the fluid part of the membrane by the $H^1$-norm in the whole fluid domain. This allows to obtain better estimates (with respect to $\vareps$) in the membrane. Later, we will improve this estimate by replacing $H^1$ with $H^\beta$, which allows us to transfer strong convergence results from the bulk domains to the thin layer. Even if the result is quite simple and easy to prove, it is crucial to obtain the necessary \textit{a priori} estimates for the microscopic solution. It also builds the basis for several further estimates obtained below, for example the trace inequality on $\geps$ with respect to the bulk domains (see Lemma \ref{lem:Trace_inequality_Bulk}).
\begin{lemma}\label{lem:estimate_L2_membrane_Bulk_Gradient}
For every $\phieps \in H^1(\oemf)$ it holds that
\begin{align}\label{ineq:aux_trace_inequality}
        \frac{1}{\sqrt{\vareps}} \|\phieps\|_{L^2(\oemf)} \le C \left( \sqrt{\vareps} \|\nabla \phieps\|_{L^2(\oemf)} + \|\phieps\|_{L^2(S_{\vareps,f}^{\pm})}\right)
    \end{align}
In particular, for every $\phieps \in H^1(\oef)$ we obtain
\begin{align*}
\frac{1}{\sqrt{\vareps}} \|\phieps\|_{L^2(\oemf)} \le C \|\phieps\|_{H^1(\oef)}.
\end{align*}
\end{lemma}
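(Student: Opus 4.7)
The plan is to reduce the inequality to a standard Poincaré-trace inequality on the fixed reference cell $Z_f$ and then scale back and sum over all cells making up the perforated thin layer $\oemf$. Since $Z_f$ is a connected Lipschitz domain and $|S_f^+| + |S_f^-| > 0$ (at least one of the top/bottom pieces is non-trivial), a classical contradiction-and-compactness argument yields a constant $C>0$ (depending only on $Z_f$) such that
\begin{equation*}
  \|\psi\|_{L^2(Z_f)} \,\le\, C\bigl(\|\nabla_y \psi\|_{L^2(Z_f)} + \|\psi\|_{L^2(S_f^\pm)}\bigr)
  \qquad \text{for all } \psi \in H^1(Z_f).
\end{equation*}

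Next, for each $k \in K_\vareps$ I introduce the rescaled function $\psi_\vareps^k(y) := \phieps(\vareps(y + k))$ on $Z_f$, apply the cell inequality to $\psi_\vareps^k$, and undo the change of variables. Keeping track of the Jacobian factors ($\vareps^{-n}$ for the volume integrals and $\vareps^{-(n-1)}$ for the surface integral on $S_f^\pm$) together with the chain rule $\nabla_y \psi_\vareps^k = \vareps \,(\nabla \phieps)(\vareps(\cdot + k))$ gives, after multiplication by $\vareps^n$,
\begin{equation*}
  \|\phieps\|_{L^2(\vareps(Z_f+k))}^2 \,\le\, C\bigl(\vareps^2 \|\nabla \phieps\|_{L^2(\vareps(Z_f+k))}^2 + \vareps\, \|\phieps\|_{L^2(\vareps(S_f^\pm + k))}^2\bigr).
\end{equation*}
Summing over $k \in K_\vareps$ (using that the cells cover $\oemf$ and that $\bigcup_{k} \vareps(S_f^\pm + k) = S_{\vareps,f}^\pm$), and finally dividing by $\vareps$ and taking square roots, gives precisely the stated inequality $\eqref{ineq:aux_trace_inequality}$.

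For the second claim I use that any $\phieps \in H^1(\oef)$ has a trace on $S_{\vareps,f}^\pm$ that can be computed either from $\oemf$ or from $\oeps^\pm$. Applying the standard trace inequality from $H^1(\oeps^\pm)$ to the flat pieces $S_{\vareps,f}^\pm \subset \Sigma \times \{\pm \vareps\}$ gives $\|\phieps\|_{L^2(S_{\vareps,f}^\pm)} \le C \|\phieps\|_{H^1(\oeps^\pm)}$ with a constant independent of $\vareps$ (this is uniform because $\oeps^\pm = \Sigma \times (\pm\vareps, \pm H)$ converges to the fixed Lipschitz domain $\Omega^\pm$ as $\vareps\to 0$). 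Inserting this into $\eqref{ineq:aux_trace_inequality}$ and bounding $\sqrt{\vareps}\|\nabla \phieps\|_{L^2(\oemf)} \le \|\phieps\|_{H^1(\oef)}$ completes the argument.

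The only non-trivial point is the cell-level inequality, which requires $|S_f^+| + |S_f^-| > 0$; this is automatic from our geometric setting and the connectedness of $Z_f$. Everything else is routine scaling and summation, so I do not anticipate a real obstacle.
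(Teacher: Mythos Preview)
Your proposal is correct and follows essentially the same approach as the paper: a Poincar\'e--trace inequality on the reference cell $Z_f$ (obtained by compactness/contradiction), followed by the standard $\vareps$-scaling and summation over cells, and finally the $\vareps$-uniform trace inequality from the bulk domains $\oeps^{\pm}$ onto $S_{\vareps,f}^{\pm}$. The paper presents the scaling step more tersely (``by a standard scaling argument''), whereas you spell out the Jacobian bookkeeping explicitly, but the underlying argument is identical.
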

\begin{proof}
A similar estimate as $\eqref{ineq:aux_trace_inequality}$ was shown in \cite{conca1985application} and we use the same ideas.
   Due to the compactness of $H^1(Z_f)$ in $L^2(Z_f)$ we have the following inequality (see also the proof of Lemma \ref{lem:estimate_L2_norm_membrane_Hbeta} below for some more details)
    \begin{align}\label{ineq:bound_gradient_surface}
        \|\phi\|_{L^2(Z_f)}  \le C \left( \|\nabla \phi\|_{L^2(Z_f)}  + \|\phi \|_{L^2(S_f^{\pm})}\right)
    \end{align}
    for every $\phi \in H^1(Z_f)$. Now, $\eqref{ineq:aux_trace_inequality}$ follows by a standard scaling argument. Obviously, we have
    \begin{align}
     \|\phieps\|_{L^2(S_{\vareps,f}^{\pm})} \le C \|\phieps\|_{H^1(\oeps^{\pm})}
    \end{align}
    for a constant $C>0$ independent of $\vareps$, which finishes the proof.

\end{proof}
In the following lemma we improve the inequality from the previous lemma by considering the $H^\beta$-norm for $\beta \in (\frac12 ,1)$ instead of the $H^1$-norm. We define for an open set $U\subset \R^n$ the $H^\beta$-seminorm by
\begin{align*}
    |u|_{\beta,U}^2:= \int_U \int_U \frac{|u(x) - u(y)|^2}{|x-y|^{n + 2\beta}} dy dx.
\end{align*}
\begin{lemma}\label{lem:estimate_L2_norm_membrane_Hbeta}
Let $\beta \in (\frac12,1)$. Then, for every $\phieps \in H^{\beta}(\oef)$ it holds that
\begin{align*}
    \frac{1}{\sqrt{\vareps}} \|\phieps\|_{L^2(\oemf)} \le C \|\phieps\|_{H^{\beta}(\oef)}.
\end{align*}
This inequality is also valid if we replace $Z_f$ resp. $\oemf$  by $Z$ resp. $\oeps^M$, and $\oef$ by $\Omega$ 
\end{lemma}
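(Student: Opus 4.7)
The plan is to follow the same strategy as in Lemma \ref{lem:estimate_L2_membrane_Bulk_Gradient}: establish a reference cell inequality first, then rescale and sum. The sharpening from $H^1$ to $H^\beta$ will come entirely from the scaling exponent $\vareps^{2\beta}$, which exceeds the scaling $\vareps$ of the boundary term precisely when $\beta > 1/2$.

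First I would prove a reference cell estimate: for every $\phi \in H^{\beta}(Z_f)$,
\begin{align*}
    \|\phi\|_{L^2(Z_f)}^2 \le C\bigl( |\phi|_{\beta, Z_f}^2 + \|\phi\|_{L^2(S_f^{\pm})}^2 \bigr).
\end{align*}
This is a standard Peetre-type argument by contradiction: if the inequality fails, one obtains a sequence $\phi_k$ with $\|\phi_k\|_{L^2(Z_f)} = 1$, $|\phi_k|_{\beta,Z_f} \to 0$ and $\|\phi_k\|_{L^2(S_f^\pm)} \to 0$; the compact embedding $H^{\beta}(Z_f) \hookrightarrow L^2(Z_f)$ and the fractional trace theorem (valid since $\beta > 1/2$ and $Z_f$ is Lipschitz) yield a constant limit with vanishing trace on $S_f^{\pm}$, hence zero, contradicting $\|\phi_k\|_{L^2(Z_f)} = 1$.

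Next I would rescale. Under $x = \vareps(\tilde x + k)$ a direct computation gives $\|\phieps\|_{L^2(\vareps(Z_f+k))}^2 = \vareps^n\|\phi\|_{L^2(Z_f)}^2$, $\|\phieps\|_{L^2(\vareps(S_f^\pm + k))}^2 = \vareps^{n-1}\|\phi\|_{L^2(S_f^\pm)}^2$, and crucially
\begin{align*}
|\phieps|_{\beta, \vareps(Z_f+k)}^2 = \vareps^{n-2\beta}|\phi|_{\beta, Z_f}^2,
\end{align*}
so the cell estimate becomes
\begin{align*}
    \|\phieps\|_{L^2(\vareps(Z_f+k))}^2 \le C\bigl( \vareps^{2\beta} |\phieps|_{\beta, \vareps(Z_f+k)}^2 + \vareps \|\phieps\|_{L^2(\vareps(S_f^\pm + k))}^2 \bigr).
\end{align*}
Summing over $k \in K_\vareps$ and using $\sum_{k} |\phieps|_{\beta, \vareps(Z_f+k)}^2 \le |\phieps|_{\beta, \oef}^2$ (since the cells are disjoint and the double-integral defining the seminorm is nonnegative, and $\oemf \subset \oef$), together with the standard $\vareps$-independent trace estimate $\|\phieps\|_{L^2(S_{\vareps,f}^{\pm})} \le C\|\phieps\|_{H^{\beta}(\oeps^{\pm})}$, I arrive at
\begin{align*}
    \|\phieps\|_{L^2(\oemf)}^2 \le C\bigl( \vareps^{2\beta} |\phieps|_{\beta, \oef}^2 + \vareps \|\phieps\|_{H^{\beta}(\oef)}^2 \bigr).
\end{align*}
Since $2\beta > 1$ and $\vareps \in (0,1]$, $\vareps^{2\beta} \le \vareps$, and dividing by $\vareps$ gives the claim. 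The variant for $Z$, $\oeps^M$, $\Omega$ follows identically (even slightly simpler, as $S_f^\pm$ is replaced by $S^{\pm}$ and no perforation has to be avoided).

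The main obstacle is being careful with the scaling of the Gagliardo seminorm, since $n - 2\beta$ does not match the scaling of $L^2$ norms on bulk or boundary; everything else is a routine combination of compactness and summation over periodicity cells. The gain over Lemma \ref{lem:estimate_L2_membrane_Bulk_Gradient} is exactly the condition $\beta > 1/2$, which is what makes $\vareps^{2\beta}$ absorbable into $\vareps$ and also ensures the fractional trace embedding used in the reference estimate.
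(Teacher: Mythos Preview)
Your proof is correct and follows essentially the same route as the paper: the same contradiction argument on the reference cell (compact embedding $H^\beta(Z_f)\hookrightarrow L^2(Z_f)$ plus the trace on $S_f^{\pm}$, valid for $\beta>\tfrac12$), the same scaling identity $|\phieps|_{\beta,\vareps(Z_f+k)}^2=\vareps^{\,n-2\beta}|\phi|_{\beta,Z_f}^2$, summation over $k\in K_\vareps$, and the $\vareps$-independent trace estimate $\|\phieps\|_{L^2(S_{\vareps,f}^{\pm})}\le C\|\phieps\|_{H^\beta(\oeps^{\pm})}$. Your explicit remark that $\vareps^{2\beta}\le \vareps$ for $\beta>\tfrac12$ makes the absorption step a little more transparent than in the paper, but otherwise the arguments coincide.
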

\begin{proof}
We argue in the same way as in the proof of Lemma \ref{lem:estimate_L2_membrane_Bulk_Gradient}, where we just replace the semi-norm $\|\nabla u\|_{L^2}$  by the $H^{\beta}$-semi-norm. First of all, we notice that for every $\phi \in H^{\beta}(Z_f)$ it holds that
\begin{align}\label{ineq:aux_estimate_L2_norm_Hbeta}
    \|\phi\|_{L^2(Z_f)} \le C \left(|\phi|_{\beta,Z_f} + \|\phi\|_{L^2(S_f^{\pm})} \right).
\end{align}
This follows by a standard contradiction argument and the compact embedding $H^{\beta}(Z_f) \hookrightarrow L^2(Z_f)$. For the sake of completeness we give some details. If the inequality is not valid, we find a sequence $\phi_n \in H^{\beta}(Z_f)$ such that
\begin{align*}
    1 = \|\phi_n\|_{L^2(Z_f)} > n \left( |\phi_n|_{\beta,Z_f} + \|\phi_n\|_{L^2(S_f^{\pm})} \right).
\end{align*}
Hence, we obtain a subsequence and a function $\phi \in H^{\beta}(Z_f)$, such that $\phi_n \rightharpoonup \phi $ in $H^{\beta}(Z_f)$ and $\phi_n \rightarrow \phi$ in $L^2(Z_f).$ Further, it holds that $|\phi|_{\beta,Z_f} = 0$ and $\|\phi\|_{L^2(S_f^{\pm})} = 0$. The first identity gives $\phi$ is constant, and the second equality that $\phi = 0, $ what gives us a contradiction. 

Now, we consider $\phieps\in H^{\beta}(\oef)$ and obtain by a standard scaling argument  and inequality $\eqref{ineq:aux_estimate_L2_norm_Hbeta}$ (see also the proof of \cite[Lemma A.6]{GahnDissertation} for more details)
\begin{align*}
\foe \|\phieps\|_{L^2(\oemf)}^2 &= \sum_{k \in K_{\vareps}} \vareps^{n-1} \int_{Z_f} |\phi_{\vareps}(\vareps(y+ k))|^2 dy 
\\
&\le C \vareps^{n-1} \sum_{k \in K_{\vareps}} \left( |\phieps(\vareps (\cdot + k))|_{\beta,Z_f}^2 + \|\phieps(\vareps( \cdot + k))\|_{L^2(S_f^{\pm})} \right)
\\
&= C \vareps^{n-1} \sum_{k\in K_{\vareps}} \left(\vareps^{2\beta - n}|\phieps|_{\beta,\vareps(Z_f + k)}^2 + \vareps^{1- n} \|\phieps\|^2_{L^2(\vareps(S_f^{\pm} + k))} \right)
\\
&\le C \left( \vareps^{2\beta - 1} |\phieps|^2_{\beta,\oemf} + \|\phieps\|_{L^2(S_{\vareps,f}^{\pm})} \right).
\end{align*}
Then the inequality $\|\phieps\|_{L^2(S_{\vareps,f}^{\pm})} \le C \|\phieps\|_{H^{\beta}(\oeps^{\pm})}$ gives the desired result.
\end{proof}

\begin{lemma}[Trace inequality]\label{lem:Trace_inequality_Bulk}
    There exists a constant $C_0>0$ with the following property: For every  $\theta >0$ there exists a constant $ C_{\theta}>0$ independent of $\vareps$,  such that for every $\phieps \in H^1(\oef)$ it holds that
    \begin{align}\label{ineq:lem_trace_inequality_bulk_scaled}
        \|\phieps\|_{L^2(\geps)} \le C_0 \sqrt{\vareps } \|\nabla \phieps\|_{L^2(\oemf)} + C_{\theta} \|\phieps\|_{L^2(\oeps^{\pm})} + \theta \|\nabla \phieps\|_{L^2(\oeps^{\pm})}.
    \end{align}
    In particular it holds that
    \begin{align}\label{ineq:lem_trace_inequality_bulk_H1}
        \|\phieps\|_{L^2(\geps)} \le C \|\phieps\|_{H^1(\oef)}.
    \end{align}

Further, for every $\phieps \in H^{\beta}(\oef)$ with $\beta \in \left(\frac12 , 1\right)$ it holds that 
\begin{align}\label{ineq:lem_trace_inequality_bulk_Hbeta}
\|\phieps\|_{L^2(\geps )} \le C \| \phieps \|_{H^{\beta}(\oef)}.
\end{align}
All the results are valid if we replace $\geps$ by $\geps \cup S_{\vareps,f}^{\pm}$.
\end{lemma}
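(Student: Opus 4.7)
The plan is to reduce the trace on the rapidly oscillating surface $\geps$ first to a volume norm on the fluid membrane $\oemf$ via Lemma \ref{lem:Stand_scaled_trace_inequality}, then to a trace on the flat lateral interfaces $S_{\vareps,f}^{\pm}$ via Lemma \ref{lem:estimate_L2_membrane_Bulk_Gradient}, and finally to bulk quantities on $\oeps^{\pm}$. More precisely, Lemma \ref{lem:Stand_scaled_trace_inequality} with $\ast = f$ gives, for any $\theta_1 > 0$, $\|\phieps\|_{L^2(\geps)} \le C_{\theta_1}\vareps^{-1/2}\|\phieps\|_{L^2(\oemf)} + \theta_1\sqrt{\vareps}\,\|\nabla\phieps\|_{L^2(\oemf)}$, and inserting Lemma \ref{lem:estimate_L2_membrane_Bulk_Gradient} into the first term produces an estimate involving only $\sqrt{\vareps}\,\|\nabla\phieps\|_{L^2(\oemf)}$ (already of the desired form in \eqref{ineq:lem_trace_inequality_bulk_scaled}) and the flat trace $\|\phieps\|_{L^2(S_{\vareps,f}^{\pm})}$.

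To handle this flat trace, I will establish a uniform-in-$\vareps$ splitting
\begin{align*}
\|\phieps\|_{L^2(S_{\vareps,f}^{\pm})} \le C_\theta\|\phieps\|_{L^2(\oeps^{\pm})} + \theta\|\nabla\phieps\|_{L^2(\oeps^{\pm})}.
\end{align*}
Since $\oeps^{\pm} = \Sigma\times(\pm\vareps,\pm H)$ is carried onto a fixed cylinder by the affine map $x\mapsto x \mp\vareps e_n$ with bi-Lipschitz constants independent of $\vareps$, one can argue on the fixed reference cylinder $\Omega^{\pm}$, apply the interpolated trace inequality $\|\phi\|_{L^2(\Sigma)} \le C\|\phi\|_{L^2(\Omega^{\pm})}^{1-s}\|\phi\|_{H^1(\Omega^{\pm})}^s$ for any $s\in(1/2,1)$ (by interpolation between the $H^1$-trace and the $L^2$ embedding), and combine with Young's inequality to obtain the stated $(\theta,C_\theta)$-splitting. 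Combining all three estimates and choosing $\theta_1$ small enough yields \eqref{ineq:lem_trace_inequality_bulk_scaled}; then \eqref{ineq:lem_trace_inequality_bulk_H1} follows by fixing any $\theta>0$ and absorbing the gradient terms into a single $H^1(\oef)$-norm.

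For the fractional version \eqref{ineq:lem_trace_inequality_bulk_Hbeta}, I avoid gradients entirely and start from the reference-cell trace embedding $\|\phi\|_{L^2(\Gamma)}^2 \le C(|\phi|_{\beta,Z_f}^2 + \|\phi\|_{L^2(Z_f)}^2)$ for $\phi\in H^{\beta}(Z_f)$, which holds for $\beta>\tfrac12$. Scaling cell-by-cell and summing over $K_\vareps$ (exactly as in the proof of Lemma \ref{lem:estimate_L2_norm_membrane_Hbeta}) gives
\begin{align*}
\|\phieps\|_{L^2(\geps)}^2 \le C\bigl(\vareps^{2\beta-1}|\phieps|_{\beta,\oemf}^2 + \vareps^{-1}\|\phieps\|_{L^2(\oemf)}^2\bigr).
\end{align*}
Since $\beta>\tfrac12$, the factor $\vareps^{2\beta-1}$ is bounded uniformly for $\vareps\le 1$, and the Gagliardo seminorm is monotone under domain inclusion, so $|\phieps|_{\beta,\oemf}\le |\phieps|_{\beta,\oef}$; the remaining $L^2$-term is controlled by $\|\phieps\|_{H^{\beta}(\oef)}^2$ directly via Lemma \ref{lem:estimate_L2_norm_membrane_Hbeta}.

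The extension of all three inequalities from $\geps$ to $\geps\cup S_{\vareps,f}^{\pm}$ requires only replacing $\Gamma$ by $\Gamma\cup S_f^{\pm}$ in the reference-cell trace statements, which is standard. The main technical obstacle is ensuring that the interpolation constants used in the Ehrling-type splitting are uniform in $\vareps$; this is the reason for the explicit translation onto the fixed bulk cylinder $\Omega^{\pm}$, after which everything reduces to standard functional analysis on a fixed Lipschitz domain.
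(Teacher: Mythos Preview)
Your proposal is correct and follows essentially the same route as the paper: apply the scaled trace inequality (Lemma~\ref{lem:Stand_scaled_trace_inequality}) to pass from $\geps$ to the membrane, then Lemma~\ref{lem:estimate_L2_membrane_Bulk_Gradient} to reach the flat interfaces $S_{\vareps,f}^{\pm}$, and finally an Ehrling-type trace splitting on the bulk cylinders; for the $H^\beta$ case the paper cites the cell-scaled trace inequality $\|\phieps\|_{L^2(\geps)} \le C\bigl(\vareps^{-1/2}\|\phieps\|_{L^2(\oemf)} + \vareps^{\beta-1/2}|\phieps|_{\beta,\oemf}\bigr)$ from \cite{GahnDissertation}, which is exactly what you rederive by hand, and then invokes Lemma~\ref{lem:estimate_L2_norm_membrane_Hbeta}. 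The only cosmetic difference is that you justify the uniform Ehrling splitting by an explicit translation to $\Omega^{\pm}$ plus interpolation, whereas the paper simply states it.
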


A similar estimate as $\eqref{ineq:lem_trace_inequality_bulk_H1}$ was shown in \cite[Proposition 2]{donato2019asymptotic} for a rough oscillating interface given as a graph, see also \cite[Lemma 2]{freudenberg2024homogenization} for slight generalizations. Here, we give a result for the case when $\geps$ is only a Lipschitz surface. Further, we improve  these results by using the $H^{\beta}$-norm in the trace inequality, see $\eqref{ineq:lem_trace_inequality_bulk_Hbeta}$. Additionally, we give in $\eqref{ineq:lem_trace_inequality_bulk_scaled}$ a scaled trace inequality (with respect to $\theta>0$), which is important for the treatment of the nonlinear boundary terms in the derivation of the \textit{a priori} estimates.
\begin{proof}
With Lemma \ref{lem:estimate_L2_membrane_Bulk_Gradient} and the scaled trace inequality from Lemma \ref{lem:Stand_scaled_trace_inequality} we obtain for every $\phieps \in H^1(\oef)$ and a constant $C>0$ resp. $C_0>0$ independent of $\vareps$
    \begin{align*}
        \|\phieps\|_{L^2(\geps)} &\le C \left( \frac{1}{\sqrt{\vareps}} \|\phieps\|_{L^2(\oemf)} + \sqrt{\vareps} \|\nabla \phieps \|_{L^2(\oemf)} \right)
        \\
        &\le C_0 \left( \sqrt{\vareps} \|\nabla \phieps \|_{L^2(\oemf)} + \|\phieps\|_{L^2(S_{\vareps,f}^{\pm})} \right)
        \\
        &\le  C_0 \sqrt{\vareps } \|\nabla \phieps\|_{L^2(\oemf)} + C_{\theta} \|\phieps\|_{L^2(\oeps^{\pm})} + \theta \|\nabla \phieps\|_{L^2(\oeps^{\pm})},
    \end{align*}
    where in the last inequality we used the trace estimate (for arbitrary $\theta>0$ and a constant $C_{\theta}>0$ depending on $\theta $ but not $\vareps$)
    \begin{align*}
        \|\phieps\|_{L^2(S_{\vareps,f}^{\pm})} \le C_{\theta} \|\phieps\|_{L^2(\oeps^{\pm})} + \theta \|\nabla \phieps\|_{L^2(\oeps^{\pm})}.
    \end{align*}

The inequality for $\phieps \in H^{\beta}(\oef)$ follows by the same argument by using Lemma \ref{lem:estimate_L2_norm_membrane_Hbeta} and the trace inequality (see \cite[Lemma A.6]{GahnDissertation})
\begin{align*}
\|\phieps\|_{L^2(\geps)} \le C \left(\frac{1}{\sqrt{\vareps}} \|\phieps\|_{L^2(\oemf)} + \vareps^{\beta - \frac12} |\phieps|_{\beta,\oemf}\right).
\end{align*}
For $\geps \cup S_{\vareps,f}^{\pm}$ instead of $\geps$ the proof follows the same lines.
\end{proof}

The existence of extension operators preserving \textit{a priori} bounds of the extended function in Sobolev spaces for periodic domains was studied in detail in \cite{Acerbi1992} (see also \cite{CioranescuSJPaulin} for the case of strict inclusions). These results are mainly based on suitable local extension operators. Therefore, they can be easily transferred to our microscopic geometry. In the following we formulate the necessary extension results for our problem. In contrast to existing literature we also need the periodicity of the extended function, which also follows by the construction of the global extension operator in \cite{Acerbi1992} and \cite{CioranescuSJPaulin}. For the sake of completeness we give the proof.

\begin{lemma}[Extension operators]\label{lem:Extension_operators}
\begin{enumerate}[label = (\roman*)]
\item There exists an extension operator $E_{\vareps}^s: H^1(\oems) \rightarrow H^1(\oeps^M)$ such that for all $\phieps \in H^1(\oems)$ it holds that
\begin{align*}
    \|E_{\vareps}^s \phieps\|_{L^2(\oeps^M)} \le C \|\phieps\|_{L^2(\oems)},\qquad \|\nabla E_{\vareps}^s \phieps \|_{L^2(\oeps^M)} \le C\|\nabla \phieps\|_{L^2(\oems)}, 
\end{align*}
for a constant $C>0$ independent of $\vareps$. The extension operator $E_{\vareps}^s$ preserves the $\Sigma$-periodicity, more precisely, we have $E_{\vareps}^s:H^1_{\#}(\oems) \rightarrow H^1_{\#}(\oem)$. 

\item  There exists an extension operator $E_{\vareps}^f: H^1(\oef) \rightarrow H^1(\Omega)$ such that for all $\phieps \in H^1(\oef)$ it holds that
\begin{align*}
    \|E_{\vareps}^f\phieps\|_{L^2(\Omega)} \le C \|\phieps\|_{L^2(\oef)},\qquad \|\nabla E_{\vareps}^f \phieps \|_{L^2(\Omega)} \le C\|\nabla \phieps\|_{L^2(\oef)}, 
\end{align*}
for a constant $C>0$ independent of $\vareps$. The extension operator $E_{\vareps}^f$ preserves the $\Sigma$-periodicity, more precisely, we have $E_{\vareps}^f:H^1_{\#}(\oef) \rightarrow H^1_{\#}(\Omega)$.
\end{enumerate}
\end{lemma}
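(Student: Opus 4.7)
The plan is to build both extension operators cell-by-cell from fixed reference extensions on the periodicity cell $Z$ and then rescale; this is the classical template for perforated domains (Acerbi--Chiad\`o Piat--Dal Maso--Percivale, Cioranescu--Saint Jean Paulin), and the only additional point here is that the construction has to respect the lateral $Y$-periodicity, so that patching across neighbouring $\vareps$-cells yields a globally $H^1$-function, and so that $\Sigma$-periodicity is preserved.

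For (i), I would first invoke the Acerbi et al.\ extension result to obtain a bounded linear operator $\mathcal{E}^s \colon H^1(Z_s) \to H^1(Z)$ satisfying
\begin{align*}
\|\mathcal{E}^s u\|_{L^2(Z)} \le C \|u\|_{L^2(Z_s)}, \qquad \|\nabla \mathcal{E}^s u\|_{L^2(Z)} \le C \|\nabla u\|_{L^2(Z_s)},
\end{align*}
with $\mathcal{E}^s u$ compatible with the $Y$-periodic identification on the lateral boundary of $Z$ (which is possible because $\partial Z_s$ is $Y$-periodic). For $\phi_\vareps \in H^1(\oems)$, I would then define on each microcell $\vareps(Z+k)$, $k \in K_\vareps$,
\begin{align*}
(E_\vareps^s \phi_\vareps)(x) := \bigl(\mathcal{E}^s [\phi_\vareps(\vareps(\cdot + k))]\bigr)\!\left(\frac{x}{\vareps} - k\right).
\end{align*}
A change of variables turns the cell estimates for $\mathcal{E}^s$ into local bounds on $\vareps(Z+k)$ with matching $\vareps$-factors on $L^2$ and on $\nabla$ separately, and summation over $k$ yields the global estimates. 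The $Y$-periodicity compatibility of $\mathcal{E}^s$ ensures that the cellwise pieces match across lateral faces in an $H^1$-sense; since $\phi_\vareps$ is $\Sigma$-periodic, so is $E_\vareps^s \phi_\vareps$.

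For (ii), I would use the same scheme but with a \emph{trace-preserving} reference extension $\mathcal{E}^f \colon H^1(Z_f) \to H^1(Z)$, chosen so that the trace of $\mathcal{E}^f u$ on $\partial Z \cap \overline{Z_f}$ agrees with that of $u$ and so that the $Y$-periodicity is respected. Such an operator exists because $Z_s \subset Z$ is Lipschitz: the trace $u|_{\partial Z_s \cap Z}$ lies in $H^{1/2}$, and any bounded lifting into $Z_s$ (for instance the harmonic extension) yields the required operator. Then $E_\vareps^f$ is set to the identity on $\oeps^+ \cup \oemf \cup \oeps^-$ and to the rescaled $\mathcal{E}^f$-construction on each solid inclusion $\vareps(Z_s+k)$. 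The trace-preservation of $\mathcal{E}^f$ ensures matching of the extension across all interfaces $S_{\vareps,f}^\pm$, $S_{\vareps,s}^\pm$ and lateral cell faces, hence $H^1$-regularity across $\oems$, irrespective of whether $|S_s^\pm|>0$. The $\vareps$-uniform bounds follow exactly as in (i).

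The main obstacle is producing the reference operators $\mathcal{E}^s$, $\mathcal{E}^f$ with \emph{separate} estimates for the $L^2$- and the gradient-norms rather than only for the full $H^1$-norm, since only the former gives the sharp $\vareps$-scaling after rescaling. This requires the classical Acerbi et al.\ argument (reducing to mean-zero functions on $Z_s$ and exploiting a Poincar\'e inequality in the reference cell), as opposed to the generic Stein extension, which does not provide the gradient-only bound. Once this reference ingredient is in place, rescaling, patching across $\vareps$-cells, preservation of $\Sigma$-periodicity, and continuity across $S_{\vareps,s}^\pm$ are straightforward local-to-global arguments that are insensitive to the thinness of the layer, since the construction is purely local in the lateral variables.
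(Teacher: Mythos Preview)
Your proposal is correct and follows essentially the same cell-by-cell construction as the paper: a fixed reference extension on the periodicity cell (with separate $L^2$- and gradient-bounds in the spirit of Acerbi et al.), rescaled and patched over the microcells. The paper differs only in emphasis: rather than invoking $Y$-periodic compatibility of the reference operator to get $\Sigma$-periodicity, it first extends $\phi_\vareps$ by $\Sigma$-periodicity to the infinite strip, applies the local operator $\tau$ in every cell, and then verifies the commutation identity $(E_\vareps^s\phi_\vareps)(\cdot+l\vareps)=E_\vareps^s(\phi_\vareps(\cdot+l\vareps))$ by direct computation; this yields $\Sigma$-periodicity and, as a by-product, the shift-commutation property needed later in Lemma~\ref{lem:apriori_shifts} and Proposition~\ref{prop:strong_ts_conv_gamma_-1_1}.
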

\begin{proof}
    The results follow from the local results in \cite{Acerbi1992}, see also \cite{GahnJaegerTwoScaleTools} for thin perforated layers in the context of the symmetric gradient. We only give some additional comments about the $\Sigma$-periodicity of the extension, where we focus on the operator $E_{\vareps}^s$ (for the operator $E_{\vareps}^f$ we can argue in the same way). We also assume for the sake of simplicity that $\oem \setminus \oems$ is disconnected (for the general more technical case we refer to the construction in \cite{Acerbi1992} and \cite{GahnJaegerTwoScaleTools}, but the main idea is the same). We first show that for every $\phieps \in H_{\#}^1(\oems)$ and $l\in \Z^{n-1} \times \{0\}$ it holds that
\begin{align}\label{eq:commuting_prop_extension_shifts}
   ( E_{\vareps}^s\phieps )(\cdot + l\vareps ) = E_{\vareps}^s(\phieps(\cdot + l\vareps)).
\end{align}   
In fact, let $\tau: H^1(Z_s)\rightarrow H^1(Z)$ be the local extension operator from \cite{Acerbi1992} fulfilling for all $\phi \in H^1(Z_s)$
\begin{align*}
    \|\tau \phi \|_{L^2(Z)} \le C \|\phi\|_{L^2(Z_s)} , \qquad \|\nabla \tau \phi \|_{L^2(Z)} \le C \| \nabla \phi\|_{L^2(Z_s)}.
\end{align*}
Then the global extension operator $E_{\vareps}^s$ is defined in the following way. First of all, we define  $\widehat{\Omega}_{\vareps}^{M,s} := \bigcup_{k \in \Z^{n-1} \times \{0\}} \vareps (Z_s + k)$ and extend $\phieps \in H^1_{\#}(\oems)$ by periodicity to the whole domain $\widehat{\Omega}_{\vareps}^{M,s}$ to obtain a function (same notation) $\phieps \in H^1_{\mathrm{loc}}(\widehat{\Omega}_{\vareps}^{M,s})$. Now, for almost every $x \in \vareps (k + Z_s)$ with $k \in \Z^{n-1} \times \{0\}$ the extension operator $E_{\vareps}^s$ is defined by
\begin{align*}
    E_{\vareps}^s \phieps(x) = \tau \phieps^k \left(\fxe - k\right)
\end{align*}
for $\phieps^k := \phieps(\vareps( k +\cdot))$. Hence, for $l \in \Z^{n-1}\times \{0\}$ we get, since $x + l\vareps \in \vareps (k + l + Z_s)$,
\begin{align*}
(E_{\vareps}^s\phieps)(x + l\vareps ) = \tau \phieps^{k+l}\left(\frac{x + l\vareps}{\vareps} - (k +l)\right) = \tau \phieps^{k+l}\left(\frac{x}{\vareps} - k\right) = (E_{\vareps}^s (\phieps(\cdot + l\vareps))) (x).
\end{align*}
This implies $\eqref{eq:commuting_prop_extension_shifts}$. Now, choosing $l= (b-a,0) \in \Z^{n-1}\times 0$ we get the $\Sigma$-periodicity of $E_{\vareps}^s \phieps$, because $\phieps(\cdot + l\vareps) = \phieps$.

\end{proof}

\begin{remark}\label{rem:extension_operator}
The proof of Lemma \ref{lem:Extension_operators} shows that the extension operators commute with the shift operator, more precisely $\eqref{eq:commuting_prop_extension_shifts} $ holds.

\end{remark}
The existence of an extension operator for the perforated fluid domain $\oef$ immediately implies the following Sobolev embedding with embedding constant independently of $\vareps$:
\begin{corollary}\label{cor:embedding_H1_L6_oef}
Let $n \geq 3$ and $2^{\ast} = \frac{2n}{n-2}$ (the case $n=2$ is clear). There exists a constant $C>0$ independent of $\vareps$ such that for every $\xieps^f\in H^1(\oef)$ it holds that
\begin{align*}
\|\xieps^f\|_{L^{2^{\ast}}(\oef)} \le C \|\xieps^f\|_{H^1(\oef)}.
\end{align*}
\end{corollary}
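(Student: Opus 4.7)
The strategy is to reduce the inequality on the $\vareps$-dependent perforated domain $\oef$ to the standard Sobolev embedding on the fixed domain $\Omega$, by means of the uniform extension operator from Lemma \ref{lem:Extension_operators}(ii). First I would apply $E_\vareps^f$ to an arbitrary $\xieps^f \in H^1(\oef)$ to obtain $E_\vareps^f \xieps^f \in H^1(\Omega)$, with the uniform bound
\begin{align*}
\|E_\vareps^f \xieps^f\|_{H^1(\Omega)} \le C \|\xieps^f\|_{H^1(\oef)},
\end{align*}
where $C$ does not depend on $\vareps$.

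Next, since $\Omega \subset \R^n$ is a fixed bounded Lipschitz domain, the classical Sobolev embedding theorem yields a constant $C_{\mathrm{Sob}}>0$ (independent of $\vareps$ because $\Omega$ is fixed) with
\begin{align*}
\|u\|_{L^{2^*}(\Omega)} \le C_{\mathrm{Sob}} \|u\|_{H^1(\Omega)} \quad \text{for all } u \in H^1(\Omega).
\end{align*}
Applying this to $u = E_\vareps^f \xieps^f$ and then restricting back to $\oef \subset \Omega$ (using that $E_\vareps^f$ is an extension, i.e.\ $E_\vareps^f \xieps^f = \xieps^f$ on $\oef$) I would chain the estimates to obtain
\begin{align*}
\|\xieps^f\|_{L^{2^*}(\oef)} \le \|E_\vareps^f \xieps^f\|_{L^{2^*}(\Omega)} \le C_{\mathrm{Sob}}\, C\, \|\xieps^f\|_{H^1(\oef)},
\end{align*}
which is the claimed inequality with a constant independent of $\vareps$.

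There is no genuine obstacle here: the entire content of the corollary is already encoded in the fact that Lemma \ref{lem:Extension_operators}(ii) supplies an extension operator whose operator norm from $H^1(\oef)$ to $H^1(\Omega)$ is bounded uniformly in $\vareps$. Hence the proof is a one-line consequence of that lemma combined with the fixed-domain Sobolev embedding, and the only thing to verify is the elementary monotonicity $\|\xieps^f\|_{L^{2^*}(\oef)} \le \|E_\vareps^f \xieps^f\|_{L^{2^*}(\Omega)}$, which holds because $\oef \subset \Omega$ and $E_\vareps^f$ agrees with $\xieps^f$ on $\oef$.
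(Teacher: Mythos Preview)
Your proposal is correct and matches the paper's proof essentially line for line: extend via $E_\vareps^f$, apply the fixed-domain Sobolev embedding $H^1(\Omega)\hookrightarrow L^{2^*}(\Omega)$, and restrict back to $\oef$. There is nothing to add.
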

\begin{proof}
With the extension operator $E_{\vareps}^f$ from Lemma \ref{lem:Extension_operators} we obtain from the continuity of the embedding $H^1(\Omega)\hookrightarrow L^{2^{\ast}}(\Omega)$
\begin{align*}
    \|\xieps^f\|_{L^{2^{\ast}}(\oef)} \le \|E_{\vareps}^f \xieps^f\|_{L^{2^{\ast}}(\Omega)} \le C \|E_{\vareps}^f \xieps^f\|_{H^1(\Omega)} \le C \|\xieps^f\|_{H^1(\oef)}.
\end{align*}
\end{proof}
To estimate the convective term in the transport equation in the thin layer and obtain $\vareps$-uniform \textit{a priori} bounds for the time-derivative of $\ceps^f$, we make use of the following embedding result with explicit $\vareps$-dependence of the embedding constant. It is applied to the fluid velocity $\veps$ which is equal to zero on $\geps$. However, to formulate the result in a more general framework we consider arbitrary functions in $H^1(\oemf)$, which leads to an additional term on a part of the boundary $\partial \oemf$.
\begin{lemma}\label{lem:embedding_thin_layer_H1_Lp}
Let $p \in [2, 2^{\ast})$ with $2^{\ast} := \frac{2n}{n-2}$ for $n\geq 3$ ($n=2$ is clear). Let $G\subset \partial Z_f$ and $|G|>0$. Further, we denote by $G_{\vareps}$ the associated microscopic surface (for example $G_{\vareps} = \geps$ for $G = \Gamma$). Then, for every $\xieps \in H^1(\oemf)$ it holds that 
\begin{align*}
    \|\xieps\|_{L^p(\oemf)} \le  C \left( \vareps^{n \left(\frac{1}{p} - \frac12 \right) + \frac12} \|\xieps\|_{L^2(G_{\vareps})} + \vareps^{n \left(\frac{1}{p} - \frac12 \right) + 1 } \|\nabla \xieps\|_{L^2(\oemf)} \right)
\end{align*}
for a constant $C>0$ independent of $\vareps$.
\end{lemma}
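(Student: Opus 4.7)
The strategy is the classical reduce-to-reference-cell-and-rescale argument. All the work sits in proving a clean local inequality on $Z_f$ and then tracking the exponents produced by the affine scaling.

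\textbf{Step 1 (Local inequality on $Z_f$).} I would first establish that for every $\phi \in H^1(Z_f)$,
\begin{equation}\label{eq:planlocal}
\|\phi\|_{L^p(Z_f)} \le C\bigl(\|\phi\|_{L^2(G)} + \|\nabla \phi\|_{L^2(Z_f)}\bigr).
\end{equation}
Since $|G|>0$ and $Z_f$ is a connected Lipschitz domain, a standard contradiction/compactness argument (assume a sequence violating the bound, normalize in $L^2(Z_f)$, extract a weak $H^1$-limit whose gradient vanishes and whose trace on $G$ vanishes, use strong $L^2$-convergence to get a contradiction) yields the Poincar\'e-trace inequality $\|\phi\|_{L^2(Z_f)} \le C(\|\phi\|_{L^2(G)} + \|\nabla\phi\|_{L^2(Z_f)})$, hence $\|\phi\|_{H^1(Z_f)} \le C(\|\phi\|_{L^2(G)} + \|\nabla\phi\|_{L^2(Z_f)})$. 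Combining this with the Sobolev embedding $H^1(Z_f) \hookrightarrow L^p(Z_f)$, which holds precisely because $p < 2^\ast$, gives \eqref{eq:planlocal}.

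\textbf{Step 2 (Scaling to a single $\vareps$-cell).} For each $k \in K_\vareps$, set $\xieps^k(y) := \xieps(\vareps(y+k))$ for $y \in Z_f$. The change of variables $x = \vareps(y+k)$ gives
\begin{align*}
\|\xieps^k\|_{L^p(Z_f)} &= \vareps^{-n/p}\|\xieps\|_{L^p(\vareps(Z_f+k))}, \\
\|\xieps^k\|_{L^2(G)} &= \vareps^{(1-n)/2}\|\xieps\|_{L^2(\vareps(G+k))}, \\
\|\nabla_y \xieps^k\|_{L^2(Z_f)} &= \vareps^{(2-n)/2}\|\nabla \xieps\|_{L^2(\vareps(Z_f+k))}.
\end{align*}
Applying \eqref{eq:planlocal} to $\xieps^k$ and multiplying through by $\vareps^{n/p}$ yields, with $\alpha := n(\tfrac{1}{p}-\tfrac12)+\tfrac12$ and $\beta := n(\tfrac{1}{p}-\tfrac12)+1$,
\begin{equation*}
\|\xieps\|_{L^p(\vareps(Z_f+k))} \le C\bigl(\vareps^{\alpha}\|\xieps\|_{L^2(\vareps(G+k))} + \vareps^{\beta}\|\nabla \xieps\|_{L^2(\vareps(Z_f+k))}\bigr).
\end{equation*}

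\textbf{Step 3 (Summing over cells).} I would raise the cell-wise bound to the $p$-th power, sum over $k \in K_\vareps$, and use the elementary discrete inequality $(a+b)^p \le 2^{p-1}(a^p + b^p)$ together with the $\ell^p \hookrightarrow \ell^2$ embedding (valid because $p \ge 2$): for a nonnegative sequence $(d_k)$, $\sum_k d_k^p \le (\sum_k d_k^2)^{p/2}$. Applied to $d_k := \|\xieps\|_{L^2(\vareps(G+k))}$ and $d_k := \|\nabla\xieps\|_{L^2(\vareps(Z_f+k))}$, this produces exactly $\|\xieps\|_{L^2(G_\vareps)}^p$ and $\|\nabla\xieps\|_{L^2(\oemf)}^p$ after pulling out the common $\vareps^{\alpha p}$ resp.\ $\vareps^{\beta p}$ factors. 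Taking the $p$-th root gives the stated inequality.

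\textbf{Main obstacle.} There is no conceptual difficulty; the only delicate point is verifying Step~1 under the mere hypothesis $|G|>0$ on $\partial Z_f$ (in particular no regularity of $G$ beyond being a measurable subset of the Lipschitz boundary with positive surface measure), and the bookkeeping in Step~3 to ensure the $\ell^p$-to-$\ell^2$ step does not cost $\vareps$-powers. Using $p \ge 2$ is essential there, as is the periodic tiling $\oem = \bigcup_{k \in K_\vareps}\vareps(\overline{Z}+k)$ which guarantees disjointness of the rescaled cells.
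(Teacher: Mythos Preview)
Your proposal is correct and follows essentially the same route as the paper's proof: a Poincar\'e--trace inequality on $Z_f$ combined with Sobolev embedding, then cell-wise scaling, and finally the $\ell^p\hookrightarrow\ell^2$ step (which the paper phrases as ``equivalence of $p$-norms'') to pass from the sum of $p$-th powers of local $L^2$-norms to the global $L^2$-norms. Your write-up is in fact slightly more explicit than the paper's about why $p\ge 2$ is needed in Step~3.
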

\begin{proof}
We have with $\xieps^k:= \xieps(\vareps ( \cdot + k))$ and the same argument as for $\eqref{ineq:bound_gradient_surface}$
\begin{align*}
    \|\xieps\|_{L^p(\oemf)}^p &= \vareps^n \sum_{k\in K_{\vareps}} \|\xieps^k\|_{L^p(Z_f)}^p  \le C \vareps^n \sum_{k\in K_{\vareps}} \left( \|\xieps^k\|_{L^2(G)} + \|\nabla \xieps^k\|_{L^2(Z_f)} \right)^p
    \\
    &\le C \vareps^n \sum_{k\in K_{\vareps}} \left( \|\xieps^k\|_{L^2(G)}^p + \vareps^p \|(\nabla \xieps)^k \|_{L^2(Z_f)}^p \right)
    \\
    &= C \vareps^n \sum_{k \in K_{\vareps}} \left( \vareps^{p\frac{1-n}{2}} \|\xieps \|_{L^2(\vareps(G + k))}^p + \vareps^{p\left( 1 - \frac{n}{2}\right)} \|\nabla \xieps\|_{L^p(\vareps(Z_f + k)}^p \right).
\end{align*}
Taking this inequality with the power $\frac{1}{p}$, we get (equivalence of $p$-norms)
\begin{align*}
\|\xieps\|_{L^p(\oemf)} &\le C \vareps^{\frac{n}{p}} \left\{ \sum_{k\in K_{\vareps}} \left( \vareps^{p\frac{1-n}{2}} \|\xieps \|_{L^2(\vareps(G + k))}^p + \vareps^{p\left( 1 - \frac{n}{2}\right)} \|\nabla \xieps\|_{L^2(\vareps(Z_f + k)}^p \right) \right\}^{\frac{1}{p}}
\\
&\le C \vareps^{\frac{n}{p}} \left\{ \sum_{k\in K_{\vareps}} \left( \vareps^{1-n} \|\xieps\|^2_{L^2((\vareps(G + k))} + \vareps^{2-n} \|\nabla\xieps\|^2_{L^2(\vareps(Z_f + k))} \right) \right\}^{\frac12}
\\
&\le  C \left( \vareps^{n \left(\frac{1}{p} - \frac12 \right) + \frac12} \|\xieps\|_{L^2(G_{\vareps})} + \vareps^{n \left(\frac{1}{p} - \frac12 \right) + 1 } \|\nabla \xieps\|_{L^2(\oemf)} \right).
\end{align*}
\end{proof}
\begin{remark}
    As a special case we have for $p=2$ the inequality $\eqref{ineq:aux_trace_inequality}$.
\end{remark}
Finally, we give the following well-known lemma giving a pointwise bound for differences of shifts in the intermediate space in a Gelfand-triple.
\begin{lemma}
\label{lem:Nikolskii_estimate}
Let $V$ and $H$ be Hilbert spaces   and we assume that $(V,H,V^{\ast})$ is a Gelfand triple. Let $v \in L^2((0,T),V) \cap H^1((0,T),V^{\ast})$. Then, for every $\phi \in V$ and almost every $t \in (0,T)$, $h\in(-T,T)$, such that $t + h \in (0,T)$, we have
\begin{align*}
\big|(v(t+h) - v(t), \phi)_H\big| \le \sqrt{|h|} \|\phi\|_V \|\partial_t v \|_{L^2((t,t+h),V^{\ast})}.
\end{align*}
In particular, it holds that
\begin{align*}
\big\|v(t+h) - v(t) \big\|^2_H \le \sqrt{|h|} \big\|v(t+h) - v(t)\big\|_V \|\partial_t v\|_{L^2((t,t+h),V^{\ast})}.
\end{align*}
\end{lemma}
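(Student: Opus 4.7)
\vspace{1em}
\noindent\textbf{Proof proposal.} The plan is to reduce everything to the fundamental theorem of calculus for Bochner integrals plus one Cauchy--Schwarz inequality, exploiting the identification that comes with a Gelfand triple. Since $v \in L^2((0,T),V) \cap H^1((0,T),V^{\ast})$, we have in particular that $\partial_t v \in L^2((0,T), V^*)$ and the Bochner integral identity
\begin{align*}
v(t+h) - v(t) = \int_t^{t+h} \partial_t v(s)\, ds
\end{align*}
holds in $V^{\ast}$ for almost every admissible $t$ and $h$. I would establish this at the start by invoking the standard representative of an element of $H^1((0,T),V^*)$ as an absolutely continuous $V^*$-valued function.

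Next, I would fix $\phi \in V$ and apply the duality pairing $\langle \cdot, \phi \rangle_V$ to both sides. Because $(V,H,V^*)$ is a Gelfand triple, the pairing restricted to $H \times V$ coincides with the $H$-inner product, i.e. $\langle u, \phi \rangle_V = (u,\phi)_H$ for every $u \in H \subset V^*$. Since $v$ takes values in $V \subset H$ almost everywhere, this yields
\begin{align*}
(v(t+h) - v(t), \phi)_H
= \int_t^{t+h} \langle \partial_t v(s), \phi\rangle_V\, ds .
\end{align*}
Estimating the right-hand side by the duality inequality and then Cauchy--Schwarz in the time variable,
\begin{align*}
\left| \int_t^{t+h} \langle \partial_t v(s), \phi\rangle_V\, ds \right|
\le \|\phi\|_V \int_t^{t+h} \|\partial_t v(s)\|_{V^*}\, ds
\le \sqrt{|h|}\, \|\phi\|_V\, \|\partial_t v\|_{L^2((t,t+h), V^*)},
\end{align*}
which is exactly the first claimed bound.

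For the second, pointwise-in-$h$ inequality, I would simply insert the test function $\phi := v(t+h) - v(t) \in V$ into the estimate just obtained, using that $\|v(t+h)-v(t)\|_H^2 = (v(t+h)-v(t),\,v(t+h)-v(t))_H$ by definition of the $H$-inner product. I do not anticipate a genuine obstacle here: the only step that is not pure book-keeping is the identification of the pairing with the inner product on $H \cap V^*$, which is built into the definition of the Gelfand triple (so I would mention it explicitly but not belabor it). Everything else is an application of the fundamental theorem of calculus and Cauchy--Schwarz.
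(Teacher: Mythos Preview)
Your proposal is correct and is exactly the standard argument for this estimate; the paper itself does not give a proof but simply refers to \cite[Lemma 7.2]{GahnNeussRaduKnabner2018a} and \cite[Lemma 9]{Gahn}, where the same fundamental-theorem-of-calculus plus Cauchy--Schwarz reasoning is carried out.
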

\begin{proof}
    For a proof see \cite[Lemma 7.2]{GahnNeussRaduKnabner2018a} respectively \cite[Lemma 9]{Gahn}
\end{proof}

\section{Two-scale convergence and the unfolding method}
\label{sec:two_scale_convergence_unfolding}
In this section we first briefly repeat the definition of the two-scale convergence  for thin (perforated) layers, see \cite{BhattacharyaGahnNeussRadu,GahnEffectiveTransmissionContinuous,NeussJaeger_EffectiveTransmission}, and recall some known compactness results used in this paper. Further, we give some new two-scale compactness results for the time-derivative as well as some compactness results for suitable bounded sequences in $H^1(\oef)$ together with an interface condition for the limit function.
\begin{definition}\
\begin{enumerate}
[label = (\roman*)]
\item\,[Two-scale convergence in the thin layer $\oem$] We say the sequence $\weps \in L^2((0,T)\times \oem)$ converges (weakly) in the two-scale sense to a limit function $w_0 \in L^2((0,T)\times  \Sigma \times Z)$ if 
\begin{align*}
\lim_{\vareps\to 0} \foe \int_0^T \int_{\oem} \weps(t,x) \phi \tbxfxe dxdt = \int_0^T\int_{\Sigma} \int_Z w_0(t,\x,y) \psi(t,\x,y) dy d\x dt
\end{align*}
for all $\phi \in L^2((0,T)\times \Sigma,C_{\#}^0(\overline{Z}))$. We write $\weps \rats w_0$.
If additionally it holds that 
\begin{align*}
  \lim_{\vareps\to 0}  \frac{1}{\sqrt{\vareps}} \|\weps\|_{L^2((0,T)\times \oem)}  = \|w_0\|_{L^2((0,T)\times \Sigma \times Z)}
\end{align*}
we say that the sequence $\weps$ converges strongly in the two-scale sense to $w_0$ and we write $\weps \rasts w_0$.
\item\,[Two-scale convergence on the oscillating surface $\geps$] We say the sequence $\weps \in L^2((0,T)\times \geps)$ converges (weakly) in the two-scale sense to a limit function $w_0 \in L^2((0,T)\times  \Sigma \times \Gamma)$ if 
\begin{align*}
\lim_{\vareps\to 0}  \int_0^T\int_{\geps} \weps(t,x) \phi \tbxfxe dx dt = \int_0^T \int_{\Sigma} \int_\Gamma w_0(t,\x,y) \psi(t,\x,y) dy d\x dt
\end{align*}
for all $\phi \in C^0([0,T]\times \overline{\Sigma},C_{\#}^0(\Gamma))$. We write  
$\weps \rats w_0$ on $\geps$.
If additionally it holds that
\begin{align*}
\lim_{\vareps \to 0} \|\weps\|_{L^2((0,T)\times \geps)} = \|w_0\|_{L^2((0,T)\times \Sigma \times \Gamma)},
\end{align*}
we say that the sequence $\weps$ converges strongly in the two-scale sense to $w_0$ on $\geps$ and we write $\weps \rasts w_0$ on $\geps$.
\end{enumerate}
\end{definition}
\begin{remark}
The definition of the two-scale convergence on the surface $\geps$ can also be formulated on $\geps\cup S_{\vareps,f}^{\pm}$ and the following results are also valid in this case.
\end{remark}
Next, we give some basic compactness results for the two-scale convergence in thin layers. 
\begin{lemma}\label{LemmaBasicTSCompactness}\
\begin{enumerate}
[label = (\roman*)]
\item Every sequence $\weps \in L^2((0,T)\times \oem)$ with 
\begin{align*}
    \frac{1}{\sqrt{\vareps}} \|\weps\|_{L^2((0,T)\times \oem)} \le C
\end{align*}
has a (weakly) two-scale convergent subsequence.
\item  Every sequence $\weps \in L^2((0,T)\times \geps)$ with 
\begin{align*}
     \|\weps\|_{L^2((0,T)\times \geps)} \le C
\end{align*}
has a (weakly) two-scale convergent subsequence on $\geps$.
\item Let $\gamma \in [-1,1]$ and $\weps \in L^2((0,T), H^1(\oem))$ be a sequence with
\begin{align*}
\frac{1}{\sqrt{\vareps}}\Vert \weps \Vert_{L^2((0,T)\times \oem)} + \vareps^{\gamma}\Vert \nabla \weps \Vert_{L^2((0,T)\times \oem)}  \le C.
\end{align*}
Then we have the following cases:
\begin{enumerate}
[label = (\arabic*)]
\item For $\gamma = 1$ there exists $w_0 \in  L^2( (0,T)\times \Sigma, H_{\#}^1(Z)/\R)^3$, such that up to a subsequence
\begin{align*}
\weps  \rats w_0,\qquad 
\vareps \nabla \weps \rats \nabla_y w_0
\end{align*}

\item For $\gamma \in (-1,1)$ there exists $w_0 \in L^2((0,T)\times \Sigma)$, such that up to a subsequence  $\weps \rats w_0$.
\item For $\gamma = -1$ there exist $w_0 \in L^2((0,T),H^1(\Sigma))$ and $w_1 \in L^2((0,T)\times \Sigma , H_{\#}^1(Z))$ such that up to a subsequence
\begin{align*}
    \weps \rats w_0, \qquad \nabla \weps \rats \nabla_{\x} w_0 + \nabla_y w_1.
\end{align*}
If additionally it holds that $\weps \in L^2((0,T),H_{\#}^1(\oem))$, then we have $w_0\in L^2((0,T),H^1_{\#}(\Sigma))$.
\end{enumerate}

\end{enumerate}
\end{lemma}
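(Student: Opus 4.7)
The proof proceeds by identifying two-scale convergence $\weps \rats w_0$ in the thin layer with weak $L^2$-convergence of the unfolded sequence $\teps^M \weps$ in $L^2((0,T)\times \Sigma \times Z)$ (and the analogous equivalence on $\geps$ for the boundary unfolding); these equivalences rest on the isometry-type identity $\|\teps^M u\|_{L^2(\Sigma \times Z)} = \vareps^{-1/2}\|u\|_{L^2(\oem)}$ and its surface counterpart, both recorded in the references cited at the start of this section. Once this translation is in place, the three statements reduce to Banach--Alaoglu arguments on bounded sequences in ordinary Bochner--Lebesgue spaces, and the only nontrivial issue is the identification of the structure of gradient limits.

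\textbf{Parts (i) and (ii).} The hypothesis in (i) is precisely that $\teps^M \weps$ is bounded in $L^2((0,T)\times \Sigma \times Z)$, so Banach--Alaoglu directly yields a subsequence converging weakly to some $w_0$, which is the desired two-scale limit. Part (ii) is identical once one observes that the boundary unfolding is isometric from $L^2(\geps)$ to $L^2(\Sigma \times \Gamma)$ without any $\vareps$-factor, so the bound $\|\weps\|_{L^2((0,T)\times \geps)} \le C$ furnishes weak compactness in $L^2((0,T)\times \Sigma \times \Gamma)$ immediately.

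\textbf{Part (iii), gradient scaling and the non-critical cases.} The extra ingredient is the commutation $\nabla_y(\teps^M \weps) = \vareps\, \teps^M(\nabla \weps)$, which combined with the gradient bound yields $\|\nabla_y \teps^M \weps\|_{L^2} = \sqrt{\vareps}\,\|\nabla \weps\|_{L^2((0,T)\times \oem)} \le C\vareps^{(1-\gamma)/2}$. For $\gamma = 1$ this is uniformly bounded, so $\teps^M \weps$ is bounded in $L^2((0,T)\times \Sigma, H^1(Z))$; a weakly convergent subsequence has a limit $w_0 \in L^2((0,T)\times \Sigma, H_{\#}^1(Z)/\R)$, with the $Y$-periodicity inherited from the periodic cell structure and the quotient by $\R$ arising because only $\nabla_y$ of the limit is controlled in $L^2$. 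For $\gamma \in (-1,1)$ the right-hand side above vanishes as $\vareps \to 0$, forcing $\nabla_y w_0 \equiv 0$, so $w_0$ is $y$-independent and therefore belongs to $L^2((0,T)\times \Sigma)$.

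\textbf{Critical case $\gamma = -1$ and the main obstacle.} For $\gamma = -1$ the commutation identity yields the uniform bound $\|\teps^M(\nabla \weps)\|_{L^2} \le C$, so a subsequence satisfies $\nabla \weps \rats W$ for some $W \in L^2((0,T)\times \Sigma \times Z)^n$. The core step, and the main technical obstacle, is to identify the structure $W = \nabla_{\x} w_0 + \nabla_y w_1$. This will be done by the standard divergence-structure argument: test the two-scale formulation of $\nabla \weps \rats W$ against vector fields $\psi(\x, y)$ that are $Y$-periodic in $y$, $\Sigma$-periodic in $\x$, satisfy $\nabla_y \cdot \psi = 0$ in $Z$ and have vanishing flux through $S^{\pm}$, then integrate by parts to eliminate $w_1$ and identify $\nabla_{\x} w_0$ as a distributional derivative that in fact lies in $L^2$. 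The delicate point is the adaptation to the thin-layer geometry, since $Z$ has boundary components $S^{\pm}$ rather than being fully periodic, which necessitates the flux condition on $S^{\pm}$ when constructing admissible test fields. The additional claim $w_0 \in L^2((0,T), H^1_{\#}(\Sigma))$ when $\weps \in L^2((0,T), H^1_{\#}(\oem))$ is then inherited from the $\Sigma$-periodicity of $\teps^M \weps$ in the $\x$-variable.
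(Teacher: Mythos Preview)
Your proposal is correct and follows the standard unfolding route that underlies the references the paper cites; indeed, the paper's own proof consists only of the line ``See \cite{GahnEffectiveTransmissionContinuous} for the different cases $\gamma$ and \cite{BhattacharyaGahnNeussRadu} for the surface results. The periodicity in the case $\gamma = -1$ is obvious.'' So you are supplying more detail than the paper does, and your argument matches precisely what those references contain: isometry of the unfolding operator for (i)--(ii), the commutation $\nabla_y\teps^M = \vareps\,\teps^M\nabla$ for the gradient cases, and the divergence-free test-field argument for the identification $W=\nabla_{\x}w_0+\nabla_y w_1$ when $\gamma=-1$, with the thin-layer adaptation requiring vanishing normal flux on $S^{\pm}$.

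One small correction: your explanation of the quotient $H^1_{\#}(Z)/\R$ in case $\gamma=1$ (``because only $\nabla_y$ of the limit is controlled'') is not quite right, since the first hypothesis also bounds $\teps^M\weps$ in $L^2(\Sigma\times Z)$, hence the full $H^1(Z)$-norm of the unfolded sequence is controlled and $w_0$ itself is uniquely determined as a two-scale limit. The quotient is merely a conventional choice of target space reflecting that in applications only $\nabla_y w_0$ enters.
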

\begin{proof}
See \cite{GahnEffectiveTransmissionContinuous} for the different cases $\gamma$ and \cite{BhattacharyaGahnNeussRadu} for the surface results. The periodicity in the case $\gamma = -1$ is obvious.
\end{proof}

Next, we repeat the definition of the unfolding operator for thin layers, see \cite{NeussJaeger_EffectiveTransmission}. We also refer to \cite{CioranescuGrisoDamlamian2018} for a general overview of the unfolding method.
\begin{definition}
Let $Z_{\ast} \subset Z$ (usually we have $Z_s$, $Z_f$, or $Z$ for $Z_{\ast}$) open and $\oeps^{M,\ast}$ is the associated thin microscopic layer ($\oems$, $\oemf$, or $\oem$). Then, for $p\in [1,\infty]$ we define the unfolding operator 
\begin{align*}
\teps^M: L^p((0,T)\times \oeps^{M,\ast})& \rightarrow L^p((0,T)\times \Sigma \times Z_{\ast}),
\\
\teps^M\ueps (t,\x,y) &= \ueps\left(t,\vareps \left[\frac{\x}{\vareps}\right] + \vareps \y , \vareps y_n\right).
\end{align*}
\end{definition}
Let us summarize some well-known properties of the unfolding operator:
\begin{lemma}
Let $p \in [1,\infty)$. 
\begin{enumerate} [label = (\roman*)]
\item For $\weps \in L^p((0,T)\times \oem)$ it holds that
\begin{align*}
    \|\teps^M \weps\|_{L^p((0,T)\times \Sigma \times Z)} = \vareps^{-\frac{1}{p}} \|\weps\|_{L^p((0,T)\times \oem)}.
\end{align*}
\item For $\weps \in L^p((0,T),W^{1,p}(\oem))$ we have $\teps^M \weps \in L^p((0,T)\times \Sigma,W^{1,p}(Z))$ and $\nabla_y \teps \weps = \vareps \teps^M \nabla \weps$.
\end{enumerate}
\end{lemma}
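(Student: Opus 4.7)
The plan is to prove both statements by a direct change-of-variables argument based on the cell decomposition of the thin layer. The crucial combinatorial observation is that, since $\vareps^{-1}\in\N$ and $\Sigma=(a,b)$ with $a,b\in\Z^{n-1}$, the tangential rectangle $\Sigma$ is exactly (up to a null set) the disjoint union of the $\vareps$-cells $\vareps(k+Y)$ for $k$ in the tangential projection of $K_\vareps$. On each such cell the floor function $[\bar x/\vareps]$ is the constant $k$, so
\[
\teps^{M}u_\vareps(t,\bar x,y)=u_\vareps\bigl(t,\vareps k+\vareps \bar y,\vareps y_n\bigr)\quad\text{for }\bar x\in \vareps(k+Y),
\]
and the right-hand side does not depend on $\bar x$ on that cell. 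All computations below are done pointwise in $t$ and then trivially integrated over $(0,T)$.

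For part (i), fix $t$ and $y\in Z$, and use the preceding observation to write
\[
\int_\Sigma |\teps^{M}u_\vareps(t,\bar x,y)|^p\,d\bar x
=\sum_k\int_{\vareps(k+Y)}|u_\vareps(t,\vareps k+\vareps\bar y,\vareps y_n)|^p\,d\bar x
=\vareps^{\,n-1}\sum_k|u_\vareps(t,\vareps k+\vareps\bar y,\vareps y_n)|^p,
\]
using $|\vareps Y|=\vareps^{n-1}$. Integrating over $y\in Z$ and applying, for each fixed $k$, the affine substitution $x=\vareps(k+y)$ (a bijection $Z\to\vareps(k+Z)$ with Jacobian $\vareps^{n}$) yields
\[
\int_Z|u_\vareps(t,\vareps k+\vareps\bar y,\vareps y_n)|^p\,dy=\vareps^{-n}\int_{\vareps(k+Z)}|u_\vareps(t,x)|^p\,dx.
\]
Summing over $k$ and combining the prefactors $\vareps^{n-1}\cdot\vareps^{-n}=\vareps^{-1}$ gives $\vareps^{-1}\|u_\vareps(t,\cdot)\|_{L^p(\oem)}^p$. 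Integration in $t$ and extraction of the $p$-th root produce the asserted identity.

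For part (ii), for almost every $(t,\bar x)$ in the interior of a single cell $(0,T)\times \vareps(k+Y)$, the map $y\mapsto \teps^{M}u_\vareps(t,\bar x,y)$ is the composition of $u_\vareps(t,\cdot)$ with the affine map $y\mapsto (\vareps k+\vareps\bar y,\vareps y_n)$. For smooth $u_\vareps$ the chain rule gives directly $\nabla_y\teps^{M}u_\vareps=\vareps\,\teps^{M}(\nabla u_\vareps)$ on that cell. The extension to $u_\vareps\in L^p((0,T),W^{1,p}(\oem))$ is carried out by a standard density argument: approximate by $u_\vareps^m\in C^\infty(\overline{\oem})$, use part (i) as an isometry (up to the factor $\vareps^{-1/p}$) to transfer the convergences $u_\vareps^m\to u_\vareps$ and $\nabla u_\vareps^m\to\nabla u_\vareps$ in $L^p$ to $\teps^{M}u_\vareps^m\to\teps^{M}u_\vareps$ and $\teps^{M}(\nabla u_\vareps^m)\to \teps^{M}(\nabla u_\vareps)$ in $L^p((0,T)\times\Sigma\times Z)$, and pass to the limit in the distributional $y$-derivative. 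The Bochner regularity $\teps^{M}u_\vareps\in L^p((0,T)\times\Sigma,W^{1,p}(Z))$ then reduces via Fubini to checking that $\teps^{M}u_\vareps$ and $\teps^{M}(\nabla u_\vareps)$ lie in $L^p((0,T)\times\Sigma\times Z)$, which is precisely part (i) applied to $u_\vareps$ and $\nabla u_\vareps$. The only real (and minor) technical issue is this density step justifying the chain rule; everything else is elementary measure-theoretic bookkeeping.
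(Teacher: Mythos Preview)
Your proof is correct. The paper does not actually prove this lemma; it is stated there as a summary of ``well-known properties of the unfolding operator'' without proof, and your direct cell-by-cell change-of-variables argument (together with the density step for part (ii)) is precisely the standard derivation one finds in the unfolding literature.
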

We have the following important relation between the two-scale convergence and the convergence of the associated unfolded sequence:
\begin{lemma}
A sequence $\weps \in L^2((0,T)\times \oem)$ converges weakly/strongly in the two-scale sense to a limit function $w_0 \in L^2((0,T)\times \Sigma \times Z)$, if and only if $\teps^M \weps$ converges weakly/strongly in $L^2((0,T)\times \Sigma \times Z)$. The same result is valid for sequences $\veps \in L^2((0,T)\times \geps)$.
\end{lemma}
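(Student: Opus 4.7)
The plan is to exploit the isometry property of the unfolding operator: by the norm identity $\|\teps^M u\|_{L^2((0,T)\times \Sigma \times Z)}^2 = \vareps^{-1} \|u\|_{L^2((0,T)\times \oem)}^2$ and polarization, one obtains the bilinear identity
\begin{align*}
\int_0^T \int_\Sigma \int_Z \teps^M u \cdot \teps^M v \, dy \, d\x \, dt = \foe \int_0^T \int_{\oem} u v \, dx \, dt
\end{align*}
for all $u,v \in L^2((0,T)\times \oem)$. This is the bridge between the two formulations. The second ingredient, whose verification is the only nontrivial part of the argument, is that for $\phi \in L^2((0,T)\times \Sigma, C_{\#}^0(\overline{Z}))$ the unfolded test function $\teps^M \phi_{\vareps}$, where $\phi_{\vareps}(t,x) := \phi(t,\x,\fxe)$, converges strongly in $L^2((0,T)\times \Sigma \times Z)$ to $\phi$.

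For the weak equivalence, assume first $\teps^M \weps \rightharpoonup w_0$ in $L^2((0,T)\times \Sigma \times Z)$; combining the bilinear identity with the strong convergence $\teps^M \phi_{\vareps} \to \phi$ immediately yields $\foe \int \weps \phi_{\vareps} \to \int w_0 \phi$, which is precisely $\weps \rats w_0$. Conversely, if $\weps \rats w_0$, then the uniform boundedness principle applied to the linear functionals $\phi \mapsto \foe \int \weps \phi_{\vareps}$ on $L^2((0,T)\times \Sigma, C_{\#}^0(\overline{Z}))$ (a dense subspace of $L^2$ in a suitable sense) shows $\foe \|\weps\|_{L^2(\oem)}^2 \le C$, so $\teps^M \weps$ is bounded in $L^2((0,T)\times \Sigma \times Z)$; extracting a weakly convergent subsequence with limit $\tilde{w}$ and using the bilinear identity as in the converse direction identifies $\tilde{w} = w_0$ against the dense set of test functions, hence the whole sequence converges weakly to $w_0$.

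The strong equivalence is then immediate: weak two-scale convergence together with the extra condition $\vareps^{-1/2} \|\weps\|_{L^2(\oem)} \to \|w_0\|_{L^2(\Sigma \times Z)}$ translates, via the norm identity, into weak $L^2$ convergence of $\teps^M \weps$ together with $\|\teps^M \weps\|_{L^2} \to \|w_0\|_{L^2}$, which in a Hilbert space is equivalent to strong $L^2$ convergence. The surface version follows the exact same scheme using the boundary unfolding operator $\teps^{\Gamma}$ with the analogous (unscaled, since the factor $\vareps^{n-1}$ from the surface measure cancels) isometry $\|\teps^{\Gamma} \weps\|_{L^2((0,T)\times \Sigma \times \Gamma)}^2 = \|\weps\|_{L^2((0,T)\times \geps)}^2$.

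The main technical obstacle is the strong convergence $\teps^M \phi_{\vareps} \to \phi$. For $\phi \in C^0([0,T]\times \overline{\Sigma}, C_{\#}^0(\overline{Z}))$ the explicit formula $\teps^M \phi_{\vareps}(t,\x,y) = \phi(t, \vareps[\x/\vareps] + \vareps \y, y)$ together with uniform continuity gives pointwise convergence with a uniform majorant, and the dominated convergence theorem finishes the job; the general case follows from density in $L^2((0,T)\times \Sigma, C_{\#}^0(\overline{Z}))$ combined with the uniform bound $\|\teps^M \phi_{\vareps}\|_{L^2((0,T)\times \Sigma \times Z)} \le C \|\phi\|_{L^2((0,T)\times \Sigma, C_{\#}^0(\overline{Z}))}$, which is itself a consequence of the isometry applied to a dominating continuous envelope. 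For the surface version the same density argument works on $\Gamma$ via its Lipschitz parametrization.
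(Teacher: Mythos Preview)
The paper does not prove this lemma; it is recalled in Appendix~B as a known characterization from the unfolding literature (see the references to \cite{NeussJaeger_EffectiveTransmission} and \cite{CioranescuGrisoDamlamian2018}). Your argument is the standard one, and the direction ``$\teps^M\weps \rightharpoonup w_0 \Rightarrow \weps \rats w_0$'' as well as the strong equivalence are correct exactly as you wrote them: the bilinear identity coming from the isometry together with the strong convergence $\teps^M\phi_\vareps \to \phi$ does the job, and weak convergence plus norm convergence gives strong convergence in a Hilbert space.

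There is, however, a genuine gap in your converse direction. You invoke the uniform boundedness principle on $B:=L^2((0,T)\times\Sigma, C_{\#}^0(\overline Z))$ to conclude $\vareps^{-1}\|\weps\|_{L^2(\oem)}^2 \le C$. UBP gives only $\sup_\vareps\|F_\vareps\|_{B^*}<\infty$ where $F_\vareps(\phi)=\vareps^{-1}\int_{\oem}\weps\phi_\vareps$, and in the \emph{thin-layer} setting one does \emph{not} have a uniform lower bound $\|F_\vareps\|_{B^*}\ge c\,\vareps^{-1/2}\|\weps\|_{L^2(\oem)}$. In the classical fixed-domain case this lower bound follows by testing with $y$-independent $\phi$, which recovers the full $L^2(\Omega)$-pairing; here $y$-independent $\phi$ produce only $x_n$-independent $\phi_\vareps$, which is not enough. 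Concretely, set $\weps(t,x)=a_\vareps N_\vareps^{1/2}\chi_{\{|x_n|<\vareps/N_\vareps\}}$ with $a_\vareps\to\infty$ and $N_\vareps=a_\vareps^4$. Then $\vareps^{-1/2}\|\weps\|_{L^2(\oem)}=a_\vareps\sqrt{2T|\Sigma|}\to\infty$, yet for every $\phi\in B$ one has
\begin{align*}
\left|\foe\int_{\oem}\weps\,\phi_\vareps\,dx\,dt\right|
\le 2a_\vareps N_\vareps^{-1/2}\sqrt{T|\Sigma|}\,\|\phi\|_B
= 2a_\vareps^{-1}\sqrt{T|\Sigma|}\,\|\phi\|_B \to 0,
\end{align*}
so $\weps\rats 0$ while $\teps^M\weps$ is unbounded in $L^2((0,T)\times\Sigma\times Z)$ and hence does not converge weakly.

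The fix is simply to add the hypothesis $\vareps^{-1/2}\|\weps\|_{L^2(\oem)}\le C$ (respectively $\|\weps\|_{L^2(\geps)}\le C$), which is how the equivalence is stated in the unfolding literature and how it is actually used throughout the paper (boundedness always comes from the \textit{a priori} estimates of Proposition~\ref{prop:existence_apriori}). With that hypothesis your subsequence extraction and identification of the limit go through unchanged.
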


The following lemma is somehow well-known in the literature and often used to derive the strong two-scale convergence on the surface $\geps$ by using the strong two-scale convergence in $\oeps^M$ (or in a general perforated domain). For the sake of completeness we formulate it as a general lemma and give the proof.
\begin{lemma}\label{lem:strong_ts_convergence_boundary}
Let $\ueps \in H^1(\oeps^M)$ and $u_0 \in L^2(\Sigma)$, such that for $\gamma \in [-1,1)$ it holds that
\begin{align*}
    \frac{1}{\sqrt{\vareps}} \|\ueps\|_{L^2(\oeps^M)} + \vareps^{\frac{\gamma}{2}} \|\nabla \ueps \|_{L^2(\oeps^M)} \le C,
\end{align*}
and $\ueps \rasts u_0$. Then for every $\beta \in (\frac12,1)$ it holds that
\begin{align*}
\teps^M \ueps &\rightarrow u_0  &\mbox{ in }& L^2( \Sigma ,H^{\beta}(Z))),
\\
\teps^M \ueps|_{\Gamma} &\rightarrow u_0 &\mbox{ in }& L^2(\Sigma, H^{\beta -\frac12} (\Gamma )).
\end{align*}
In particular, we obtain the strong two-scale convergence of $\ueps$ to $u_0$ on $\geps $. The result is also true if we replace $Z$ by the perforated cell $Z_s$. Of course, we can also replace $\Gamma$ by $\Gamma \cup S_s^{\pm}$.
\end{lemma}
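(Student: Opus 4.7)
The plan is to reformulate the claim in terms of the unfolding operator $\teps^M$ and exploit that, for $\gamma<1$, the scaling of the gradient in the hypothesis is strictly subcritical. By the equivalence between two-scale convergence and convergence of unfolded sequences recalled earlier in the appendix, the assumption $\ueps\rasts u_0$ means precisely that $\teps^M\ueps\to u_0$ strongly in $L^2(\Sigma\times Z)$, where $u_0\in L^2(\Sigma)$ is identified with the function on $\Sigma\times Z$ that is constant in $y$.

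The core computation is a bound on the $y$-gradient of the unfolded sequence. Using $\nabla_y\teps^M\ueps=\vareps\,\teps^M(\nabla\ueps)$ together with the $L^2$-scaling $\|\teps^M w\|^2_{L^2(\Sigma\times Z)}=\vareps^{-1}\|w\|^2_{L^2(\oeps^M)}$, one obtains
\[
\|\nabla_y\teps^M\ueps\|_{L^2(\Sigma\times Z)}=\sqrt{\vareps}\,\|\nabla\ueps\|_{L^2(\oeps^M)}\le C\,\vareps^{(1-\gamma)/2}.
\]
For $\gamma\in[-1,1)$ the exponent is strictly positive, so $\nabla_y\teps^M\ueps\to 0=\nabla_y u_0$ strongly in $L^2(\Sigma\times Z)$. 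Combined with the strong $L^2(\Sigma\times Z)$-convergence of $\teps^M\ueps$ itself from the first step, this upgrades the convergence to $\teps^M\ueps\to u_0$ strongly in $L^2(\Sigma,H^1(Z))$, and the first assertion of the lemma follows via the continuous embedding $H^1(Z)\hookrightarrow H^\beta(Z)$ for $\beta\in(\tfrac12,1)$.

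For the trace statement I would invoke the continuity of the trace operator $H^\beta(Z)\to H^{\beta-1/2}(\Gamma)$, valid for $\beta>\tfrac12$, to obtain $(\teps^M\ueps)|_{\Sigma\times\Gamma}\to u_0$ strongly in $L^2(\Sigma,H^{\beta-1/2}(\Gamma))$, which then implies strong convergence in $L^2(\Sigma\times\Gamma)$. The pointwise definition of $\teps^M$ yields $(\teps^M\ueps)|_{\Sigma\times\Gamma}=\teps^M(\ueps|_{\geps})$, so by the equivalence between strong two-scale convergence on $\geps$ and strong $L^2(\Sigma\times\Gamma)$-convergence of the unfolded sequence we conclude $\ueps|_{\geps}\rasts u_0$ on $\geps$. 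Replacing $\Gamma$ by $\Gamma\cup S_s^\pm$ throughout handles the variant with $\geps\cup S_{\vareps,f}^\pm$, and replacing $Z$ by $Z_s$ is completely analogous.

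There is no serious obstacle: the entire argument reduces to the observation that the scaling $\|\nabla\ueps\|_{L^2(\oeps^M)}=O(\vareps^{-\gamma/2})$ is strictly weaker than the critical scaling $\vareps^{-1/2}$ as soon as $\gamma<1$, which is exactly what forces the $y$-derivatives of $\teps^M\ueps$ to vanish in the limit and makes $u_0$ admissible as a function independent of $y$. The case $\gamma=1$ is genuinely different and excluded here, since then $\nabla_y\teps^M\ueps$ is only bounded and strong two-scale compactness requires the Kolmogorov--Simon argument of Proposition~\ref{prop:general_ts_compactness_gamma_1} together with additional shift estimates.
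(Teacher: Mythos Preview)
Your proof is correct and follows essentially the same approach as the paper: translate strong two-scale convergence into strong $L^2(\Sigma\times Z)$-convergence of the unfolded sequence, use the scaling identity $\|\nabla_y\teps^M\ueps\|_{L^2(\Sigma\times Z)}=\sqrt{\vareps}\,\|\nabla\ueps\|_{L^2(\oeps^M)}=O(\vareps^{(1-\gamma)/2})\to 0$, combine both to obtain convergence in $L^2(\Sigma,H^1(Z))\hookrightarrow L^2(\Sigma,H^\beta(Z))$, and apply the trace embedding $H^\beta(Z)\hookrightarrow H^{\beta-1/2}(\Gamma)$ for the boundary statement. The only cosmetic difference is that you explicitly pass through $H^1$ before embedding into $H^\beta$, whereas the paper writes the $H^\beta$ estimate directly; the content is identical.
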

\begin{proof}
First of all, the strong two-scale convergence of $\ueps$ implies $\teps^M \ueps \rightarrow u_0$ in $L^2(\Sigma \times Z)$.
From the continuity of the embedding   $H^1(Z)\rightharpoonup H^{\beta}(Z)$ and the properties of the unfolding operator we obtain (using that $u_0$ is independent of $y$)
\begin{align*}
\|\teps^M \ueps - u_0 \|_{L^2(\Sigma , H^{\beta}(Z))} &\le C \left( \|\teps^M \ueps - u_0 \|_{L^2(\Sigma \times Z)} + \|\nabla_y  \teps^M  \ueps \|_{L^2(\Sigma \times Z)} \right)
\\
&\le C \left( \|\teps^M \ueps - u_0 \|_{L^2(\Sigma \times Z)} + \sqrt{\vareps} \|\nabla \ueps\|_{L^2(\oeps^M)} \right)
\end{align*}
The first term vanishes for $\vareps \to 0$, as well as the second term which is of order $\vareps^{\frac{1 - \gamma}{2}}$. The convergence of the traces follows from the continuous embedding $H^{\beta}(Z) \hookrightarrow H^{\beta-\frac12} (\Gamma )$.
\end{proof}

\subsection{Two-scale convergence of the time-derivative}
\label{sec:Ts_compactness_time_derivative}

The following lemma gives convergence results for the time derivative in a two-scale sense and also regularity of the limit function, just based on the \textit{a priori} estimates for the sequence.

\begin{lemma}\label{lem:two_scale_conv_time_derivative}
Let $\gamma \in [-1,1]$ and  $\ueps \in L^2((0,T) , H^1(\oems)) \cap H^1((0,T),H^1(\oems)^{\ast})$ with
\begin{align*}
\foe \|\partial_t \ueps\|_{L^2((0,T),H_{\vareps,\gamma}(\oems)^{\ast})} +  \|\ueps \|_{L^2((0,T),H_{\vareps,\gamma}(\oems)) } \le C.
\end{align*}
\begin{enumerate}[label = (\roman*)]
\item\label{enumerate_1} Let $\gamma = 1$. There exists $u_0 \in L^2((0,T)\times \Sigma,H^1_{\#}(Z_s)) \cap H^1((0,T),L^2(\Sigma,H_{\#}^1(Z_s))^{\ast})$, such that up to a subsequence $\ueps \rats u_0$. Further, for all $\phieps(t,x) := \phi \left(t,\x,\fxe\right)$ with $\phi \in C_0^{\infty}([0,T)\times \overline{\Sigma}, C_{\#}^0(Z_s))$
it holds that (up to a subsequence)
\begin{align*}
    \lim_{\vareps \to 0} \foe \int_0^T \langle \partial_t \ueps , \phieps \rangle_{H^1(\oems)} dt = \int_0^T \langle \partial_t u_0 , \phi \rangle_{L^2(\Sigma,H_{\#}^1(Z_s))} dt.
\end{align*}

\item\label{enumerate_2} Let $\gamma \in (-1,1)$. There exists $u_0 \in L^2((0,T)\times \Sigma) \cap H^1((0,T),L^2(\Sigma))$, such that up to a subsequence $\ueps \rats u_0$. Further, for all $\phieps(t,x) := \phi \left(t,\x,\fxe\right)$ with $\phi \in C_0^{\infty}((0,T)\times \overline{\Sigma}, C_{\#}^0(Z_s))$ and all $\eta \in L^2((0,T),H^1(\Sigma))$
it holds that (up to a subsequence)
\begin{align*}
    \lim_{\vareps \to 0} \foe \int_0^T \langle \partial_t \ueps , \eta + \phieps \rangle_{H^1(\oems)} dt = \int_0^T \int_{\Sigma} \partial_t u_0 \left( \eta +\int_{Z_s} \phi dy \right) d\x  dt.
\end{align*}

\item\label{enumerate_3} Let $\gamma = -1$. There exists $u_0 \in L^2((0,T),H^1(\Sigma))\cap H^1((0,T),H^1(\Sigma)^{\ast})$ such that up to a subsequence
$\ueps \rats u_0$ and with $\eta$ and $\phieps$ as in \ref{enumerate_2}
\begin{align*}
     \lim_{\vareps \to 0} \foe \int_0^T \langle \partial_t \ueps , \eta + \phieps \rangle_{H^1(\oems)} dt  = \int_0^T \int_{Z_s} \langle \partial_t u_0 , \eta + \phi(\cdot_t,\cdot_{\x},y ) \rangle_{H^1(\Sigma)} dy dt.
\end{align*}
\end{enumerate}
All results are also valid for $\Sigma$-periodic functions, i.e., under the assumption  $\ueps \in L^2((0,T),H^1_{\#}(\oems))\cap H^1((0,T),H^1_{\#}(\oems)^{\ast})$ with obvious modifications.
\end{lemma}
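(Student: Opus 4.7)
The plan is to first apply Lemma~\ref{LemmaBasicTSCompactness} to extract a subsequence converging weakly in the two-scale sense to some $u_0$ with the spatial regularity prescribed in each of the three cases; the genuinely new content is the time regularity of $u_0$ and the convergence of the scaled pairing $\foe \int_0^T \langle \partial_t \ueps,\cdot\rangle dt$. I would treat all three cases via the same two-step procedure: first identify the limit of the pairing for test functions vanishing at $t=0$ and $t=T$ via integration by parts in time and the weak two-scale convergence of $\ueps$, then upgrade the regularity of $\partial_t u_0$ by exploiting the uniform bound $\foe \|\partial_t \ueps\|_{L^2((0,T), H_{\vareps,\gamma}(\oems)^\ast)} \le C$ against test functions whose $H_{\vareps,\gamma}$-norm can be controlled with the correct $\vareps$-scaling.

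For the identification step I would take $\Phi_\vareps(t,x) := \eta(t,\x) + \phi(t, \x, x/\vareps)$ with $\phi$ smooth and compactly supported in $(0,T)\times\overline\Sigma$. Since $\partial_t \Phi_\vareps$ is again an admissible two-scale test function, integration by parts in time followed by the weak two-scale convergence $\ueps \rats u_0$ yields the stated limit. Crucially, in cases (ii) and (iii) the limit $u_0$ does not depend on $y$, so a Fubini argument replaces $\phi(t,\x,y)$ by its $y$-average $\int_{Z_s}\phi\,dy$ in the limit formula, whereas in case (i) the full $y$-dependence survives because $u_0$ is genuinely a function on $\Sigma\times Z_s$. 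The extension from smooth $\phi$ to the full test-function class of the statement (in particular to $\eta \in L^2((0,T),H^1(\Sigma))$ not vanishing at the endpoints) follows by density together with the uniform dual bound obtained in the next step.

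For the regularity upgrade I would compute $\|\Phi_\vareps\|_{L^2((0,T), H_{\vareps,\gamma}(\oems))}$ via the unfolding identity. For $\gamma = 1$ and $\phi \in L^2((0,T), L^2(\Sigma, H_\#^1(Z_s)))$ the dominant contribution comes from $\vareps\|\nabla\Phi_\vareps\|_{L^2(\oems)}^2$, where the $\vareps$-prefactor exactly cancels the $\vareps^{-2}$ coming from $\nabla_y\phi$, producing $\|\phi\|^2_{L^2(\Sigma,H_\#^1(Z_s))}$ in the limit and therefore $\partial_t u_0 \in L^2((0,T), L^2(\Sigma, H_\#^1(Z_s))^\ast)$. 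For $y$-independent $\eta$, the same norm converges to a constant multiple of $\|\eta\|_{L^2(\Sigma)}^2$ when $\gamma \in (-1,1)$ (the gradient contribution now scales like $\vareps^{\gamma+1} \to 0$) and to a constant multiple of $\|\eta\|_{H^1(\Sigma)}^2$ when $\gamma=-1$; density of $H^1(\Sigma)$ in $L^2(\Sigma)$ then yields $\partial_t u_0 \in L^2((0,T) \times \Sigma)$ and $\partial_t u_0 \in L^2((0,T), H^1(\Sigma)^\ast)$ respectively, matching the asserted regularity classes.

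The main obstacle in my view is case (i) with $\phi$ allowed to be nonzero at $t=0$: integration by parts then produces a boundary term $-\foe \int_{\oems} \ueps(0)\,\phi(0,\x,x/\vareps)\, dx$ whose convergence to the natural pairing of $u_0(0)$ with $\phi(0)$ requires identifying the two-scale trace of $\ueps$ at the initial time. I would handle this by exploiting that the hypotheses give, after scaling, equi-continuity in $t$ of $\foe\ueps$ as a map into the dual of $H_{\vareps,\gamma}$ via the Gelfand-triple inequality of Lemma~\ref{lem:Nikolskii_estimate}; combined with the weak two-scale convergence on $(0,T)$ this lifts the convergence to each fixed time slice, in particular $t=0$, which is essentially the content of Corollary~\ref{cor:initial_conditions_ts_limit} referenced in the main text.
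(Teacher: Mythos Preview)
Your overall strategy matches the paper's proof closely: both establish the spatial regularity of $u_0$ via Lemma~\ref{LemmaBasicTSCompactness}, identify the limit of the pairing via integration by parts in time and weak two-scale convergence of $\ueps$, and obtain the time regularity of $\partial_t u_0$ by controlling the $H_{\vareps,\gamma}$-norm of the test functions and showing it converges to the appropriate limit norm in each of the three regimes. The paper phrases the regularity step through difference quotients $\partial_t^h u_0$ rather than bounding the limit functional directly as you do, but this is cosmetic.

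The one place where your proposal diverges is the handling of $\phi(0)\neq 0$ in case~(i). Your equi-continuity argument via Lemma~\ref{lem:Nikolskii_estimate} is heavier than needed, and the appeal to Corollary~\ref{cor:initial_conditions_ts_limit} is misplaced: that corollary \emph{assumes} the two-scale convergence of $\ueps(0)$ as a hypothesis and then identifies $u_0(0)$ using the convergence of the pairing proved in the present lemma --- it does not derive the trace convergence from the hypotheses here, so invoking it would be circular. The paper instead uses a simpler device: once the regularity $\partial_t u_0 \in L^2((0,T),L^2(\Sigma,H^1_\#(Z_s))^\ast)$ is in hand, one approximates $\phi$ by $\psi_\delta\phi$ with a time cut-off $\psi_\delta$ vanishing near $t=0$, applies the already-established convergence for test functions with $\phi(0)=0$, and lets $\delta\to 0$ on both sides using that $\|(\phi-\psi_\delta\phi)_\vareps\|_{L^2((0,T),H_{\vareps,1}(\oems))}\to 0$ uniformly in $\vareps$. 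The paper also remarks that this cut-off trick fails for the oscillating part $\phieps$ when $\gamma<1$ (since then $\vareps^\gamma\|\nabla\phieps\|_{L^2(\oems)}^2$ blows up), which is precisely why in cases~(ii) and~(iii) the statement only allows $\phi\in C_0^\infty((0,T)\times\cdots)$.
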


\begin{proof}
We use similar arguments as in \cite[Proposition 2.10]{GahnDissertation} (case $\gamma \in [-1,1)$), and \cite[Proposition 4]{GahnNRP} (case $\gamma = -1$ in perforated domains (not thin)), so we refer to these references regarding some elemental calculations. However, here we slightly simplify the proof, get better convergence results, and obtain more regularity in the case $\gamma \in (-1,1)$. In fact, in this case, the time derivative is an element of $L^2((0,T)\times \Sigma)$. We denote for $0<h\ll 1$ the difference quotient with respect to time by $\partial_t^h$. We first show the existence of the time-derivative of the limit function for \ref{enumerate_1} - \ref{enumerate_3}, and finally give the argument for the convergence result.

 To prove \ref{enumerate_1} let $\gamma = 1$ and we obtain the existence of $u_0 \in L^2((0,T)\times \Sigma,H_{\#}^1(Z_s))$ such that up to a subsequence $\chi_{\oems} \ueps \rats \chi_{Z_s} u_0$. We emphasize that we also have the two-scale convergence of the gradients (as in the other cases $\gamma \in [-1,1)$  below), but this is not necessary to establish the regularity of $\partial_t u_0$, and we only need the regularity of the limit function with respect to space at the end of the proof to show the convergence results. Now, for  $\phi \in C_0^{\infty}((0,T)\times \Sigma , H_{\#}^1(Z_s))$  we define $\phieps(t,x):= \phi\left(t,\x,\fxe\right)$ and obtain
\begin{align*}
\langle \partial_t^h u_0 , \phi \rangle_{L^2((0,T-h),L^2(\Sigma,H_{\#}^1(Z_s))}
&= \lim_{\vareps \to 0} \foe \int_0^{T-h} \langle \partial_t^h \ueps , \phieps\rangle_{H_{\vareps,1}(\oems)} dt 
\\
&\le \lim_{\vareps \to 0} \foe \|\partial_t^h \ueps\|_{L^2((0,T),H_{\vareps,1}(\oems)^{\ast})} \|\phieps\|_{L^2((0,T),H_{\vareps,1}(\oems))} 
\\
&\le C \|\phi\|_{L^2(\Sigma,H^1(Z_s))},
\end{align*}
since  $\|\phieps\|_{H_{\vareps,1}(\oems)} \rightarrow \|\phi\|_{L^2(\Sigma,H^1(Z_s))}$ for $\vareps \to 0$ and every $t \in (0,T)$. Hence, $\partial_t^h u_0$ is bounded in $L^2((0,T-h),L^2(\Sigma,H_{\#}^1(Z_s))^{\ast})$ uniformly with respect to $h$, from which we obtain $\partial_t u_0 \in L^2((0,T),L^2(\Sigma,H_{\#}^1(Z_s))^{\ast})$.

Next, we show the regularity for $\partial_t u_0$ in \ref{enumerate_2} and \ref{enumerate_3}, so let $\gamma \in [-1,1)$. First of all, we obtain the existence of $u_0 \in L^2((0,T)\times \Sigma)$ with $\nabla_{\x} u_0 \in L^2((0,T)\times \Sigma)$ for $\gamma =-1$, such that up to a subsequence $\chi_{\oems} \ueps \rats \chi_{Z_s} u_0$.
We choose $\phi \in L^2((0,T),H^1(\Sigma))$, to obtain
\begin{align*}
\langle \partial_t^h u_0 , \phi \rangle_{L^2((0,T-h),H^1(\Sigma))} &= \lim_{\vareps\to 0} \frac{1}{\vareps |Z_s|} \int_0^{T-h} \langle \partial_t^h \ueps , \phi \rangle_{H_{\vareps,\gamma}(\oems)} dt 
\\
&\le \begin{cases}
       C \|\phi\|_{L^2(\Sigma)} &\mbox{ for } \gamma \in (-1,1),
        \\
       C \|\phi\|_{H^1(\Sigma)} &\mbox{ for } \gamma = -1.
    \end{cases},
\end{align*}
where at the end we used
\begin{align*}
    \|\phi\|_{H_{\vareps,\gamma}(\oems)} \overset{\vareps\to 0 }{\longrightarrow} \begin{cases}
        \|\phi\|_{L^2(\Sigma)} &\mbox{ for } \gamma \in (-1,1),
        \\
        \|\phi\|_{H^1(\Sigma)} &\mbox{ for } \gamma = -1.
    \end{cases}
\end{align*}
This implies $\partial_t u_0 \in L^2((0,T),H^1(\Sigma)^{\ast})$. Moreover, for $\gamma \in (-1,1)$ we obtain for all $\phi \in H^1(\Sigma)$ and almost every $t \in (0,T)$
\begin{align*}
    |\langle \partial_t u_0 , \phi\rangle_{H^1(\Sigma)} | \le C \|\phi\|_{L^2(\Sigma)}.
\end{align*}
In other words, $\partial_t u_0$ is a linear functional on  $H^1(\Sigma)$ and bounded (continuous) in $L^2(\Sigma)$. Hence, $\partial_t u_0 $ can be extended to a linear functional on $L^2(\Sigma)$ and by the Riesz-representation theorem we get $\partial_t u_0 \in L^2((0,T)\times \Sigma)$. 

Finally, we have to check the convergence results for test-functions $\phieps$ formulated in the statement. 
For $\gamma = 1$, we first choose  $\phi \in C_0^{\infty}((0,T)\times \overline{\Sigma}, C_{\#}^0(Z_s))$. Now, the result follows by a simple integration by parts in time and using the two-scale convergence for $\ueps$. We emphasize again, that also spatial regularity is necessary for $\ueps$ and $u_0$, such that a integration by parts formula for the generalized time-derivative is valid. Now, the general case $\phi \in C_0^{\infty}([0,T)\times \overline{\Sigma}, C_{\#}^0(Z_s))$ follows by an approximation argument using a cut off function with respect to time near $0$. The case $\gamma \in [-1,1)$ can be shown by similar arguments. However, we emphasize that our approximation argument above fails, so we can only consider the case $\phi(0) = 0$. 
\end{proof}

\begin{corollary}\label{cor:initial_conditions_ts_limit}
Let the assumptions and notations from Lemma \ref{lem:two_scale_conv_time_derivative} hold. Further, we assume that there exists $u_{\mathrm{in}} \in L^2(\Sigma \times Z_s)$, such that $\ueps(0) \rats u_{\mathrm{in}}$. Then
\begin{enumerate}[label = (\roman*)]
 \item for $\gamma = 1$ it holds that $u_0(0) = u_{\mathrm{in}}$ in $L^2(\Sigma \times Z_s)$,
 \item for $\gamma \in [-1,1)$ it holds that $u_0(0) = \frac{1}{|Z_s|} \int_{Z_s} u_{\mathrm{in}} dy$ in $L^2(\Sigma)$.
\end{enumerate}
\end{corollary}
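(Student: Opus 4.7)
My plan is to obtain both identifications by combining integration by parts in time at the microscopic level with the time–derivative convergences from Lemma \ref{lem:two_scale_conv_time_derivative}, and then comparing with integration by parts for the limit $u_0$. Since $u_0$ has the Bochner regularity asserted in Lemma \ref{lem:two_scale_conv_time_derivative}, it possesses traces at $t=0$ in the appropriate spaces, and so does $\ueps$ by assumption; the corollary then reduces to matching the boundary terms at $t=0$.

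For part (i), take a test function $\phi \in C^\infty([0,T)\times \overline{\Sigma}, C_{\#}^\infty(\overline{Z_s}))$ with $\phi(T,\cdot,\cdot)=0$ and set $\phieps(t,x) := \phi(t,\x,x/\vareps)$. Integration by parts in time in the microscopic Gelfand triple gives
\begin{align*}
\foe \int_0^T \langle \partial_t \ueps,\phieps\rangle_{H^1(\oems)} dt = -\foe \int_0^T\!\! \int_{\oems} \ueps\, \partial_t\phieps\, dx\, dt - \foe \int_{\oems} \ueps(0)\phieps(0)\, dx.
\end{align*}
On the left, Lemma \ref{lem:two_scale_conv_time_derivative}\ref{enumerate_1} yields convergence to $\int_0^T \langle \partial_t u_0,\phi\rangle_{L^2(\Sigma,H_{\#}^1(Z_s))}\,dt$; in the first right-hand term the two-scale convergence $\ueps \rats u_0$ (here $u_0$ is $y$-dependent) passes to $\int_0^T\!\int_\Sigma\!\int_{Z_s} u_0\,\partial_t\phi\, dy\, d\x\, dt$; and in the second term the hypothesis $\ueps(0) \rats u_{\mathrm{in}}$ delivers $\int_\Sigma \int_{Z_s} u_{\mathrm{in}}\, \phi(0) \,dy\, d\x$. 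Integration by parts applied directly to the limit $u_0$ (valid in the Gelfand triple $L^2(\Sigma,H_{\#}^1(Z_s)) \hookrightarrow L^2(\Sigma\times Z_s) \hookrightarrow L^2(\Sigma,H_{\#}^1(Z_s))^\ast$) produces the same identity with $u_0(0)$ in place of $u_{\mathrm{in}}$. Subtracting and exploiting the density of admissible $\phi(0,\cdot,\cdot)$ in $L^2(\Sigma\times Z_s)$ gives $u_0(0) = u_{\mathrm{in}}$ in $L^2(\Sigma \times Z_s)$.

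For part (ii), the limit $u_0$ is independent of $y$, so I choose a test function $\eta$ independent of $y$ (taking $\eta \in C^\infty([0,T),C_{\#}^\infty(\overline{\Sigma}))$ with $\eta(T)=0$, which is admissible for both $\gamma \in (-1,1)$ and $\gamma=-1$). The same integration by parts in time now gives
\begin{align*}
\foe \int_0^T \langle \partial_t \ueps,\eta\rangle_{H^1(\oems)} dt = -\foe \int_0^T\!\! \int_{\oems} \ueps\,\partial_t\eta\, dx\, dt - \foe \int_{\oems}\ueps(0)\,\eta(0)\, dx.
\end{align*}
The left-hand side converges by Lemma \ref{lem:two_scale_conv_time_derivative}\ref{enumerate_2}--\ref{enumerate_3} (taking $\phi = 0$). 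For the two terms on the right, the two-scale convergence $\ueps \rats u_0$ together with the $y$-independence of $u_0$ yields $\int_0^T\!\int_\Sigma |Z_s|\, u_0\,\partial_t\eta\, d\x\, dt$, while the initial-data convergence gives $\int_\Sigma \bigl(\int_{Z_s} u_{\mathrm{in}}\,dy\bigr)\eta(0)\, d\x$. Comparing with the integration by parts for $u_0$ itself (which produces $-|Z_s|\int_0^T\!\int_\Sigma u_0\,\partial_t\eta\, d\x\, dt - |Z_s|\int_\Sigma u_0(0)\eta(0)\, d\x$), the volume terms cancel and the density of the traces $\eta(0)$ in $L^2(\Sigma)$ forces $|Z_s|\,u_0(0) = \int_{Z_s} u_{\mathrm{in}}\, dy$, which is the claimed identity.

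The only technical point that needs some care is justifying that $\ueps(0) \in L^2(\oems)$ is indeed the pointwise trace at $t=0$ of the curve $t \mapsto \ueps(t)$ so that the microscopic integration by parts has the indicated boundary term; this is standard given the Bochner regularity $\ueps \in L^2((0,T),H^1(\oems)) \cap H^1((0,T),H_{\vareps,\gamma}(\oems)^\ast)$ and the Gelfand triple $H_{\vareps,\gamma}(\oems) \hookrightarrow L^2(\oems) \hookrightarrow H_{\vareps,\gamma}(\oems)^\ast$. After that, the argument is a clean bookkeeping of limits and I expect no essential obstacle beyond the delicate scaling factor $\foe$ on the initial-value contribution, which is precisely what converts the $L^2(\oems)$-product into the $Z_s$-integral needed in the identification.
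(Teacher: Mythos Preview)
Your proof is correct and follows essentially the same approach as the paper: integration by parts in time at the microscopic level, passing to the limit via Lemma \ref{lem:two_scale_conv_time_derivative} and the two-scale convergences of $\ueps$ and $\ueps(0)$, and comparing with integration by parts for $u_0$. The paper only writes out the case $\gamma=-1$ in detail (with test function $\eta\in C_0^\infty([0,T)\times\Sigma)$ and $\phi=0$) and remarks that the other cases are analogous, so your treatment is in fact slightly more explicit.
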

\begin{proof}
We only give the proof for $\gamma = -1$. The other cases follow by similar arguments. Let $\eta \in C_0^{\infty} ([0,T)\times \Sigma)$. Then, by Lemma \ref{lem:two_scale_conv_time_derivative} we have with integration by parts and the two-scale convergence of $\ueps$
\begin{align*}
|Z_s| \int_0^T \langle \partial_t u_0 , \eta \rangle_{H^1(\Sigma)} dt &=\lim_{\vareps\to 0} \foe \int_0^T \langle \partial_t \ueps , \eta \rangle_{H^1(\oems)} dt
\\
&= \lim_{\vareps\to 0} \foe \left\{-\int_0^T \int_{\oems} \ueps \partial_t \eta dx dt - \int_{\oems} \ueps(0) \eta(0) dx \right\}
\\
&= -|Z_s| \int_0^T \int_{\Sigma} u_0 \partial_t \eta d\x dt - \int_{\Sigma} \int_{Z_s} u_{\mathrm{in}} \eta(0) d\x.
\end{align*}
Now, the desired result follows by using again an integration by parts with respect to time.
\end{proof}

\subsection{Interface condition for two-scale sequences in the layer coupled to bulk domains}

In the following lemma we show that the mean value over $S^\pm_f$ of the weak two-scale limit in the layer is equal to the trace of the weak limit in the bulk domains.
We formulate the statement in a more general framework then necessary for our application, i.e., we consider the case when the two-scale limit is depending on the microscopic variable $y$.

\begin{lemma}\label{lem:weak_two_scale_Geps}
Let $\ceps \in H^1(\oef)$ with 
\begin{align*}
\|\ceps \|_{H^1(\oeps^{\pm})} +  \frac{1}{\sqrt{\vareps}} \|\ceps\|_{L^2(\oemf)} + \sqrt{\vareps} \|\nabla \ceps\|_{L^2(\oemf)}   \le C.
\end{align*}
We define $\ceps^{\pm}:= \ceps|_{\oeps^{\pm}}$ and $\ceps^M := \ceps|_{\oemf}$.
Then there exist $c_0^{\pm} \in H^1(\Omega^{\pm})$ and $c_0^M \in L^2(\Sigma , H_{\#}^1(Z_f))$ such that up to a subsequence 
\begin{align*}
    \chi_{\oeps^{\pm}} \ceps^{\pm} &\rightharpoonup c_0^{\pm} &\mbox{ in }& L^2(\Omega^{\pm}),
    \\
    \chi_{\oeps^{\pm}} \nabla \ceps^{\pm} &\rightharpoonup \nabla c_0^{\pm} &\mbox{ in }& L^2(\Omega^{\pm})^n,
    \\
    \chi_{\oemf} \ceps^M &\rats \chi_{Z_f} c_0^M ,
    \\
    \vareps \chi_{\oemf} \nabla \ceps^M &\rats \chi_{Z_f} \nabla_y c_0^M,
    \\
    \ceps^M|_{\geps} &\rats c_0^M &\mbox{ on }& \geps.
\end{align*}
Further it holds that 
\begin{align*}
    c_0^{\pm} = \frac{1}{|S_{f}^{\pm}|} \int_{S_f^{\pm}} c_0^M d\sigma_y \qquad \mbox{on } \Sigma.
\end{align*}
\end{lemma}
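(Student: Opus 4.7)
The strategy is to extract the claimed limits by compactness and then identify the interface relation via the $H^1(\oef)$-continuity of $\ceps$ across $S_{\vareps,f}^{\pm}$.

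For the bulk convergences, the uniform bound $\|\ceps^\pm\|_{H^1(\oeps^\pm)} \le C$ yields, after a standard extension from $\oeps^\pm$ to $\Omega^\pm$ (e.g.\ by reflection across $\Sigma \times \{\pm\vareps\}$, preserving the $H^1$-norm) and extraction of a subsequence, a limit $c_0^\pm \in H^1(\Omega^\pm)$ with the stated weak $L^2$-convergences of $\chi_{\oeps^\pm} \ceps^\pm$ and $\chi_{\oeps^\pm} \nabla \ceps^\pm$. In the fluid part of the layer, the scaled bounds $\frac{1}{\sqrt{\vareps}}\|\ceps^M\|_{L^2(\oemf)} + \sqrt{\vareps}\|\nabla \ceps^M\|_{L^2(\oemf)} \le C$ fit the framework of Lemma~\ref{LemmaBasicTSCompactness} (case $\gamma=1$), producing $c_0^M \in L^2(\Sigma, H^1_{\#}(Z_f))$ with $\ceps^M \rats c_0^M$ and $\vareps \nabla \ceps^M \rats \nabla_y c_0^M$. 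The two-scale convergence $\ceps^M|_{\geps} \rats c_0^M|_{\Gamma}$ on $\geps$ then follows by testing with an oscillating function $\psi\bxfxe$, $\psi \in C^{\infty}(\Sigma, C^{\infty}_{\#}(\overline{Z_f}))$, and using integration by parts in $\oemf$: the boundary integral on $\geps$ is reduced to a volume integral (handled by the already established two-scale convergences) plus a boundary contribution on $S_{\vareps,f}^{\pm}$ which is controlled via Lemma~\ref{lem:Trace_inequality_Bulk}.

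The crux of the proof is the interface identity on $\Sigma$. For a test function $\phi \in C_0^\infty(\Sigma)$, the plan is to compute $\lim_{\vareps\to 0} \int_{S_{\vareps,f}^\pm} \ceps\,\phi(\x)\,d\x$ in two ways, since $\ceps^\pm = \ceps^M$ on $S_{\vareps,f}^\pm$ by the $H^1(\oef)$-regularity of $\ceps$. On the layer side, the two-scale convergence on the oscillating surface $S_{\vareps,f}^{\pm}$ (made precise via unfolding, using that $c_0^M \in L^2(\Sigma, H^1_{\#}(Z_f))$ has a well-defined trace on $S_f^\pm$) yields
\begin{align*}
\int_{S_{\vareps,f}^\pm} \ceps^M \phi(\x) \, d\x \;\longrightarrow\; \int_{\Sigma} \phi(\x) \int_{S_f^\pm} c_0^M(\x,y) \, d\sigma_y \, d\x.
\end{align*}
On the bulk side, introduce the translation $\tilde c_\vareps^\pm(\x, x_3) := \ceps^\pm(\x, x_3 \pm \vareps)$ and extend to $\Omega^\pm$. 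An elementary argument (the shift by $\vareps$ is negligible in the limit) shows $\tilde c_\vareps^\pm \rightharpoonup c_0^\pm$ in $H^1(\Omega^\pm)$, whence by compactness of the trace operator $\tilde c_\vareps^\pm(\cdot,0) = \ceps^\pm(\cdot, \pm\vareps) \to c_0^\pm|_\Sigma$ strongly in $L^2(\Sigma)$. Combined with the standard mean-value convergence $\chi_{S_f^\pm}(\cdot/\vareps) \rightharpoonup |S_f^\pm|$ in $L^\infty(\Sigma)$-weak$^*$ one obtains
\begin{align*}
\int_{S_{\vareps,f}^\pm} \ceps^\pm \phi(\x)\, d\x = \int_\Sigma \chi_{S_f^\pm}(\x/\vareps)\,\ceps^\pm(\x,\pm\vareps)\,\phi(\x)\,d\x \;\longrightarrow\; |S_f^\pm| \int_\Sigma c_0^\pm|_\Sigma\,\phi\, d\x.
\end{align*}
Equating the two limits and varying $\phi$ produces the desired identity.

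The main technical obstacle is making rigorous the surface two-scale convergence on $S_{\vareps,f}^\pm$ starting from the volume convergences $\ceps^M \rats c_0^M$ and $\vareps\nabla \ceps^M \rats \nabla_y c_0^M$; this is handled most cleanly via the unfolding operator $\teps^M$, whose action on the surface $S_f^\pm$ commutes with the volume unfolding, reducing the claim to a standard weak $L^2$-convergence of the unfolded trace. The remaining pieces—the extension argument producing $c_0^\pm$, the translation/compactness step for the bulk trace, and the integration-by-parts derivation of the convergence on $\geps$—are essentially standard Sobolev and two-scale arguments.
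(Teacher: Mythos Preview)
Your argument is correct, but the route to the interface identity differs from the paper's. The paper constructs a vector-valued test function $\phieps$ piecewise on all of $\oef$ (equal to $\phi^{\pm}(x\mp\vareps e_n)$ in $\oeps^{\pm}$ and to an oscillating $\phi^M(\x,x/\vareps)$ in $\oemf$, matched on $S_{\vareps}^{\pm}$), and applies the divergence theorem to $\int_{\oef}\nabla\cdot(\ceps\phieps)\,dx$; the resulting boundary terms live on $\geps$ and on $S_{\vareps,s}^{\pm}$, and after passing to the limit in the volume integrals and integrating by parts back on $Z_f$ and on $\Omega^{\pm}$, the identity on $\Sigma$ drops out. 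You instead work directly on $S_{\vareps,f}^{\pm}$, computing the same surface integral from the layer side (weak two-scale convergence of the trace, via the unfolding $\teps^M\ceps^M\rightharpoonup c_0^M$ in $L^2(\Sigma,H^1(Z_f))$) and from the bulk side (strong $L^2(\Sigma)$-convergence of the shifted trace combined with weak-$\ast$ convergence of $\chi_{S_f^{\pm}}(\cdot/\vareps)$). Your approach is more transparent and avoids the piecewise vector-field construction; the paper's approach has the advantage that it never needs to isolate the surface two-scale limit on $S_{\vareps,f}^{\pm}$ as a separate step, since everything is expressed through volume integrals and the already-stated convergence on $\geps$. Note that both proofs ultimately rely on the compactness of the trace $H^1(\Omega^{\pm})\hookrightarrow L^2(\Sigma)$ to handle the product of the bulk trace with an oscillating characteristic function (on $S_{\vareps,s}^{\pm}$ in the paper, on $S_{\vareps,f}^{\pm}$ in your version).
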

\begin{proof}
The convergence results are clear (see Lemma \ref{LemmaBasicTSCompactness}) and we only give the proof for the equation of the traces on the interface $\Sigma$.
Let $\phi^{\pm} \in C_0^{\infty}(\Omega^{\pm} \cup \Sigma)^n$ and $\phi^M \in C_0^{\infty}(\Sigma, C_{\#}^{\infty}(Z))^n$ with $\phi^M(\x,y) = \phi^{\pm}(\x)$ for all $y \in S^{\pm}$, in particular $\phi^M$ is constant with respect to $y$ on $S^{\pm}$. Now, we define
\begin{align*}
    \phieps (x):= \begin{cases}
     \phi^+(x - \vareps e_n) &\mbox{ for } x \in \oeps^+,
     \\
     \phi^M\left(\x, \fxe \right) &\mbox{ for } x \in \oemf,
     \\
     \phi^-(x + \vareps e_n) &\mbox{ for } x \in \oeps^-.
    \end{cases}
\end{align*}
We obtain (in the following we indicate with respect to which domain the outer unit normal $\nu$ is understood)
\begin{align*}
\int_{\Sigma} \int_{\Gamma} c_0^M& \phi^M \cdot \nu_{Z_f} d\sigma_y d\x \overset{\vareps\to 0}{\longleftarrow} \int_{\geps} \ceps^M \phi^M\left(\x,\fxe\right) \cdot \nu_{\oemf} d\sigma 
\\
=& \int_{\oef} \nabla \cdot (\ceps \phieps) dx - \int_{S_{\vareps,s}^{\pm}} \ceps \phi^M\left(\x,\fxe\right) \cdot \nu_{\oeps^{\pm}} d\sigma 
\\
=& \sum_{\pm} \left\{ \int_{\oeps^{\pm}} \nabla \ceps^{\pm} \phi^{\pm} (x \mp \vareps e_n) dx + \int_{\oeps^{\pm} } \ceps^{\pm} \nabla \cdot \phi^{\pm}(x \mp \vareps e_n) dx \right\}
\\
&+ \foe \int_{\oemf} \ceps \left[\vareps \nabla_{\x} \cdot \phi^M\left(\x,\fxe\right) + \nabla_y \cdot \phi^M \left(\x,\fxe\right)\right] dx
\\
&+ \int_{\oemf} \nabla \ceps^M \cdot \phi^M\left(\x,\fxe\right) dx - \int_{S_{\vareps,s}^{\pm}} \ceps \phi^{\pm} \cdot \nu_{\oeps^{\pm}} d\sigma 
\\
\overset{\vareps\to 0}{\longrightarrow}& \sum_{\pm} \left\{\int_{\Omega^{\pm}}  \nabla c_0^{\pm} \cdot \phi^{\pm} dx + \int_{\Omega^{\pm}} c_0^{\pm} \nabla \cdot \phi^{\pm} dx \right\} 
\\
&+ \int_{\Sigma}\int_{Z_f} \nabla_y c_0^M \cdot \phi^M dy d\x + \int_{\Sigma} \int_{Z_f} c_0^M \nabla_y \cdot \phi^M dy d\x - |S_s^{\pm}|\int_{\Sigma}  c_0^{\pm}|_{\Sigma} \phi^{\pm} \cdot \nu_{\Omega^{\pm}} d\x,
\end{align*}
where for the last term we used the two-scale convergence of $\chi_{S_{\vareps,s}^{\pm}} \ceps(x \mp \vareps e_n)$ to $\chi_{S_s^{\pm}} c_0|_{\Sigma}$ on $\Sigma$. Using integration by parts we obtain with $1 - |S_s^{\pm}| = |S_f^{\pm}|$ and $\nu_{\Omega}^{\pm} = \mp \nu_{Z_f}$ on $\Sigma \times S^{\pm}_f$
\begin{align*}
0 &= \sum_{\pm}|S_f^{\pm}| \int_{\Sigma} c_0^{\pm} \phi^{\pm} \cdot \nu_{\Omega^{\pm}} d\x + \sum_{\pm} \int_{\Sigma} \int_{S_f^{\pm}} c_0^M  \phi^M  \cdot \nu_{Z_f} d\sigma_y d\x
\\
&= \sum_{\pm} \int_{\Sigma} \left[ |S_f^{\pm}| c_0^{\pm} - \int_{S_f^{\pm}} c_0^M d\sigma_y \right] \phi^{\pm} \cdot \nu_{\Omega^{\pm}} d\x.
\end{align*}
This gives the desired result.
\end{proof}

\begin{corollary}\label{cor:weak_ts_conv_geps}
Let $\ceps \in H^1(\oef)$ with 
\begin{align*}
\|\ceps \|_{H^1(\oef)} \le C.
\end{align*}
With the notations from Lemma \ref{lem:weak_two_scale_Geps} we obtain  $c_0^+ = c_0^- =: c_0$ on $\Sigma$ and 
\begin{align*}
    \ceps^M \rats c_0 \qquad\mbox{on } \geps.
\end{align*}
\end{corollary}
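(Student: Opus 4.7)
The plan is to derive the corollary from Lemma \ref{lem:weak_two_scale_Geps} by exploiting the stronger bound on $\nabla \ceps$ in the fluid part of the thin layer. First I would verify that the hypotheses of Lemma \ref{lem:weak_two_scale_Geps} are met under the present, stronger, assumption $\|\ceps\|_{H^1(\oef)} \le C$. The bounds on $\|\ceps\|_{H^1(\oeps^{\pm})}$ are immediate, the bound $\sqrt{\vareps}\|\nabla \ceps\|_{L^2(\oemf)} \le C\sqrt{\vareps}$ is trivial, and the bound $\tfrac{1}{\sqrt{\vareps}}\|\ceps\|_{L^2(\oemf)} \le C\|\ceps\|_{H^1(\oef)}$ is provided by Lemma \ref{lem:estimate_L2_membrane_Bulk_Gradient}. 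Hence Lemma \ref{lem:weak_two_scale_Geps} yields limit functions $c_0^{\pm} \in H^1(\Omega^{\pm})$ and $c_0^M \in L^2(\Sigma, H_{\#}^1(Z_f))$ together with the corresponding (weak and two-scale) convergences, including the interface identity
\[
c_0^{\pm} = \frac{1}{|S_f^{\pm}|} \int_{S_f^{\pm}} c_0^M\, d\sigma_y \qquad \text{on } \Sigma.
\]

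The key additional observation is that the present hypothesis gives $\|\nabla \ceps\|_{L^2(\oemf)} \le C$ \emph{without} the degenerate factor $\vareps^{-1/2}$, so that
\[
\tfrac{1}{\sqrt{\vareps}}\|\vareps \nabla \ceps\|_{L^2(\oemf)} = \sqrt{\vareps}\,\|\nabla \ceps\|_{L^2(\oemf)} \le C\sqrt{\vareps} \xrightarrow{\vareps \to 0} 0.
\]
Consequently, the two-scale limit of $\vareps\,\chi_{\oemf} \nabla \ceps^M$, which by Lemma \ref{lem:weak_two_scale_Geps} equals $\chi_{Z_f}\nabla_y c_0^M$, must vanish identically. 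Since $Z_f$ is connected, this forces $c_0^M$ to be independent of the microscopic variable $y$, i.e.\ $c_0^M = c_0^M(\x)$.

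Plugging this $y$-independence into the interface identity above immediately yields $c_0^+ = c_0^M = c_0^-$ on $\Sigma$, so setting $c_0 := c_0^{\pm} = c_0^M$ gives the first claim. For the two-scale convergence on $\geps$, the last assertion of Lemma \ref{lem:weak_two_scale_Geps} already provides $\ceps^M|_{\geps} \rats c_0^M$ on $\geps$, and since $c_0^M = c_0$ does not depend on $y$ this is exactly the stated convergence $\ceps^M \rats c_0$ on $\geps$. No step is genuinely hard here: the whole argument rests on the simple but crucial observation that a uniform (rather than $\vareps$-degenerate) gradient bound in the thin fluid layer suppresses the microscopic $y$-oscillation of the two-scale limit.
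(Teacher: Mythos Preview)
Your proof is correct and follows essentially the same route as the paper. The only cosmetic difference is that the paper obtains the $y$-independence of $c_0^M$ by invoking the general compactness result of Lemma \ref{LemmaBasicTSCompactness} (case $\gamma \in (-1,1)$), whereas you spell out the underlying mechanism directly, namely that $\sqrt{\vareps}\,\|\nabla \ceps\|_{L^2(\oemf)} \to 0$ forces $\nabla_y c_0^M = 0$ and hence, by connectedness of $Z_f$, $c_0^M = c_0^M(\x)$.
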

\begin{proof}
From inequality $\eqref{ineq:aux_trace_inequality}$ we obtain 
\begin{align*}
    \frac{1}{\sqrt{\vareps}} \|\ceps\|_{L^2(\oemf)} + \|\nabla \ceps \|_{L^2(\oemf)} \le C.
\end{align*}
In particular the assumptions of Lemma \ref{lem:weak_two_scale_Geps} are fulfilled and (with the notations from Lemma \ref{lem:weak_two_scale_Geps}) we obtain $c_0^M(\x,y) = c_0^M(\x) \in L^2(\Sigma)$, see Lemma \ref{LemmaBasicTSCompactness}. Further, we obtain
\begin{align*}
    c_0^{\pm} = \frac{1}{|S_f^{\pm}|} \int_{S_f^{\pm}} c_0^M d\sigma_y = c_0^M.
\end{align*}
\end{proof}
\begin{remark}
The corollary above immediately implies the continuity of the limit function $c_0$ across $\Sigma$. This is also obtained from the extension operator in Lemma \ref{lem:Extension_operators}.
\end{remark}

\begin{remark}
All the results can be easily transferred to the time-dependent case.
\end{remark}

\end{appendix}

\subsection*{Acknowledgements}
We acknowledge the contribution of Jonas Knoch (FAU Erlangen-N\"urnberg) to Figure \ref{fig:FigureMicroDomain} developed from \cite{fabricius2023homogenization}.

\bibliographystyle{abbrv}
\bibliography{literature} 

\begin{thebibliography}{10}

\bibitem{Acerbi1992}
E.~Acerbi, V.~Chiad\`{o}, G.~{Dal Maso}, and D.~Percivale.
\newblock An extension theorem from connected sets, and homogenization in
  general periodic domains.
\newblock {\em Nonlinear Anal., Theory Methods Appl.}, 18(5):481--496, 1992.

\bibitem{AllaireII1991}
G.~Allaire.
\newblock Homogenization of the {N}avier-{S}tokes equations in open sets
  perforated with tiny holes {II}: Non-critical sizes of the holes for a volume
  distribution and a surface distribution of holes.
\newblock {\em Arch. Rational Mech. Anal.}, 113:261--298, 1991.

\bibitem{Allaire_TwoScaleKonvergenz}
G.~Allaire.
\newblock Homogenization and two-scale convergence.
\newblock {\em SIAM J. Math. Anal.}, 23:1482--1518, 1992.

\bibitem{bayada1989homogenization}
G.~Bayada and M.~Chambat.
\newblock Homogenization of the stokes system in a thin film flow with rapidly
  varying thickness.
\newblock {\em ESAIM: Mathematical Modelling and Numerical Analysis},
  23(2):205--234, 1989.

\bibitem{BhattacharyaGahnNeussRadu}
A.~Bhattacharya, M.~Gahn, and M.~Neuss-Radu.
\newblock Effective transmission conditions for reaction-diffusion processes in
  domains separated by thin channels.
\newblock {\em Applicable Analysis}, 2020.

\bibitem{bhattacharya2022homogenization}
A.~Bhattacharya, M.~Gahn, and M.~Neuss-Radu.
\newblock Homogenization of a nonlinear drift--diffusion system for multiple
  charged species in a porous medium.
\newblock {\em Nonlinear Analysis: Real World Applications}, 68:103651, 2022.

\bibitem{BourgeatGipoulouxMarusicPaloka2001}
A.~Bourgeat, O.~Gipouloux, and E.~Maru\v{s}i\'c-Paloka.
\newblock Mathematical modelling and numerical simulation of a non-{N}ewtonian
  viscous flow through a thin filter.
\newblock {\em SIAM J. Appl. Math.}, 62(2):597--626, 2001.

\bibitem{CioranescuGrisoDamlamian2018}
D.~Cioranescu, G.~Griso, and A.~Damlamian.
\newblock {\em The periodic unfolding method}.
\newblock Springer, Singapore, 2018.

\bibitem{CioranescuSJPaulin}
D.~Cioranescu and J.~S.~J. Paulin.
\newblock Homogenization in open sets with holes.
\newblock {\em J. Math. Pures et Appl.}, 71:590--607, 1979.

\bibitem{conca1985application}
C.~Conca.
\newblock On the application of the homogenization theory to a class of
  problems arising in fluid mechanics.
\newblock {\em J. Math. Pures Appl}, 64(1):31--75, 1985.

\bibitem{ConcaI1987}
C.~Conca.
\newblock {\'E}tude d'un fluid traversant une paroi perfore\'e {I}.
  {C}omportement limite pr\`es de la paroi.
\newblock {\em J. Math. pures et appl.}, 66:1--43, 1987.

\bibitem{ConcaII1987}
C.~Conca.
\newblock {\'E}tude d'un fluid traversant une paroi perfore\'e {II}.
  {C}omportement limite loin de la paroi.
\newblock {\em J. Math. pures et appl.}, 66:45--69, 1987.

\bibitem{donato2018homogenization}
P.~Donato, O.~Guib{\'e}, and A.~Oropeza.
\newblock Homogenization of quasilinear elliptic problems with nonlinear robin
  conditions and l1 data.
\newblock {\em Journal de Math{\'e}matiques Pures et Appliqu{\'e}es},
  120:91--129, 2018.

\bibitem{donato2019asymptotic}
P.~Donato, E.~C. Jose, and D.~Onofrei.
\newblock Asymptotic analysis of a multiscale parabolic problem with a rough
  fast oscillating interface.
\newblock {\em Archive of Applied Mechanics}, 89:437--465, 2019.

\bibitem{fabricius2023homogenization}
J.~Fabricius and M.~Gahn.
\newblock Homogenization and dimension reduction of the {S}tokes problem with
  {N}avier-slip condition in thin perforated layers.
\newblock {\em Multiscale Modeling \& Simulation}, 21(4):1502--1533, 2023.

\bibitem{fabricius2020pressure}
J.~Fabricius, E.~Miroshnikova, A.~Tsandzana, and P.~Wall.
\newblock Pressure-driven flow in thin domains.
\newblock {\em Asymptotic Analysis}, 116(1):1--26, 2020.

\bibitem{freudenberg2024analysis}
T.~Freudenberg and M.~Eden.
\newblock Analysis and simulation of a coupled fluid-heat system in a thin,
  rough layer.
\newblock {\em arXiv preprint arXiv:2406.02150}, 2024.

\bibitem{freudenberg2024homogenization}
T.~Freudenberg and M.~Eden.
\newblock Homogenization and simulation of heat transfer through a thin grain
  layer.
\newblock {\em Networks and Heterogeneous Media}, 19(2):569--596, 2024.

\bibitem{GahnDissertation}
M.~Gahn.
\newblock {\em Derivation of effective models for reaction-diffusion processes
  in multi-component media}.
\newblock PhD thesis (Friedrich-Alexander-Universit\"at Erlangen-N\"urnberg);
  Shaker Verlag, 2017.

\bibitem{gahn2024derivation}
M.~Gahn.
\newblock Derivation of a {Biot-Plate-System} for a thin poroelastic layer.
\newblock {\em arXiv preprint arXiv:2403.04392}, 2024.

\bibitem{GahnJaegerTwoScaleTools}
M.~Gahn, W.~J\"ager, and M.~Neuss-Radu.
\newblock Two-scale tools for homogenization and dimension reduction of
  perforated thin layers: {E}xtensions, {K}orn-inequalities, and two-scale
  compactness of scale-dependent sets in {S}obolev spaces.
\newblock {\em Submitted (arXiv preprint: arXiv:2112.00559)}.

\bibitem{gahn2022derivation}
M.~Gahn, W.~J{\"a}ger, and M.~Neuss-Radu.
\newblock Derivation of {S}tokes-plate-equations modeling fluid flow
  interaction with thin porous elastic layers.
\newblock {\em Applicable Analysis}, 101(12):4319--4348, 2022.

\bibitem{GahnNeussRaduKolmogorovCompactness}
M.~Gahn and M.~Neuss-Radu.
\newblock A characterization of relatively compact sets in {$L^p(\Omega,B)$}.
\newblock {\em Stud. Univ. Babe\c{s}-Bolyai Math.}, 61(3):279--290, 2016.

\bibitem{Gahn}
M.~Gahn, M.~Neuss-Radu, and P.~Knabner.
\newblock Homogenization of reaction--diffusion processes in a two-component
  porous medium with nonlinear flux conditions at the interface.
\newblock {\em SIAM J. Appl. Math.}, 76(5):1819--1843, 2016.

\bibitem{GahnEffectiveTransmissionContinuous}
M.~Gahn, M.~Neuss-Radu, and P.~Knabner.
\newblock Derivation of effective transmission conditions for domains separated
  by a membrane for different scaling of membrane diffusivity.
\newblock {\em Discrete \& Continuous Dynamical Systems-Series S},
  10(4):773--797, 2017.

\bibitem{GahnNeussRaduKnabner2018a}
M.~Gahn, M.~Neuss-Radu, and P.~Knabner.
\newblock Effective interface conditions for processes through thin
  heterogeneous layers with nonlinear transmission at the microscopic
  bulk-layer interface.
\newblock {\em Netw. Heterog. Media}, 13(4):609--640, 2018.

\bibitem{GahnNRP}
M.~Gahn, M.~Neuss-Radu, and I.~S. Pop.
\newblock Homogenization of a reaction-diffusion-advection problem in an
  evolving micro-domain and including nonlinear boundary conditions.
\newblock {\em J. Differ. Equ.}, 289:95--127, 2021.

\bibitem{GahnPop}
M.~Gahn and I.~S. Pop.
\newblock Homogenization of a mineral dissolution and precipitation model
  involving free boundaries at the micro scale.
\newblock {\em Journal of Differential Equations}, 343:90--151, 2023.

\bibitem{Orlik2023}
M.~Krier and J.~Orlik.
\newblock Solvability of a fluid-structure interaction problem with semigroup
  theory.
\newblock {\em AIMS Mathematics}, 8(12):29490--29516, 2023.

\bibitem{Marusic1998}
S.~Maru\v{s}i\'c.
\newblock Low concentration limit for a fluid flow through a filter.
\newblock {\em Mathematical Models and Methods in Applied Sciences},
  8(4):623--643, 1998.

\bibitem{MarusicMarusicPalokaTwoScaleConvergenceThinDomains}
S.~Maru\v{s}i\'c and E.~Maru\v{s}i\'c-Paloka.
\newblock Two-scale convergence for thin domains and its applications to some
  lower-dimensional model in fluid mechanics.
\newblock {\em Asymptot. Anal.}, 23:23--58, 2000.

\bibitem{NeussJaeger_EffectiveTransmission}
M.~Neuss-Radu and W.~J\"ager.
\newblock Effective transmission conditions for reaction-diffusion processes in
  domains separated by an interface.
\newblock {\em SIAM J. Math. Anal.}, 39:687--720, 2007.

\bibitem{sanchez1982boundary}
E.~S{\'a}nchez-Palencia.
\newblock Boundary value problems in domains containing perforated walls.
\newblock In {\em Nonlinear partial differential equations and their
  applications. College de France Seminar}, volume~3, pages 309--325, 1982.

\bibitem{Telma}
T.~Silva, W.~J{\"a}ger, M.~Neuss-Radu, and A.~Sequeira.
\newblock Modeling of the early stage of atherosclerosis with emphasis on the
  regulation of the endothelial permeability.
\newblock {\em Journal of Theoretical Biology}, 496:110229, 2020.

\bibitem{Simon}
J.~Simon.
\newblock Compact sets in the space ${L}^p(0,{T};{B})$.
\newblock {\em Ann. Mat. Pura Appl.}, 146:65--96, 1987.

\end{thebibliography}

\end{document}